\newtheorem{lemma}{Lemma}[section]
\newtheorem{proposition}[lemma]{Proposition}
\newtheorem{corollary}[lemma]{Corollary}
\newtheorem{theorem}[lemma]{Theorem}
\newtheorem{example}[lemma]{Example}
\newtheorem{definition}[lemma]{Definition}
\newtheorem{remark}[lemma]{Remark}
\newtheorem*{Acknowledgement}{Acknowledgements}
\newtheorem{thmy}{Theorem}
\newenvironment{thmx}{\begin{thmy}}{\end{thmy}}
 \newtheorem{corox}[thmy]{Corollary} 
 \newtheorem{propy}{Proposition}
\newcommand*\bR{\mathbb R}
\newcommand*\bC{\mathbb C}
\newcommand{\dd}{\partial\bar{\partial}}
\DeclareMathOperator{\dvol}{dvol}
\DeclareMathOperator{\Id}{Id}
\newcommand{\kahler}{K\"ahler }
\DeclareMathOperator{\Bl}{Bl}
\DeclareMathOperator{\diag}{diag}
\DeclareMathOperator{\pt}{pt}
\DeclareMathOperator{\pr}{Pr}
\DeclareMathOperator{\DR}{dR}
\DeclareMathOperator{\Stab}{Stab}
\DeclareMathOperator{\id}{id}
\DeclareMathOperator{\QAC}{QAC}
\DeclareMathOperator{\ALE}{ALE}
\DeclareMathOperator{\Lie}{Lie}
\DeclareMathOperator{\Aut}{Aut}
\DeclareMathOperator{\Iso}{Iso}
\DeclareMathOperator{\sca}{SC}
\DeclareMathOperator{\inter}{int}
\DeclareMathOperator{\op}{op}
\DeclareMathOperator{\Rc}{Ric}
\DeclareMathOperator{\SC}{SC}
\DeclareMathOperator{\Vol}{Vol}
\DeclareMathOperator{\depth}{depth}
\DeclareMathOperator{\Euc}{Euc}
\DeclareMathOperator{\FS}{FS}
\DeclareMathOperator{\PGL}{\mathbb{P}GL}
\DeclareMathOperator{\SP}{SP}
\newcommand\GL{\operatorname{GL}}
\begin{document}
\title[]
{Constant Scalar Curvature \kahler metrics on resolutions of an orbifold singularity of depth 1}

\author{Mehrdad Najafpour}
\address{D\'epartement de Math\'ematiques, Universit\'e du Qu\'ebec \`a Montr\'eal}
\email{najafpour\textunderscore ghazvini.mehrdad@uqam.ca}

\maketitle

\begin{abstract}
We construct new examples of constant scalar curvature Kähler metrics on suitable resolutions of certain constant scalar curvature Kähler orbifolds with singularities of type $\mathcal{I}$ in the sense of Apostolov-Rollin, along a suborbifold of complex codimension $k > 2$.\\
\end{abstract}

\tableofcontents

\numberwithin{equation}{section}

\section{Introduction}
In the 1950s, Eugenio Calabi in \cite{calabi1958improper, calabi2015kahler} proposed a natural notion of canonical Kähler metrics, namely extremal metrics. This involves fixing a Kähler class $\Omega$ and minimizing the Calabi functional:
$$\text{Cal}(\omega)=\displaystyle \int_M S(\omega)^2\omega^n,$$
 where $S(\omega)$ is the scalar curvature, within the space of Kähler metrics whose Kähler form $\omega$ belongs to $\Omega$. Constant scalar curvature Kähler (cscK) metrics are examples of extremal metrics, and Kähler-Einstein metrics are examples of cscK metrics.

The existence of Kähler-Einstein metrics for compact Kähler manifolds depends on the sign of the first Chern class of the Kähler manifold. When the first Chern class is negative, there is always a Kähler-Einstein metric, as independently proved by Thierry Aubin \cite{aubin1976equations} and Shing-Tung Yau \cite{yau1978ricci, yau1977calabi}. When the first Chern class is zero, there is always a Kähler-Einstein metric, as was shown by Shing-Tung Yau in \cite{yau1978ricci, yau1977calabi}. However, when the first Chern class is positive (also called Fano), the existence of Kähler-Einstein metric remained a well-known open problem for many years. In 2012, Xiuxiong Chen, Simon Donaldson, and Song Sun \cite{chen2015kahler1, chen2015kahler2, chen2015kahler3}, as well as independently Gang Tian \cite{MR3352459}, proved that for the Fano case, an algebraic-geometric criterion called K-stability implies the existence of a Kähler-Einstein metric. Additionally, the converse was proved by Robert Berman \cite{berman2016k}. 
Recently, other developments have arisen, such as \cite{lahdili2023einstein} and \cite{ammar2024delta}.

Sixty years after it was proposed, Calabi's program continues to represent the forefront of most active current research in complex geometry, yielding spectacular results. Yau-Tian-Donaldson \cite{donaldson2002scalar} conjectured more generally that there is an equivalence between the existence of a cscK metric on a polarized projective manifold and the K-polystability of that polarized manifold. Beyond the Kähler-Einstein Fano case, the conjecture was established for toric Kähler surfaces by Donaldson \cite{donaldson2002scalar} and for general toric varieties by Chen-Cheng. This conjecture was recently proven in 2021 by Chen-Cheng \cite{chen2021constant1, chen2021constant2, chen2018constant} in the toric case. In fact, it provides a necessary and sufficient condition, expressed in terms of the corresponding Delzant polytope, for a compact smooth toric manifold to admit a compatible Riemannian metric of constant scalar curvature.

In this paper, we focus on constant scalar curvature \kahler (cscK) metrics. In 2006, Arezzo and Pacard \cite{arezzo2006blowing} proved that if a compact manifold or compact orbifold $M$ with isolated singularities and no non-trivial holomorphic vector fields vanishing somewhere, admits a cscK metric, then the blow-up of $M$ at finitely many points also admit a cscK metric. In 2009 \cite{arezzo2009blowing}, they generalized the statement to situations where there are non-trivial holomorphic vector fields with zeros. In 2011, Arezzo, Pacard, and Singer \cite{arezzo2011extremal} proved the existence of an extremal metric on the blow-ups of a manifold at certain points, subject to assumptions on the position of the points, such as balancing and genericity conditions. Recently, in 2020, Seyyedali and Székelyhidi \cite{seyyedali2020extremal} extended the results of Arezzo and Pacard to obtain a cscK metric on blow-ups of a manifold along a submanifold. When the extremal metric is cscK and the automorphisms group is trivial, their result can be formulated as follows.
\begin{thmx}[\cite{seyyedali2020extremal}]\label{SS}
Let $(X,\omega_X)$ be a compact cscK complex manifold with discrete group of automorphisms (in particular, there are no non-trivial holomorphic vector fields on $X$) and $Y\subset X$ be a
submanifold of codimension $k$ greater than 2. Then $\Bl_Y^X$ admits a cscK metric in the class $[\omega_X]-\varepsilon^2[E]$ for sufficiently small $\varepsilon>0$, where $E$ is the exceptional divisor of the blow-up.
\end{thmx}
In this paper, we generalize Arezzo-Pacard-Singer and Seyyedali-Sz\'ekelyhidi results by constructing cscK metrics on the resolution of a certain orbifolds as follows.

\begin{thmx}[Theorem \ref{themain}]\label{B}
Suppose that $(X,\omega_X)$ is a compact cscK orbifold with no holomorphic vector fields, and such that the set of singular points $Y$ of $X$ is of complex co-dimension $>$ 2. Suppose, furthermore, that any point $p\in Y$ has a local orbifold uniformization chart of the form $\bC^{n-k}\times \left(\bC^k/\Gamma\right)$ where $\Gamma$ is a finite linear group of type $\mathcal{I}$ in the sense of Apostolov-Rollin of the form $(-w_0,w)$. If $\pi: \widehat{X} \to X$ is the partial resolution of $X$ obtained by a $(-w_0, w)$-weighted blow-up of $X$ along $Y$, then the class $[\omega_X] - \varepsilon^2 [E]$ admits a cscK metric for $\varepsilon > 0$ sufficiently small, where $E = \pi^{-1}(Y)$ is the exceptional divisor of the resolution $\pi: \widehat{X} \to X$.
\end{thmx}
Unless the singularity is of type $\mathcal{I}$ and of the form $(-w_0, 1, \ldots, 1)$, the resolution $\widehat{X}$ is not smooth. However, there is a possibly non-unique sequence of resolutions
$$\widehat{X}_l\to \widehat{X}_{l-1}\to \ldots \to \widehat{X}_1\to X$$
obtained by a sequence of weighted blow-ups with $\widehat{X}_l$. For such a sequence of resolutions, we show that Theorem \ref{B} can be applied iteratively to each $\widehat{X}_i$, yielding the following result.
\begin{corox}\label{coroc}
For $(X, \omega_X)$ as in Theorem \ref{B}, let $\widehat{X}_l\to \widehat{X}_{l-1}\to \ldots \to \widehat{X}_1\to X$ be a sequence of resolutions obtained through a sequence of weighted blow-ups with $\widehat{X}_l$ smooth. Then $\widehat{X}_l$ admits a cscK metric in a suitable Kähler class.
\end{corox}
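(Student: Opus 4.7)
The plan is to iterate Theorem~\ref{B} along the given chain $\widehat{X}_l \to \widehat{X}_{l-1} \to \cdots \to \widehat{X}_1 \to X$ of weighted blow-ups. Setting $\widehat{X}_0 := X$ and $\widehat{\omega}_0 := \omega_X$, my induction hypothesis at stage $i$ is that $(\widehat{X}_i, \widehat{\omega}_i)$ is a compact cscK orbifold meeting all the hypotheses of Theorem~\ref{B} with respect to the next blow-up center. Applying that theorem produces a cscK metric $\widehat{\omega}_{i+1}$ on $\widehat{X}_{i+1}$ in the Kähler class $[\widehat{\omega}_i] - \varepsilon_{i+1}^2 [E_{i+1}]$ for some sufficiently small $\varepsilon_{i+1} > 0$, where $E_{i+1}$ is the exceptional divisor of $\widehat{X}_{i+1} \to \widehat{X}_i$. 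After $l$ steps one arrives at a cscK metric on the smooth model $\widehat{X}_l$ in the class
\[
[\omega_X] - \varepsilon_1^2 [E_1] - \varepsilon_2^2 [E_2] - \cdots - \varepsilon_l^2 [E_l],
\]
read on $\widehat{X}_l$ via successive total transforms, which remains Kähler provided the $\varepsilon_i$ are chosen successively and nestedly small.

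The work is to confirm that the hypotheses of Theorem~\ref{B} are preserved at every stage. For the absence of holomorphic vector fields: the exceptional divisor of each $\widehat{X}_{i+1} \to \widehat{X}_i$ is determined geometrically (as the non-isomorphism locus of the morphism), so the flow of any holomorphic vector field on $\widehat{X}_i$ preserves the total exceptional locus of the composition $\pi: \widehat{X}_i \to X$, descends through $\pi$ to a holomorphic flow on $X$ vanishing along $Y$, and is therefore trivial by hypothesis; hence the original field is zero on the open set where $\pi$ is an isomorphism, and thus identically zero. For the remaining geometric hypotheses: it must be checked that the orbifold singular locus of $\widehat{X}_i$ is again a suborbifold of complex codimension strictly greater than $2$, and that at every singular point a local uniformization chart $\bC^{n-k'} \times (\bC^{k'}/\Gamma')$ again has $\Gamma'$ of type $\mathcal{I}$ in the sense of Apostolov-Rollin, of the form $(-w_0', w')$.

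This latter condition is the main obstacle, and it is a purely local statement about $(-w_0, w)$-weighted blow-ups of the model $\bC^{n-k} \times (\bC^k/\Gamma)$. On each of the standard affine charts of such a weighted blow-up, one reads off the new orbifold uniformizing group from the corresponding toric fan and verifies that it is again of type $\mathcal{I}$ of the stipulated shape, with the exceptional stratum having codimension matching the one required to invoke Theorem~\ref{B}. Provided the family of weighted blow-ups used to build the sequence $\widehat{X}_l \to \cdots \to X$ is closed under this operation---a property set up in the preceding sections of the paper---the induction closes and $\widehat{X}_l$ inherits a cscK metric in the displayed Kähler class.
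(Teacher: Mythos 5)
Your overall strategy is exactly the paper's: set $\widehat{X}_0=X$, iterate Theorem \ref{B} (Theorem \ref{themain}) along the chain, and check at each stage that the hypotheses are preserved. The paper's proof is precisely this two-step observation, invoking Proposition \ref{bihologrroupblow} for the vector-field hypothesis and the very definition of type $\mathcal{I}$ singularities for the structure of the intermediate $\widehat{X}_i$.

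Two of your verifications deserve comment. First, your argument for the absence of holomorphic vector fields has a genuine gap: you assert that the flow of a holomorphic vector field on $\widehat{X}_i$ preserves the exceptional locus ``because the exceptional divisor is determined geometrically as the non-isomorphism locus of the morphism.'' This is circular: an automorphism of $\widehat{X}_i$ carries no a priori compatibility with the blow-down map, so the non-isomorphism locus of $\pi$ is not manifestly invariant under its flow. The paper's Proposition \ref{bihologrroupblow} proves the needed tangency honestly: the restriction $\widehat{V}\big|_E$ defines a holomorphic section of the normal bundle $N_{\widehat{X}}(E)$, which on each fiber $\mathbb{CP}^{k-1}_{w}$ of $E\to Y$ restricts to $\mathcal{O}_{\mathbb{CP}^{k-1}_{w}}(-w_0)$ and hence admits no nontrivial holomorphic section (Proposition \ref{holomorphic sections of weighted projective}); only after this does the field descend (constancy of sections of $\pi^*TY$ on fibers, then Hartogs) to an element of $\mathfrak{h}(X)$, which vanishes by hypothesis. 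Since that proposition is already established in the paper, your induction closes once you cite it in place of your invariance claim. Second, what you call ``the main obstacle''---closure of the class of singularities under weighted blow-up---requires no fresh toric-chart computation: it is built into Definition \ref{typeI} and the discussion preceding Proposition \ref{bihologrroupblow}, where it is shown that the new singular strata of a type $\mathcal{I}$ weighted blow-up are again depth-one of type $\mathcal{I}$ and are covers of $Y$, hence of the same codimension $k>2$; termination in finitely many steps is likewise part of the definition. With these two substitutions your proof coincides with the paper's, including the nested choice of the $\varepsilon_i$ and the resulting class $[\omega_X]-\varepsilon_1^2[E_1]-\cdots-\varepsilon_l^2[E_l]$, which the paper leaves implicit as ``a suitable K\"ahler class.''
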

Our strategy to prove this result consists in adapting the approach of \cite{seyyedali2020extremal} to the singular setting, using a coordinate-free description involving manifolds with corners.\\

The paper is organized as follows. In \S 2, we revisit concepts about weighted projective spaces and their singularities. In \S 3, we describe the analytical tools needed for the main result. We begin by defining manifolds with corners and blow-ups in the Melrose sense. We then define the Lie structure at infinity and Riemannian metrics from them. In \S 4, we focus on constant scalar curvature Kähler (cscK) metrics and give examples of cscK orbifolds with singularities of type $\mathcal{I}$ and having discrete automorphism group (Theorem \ref{example}). In \S 5, we construct a family of Kähler metrics $\widehat{\omega}_\varepsilon$ on the partial resolution $\widehat{X}$ of $X$ by gluing technique and using manifolds with corners. In \S 6, we focus on linear analysis through the linearization of constant scalar curvature, which requires considering the Lichnerowicz operator on weighted Hölder spaces using techniques introduced by Mazzeo in his study of conical metrics. The triviality of the kernel of the Lichnerowicz operator is due to the assumption on holomorphic vector fields on \(X\) (lemma \ref{green}). Based on techniques developed by Seyyedali and Székelyhidi, we proved that the twisted Lichnerowicz operator \( \widetilde{L}_\varepsilon \) is indeed uniformly boundedly invertible for sufficiently small \(\varepsilon\) (Proposition \ref{inv}). Finally, in \S 7, we use nonlinear analysis to find a potential \( u \) for obtaining a cscK metric \( \widetilde{\omega}_\varepsilon = \widehat{\omega}_\varepsilon + \sqrt{-1}\,\partial\bar{\partial}\phi_\varepsilon \) on the resolution. We begin by expressing the first Chern class of the resolution \( \widehat{X} \) in terms of that of \( X \) (Proposition \ref{chern}). This allows for the explicit calculation of the topological constant in the cscK equation on the resolution that we aim to solve (Proposition \ref{lambda}). The final step is to apply Banach's fixed-point theorem by carefully controlling the error term across four different regions, which depend on the distance to \( Y \), following a strategy implemented in Székelyhidi's book \cite{szekelyhidi2014introduction}. 

\begin{Acknowledgement}
I am grateful to Vestislav Apostolov and Frédéric Rochon, my Ph.D. supervisors, for introducing this problem and for their help.
\end{Acknowledgement}

\section{Weighted projective spaces and their singularities}
Since weighted projective spaces will play an important role in our construction, we will first review this notion and explain how it can be used to resolve some orbifold singularities.
\begin{definition}[Compact weighted projective space]
Let $w_0\in\mathbb{N}$ and $w=(w_1,\ldots,w_n)\in\mathbb{N}^n$. The compact weighted projective space corresponding to the weight vector $\overrightarrow{w}=(w_0,w)$ is the quotient 
$$\mathbb{CP}^{n}_{\overrightarrow{w}}=(\mathbb{C}^{n+1}\setminus\{0\})/\mathbb{C}^*,$$
with $\mathbb{C}^*$-action on $\mathbb{C}^{n+1}$ is given by 
$$t.(z_0,z_1,\ldots,z_n)=(t^{w_0}z_0,t^{w_1}z_1,\ldots,t^{w_n}z_n),  \forall  t\in \mathbb{C}^*.$$
\end{definition}
\begin{definition}[Non-compact weighted projective space]
Let $w_0\in\mathbb{N}$ and $w=(w_1,\ldots,w_n)\in\mathbb{N}^n$. The non-compact weighted projective space corresponding to the weight vector $\overrightarrow{w}=(-w_0,w)$
 is the quotient 
$$\mathbb{CP}^{n}_{\overrightarrow{w}}=(\mathbb{C}\times \mathbb{C}^{n}\setminus \mathbb{C}\times\{0\})/\mathbb{C}^*,$$
with $\mathbb{C}^*$-action given by 
$$t.(z_0,z_1,\ldots,z_n)=(t^{-w_0}z_0,t^{w_1}z_1,\ldots,t^{w_n}z_n),  \forall  t\in \mathbb{C}^*.$$
\end{definition}
\begin{remark}
The weighted projective space $\mathbb{CP}^{n}_{\overrightarrow{w}}$ (compact or non-compact) has the structure of a complex orbifold, since the $\mathbb{C}^*$-action is holomorphic, faithful and orientation preserving.
\end{remark}
\begin{example} 
The weighted projective space $\mathbb{CP}^{n}_{(1,1,\ldots,1)}$ is the usual complex projective space $\mathbb{CP}^{n}$.
\end{example}
\begin{example}
Let $r\in \mathbb{N}$, the non-compact weighted projective space $\mathbb{CP}^{n}_{(-r,1,\ldots,1)}$ is the total space of the line bundle $\mathcal{O}_{\mathbb{CP}^{n-1}}(-r)$.
\end{example}
To describe singularities of the weighted projective space, we begin with the following observation. Assume that $w_0> 1$, and let $p=(1,0,\ldots,0)\in \mathbb{C}^{n+1}\setminus \{0\}$. Under the action of $t\in \mathbb{C}^*$, $(1,0,\ldots,0)$ is taken to $t.p=(t^{w_0},0,\ldots,0)$. So the stablizer of $p$ is given by
$$\Stab(p)=\{t\in \mathbb{C}^*:t^{w_0}=1 \}\cong \mathbb{Z}_{w_0}.$$
With the same idea we can see that for a point $p=[z_0,z_1,\ldots,z_n]\in \mathbb{CP}^{n}_{(w_0,w)}$, if we set
$$d=\gcd\{w_i:z_i\neq 0,0\leq i\leq n\},$$
then we get two cases:
\begin{enumerate}
\item Case 1: If $d=1$, then $p$ is a non-singular point.
\item Case 2: If $d\neq1$, then $p$ is a singular point. Near the point $p$, the weighted projective space is locally like $\mathbb{C}^{k-1}\times (\mathbb{C}^{n-k+1}/\mathbb{Z}_d)$ where $k=\text{card}\{i:z_i\neq 0,0\leq i\leq n\}$ and $\mathbb{Z}_d$ acts on $ \mathbb{C}^{n-k+1}$ by
$$e^{\frac{2\pi i}{d}}.(\xi_{i_1},\xi_{i_2},\ldots,\xi_{i_{n-k+1}})=(e^{\frac{2\pi i}{d}w_{i_1}}\xi_{i_1},e^{\frac{2\pi i}{d}w_{i_2}}\xi_{i_2},\ldots,e^{\frac{2\pi i}{d}w_{i_{n-k+1}}}\xi_{i_{n-k+1}}),$$
and $\{i_1,\ldots,i_{n-k+1}\}=\{i:z_i= 0\}$. See pages 133-134 \cite{joyce2000compact} for more details.
\end{enumerate}
For the non-compact weighted projective space $\mathbb{CP}^{n}_{(-w_0,w)}$, the singular points correspond to the singular points of $\mathbb{CP}^{n-1}_{w}$ in $\mathbb{CP}^{n}_{(-w_0,w)}$ given by
$$\mathbb{CP}^{n-1}_{w}=\{[z_0,z_1,\ldots,z_n]\in \mathbb{CP}^{n}_{(-w_0,w)} :z_0=0\}\subset \mathbb{CP}^{n}_{(-w_0,w)}.$$

\begin{remark}\label{singularities of wiethted}
The above discussion shows that
\begin{enumerate}
\item The compact weighted projective space $\mathbb{CP}^{n}_{(w_0,w)}$ is smooth if and only if $w_0=1$ and $w=(1,\ldots,1)$, i.e, it is the usual weighted projective space $\mathbb{CP}^{n}$.
\item The non-compact weighted projective space $\mathbb{CP}^{n}_{(-w_0,w)}$ is smooth if and only if $w=(1,\ldots,1)$ and $\mathbb{CP}^{n}_{(-w_0,1,\ldots,1)}\cong \mathcal{O}_{\mathbb{CP}^{n-1}}(-w_0)$. 
\end{enumerate}
\end{remark}

\begin{remark}\label{singularities of wiethted2}
A non-compact weighted projective space $\mathbb{CP}^{n}_{(-w_0,w)}$ has only isolated singularities, if and only if 
\begin{equation}\label{isocond2}
\gcd(w_i,w_j)=1, \forall i\neq j\in\{0,\ldots,n\}.
\end{equation}
In this case, the number of singularities is equal to the number of values $w_i$ that are not equal to 1 for $i\in {1,\ldots,n}$. If $w_i\neq 1$, the singularity corresponding to the point $[0:\ldots:0:1:\ldots:0]\in \mathbb{CP}^{n}_{(-w_0,w)}$ and is modeled by the orbifolds $\mathbb{C}^{n}/\mathbb{Z}_{w_i}$ obtained by the action of the cyclic group of $w_i$-th roots of unity given by
$$e^{\frac{2\pi i}{w_i}}.(\xi_{0},\xi_{i_1},\ldots,\xi_{i_{n-1}})=(e^{-\frac{2\pi i}{w_i}w_{0}}\xi_{0},e^{\frac{2\pi i}{w_i}w_{i_1}}\xi_{i_1},\ldots,e^{\frac{2\pi i}{w_i}w_{i_{n-1}}}\xi_{i_{n-1}}),$$
with $\{i_1,\ldots,i_{n-1}\}=\{j\in \{1,\ldots,n\}:z_j\neq i\}$.
\end{remark}
\begin{example}\label{aaa}
Let $\Gamma_{(-w_0,w)}$ be the cyclic group of $w_0$-roots unity defined by 
 $$\Gamma_{(-w_0,w)}=\braket{\diag(\xi^{w_1},\ldots,\xi^{w_n})}\cong\mathbb{Z}_{w_0},$$
 where $\xi=e^{\frac{2\pi i}{w_0}}$. Consider the action of $\Gamma_{(-w_0,w)}$ on $\mathbb{C}^n$ given by
$$(z_1,\ldots,z_n)\mapsto (\xi^{w_1}z_1,\ldots,\xi^{w_n}z_n).$$
The group $\Gamma_{(-w_0,w)}$ is a  finite subgroup of $U(n)$, so $X=\mathbb{C}^n/\Gamma_{(-w_0,w)}$ has the structure of an orbifold. It has an isolated singularity at the origin if and only if
\begin{equation}\label{isocond}
\gcd(w_0,w_i)=1, \forall i\in\{1,\ldots,n\}.
\end{equation}
\end{example}

\begin{definition}\label{projectivelinebundle}
Assume that the complex orbifold  $X=\mathbb{C}^n/\Gamma_{(-w_0,w)}$ has an isolated singularity at the origin. A blow-up of the origin is given by the non-compact weighted projective space $\mathbb{CP}^{n}_{(-w_0,w)}$ with the blow-down map $\beta:\mathbb{CP}^{n}_{(-w_0,w)}\to \mathbb{C}^n/\Gamma_{(-w_0,w)}$ given by
$$[z_0,z_1,\ldots,z_n]\mapsto (z_0^{\frac{w_1}{w_0}}z_1,\ldots, z_0^{\frac{w_n}{w_0}}z_n).$$
The above map is well defined because for any $t\in\mathbb{C}^*$, 
$$\beta([t^{-w_0}z_0,t^{w_1}z_1,\ldots,t^{w_n}z_n])=((t^{-w_0}z_0)^{\frac{w_1}{w_0}}t^{w_1}z_1,\ldots, (t^{-w_0}z_0)^{\frac{w_n}{w_0}}t^{w_n}z_n)=(z_0^{\frac{w_1}{w_0}}z_1,\ldots, z_0^{\frac{w_n}{w_0}}z_n).$$
The exceptional divisor $E=\beta^{-1}(0)$ of this weighted blow-up is naturally identified with the compact weighted projective space $E=\mathbb{CP}^{n-1}_{w}$.
\end{definition}

\begin{remark}
Note that $\mathbb{C}^n/\Gamma_{(-1,1,\ldots,1)}=\mathbb{C}^n$, so
$$\Bl^{\mathbb{C}^n}_0=\mathbb{CP}^{n}_{(-1,1,\ldots,1)}=\mathcal{O}_{\mathbb{CP}^{n-1}}(-1).$$
\end{remark}
Definition \ref{projectivelinebundle} shows that for any $w_0\in \mathbb{N}$, the non compact weighted projective space $\mathbb{CP}^{n}_{(-w_0,w)}$ is a holomorphic line bundle over the compact weighted projective space $\mathbb{CP}^{n-1}_{w}$.
\begin{definition}\label{weightedtautological}
For $w\in \mathbb{N}^n$, the tautological line bundle of the compact weighted projective space $\mathbb{CP}^{n-1}_{w}$ is defined by
$$\mathcal{O}_{\mathbb{CP}^{n-1}_{w}}(-1):=\mathbb{CP}^{n}_{(-1,w)},$$
Similarly for $w_0\in \mathbb{N}$ we define
$$\mathcal{O}_{\mathbb{CP}^{n-1}_{w}}(-w_0):=(\mathcal{O}_{\mathbb{CP}^{n-1}_{w}}(-1))^{\otimes w_0}=\mathbb{CP}^{n}_{(-w_0,w)}.$$
\end{definition}
\begin{proposition}\label{holomorphic sections of weighted projective}
For any $w_0\in \mathbb{N}$ and $w\in \mathbb{N}^n$, the holomorphic line bundle $\mathcal{O}_{\mathbb{CP}^{n-1}_{w}}(-w_0)$ has no non-trivial global holomorphic section.
\end{proposition}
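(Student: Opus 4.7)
The plan is to identify global holomorphic sections of $\mathcal{O}_{\CP^{n-1}_w}(-w_0)=\CP^n_{(-w_0,w)}$ with $\mathbb{C}^*$-equivariant holomorphic functions on $\mathbb{C}^n\setminus\{0\}$, and then to show, via Hartogs' extension theorem combined with a Taylor expansion, that every such function vanishes identically.

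Unwinding Definition \ref{weightedtautological}, I would view $\CP^n_{(-w_0,w)}$ as the quotient of $\mathbb{C}\times(\mathbb{C}^n\setminus\{0\})$ by the $\mathbb{C}^*$-action with weights $(-w_0,w_1,\ldots,w_n)$, with the projection $[z_0,z_1,\ldots,z_n]\mapsto[z_1,\ldots,z_n]$ exhibiting it as a holomorphic line bundle over $\CP^{n-1}_w$. A global holomorphic section is then equivalent to a holomorphic function $f\colon\mathbb{C}^n\setminus\{0\}\to\mathbb{C}$ satisfying the equivariance
$$f(t^{w_1}z_1,\ldots,t^{w_n}z_n)=t^{-w_0}f(z_1,\ldots,z_n)\qquad\text{for all }t\in\mathbb{C}^*,$$
since such an $f$ produces the section $[z_1,\ldots,z_n]\mapsto[f(z),z_1,\ldots,z_n]$, and the equivariance is precisely the condition that this formula be independent of the chosen representative of $[z_1,\ldots,z_n]$ in $\mathbb{C}^n\setminus\{0\}$.

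For $n\ge 2$ (the geometrically relevant range in the paper), Hartogs' extension theorem extends $f$ holomorphically across the origin, and by continuity the equivariance persists on all of $\mathbb{C}^n$. Expanding $f$ as a convergent Taylor series $f(z)=\sum_\alpha a_\alpha z^\alpha$ and equating coefficients of each monomial on the two sides of the equivariance identity yields
$$a_\alpha\bigl(t^{w_1\alpha_1+\cdots+w_n\alpha_n}-t^{-w_0}\bigr)=0\qquad\text{for all }t\in\mathbb{C}^*$$
and every multi-index $\alpha=(\alpha_1,\ldots,\alpha_n)$. Since each $w_i\ge 1$ and $\alpha_i\ge 0$, the exponent $\sum_i w_i\alpha_i$ is non-negative while $-w_0$ is strictly negative, so the two powers of $t$ can never coincide; this forces $a_\alpha=0$ for every $\alpha$, and hence $f\equiv 0$.

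The only step that requires any genuine thought is the equivariance dictionary between sections of the line bundle $\mathcal{O}_{\CP^{n-1}_w}(-w_0)$ and weighted-homogeneous holomorphic functions on $\mathbb{C}^n\setminus\{0\}$, which follows tautologically from the weighted-quotient presentation of the total space; the remainder is the standard fact that a weighted-homogeneous holomorphic function of strictly negative degree on $\mathbb{C}^n$ must vanish, and I do not expect any further technical obstacle.
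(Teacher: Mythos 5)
Your proof is correct in the range $n\ge 2$ and takes a genuinely different route from the paper's. The paper argues downstairs: it composes a section $s$ with the blow-down map $\beta:\CP^{n}_{(-w_0,w)}\to\bC^{n}/\Gamma_{(-w_0,w)}$ of Definition \ref{projectivelinebundle}, notes that a holomorphic map from the compact orbifold $\CP^{n-1}_{w}$ to the affine cone must be constant (maximum principle on the coordinate functions), and then observes that a constant $\beta\circ s$ confines $s$ to a single fiber of $\beta$ --- impossible for a section over a positive-dimensional base unless that constant is $0$, in which case $s$ lands in the exceptional divisor, i.e.\ $s=0$. You instead argue upstairs: sections are $\bC^{*}$-equivariant holomorphic functions on $\bC^{n}\setminus\{0\}$ of weighted degree $-w_0$, Hartogs extends such a function across the origin, and the coefficient computation kills every Taylor monomial because monomials have weighted degree $\ge 0$ while $-w_0<0$. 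Your equivariance dictionary is the right orbifold notion of section (equivariance under all of $\bC^{*}$ in particular encodes invariance under the finite stabilizers, which both you and the paper elide at the same level of rigor), and you could even skip the Taylor expansion by letting $t\to 0$ in $f(z)=t^{w_0}f(t\cdot z)$ once Hartogs gives finiteness of $f(0)$. As for what each approach buys: the paper's argument is shorter and coordinate-free, while yours is more explicit and more general --- the same computation identifies $H^{0}(\CP^{n-1}_{w},\mathcal{O}_{\CP^{n-1}_{w}}(d))$ with the weighted-homogeneous polynomials of degree $d$ for every $d$ --- and it makes visible that the restriction $n\ge 2$ is not cosmetic: for $n=1$ with $w_1\mid w_0$ the function $f(z)=z^{-w_0/w_1}$ is holomorphic on $\bC\setminus\{0\}$ and equivariant of degree $-w_0$, so the proposition literally fails over the zero-dimensional base (e.g.\ $\mathcal{O}_{\CP^{0}}(-1)$, whose total space is $\bC$); this degenerate case is tacitly excluded both by your Hartogs step and by the ``clearly'' step in the paper's proof, and it is harmless here since the paper only uses the proposition with $n=k>2$.
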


\begin{proof}
Let $s\in H^0(\mathbb{CP}^{n-1}_{w},\mathcal{O}_{\mathbb{CP}^{n-1}_{w}}(-w_0))$ be a global holomorphic section. Then the composition with the blow-down map $\beta\circ s:\mathbb{CP}^{n-1}_{w}\to \mathbb{C}^n/\Gamma_{(-w_0,w)}$ is a holomorphic function defined on the compact weighted projective space $\mathbb{CP}^{n-1}_{w}$, so it is a constant $\beta\circ s \equiv c$. Clearly, this can only happen if $s=0$ as an elemmaent of $H^0(\mathbb{CP}^{n-1}_{w},\mathcal{O}_{\mathbb{CP}^{n-1}_{w}}(-w_0))$.
\end{proof}
On non-compact weighted projective spaces, an important class of singularities is given by those of type $\mathcal{I}$. Singularities of type $\mathcal{I}$ were introduced by Vestislav Apostolov and Yann Rollin in 2016 in \cite{apostolov2017ale}. Consider the congruence relation $\sim$ on $\mathbb{N}^{n+1}$ defined by
$$(a_0,a_1,\ldots, a_n)\sim(b_0,b_1,\ldots, b_n)\Longleftrightarrow a_0=b_0, a_i\equiv b_i \mod a_0.$$ 
We can check that if $(a_0,a)\sim(b_0,b)$ then $\mathbb{C}^n/\Gamma_{(-a_0,a)}\cong \mathbb{C}^n/\Gamma_{(-b_0,b)}$.

\begin{example}\label{firstI}
Consider $\mathbb{C}^{3}/\Gamma_{(-5,3,2,1)}$, and blow-up the origin by replacing $\mathbb{C}^{3}/\Gamma_{(-5,3,2,1)}$ with \linebreak $\mathbb{CP}^{3}_{(-5,3,2,1)}$. By remark \ref{singularities of wiethted2}, this new space is still singular with the isolated singularities at two points $[0:1:0:0]$ and $[0:0:1:0]$. These singularities are locally of the forms $\mathbb{C}^{3}/\Gamma_{(-3,1,2,1)}$ and $\mathbb{C}^{3}/\Gamma_{(-2,3,1,1)}$ so we can still blow them up by replacing these by $\mathbb{CP}^{3}_{(-3,1,2,1)}$ and $\mathbb{CP}^{3}_{(-2,1,1,1)}$. Since $(2,3,1,1)\sim (2,1,1,1)$, $\mathbb{CP}^{3}_{(-2,1,1,1)}$ is smooth. However, $\mathbb{CP}^{3}_{(-3,1,2,1)}$ still has a singularity locally in the form of $\mathbb{C}^{3}/\Gamma_{(2,1,1,1)}$. By blowing it up and replacing it with $\mathbb{CP}^{3}_{(-2,1,1,1)}$, we finally obtain a smooth complex manifold. There is a corresponding tree of singularities for $\mathbb{C}^{3}/\Gamma_{(-5,3,2,1)}$:
\begin{center}
\begin{forest}
  [\textsc{(5,3,2,1)}
    [\textsc{(3,1,2,1)}
     [\textsc{(2,1,1,1)}]
    ]
    [\textsc{(2,3,1,1)$\sim$(2,1,1,1)}]
  ]
\end{forest}

\end{center}
\end{example}
Now we define singularities of type $\mathcal{I}$.
\begin{definition}[Singularities of type $\mathcal{I}$]\label{typeI}
A singularity of an orbifold $\mathbb{C}^{n}/\Gamma_{(-w_0,w)}$ with $(-w_0,w)$ as in \eqref{isocond2} is a singularity of type $\mathcal{I}$ if either
\begin{enumerate}
\item $(w_0,w)\sim (w_0,1,\ldots, 1)$.\\
or
\item $(w_0,w)\sim (a_0,a)$ such that $\mathbb{CP}^{n}_{(-a_0,a)}$ has only isolated singularities of the forms $\mathbb{C}^n/\Gamma_{(-b_0,b)}$ of type $\mathcal{I}$. That is, after finitely many weighted blow-up we can end with a smooth manifold.
\end{enumerate}
\end{definition}
For singularities of type $\mathcal{I}$, we can represent a tree of singularities as follows. Start with $(a_0, a_1, \ldots, a_n)$, and inductively construct each branch corresponding to $a_i \neq 1$. At each step, a new singularity is obtained by:
$$(a_0, a_1, \ldots, a_{i-1}, a_i, a_{i+1}, \ldots, a_n) \to (a_i, a_1, \ldots, a_{i-1}, x, a_{i+1}, \ldots, a_n),$$
where $x \in \mathbb{N}$ is such that $x \equiv -a_0 \pmod{a_i}$. According to the definition of singularities of type $\mathcal{I}$, each branch is considered complete when it end up  to $(w_0, 1, \ldots, 1)$ with corresponding smooth weighted blow-up.
\begin{example}
The singularity of the orbifold $\mathbb{C}^{3}/\Gamma_{(-5,3,2,1)}$ in Example \ref{firstI} is a singularity of type $\mathcal{I}$ because we end with $(2,1,1,1)$ in each branch.
\end{example}

\begin{example}
Let $(w_0, w) = (p, q, 1, \ldots, 1)$, where $p$ and $q$ are two positive coprime integers such that $p > q$. Then, $(p, q, 1, \ldots, 1)$ is of type $\mathcal{I}$. A similar inductive procedure shows that by starting with $(p_0, q_0) = (p, q)$ and blowing-up at the stage $k$, by performing the Euclidean algorithm, we get $p_k = q_{k-1} < p_{k-1}$ and $0 < q_k < p_{k-1}$ such that $q_k \equiv -p_{k-1} \pmod{q_{k-1}}$. Clearly, in each stage, $p_k$ and $q_k$ are coprime, so we have an isolated singularity. Since $q_k < q_{k-1}$, we will eventually obtain a weight vector of the form $(p_N,1, 1, \ldots, 1)$.
\begin{center}
\begin{forest}
  [\textsc{($p$, $q$, $1$, \ldots, $1$)}
    [\textsc{($p_1$,$q_1$, $1$, \ldots, $1$)}
     [\ldots
          [\textsc{($p_N$, $1$, $1$, \ldots, $1$)}]
    ]]].
\end{forest}
\end{center}
\end{example}
Definition \ref{aaa} shows that locally we can glue non-compact weighted projective spaces to resolve partially the isolated singularities of $\mathbb{C}^n/\Gamma_{(-w_0,w)}$. The fact that the singularity is of type $\mathcal{I}$ means that this type of partial resolution can be iterated finitely many times to obtain a smooth manifold. Globally, a complex orbifold with isolated singularities of type $\mathcal{I}$ admits a resolution (which is not necessarily unique) denoted as $\widehat{X}$ of type $\mathcal{I}$. More generally, let $X$ be a compact complex orbifold of depth 1 (i.e, for each connected component \(\Sigma\) of \(X_{\text{sing}}\), the isotropy groups of the points of \(\Sigma\) are all isomorphic) with singularities of type $\mathcal{I}$ along a connected subset $Y$ with codimension $k$ greater than 2. We can define a type $\mathcal{I}$ resolution of $X$ along $Y$ as follows. Since $X$ has singularities of type $\mathcal{I}$ along $Y$ of codimension $k$, the normal bundle of $Y$ in $X$ is a fiber bundle over $Y$ with fibers of the form $\mathbb{C}^k/\Gamma_{(-w_0,w)}$, where $\Gamma_{(-w_0,w)}$ is a discrete finite subgroup of $U(k)$ as in Definition \ref{typeI}. Now, in a local chart 
$$\phi :U\to V_1\times V_2\subset \mathbb{C}^{n-k}\times (\mathbb{C}^k/\Gamma_{(-w_0,w)}),$$
with $\phi(U\cap Y)=V_1\times \{0\}$, we can consider the resolution $V_1\times \widehat{V}_2$ with $\widehat{V}_2=\beta^{-1}(V_2)$, where
$$\beta:\mathbb{CP}^k_{(-w_0,w)}\to \mathbb{C}^k/\Gamma_{(-w_0,w)},$$
is the natural blow-down map of Definition \ref{weightedtautological}. That is, we can consider the resolution $\beta_U: \widehat{U} \to U$, inducing a commutative diagram
$$\xymatrix{\widehat{U}\ar[rr]^{\widehat{\phi}}\ar[d]^{\beta_U}&&V_1\times \widehat{V}_2\ar[d]^{\Id\times \beta}\\U\ar[rr]^{\phi}&&V_1\times V_2,}$$
with $\widehat{\phi}$ a biholomorphism. This resolution does not depend on the choice of coordinates. Indeed, if $f:V_1\times V_2\to V_1\times V_2$ is a biholomorphism sending $V_1\times \{0\}$ onto $V_1\times \{0\}$, then it lifts to a $\Gamma_{(-w_0,w)}$-equivariant biholomorphism $\widetilde{f}:V_1\times \widetilde{V}_2\to V_1\times \widetilde{V}_2$ with $\widetilde{V}_2$ the lift of $V_2$ to $\mathbb{C}^k$ under the quotient map $q:\mathbb{C}^k\to \mathbb{C}^k/\Gamma_{(-w_0,w)}$.\\

The differential of $\widetilde{f}$ in the $\widetilde{V}_2$ factor induces, when restricted to $V_1 \times \{0\}$, a biholomorphism
$$d\widetilde{f}_2:V_1\times\mathbb{C}^k\to V_1\times\mathbb{C}^k, $$
which is linear in the $\mathbb{C}^k$ factor and $\Gamma_{(-w_0,w)}$-equivariant. In paticular, it has a weighted projectivization
$$\mathbb{P}_w(d\widetilde{f}_2):V_1\times\mathbb{P}_w(\mathbb{C}^k)\to V_1\times\mathbb{P}_w(\mathbb{C}^k).$$
One can then easily check that the biholomorphism $f:V_1\times V_2\to V_1\times V_2$ cab be lifted to a biholomorphism
$$\widehat{f}:V_1\times \widehat{V}_2\to V_1\times \widehat{V_2},$$
given by $\mathbb{P}_w(d\widetilde{f}_2)$ on $V_1\times\mathbb{P}_w(\mathbb{C}^k)$ and by $f$ on $V_1\times(\widehat{V}_2\setminus \mathbb{P}_w(\mathbb{C}^k))=V_1\times(V_2\setminus \{0\})$.

Clearly, this biholomorphism induces the commutative diagram
$$\xymatrix{V_1\times \widehat{V}_2\ar[rr]^{\widehat{f}}\ar[d]^{\Id\times \beta}&&V_1\times \widehat{V}_2\ar[d]^{\Id\times \beta}\\V_1\times V_2\ar[rr]^{f}&&V_1\times V_2,}$$
confirming that the resolution $\widehat{U}$ does not depend on the choice of coordinates. This means that we can consider a partial resolution $\pi: \widehat{X} \to X$ along $Y$ in which an open set $U$ as described above corresponds to the local resolution $\beta_U: \widehat{U} \to U$, and away from $Y$ is simply the identity map.\\
We say that the partial resolution $\widehat{X}$ is the $(-w_0,w)$-weighted blow-up of $X$ along $Y$. We denote by $E=\pi^{-1}(Y)$ the exceptional divisor of this weighted blow-up. Notice that $\pi:E\to Y$ is a fiber bundle with fibers $\mathbb{CP}_w^{k-1}$. In fact, there is a rank $k$ complex vector bundle $W\to Y$ and a fiberwise $\Gamma_{(-w_0,w)}$-action on $W$ such that $N_X(Y)=W/\Gamma_{(-w_0,w)}$ and $E=\mathbb{P}_w(W)\label{wbb}$ is the fiberwise weighted projectivization of $W$. In general, $\widehat{X}$ is not smooth and has orbifold singularities of depth one along suborbifolds corresponding to the isolated singularities of the fibers of $E \to Y$. In particular, these suborbifolds are covers of $Y$. Assuming the initial singularity along $Y$ is of type $\mathcal{I}$, we can perform weighted blow-ups along these suborbifolds. These weighted blow-ups can still have suborbifold singularities of depth one, but by performing additional weighted blow-ups, we can eventually obtain a smooth resolution after finitely many steps.\\
In other words, when the singularity along $Y$ is of type $\mathcal{I}$, we can find a finite sequence of weighted blow-ups
$$\widehat{X}_l\to \widehat{X}_{l-1}\to \ldots \to \widehat{X}_1\to X$$
with $\widehat{X}_1=\widehat{X}$ and $\widehat{X}_l$ smooth.

\begin{proposition}\label{bihologrroupblow}
Let $X$ is a compact complex orbifold of complex dimension $n$. Suppose that $X$ has only depth one singularities of type $\mathcal{I}$ and we ‌denote by $Y$ the‌ singular part of $X$. 
Assume that the complex codimension $k$ of $Y$ is greater than 2. For any partial resolution $\widehat{X}$ of $X$ of type $\mathcal{I}$, $\mathfrak{h}(\widehat{X})$ is naturally realized as the Lie subalgebra of $\mathfrak{h}(X)$ consisting of holomorphic vector fields on $X$ tangent to $Y$.
\end{proposition}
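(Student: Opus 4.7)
The plan is to construct mutually inverse Lie algebra morphisms between $\mathfrak{h}(\widehat{X})$ and $\mathfrak{h}_Y(X) := \{V \in \mathfrak{h}(X) : V|_Y \subseteq TY\}$, using pushforward combined with Hartogs' extension in one direction, and lifting of flows via the biholomorphism-lifting construction described earlier in the paper in the other. The general case of an iterated partial resolution then follows by induction, since the lift of a vector field tangent to $Y$ is automatically tangent to the singular locus at each intermediate stage.

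For the forward direction, I would push $\hat{V} \in \mathfrak{h}(\widehat{X})$ forward through the biholomorphism $\pi|_{\widehat{X}\setminus E}: \widehat{X} \setminus E \to X \setminus Y$, obtaining a holomorphic vector field on $X \setminus Y$. Compactness of $\widehat{X}$ makes this pushforward locally bounded near $Y$, so in a local orbifold uniformization where $Y$ corresponds to $V_1 \times \{0\} \subset V_1 \times \widetilde{V}_2 \subset \mathbb{C}^{n-k} \times \mathbb{C}^k$ of complex codimension $k > 2$, the $\Gamma_{(-w_0,w)}$-equivariant lift admits a unique Hartogs extension, yielding a global $\tilde{V} \in \mathfrak{h}(X)$. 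To show $\tilde{V}$ is tangent to $Y$, I would fix $p \in Y$ and $\hat{q} \in \pi^{-1}(p) \subseteq E$; by properness of $\pi$ and compactness of the fiber $\pi^{-1}(p) \cong \mathbb{CP}^{k-1}_w$, any sequence $q_n \to p$ in $X \setminus Y$ has lifts with a convergent subsequence $\hat{q}_n \to \hat{q}$, giving $\tilde{V}(p) = \lim_n d\pi_{\hat{q}_n}(\hat{V}(\hat{q}_n)) = d\pi_{\hat{q}}(\hat{V}(\hat{q}))$; since $\tilde{V}(p)$ is well-defined, this value must be independent of $\hat{q}$ as it ranges over the fiber. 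In local blow-up coordinates $(u, z_0, \xi)$ with $\pi(u, z_0, \xi) = (u, z_0, z_0\xi)$ (and its weighted analogue), a direct computation shows $d\pi_{\hat{q}}(T_{\hat{q}}\widehat{X}) = T_p Y \oplus \mathbb{C}(\partial_{z_1} + \sum_{j \ge 2}\xi_j \partial_{z_j})$, with the second summand depending nontrivially on $\hat{q}$. Writing $\hat{V}(\hat{q}) = A \partial_u + c \partial_{z_0} + D \partial_\xi$, the independence condition forces $c \equiv 0$ on $E$, i.e.\ $\hat{V}$ is tangent to $E$, and hence $\tilde{V}(p) = d\pi_{\hat{q}}(\hat{V}(\hat{q})) \in d\pi_{\hat{q}}(T_{\hat{q}}E) \subseteq T_p Y$.

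For the inverse direction, given $\tilde{V} \in \mathfrak{h}_Y(X)$ the flow $\tilde{\phi}_t$ is a one-parameter group of biholomorphisms of $X$ preserving $Y$. By the biholomorphism-lifting construction recalled just before Proposition \ref{bihologrroupblow}, namely gluing $\tilde{\phi}_t$ on $X \setminus Y$ with the fiberwise weighted projectivization of its normal differential on $E$, each $\tilde{\phi}_t$ lifts uniquely to $\hat{\phi}_t: \widehat{X} \to \widehat{X}$ satisfying $\pi \circ \hat{\phi}_t = \tilde{\phi}_t \circ \pi$; smoothness of $t \mapsto \hat{\phi}_t$ is inherited from that of $\tilde{\phi}_t$ and its normal derivatives, so $\hat{V} := \frac{d}{dt}\big|_{t=0}\hat{\phi}_t$ defines a holomorphic vector field on $\widehat{X}$ with $\pi_* \hat{V} = \tilde{V}$. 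Uniqueness of Hartogs extensions and of biholomorphism lifts makes these two constructions mutually inverse, and Lie brackets are preserved since on the dense open set $\widehat{X} \setminus E$ the map $\pi$ is a biholomorphism. The main technical obstacle is rigorously executing the tangency computation in the weighted setting, where weighted local charts on $\mathbb{CP}^{k-1}_w$ near its orbifold singularities must be handled carefully; however, the essential geometric input—that $d\pi$ contracts the vertical direction of $E \to Y$ and gains exactly one extra direction varying with $\hat{q}$—is unchanged, so the tangency conclusion carries through.
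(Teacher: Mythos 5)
Your architecture is the same as the paper's (descend $\widehat V$ off $E$, extend across $Y$ by Hartogs, prove tangency to $E$, and lift back), but both pivotal steps use genuinely different mechanisms. For tangency of $\widehat V$ to $E$, the paper applies the global cohomological fact of Proposition \ref{holomorphic sections of weighted projective} — $H^0(\mathbb{CP}^{k-1}_w,\mathcal{O}_{\mathbb{CP}^{k-1}_w}(-w_0))=0$ — to the image of $\widehat V{\big|}_E$ in $N_{\widehat X}(E)$, whereas you argue pointwise with $d\pi$ along the fibers of $E\to Y$. For surjectivity, the paper computes the coordinate lifts $\pi^*\bigl(Z_j\frac{\partial}{\partial Z_i}\bigr)$ explicitly, whereas you lift the flow $\tilde\phi_t$ using the coordinate-free lifting of biholomorphisms established just before the proposition; that part of your argument is sound and arguably cleaner (preservation of $Y$ by $\tilde\phi_t$ is automatic, since biholomorphisms of an orbifold preserve the singular stratum, and $\widehat V=\frac{d}{dt}\big|_{t=0}\widehat\phi_t$ is holomorphic because each $\widehat\phi_t$ preserves $J$).

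The genuine gap is in the tangency step, which you execute only in the unweighted model $\pi(u,z_0,\xi)=(u,z_0,z_0\xi)$, i.e.\ $w_0=1$ — exactly the case where $X$ is smooth and the proposition has no content. In the weighted case $w_0>1$, the paper's chart gives $\pi(y,z)=(y,z_1^{w_1/w_0},z_1^{w_2/w_0}z_2,\ldots,z_1^{w_k/w_0}z_k)$, whose transversal derivatives are $\tfrac{w_i}{w_0}z_1^{(w_i-w_0)/w_0}$; these blow up when $w_i<w_0$ and vanish when $w_i>w_0$, so $d\pi_{\hat q}$ is not defined at $\hat q\in E$, your displayed formula for $d\pi_{\hat q}(T_{\hat q}\widehat X)$ fails, and the ``fiber-independence forces $c\equiv 0$'' mechanism does not carry through unchanged, contrary to your closing claim. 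Relatedly, your justification of the Hartogs hypothesis is wrong: compactness of $\widehat X$ does not make $\pi_*\widehat V$ locally bounded near $Y$, precisely because $d\pi$ is unbounded there — boundedness of the normal components is essentially the statement being proved. Fortunately no boundedness is needed: since $\operatorname{codim}_{\mathbb{C}}Y=k>2$, the second Riemann--Hartogs extension theorem applies directly to the $\Gamma_{(-w_0,w)}$-invariant lift. And once the extension exists, the degeneration works in your favor: continuity of $\pi_*\widehat V$ across $Y$ forces the coefficients multiplying the negative powers $z_1^{(w_i-w_0)/w_0}$ (these occur for some $i$ since $k>2$) to vanish on $\{z_1=0\}$, i.e.\ the $\partial_{z_1}$-component of $\widehat V$ vanishes along $E$ — a shorter route than fiber-independence and an elementary alternative to the paper's cohomological argument; one can also observe that for $w_0>1$ with $\gcd(w_0,w_i)=1$ there is no nonzero $\Gamma$-invariant normal vector at points of $Y$, so tangency of the descended field to $Y$ is automatic. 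The gap is therefore fillable, but as written the central weighted computation is asserted rather than proved.
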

\begin{proof}
We proceed with a proof similar to Proposition 6.4.1 in \cite{Gauduchon}. Firstly, we demonstrate that any (real) holomorphic vector field, denoted as $\widehat{V}$, on $\widehat{X}$ descends to a (real) holomorphic vector field, denoted as $V$, on $X$ which is tangent to $Y$. Indeed, since $\pi:\widehat{X}\setminus E\to X\setminus Y$ is a biholomorphism, $\widehat{V}$ naturally descends to a holomorphic vector field on $X\setminus Y$. Via the short exact sequence of vector bundles
\begin{equation*}\label{firstblo}
\xymatrix{0\ar[r]&TE\ar[r]& T\widehat{X}\ar[d]\ar[r]&T\widehat{X}/TE \displaystyle
\ar[r] &0,\\&&E,}
\end{equation*}
the restriction $\widehat{V}{\big|}_E$ determines a holomorphic section of the normal bundle 
$$T\widehat{X}/TE \cong N_{\widehat{X}}(E).$$
On the other hand, on $E$, the restriction of $\pi$ induces a fiber bundle
\begin{equation*}\label{firstblo}
\xymatrix{E \cong \mathbb{P}_{w}(W)\ar[d]\\ Y,}
\end{equation*}
where $W$ is such that $N_{X}(Y)=W/\Gamma_{(-w_0,w)}$ for a $\Gamma_{(-w_0,w)}$ of type $\mathcal{I}$ and $\mathbb{P}_{w}(W)$ is the weighted fiberwise projectivization of $W$. Thus, each fiber of $N_{X}(Y)$ corresponds to $\mathbb{CP}^{k-1}_{w}$ with the restriction of $N_{\widehat{X}}(E)$ corresponding to $\mathcal{O}_{\mathbb{CP}^{k-1}_{w}}(-w_0)$. Using Proposition \ref{holomorphic sections of weighted projective}, it has no non-trivial global holomorphic section. Consequently, $\widehat{V}{\big|}_E$ is tangent to $E$. Now via $\pi_*:TE\to \pi^*TY$, $\widehat{V}{\big|}_E$ induces a section $\pi_*(\widehat{V}{\big|}_E)\in H^{0}(E,\pi^*TY)$. On each fiber of $\pi$, $\pi^*TY$ is trivial, so its only holomorphic sections are constant sections. This implies that $V=\pi_*(\widehat{V})$ is a well-defined continuous vector field on $X$, which is tangent to $Y$ and holomorphic on $X\setminus Y$. By Hartogs' theorem, $V$ is holomorphic everywhere on $X$, hence it belongs to the Lie subalgebra,
$$\mathfrak{h}_Y(X)=\{\xi\in \mathfrak{h}(X):\xi{\big|}_Y\in H^{0}(Y,T^{1,0}Y)\}.$$
Furthermore, the resulting map from $\mathfrak{h}(\widehat{X})$ to $\mathfrak{h}_Y(X)$ is a Lie algebra morphism. This map is clearly injective, and we shall now show that it is surjective, establishing that it is a Lie algebra isomorphism.

Indeed, any element $V$ of $\mathfrak{h}_Y(X)$ lifts to a (real) holomorphic vector field, say $\widehat{V}$, on $\widehat{X}\setminus E$, via the isomorphism $\pi:\widehat{X}\setminus E\to X\setminus Y$. We need to check that $\widehat{V}$ extends to all of $\widehat{X}$ as an element of $\mathfrak{h}(\widehat{X})$. Since $\widehat{V}$ is holomorphic on $\widehat{X}\setminus E$, we only have to worry about the behavior of $\widehat{V}$ near points $p$ of the exceptional divisor $E$. Since $p$ has a neighborhood in $\widehat{X}$ isomorphic to $\mathbb{C}^{n-k}\times\mathbb{CP}^{k}_{(-w_0,w)}$, we can use local holomorphic coordinates
$$(y_1,\ldots,y_{n-k},Z_1,\ldots, Z_{k})\in \mathbb{C}^{n-k}\times \mathbb{C}^{k}/\Gamma_{(-w_0,w)},$$
near $y=\pi(p)$. Relative to these coordinates, since $V{\big|}_Y$ is tangent to $Y$, $V$ will be here conveniently regarded as a holomorphic (complex) vector field of type $(1,0)$ in the form of:
$$V=\displaystyle\sum_{i=1}^{n-k}a^i\dfrac{\partial}{\partial y_i}+\displaystyle\sum_{i,j=1}^{k}b^{ij}Z_j\dfrac{\partial}{\partial Z_i},$$
where the $a^{i}$ and $b^{ij}$ are holomorphic functions of $y_1,\ldots,y_{n-k}, Z_1,\ldots, Z_{k}$. Now, since this is an orbifold chart, 
$V$ must be $\Gamma_{(-w_0,w)}$-invariant as well, which means that $b^{ij}=0$ for $i\neq j$ with $w_i\neq w_j$ and $b^{ii}$ does not depend on $(Z_1,\ldots, Z_{k})$ whenever $w_i\neq 1$. Correspondingly, near $p$ we can use coordinates $(y_1,\ldots,y_{n-k},z_1,\ldots, z_{k})$ such that 
$$\pi(y_1,\ldots,y_{n-k},z_1,\ldots, z_{k})=(y_1,\ldots,y_{n-k},z_1^{\frac{w_1}{w_0}},z_1^{\frac{w_2}{w_0}}z_2,\ldots, z_1^{\frac{w_k}{w_0}}z_{k})=(y_1,\ldots,y_{n-k},Z_1,\ldots, Z_{k}),$$
where we assume without loss of generality that $z_1\neq 0$ for the weighted projective class corresponding to $p$. Then
$$\pi^*(Z_i\dfrac{\partial}{\partial Z_i})=\left\{
    \begin{array}{lr}
        \dfrac{w_0}{w_1}z_1\dfrac{\partial}{\partial z_1}-\displaystyle\sum_{i=2}^{k}\dfrac{w_i}{w_1}z_i\dfrac{\partial}{\partial z_i} &: i=1\\
        z_i\dfrac{\partial}{\partial z_i} &: i>1
    \end{array}\right.$$
and $\pi^*(Z_i\dfrac{\partial}{\partial Z_j})=z_i\dfrac{\partial}{\partial z_j}$ for $i\neq j\neq 1$ with $w_i=w_j=1$. Moreover, if $w_1=1$, then for $i\neq 1$ with $w_i=1$, $\pi^*(Z_i\dfrac{\partial}{\partial Z_1})=w_0z_1z_i\dfrac{\partial}{\partial z_1}-\displaystyle\sum_{j=2}^{k}\dfrac{w_j}{w_1}z_jz_i\dfrac{\partial}{\partial z_j}$. This demonstrates that $\widehat{V}$ is well-defined on the whole of $\widehat{X}$ as an element of $\mathfrak{h}(\widehat{X})$.
\end{proof}

\section{Manifolds with corners and Lie structures at infinity}
\label{sec: Manifolds with corners and Lie structures at infinity}
Many problems in differential geometry and partial differential equations often involve manifolds with boundaries, such as boundary value problems. The category of smooth manifolds alone presents challenges, and even the category of manifolds with boundaries is not sufficiently convenient, as the product of two manifolds with boundaries does not yield a manifold with a boundary. This complexity prompts the introduction of the category of manifolds with corners. Manifolds with corners arise in various ways, as will be shown later. Constructions leading to manifolds with corners include the desingularization of singular varieties (blow-up) and the compactification of non-compact spaces.\\
Melrose calculus, also known as pseudodifferential operator calculus or boundary value calculus, is a framework that extends the theory of pseudodifferential operators to manifolds with boundaries and corners. Developed by Richard Melrose in the 1980s, it has found applications in microlocal analysis, geometric analysis, etc. The key idea in Melrose calculus is to study operators that behave like pseudodifferential operators near the boundary or corners of a manifold. Pseudodifferential operators are a class of linear operators with symbols that have asymptotic expansions, playing a fundamental role in harmonic analysis and partial differential equations. In Melrose calculus, these operators are generalized to handle boundary value problems and singularities.\\
We begin this chapter with a brief introduction to manifolds with corners and blow-ups in the sense of Melrose. The definition of manifolds with corners is not universally agreed upon. In this chapter, we follow the approach outlined by Richard Melrose in his book 'Differential Analysis on Manifolds with Corners' \cite{melrose1996differential}. In the second part of this chapter, we introduce Lie structures at infinity based on the series of papers by Ammann-Lauter-Nistor in \cite{ammann2004geometry}.\\

The definition of a manifold with corners below is based on the model spaces 
$$\mathbb{R}^n_k = [0,\infty)^k \times \mathbb{R}^{n-k} = \{ x \in \mathbb{R}^n \, | \, x_i \geq 0, 1 \leq i \leq k \},$$
which are products of half-lines and lines. The topology on $ \mathbb{R}^n_k$ is inherited from $ \mathbb{R}^n $. In particular, a subset $ \Omega \subset \mathbb{R}^n_k $ is open if there exists an open set $ \Omega_0 \subset \mathbb{R}^n $ such that $ \Omega = \Omega_0 \cap \mathbb{R}^n_k $. If $\Omega \subset \mathbb{R}^n_k$ is an open subset, we define $ C^\infty(\Omega)$ as the set of functions $ u: \Omega \to \mathbb{C}$ such that $ u$ is smooth in $\Omega^{\circ}$ with all derivatives bounded on $K \cap \Omega^{\circ}$ for all subsets $K \Subset \Omega$. Here, $\Omega^{\circ}=\Omega \cap \inter(\mathbb{R}^n_k)$ and $K\Subset \Omega$ means that the closure of $K$ is a compact subset of $\Omega$. Similarly, smooth structures, diffeomorphisms, and partitions of unity are defined in a natural way on $\mathbb{R}^n_k$.

 \begin{definition}[Smooth structure with corners]
Let $X$ be a Hausdorff topological space. A chart with corners on $X$ is a map $\phi: U \rightarrow \mathbb{R}^n_k$, which is a homeomorphism from an open set $U \subseteq X$ onto an open subset of $\mathbb{R}^n_k$, for some $k$. Two charts $(\phi_1, U_1)$ and $(\phi_2, U_2)$ are said to be compatible if either $U_1 \cap U_2 = \emptyset$, or $\phi_2 \circ \phi_1^{-1}: \phi_1(U_1 \cap U_2) \rightarrow \phi_2(U_1 \cap U_2)$ is a diffeomorphism between open subsets of $\mathbb{R}^n_{k_1}$ and $\mathbb{R}^n_{k_2}$. An atlas on $X$ is a system of charts $\{(\phi_a, U_a)\}$ for $a \in A$, which are compatible in pairs and cover $X$, i.e., $X = \displaystyle\bigcup_{a \in A} U_a$. A smooth structure with corners on $X$ is a maximal atlas, i.e., an atlas that contains any chart compatible with each element of the atlas.

\end{definition}

\begin{definition}[$t$-manifold]
A $t$-manifold is a paracompact Hausdorff space $X$ endowed with some smooth structure with corners on it.

\end{definition}

\begin{definition}[Submanifold]\label{subcorner}
If $X$ is a $t$-manifold, then a submanifold $Y \subseteq X$ is a connected subset with the property that for each $y \in Y$, there exists a coordinate system $(\phi, U)$ around $y$, a linear transformation $G \in \GL(n,\mathbb{R})$, and an open neighborhood $\Omega '\subset \mathbb{R}^n$ of $0$ in terms of which 
$$\phi{\big|}_U : Y \cap U \to G .(\mathbb{R}^{n'}_{k'}\times \{0\})\cap \Omega',$$
 for some integers $n'$ and $k' = k'(y)$.
\end{definition}

\begin{definition}[\(p\)-submanifold]\label{p-submanifold}
A submanifold \(Y\) in a \(t\)-manifold \(X\) is called a \(p\)-submanifold if, for each \(y \in Y\), there exist local coordinates \(\phi\) at \(y\) within a coordinate neighborhood \(\Omega \subset X\), such that 
$$\phi(\Omega \cap Y) = L \cap \phi(\Omega),$$
where
$$L=\{x\in \mathbb{R}^n_k: x_{k-j+1}=\ldots=x_k= 0,  x_{k+1} = \ldots = x_{k+r} = 0\},$$
and \(j + r\) is the codimension of the submanifold. 
This implies that \(X\) and \(Y\) have a common local product decomposition. The 'p' in \(p\)-submanifold stands for 'product'. 
\end{definition}
\begin{example} 
The $\mathbb{S}^{n-1}_k := \{ x \in \mathbb{R}^n_k : \|x\| = 1 \}=\mathbb{S}^{n-1}\cap \mathbb{R}^n_k$ is a \( p \)-submanifold of the manifold with corners \( \mathbb{R}^n_k \).
\end{example}
We now study the notion of the boundary $\partial X$ for $t$-manifolds.
\begin{definition}[Boundary hypersurface]
For a general $t$-manifold set 
$$\partial_l X=\{p\in X: \text{there is a chart $\phi$ near $p$ with $\phi(p)\in\partial_l\mathbb{R}_k^n$}\},$$
where
$$\partial_l\mathbb{R}_k^n=\{x\in\mathbb{R}_k^n: x_i = 0 \text{  for exactly $l$ of the first $k$ indices}\}.$$
Then $X^\circ =\partial_0 X$. More generally, we shall set
$$\partial^l X = \overline{\partial_{l}X}=\displaystyle\bigcup_{r\geq l}\partial_r X.$$
Thus $\partial_l X$ consists precisely of the points in the boundary of $X$ laying in the interior of a corner of codimension $l$, while $\partial^l X$ consists of the points at which the boundary has codimension at least $l$. We also use the notation $\partial X = \partial^1 X$, so $X^\circ = X \setminus \partial X$. A boundary hypersurface of a $t$-manifold $X$ is the closure of a component of $\partial_1 X$; the collection of boundary hypersurfaces will be denoted $M_1(X)$.
\end{definition}
\begin{definition}[Manifold with corners]
A manifold with corners is a Hausdorff space with a $C^\infty$ structure with corners (a $t$-manifold) such that each boundary hypersurface is a submanifold in the sense of Definition \ref{subcorner}.
\end{definition}
\begin{definition}
The cotangent space of a manifold with corners $X$ at $p \in X$ is defined by 
$$T_p^*X = I_pX /( I_pX)^2,$$
 where $I_pX$ is the ideal of smooth functions on $X$ vanishing at $p$:
\[I_pX = \{f \in C^\infty(X) : f(p) = 0\}.\]
Therefore, the tangent space at $p$ is defined by the dual of the cotangent bundle, i.e., 
$$T_pX = (I_pX / ( I_pX)^2)^*,$$
where
\[( I_pX)^2 = \{f \in C^\infty(X) : \exists k\in \mathbb{N}, g_1,h_1,\ldots,g_k,h_k\in I_pX \quad\text{s.t.}\quad f=\displaystyle \sum_{i=1}^kg_ih_i\}.\]
\end{definition}
\begin{example}
Examples and non-examples of manifolds with corners
\begin{enumerate}
\item The tetrahedron is a 3-manifold with corners but the square pyramid is not.
\item The tear drop $T = \{(x, y) \in \mathbb{R}^2 \mid x > 0, y^2 \leq x^2 - x^4\}$ is not a 2-manifold with corners, because its unique boundary hypersurface is not a submanifold.
\end{enumerate}
\end{example}
\begin{lemma}
In a manifold with corners, each boundary hypersurface $H$ has a global defining function in the sense that there exists $\rho_H \in C^\infty(X)$ such that $\rho_H\geq 0$, $H = \rho_H^{-1}(0)$ is the boundary hypersurface and the differential $d\rho_H$ is nowhere zero on $H$. Near each point of $H$, there are local coordinates with $\rho_H$ as the first element.
\end{lemma}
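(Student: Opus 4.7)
My plan is to construct $\rho_H$ by gluing local boundary-defining coordinates with a partition of unity, relying on the fact that any two local defining functions of the same boundary hypersurface differ by a smooth positive factor. The assumption that $H$ is a submanifold (built into the definition of a manifold with corners) gives, at each point $p\in H$, a chart $\phi_\alpha:U_\alpha\to\Omega_\alpha\subset\mathbb{R}^n_{k_\alpha}$ in which $H\cap U_\alpha$ is exactly the zero set of one distinguished coordinate, say $x_{i(\alpha)}$. Setting $\rho_\alpha:=x_{i(\alpha)}\circ\phi_\alpha$ gives a local smooth function with $\rho_\alpha\geq 0$, $H\cap U_\alpha=\rho_\alpha^{-1}(0)$ and $d\rho_\alpha$ nowhere zero on $H\cap U_\alpha$.

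Next I would choose the cover $\{U_0\}\cup\{U_\alpha\}_{\alpha\in A}$, where $U_0:=X\setminus H$ is open (since $H$, being closed in the submanifold sense, is closed in $X$) and each $U_\alpha$ is a chart of the above type, and pick a smooth partition of unity $\{\chi_0,\chi_\alpha\}$ subordinate to it. I then define
\[
\rho_H \;:=\; \chi_0 \;+\; \sum_{\alpha\in A}\chi_\alpha\,\rho_\alpha.
\]
Nonnegativity is clear. On $H$ we have $\chi_0\equiv 0$ in a neighborhood (since $\operatorname{supp}\chi_0\subset U_0$ and $U_0\cap H=\emptyset$) and each $\rho_\alpha$ vanishes, so $\rho_H=0$. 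At a point $p\notin H$ at least one $\chi_\alpha(p)$ is positive; if $\alpha=0$ this gives $\rho_H(p)\geq\chi_0(p)>0$, and if $\alpha\neq 0$ then $\rho_\alpha(p)>0$ yields $\rho_H(p)>0$. Thus $H=\rho_H^{-1}(0)$.

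The main technical step is to show that $d\rho_H$ is nonvanishing on $H$, and this rests on a division-type lemma: whenever $\rho_\alpha,\rho_\beta$ are two smooth nonnegative functions on an open set of a manifold with corners both cutting out the same hypersurface with nonvanishing differentials, one has $\rho_\alpha=f_{\alpha\beta}\rho_\beta$ for a smooth \emph{positive} function $f_{\alpha\beta}$. Granting this, at $p\in H$ I fix some $\alpha_0$ with $p\in U_{\alpha_0}$; differentiating the product expansion and using $\rho_\beta(p)=0$ gives $d\rho_\alpha(p)=f_{\alpha\alpha_0}(p)\,d\rho_{\alpha_0}(p)$ for every $\alpha$ with $\chi_\alpha(p)>0$, so
\[
d\rho_H(p)\;=\;\Big(\sum_{\alpha}\chi_\alpha(p)\,f_{\alpha\alpha_0}(p)\Big)\,d\rho_{\alpha_0}(p),
\]
where the scalar factor is strictly positive because $\sum_\alpha\chi_\alpha(p)=1$ and each $f_{\alpha\alpha_0}(p)>0$. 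Since $d\rho_{\alpha_0}(p)\neq 0$, so is $d\rho_H(p)$.

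Finally, for the local coordinate claim, near any $p\in H$ apply the same division lemma to $\rho_H$ and the distinguished coordinate $x_{i(\alpha)}$ of a chart at $p$ to obtain $\rho_H=g\,x_{i(\alpha)}$ with $g>0$ smooth; replacing $x_{i(\alpha)}$ by $\rho_H$ in the chart then yields local coordinates having $\rho_H$ as first element, by the inverse function theorem adapted to corners. I expect the principal obstacle to be establishing the division lemma cleanly in the corner setting (rather than merely in the interior), but this is a standard Malgrange-type statement that follows from Taylor expanding along the hypersurface in the distinguished coordinate and using that $H$ is a $p$-submanifold locally.
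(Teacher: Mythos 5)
The paper itself offers no proof of this lemma: it is quoted as a standard background fact (from Melrose's \emph{Differential Analysis on Manifolds with Corners}), so there is no in-text argument to compare against; judged on its own, your proof is the standard one and is correct in outline. The gluing $\rho_H=\chi_0+\sum_\alpha\chi_\alpha\rho_\alpha$, the verification $H=\rho_H^{-1}(0)$, and the identity $d\rho_H(p)=\left(\sum_\alpha\chi_\alpha(p)f_{\alpha\alpha_0}(p)\right)d\rho_{\alpha_0}(p)$ (the $d\chi_\alpha$ terms dying because $\rho_\alpha(p)=0$, and $d\chi_0(p)=0$ because $\operatorname{supp}\chi_0\subset X\setminus H$ forces $\chi_0\equiv 0$ near $H$) all go through, with the scalar factor positive since $\sum_\alpha\chi_\alpha(p)=1$ on $H$. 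You have also correctly isolated the two points where the real content lives: (i) the division lemma, which in a chart where $\rho_\beta=x_1$ is the Hadamard--Taylor identity $\rho_\alpha(x)=x_1\int_0^1\partial_1\rho_\alpha(tx_1,x')\,dt$, valid on corner charts because the segment $[0,x_1]\times\{x'\}$ stays inside $\mathbb{R}^n_k$ and the integrand is smooth up to the boundary; positivity of the factor holds on $\{x_1=0\}$ because $\rho_\alpha\geq 0$ together with $d\rho_\alpha\neq 0$ forces $\partial_1\rho_\alpha>0$ there (all tangential derivatives vanish on the zero set), and off $\{x_1=0\}$ because $\rho_\alpha>0$; and (ii) the inverse function theorem in the corner category for $(x_1,x')\mapsto(g\,x_1,x')$, which applies since the Jacobian on $\{x_1=0\}$ is $g(p)>0$ and the map preserves the octant structure. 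Two small points worth making explicit in a write-up: choose each $U_\alpha$ small and connected so that the face $\{x_{i(\alpha)}=0\}$ of the chart meets only the single hypersurface $H$ --- this, together with $H$ being a \emph{single} coordinate face at corner points, is exactly where the embeddedness requirement in the definition of manifold with corners enters (for a mere $t$-manifold such as the teardrop the lemma is false, as the paper's example shows, so your appeal to the submanifold hypothesis is essential and correctly placed); and local finiteness of the partition of unity (available by paracompactness, which is built into the definition of a $t$-manifold) is what makes the glued sum smooth.
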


\begin{example}
Consider a square in $\mathbb{R}^2$ defined by $[0,1] \times [0,1]$. This is a manifold with corners. Let us denote its boundary hypersurfaces as $H_1, H_2, H_3, H_4$, corresponding to the left, right, bottom, and top sides of the square, respectively. The boundary defining functions can be denoted as $\rho_{H_1}, \rho_{H_2}, \rho_{H_3}, \rho_{H_4}$. Specifically:
\begin{align*}
    \rho_{H_1}(x, y) &= x \text{ for } H_1, \\
    \rho_{H_2}(x, y) &= 1 - x \text{ for } H_2, \\
    \rho_{H_3}(x, y) &= y \text{ for } H_3, \\
    \rho_{H_4}(x, y) &= 1 - y \text{ for } H_4.
\end{align*}
These functions are $C^\infty$ and considered as defining functions for the respective boundary hypersurfaces.
\end{example}
\begin{definition} Let $U \subseteq \mathbb{R}^n_k$ be open. For each $u = (u_1, \ldots, u_n)$ in $U$, define the boundary depth of $u$ in $U$ denoted by $\depth_U (u)$ as the number of $u_1, \ldots, u_k$ which are zero. In other words, $\depth_U (u)$ is the number of boundary hypersurfaces of $U$ containing $u$.
\end{definition}
\begin{definition}[Boundary depth]
Let $X$ be an $n$-manifold with corners. For $p \in X$, choose a local chart $(U, \phi)$ around $p$ on the manifold $X$ with $\phi(p) = u$ for $u \in U\subseteq \mathbb{R}^n_k$, and define the depth $\depth_X (x)$ of $x$ in $X$ as $\depth_X (x) = \depth_U (u)$. This is independent of the choice of $(U, \phi)$.
\end{definition}
\begin{example}
Let $X$ be a manifold with boundary and $p\in X$. If $p\in \inter (X)$, then the boundary depth of $p$ is equal to 0 and if $p\in \partial X$, then the boundary depth of $p$ is equal to 1.
\end{example}
Now, let us define the concept of stratified spaces. These spaces are generalization of manifolds with corners.

\begin{definition}[Stratified space]\label{Stratified space}
A stratified space of dimension $n$ is a pair $(X,S)$, where $X$ is a locally compact, separable, metrizable space and $S$ is a stratification, that is, $S=\{S_i\}_{i\in I}$ is a locally finite collection of disjoint locally closed subsets of $X$ and $I$ is a poset such that:
\begin{enumerate}
\item $\displaystyle\bigcup_{i\in I} S_i=X$.
\item $S_i\cap \overline{S_j}$ is nonempty if and only if $S_i\subset\overline{S_j}$ , and this happens if and only if $i = j$ or $i < j$.
\item Each $S_i$ is a locally closed smooth submanifold of $\mathbb{R}^n$.
\end{enumerate}
The pieces $S_i$ are called strata. The set of strata is itself a poset, with the relation induced from inclusion.
\end{definition}
\begin{definition}[Depth of a stratified space]
The depth of a stratified space $(X, S)$ is the largest $k$ such that one can find $k + 1$ different strata with $S_1 < S_2 < \ldots < S_k < S_{k+1}$.
\end{definition}
\begin{example}
Any algebraic variety is naturally a stratified space.
\end{example}

\begin{example}
Consider the closed unit disk $\mathbb{D}^2$ in $\mathbb{R}^2$ and its boundary $\partial \mathbb{D}^2$, which is the unit circle. We can stratify this space as two stratas. The top stratum is the interior of the disk $\mathbb{D}^2$. It is an open subset of $\mathbb{R}^2$ and has dimension 2. And the bottom stratum is the boundary of the disk $\partial \mathbb{D}^2$, which is the unit circle. It is a closed subset of $\mathbb{R}^2$ and has dimension 1. This example illustrates a simple case of a stratified space where each stratum is a subset of the whole space, and they have different dimensions.
\end{example}

\begin{example}
Any manifold with corners $X$ of dimension $n$ is a stratified space.  For each $k = 0, \ldots, n$, define the $k$-th depth stratum of $X$ to be:
\[ S_k(X) = \{x \in X : \depth_X (x) = k\}=\partial_kX. \]
\end{example}

\begin{example}
An orbifold is naturally a stratified space. In fact, it is not hard to see that an orbifold is naturally a smoothly stratified space, with the corresponding manifold with fibered corners obtained by blowing-up the strata in an order compatible with the partial order. Notice that, in particular, an orbifold \(X\) is of depth 1 if and only if it is of depth 1 as a smoothly stratified space, i.e, for each connected component \(\Sigma\) of \(X_{\text{sing}}\), the isotropy groups of the points of \(\Sigma\) are all isomorphic.
\end{example}

\begin{definition}[Manifold with fibered corners]
We say that $(M,\phi)$ is a manifold with fibered corners if there is a partial order on the boundary hypersurfaces such that:
\begin{enumerate}
\item Any subset $I$ of boundary hypersurfaces such that $\displaystyle \bigcap _{H_i\in I}H_i\neq \emptyset$ is totally ordered.
\item If $H_i<H_j$ , then $H_i\cap H_j\neq \emptyset$, $\phi_i\vert_{H_i\cap H_j}:H_i\cap H_j\to S_i$ is a surjective submersion and $S_{ji}:=\phi_j(H_i\cap H_j)$ is one of the boundary hypersurfaces of the manifold with corners $S_j$ . Moreover, there is a surjective submersion $\phi_{ji}:S_{ji}\to S_i$ such that $\phi_{ji} \circ \phi_j=\phi_i$ on $H_i\cap H_j$.
\item The boundary hypersurfaces of $S_j$ are given by the $S_{ji}$ for $H_i<H_j$ .
\end{enumerate}
\end{definition}

\subsection{Blow-up in Melrose Sense}
Why consider blow-up at all? If we work on category of smooth manifolds, there isn't a compelling rationale for initiating any form of blow-up. Nonetheless, there are three interconnected scenarios where the process of blow-up can prove highly beneficial. These instances involve attempting to 'resolve' the following:
\begin{enumerate}
    \item A singular function, e.g., $f(x, y, z) = \sqrt{x^2 + y^2 + z^2}$.
    
    \item A singular space, e.g., $C = \{(t, x, y) \mid t^2 = x^2 + y^2, t \geq 0\}$.
    
    \item Degenerate vector fields, e.g., the span of $z_i \dfrac{\partial}{\partial z_i}$, $i = 1, 2, 3$, on $\mathbb{R}^3$.
\end{enumerate}
In Melrose blow-up (real blow-up) as introduced in \cite{hassell1995analytic}, the idea is simply to work in polar coordinates around the singular point. That is, we lift everything up to a manifold with a boundary by using the polar map. Before defining the Melrose blow-up, we need to recall the definition of sphere bundle.
\begin{definition}[Sphere bundle]
Let $E\to X$ be a smooth vector bundle. The sphere bundle of $E$, denoted by $S(E)$ is a fiber bundle whose fiber is an \(n\)-sphere and is defined as the set of (positive) rays in the bundle $E$, that is, 
$$S(E) = (E \setminus X)/\mathbb{R}^+.$$
If we fix a smooth metric on $E$, then the fiber of $S(E)$ over a point \(p\) is the set of all unit vectors in \(E_p\), the fiber over $p$ in $E$. When the vector bundle is the tangent bundle \(TX\), the unit sphere bundle is known as the unit tangent bundle.
\end{definition}
Blowing-up the origin in $\mathbb{R}^2$ simply amounts to the introduction of polar coordinates. We define $\mathbb{R}^2$ blown-up at $\{0\}$ to be
$$[\mathbb{R}^2, \{0\}] = \mathbb{S}^{1} \times [0, \infty)_r,$$
together with the associated blow-down map $\beta: \mathbb{S}^{1} \times [0, \infty)_r\to \mathbb{R}^2$ defined by $\beta(\omega,r)=r\omega$. This is a diffeomorphism from $[\mathbb{R}^2, \{0\}]$ onto $\mathbb{R}^2 \setminus \{0\}$ and has rank 1 at the boundary $\partial [\mathbb{R}^2, \{0\}] = \mathbb{S}^{1} \times \{0\}$, which projects to $\{0\}$.

\begin{figure}
\centering
\begin{tikzpicture}[x=0.75pt,y=0.75pt,yscale=-1,xscale=1]

\draw   (281.55,112.38) .. controls (281.55,87.7) and (301.56,67.69) .. (326.24,67.69) .. controls (350.92,67.69) and (370.93,87.7) .. (370.93,112.38) .. controls (370.93,137.06) and (350.92,157.07) .. (326.24,157.07) .. controls (301.56,157.07) and (281.55,137.06) .. (281.55,112.38) -- cycle ;
\draw    (370.93,112.38) -- (434.72,112.6) ;
\draw [shift={(436.72,112.6)}, rotate = 180.19] [color={rgb, 255:red, 0; green, 0; blue, 0 }  ][line width=0.75]    (10.93,-3.29) .. controls (6.95,-1.4) and (3.31,-0.3) .. (0,0) .. controls (3.31,0.3) and (6.95,1.4) .. (10.93,3.29)   ;
\draw    (211.71,111.99) -- (281.55,112.38) ;
\draw    (326.24,67.69) -- (326.7,4.43) ;
\draw [shift={(326.71,2.43)}, rotate = 90.41] [color={rgb, 255:red, 0; green, 0; blue, 0 }  ][line width=0.75]    (10.93,-3.29) .. controls (6.95,-1.4) and (3.31,-0.3) .. (0,0) .. controls (3.31,0.3) and (6.95,1.4) .. (10.93,3.29)   ;
\draw    (326.24,157.07) -- (325.71,228.43) ;
\end{tikzpicture}
\caption{\small Blowing-up the origin in $\mathbb{R}^2$}
\end{figure}
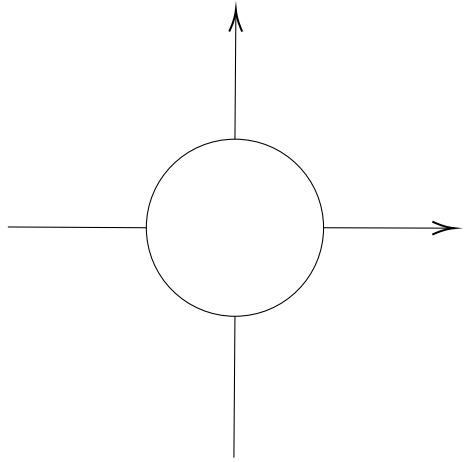

We can generalize the above idea and get that 
\begin{align*}
[\bR^{n},\{0\}]&=\mathbb{S}^{n-1}\times [0,\infty),\\
[\bC^{n},\{0\}]&=\mathbb{S}^{2n-1}\times [0,\infty).
\end{align*}
Another example is blowing-up the origin in $\mathbb{R}\times [0, \infty)_\varepsilon$. Again, by using polar coordinates, we define
$$
[\mathbb{R}\times [0, \infty)_\varepsilon, \{0\}] = \mathbb{S}_+^{1} \times [0, \infty)_r,
$$
where $\mathbb{S}_+^{n}=\{(x_0,\ldots,x_n)\in \mathbb{S}^n:x_n\geq 0 \}$ with the blow-down map $\beta: \mathbb{S}_+^{1} \times [0, \infty)_r \to \mathbb{R}\times [0, \infty)_\varepsilon$ defined by $\beta(\omega,r) = r\omega$.\\
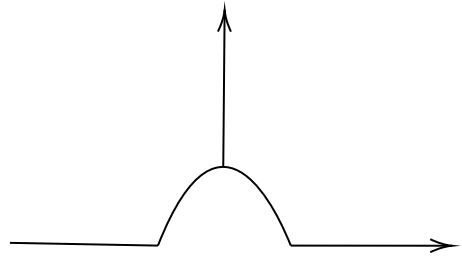
\begin{figure}
\centering
\tikzset{every picture/.style={line width=0.75pt}} 
\begin{tikzpicture}[x=0.75pt,y=0.75pt,yscale=-1,xscale=1]
\draw    (369.34,156.36) -- (448.6,156.43) ;
\draw [shift={(450.6,156.43)}, rotate = 180.05] [color={rgb, 255:red, 0; green, 0; blue, 0 }  ][line width=0.75]    (10.93,-3.29) .. controls (6.95,-1.4) and (3.31,-0.3) .. (0,0) .. controls (3.31,0.3) and (6.95,1.4) .. (10.93,3.29)   ;
\draw    (227.71,154.97) -- (302.33,156.36) ;
\draw    (335.28,116.48) -- (335.96,39.43) ;
\draw [shift={(335.97,37.43)}, rotate = 90.5] [color={rgb, 255:red, 0; green, 0; blue, 0 }  ][line width=0.75]    (10.93,-3.29) .. controls (6.95,-1.4) and (3.31,-0.3) .. (0,0) .. controls (3.31,0.3) and (6.95,1.4) .. (10.93,3.29)   ;
\draw    (302.33,156.36) .. controls (334.38,74.78) and (363.51,141.79) .. (369.34,156.36) ;
\end{tikzpicture}
\caption{\small Blowing-up the origin in $\mathbb{R}\times [0, \infty)_\varepsilon$}
\end{figure}

We can generalize the above idea and get that 
$$ [\bR^{n}\times [0,\infty),\{0\}]=\mathbb{S}_+^{n}\times [0,\infty),$$
and even more 
$$ [\bR^{n}_k,\{0\}]=\mathbb{S}_k^{n-1}\times [0,\infty).$$
\begin{align*}
\end{align*}

\begin{remark}
One can check that the action of $\GL(n)$ on $\mathbb{R}^n$ lifts to a smooth action of $\GL(n)$ on $[\mathbb{R}^n, \{0\}]$. This means that the Lie algebra, $\mathfrak{gl}(n)$, lifts to $[\mathbb{R}^n, \{0\}]$. Since the exponentials of linear vector fields are linear transformations, this implies that for each $i$ and $j$, there are smooth vector fields $V_{ij}$ on $[\mathbb{R}^n, \{0\}]$ such that
$$\beta_* V_{ij} = x_i \partial_{x_j}.$$
This shows that any smooth vector field on $\mathbb{R}^n$ which vanishes at $0$ lifts to a smooth vector field on $[\mathbb{R}^n, \{0\}]$:
$$\beta^{-1}_*(a_{ij}(x) x_i\partial_{x_j})=a_{ij}(r\theta) V_{ij}.$$
\end{remark}

In general, for a vector space $V$, the blow-up of $V$ at $0$ is defined as a set by
$$[V, \{0\}] =((V \setminus \{0\})/\mathbb{R}^{+})\bigsqcup(V\setminus \{0\}) .$$
Thus, the blow-up of $V$ at $\{0\}$ is the disjoint union of the projective sphere in $V$ and the complemmaent of $\{0\}$. The choice of a basis in $V$ gives a linear isomorphism $V \rightarrow \mathbb{R}^n$, which allows us to identify $[V, \{0\}]$ and $[\mathbb{R}^n,\{0\}]$. To show that the smooth structure, as a manifold with boundary, of $[V, \{0\}]$ is well-defined, we therefore need to check that the action of $\text{GL}(n)$ on $\mathbb{R}^n$ lifts to a smooth action of $\text{GL}(n)$ on $[\mathbb{R}^n, \{0\}]$.

If \(E\) is a vector bundle over a manifold with corners \(Y\), then we identify \(Y\) as the zero section of \(E\) and define \(E\) blown-up along \(Y\) to be
$$[E, Y] = \displaystyle\bigcup_{y \in Y} [E_y, \{0\}],$$
with the blow-down map $\beta : [E, Y] \to E$ that is, we simply blow-up the origin of each fiber. The blow-up \([E, Y]\) has a natural \(C^{\infty}\) structure as a manifold with corners. Now we can define the blow-up in Melrose sense formally.
\begin{definition}[Blow-up in Melrose sense along a submanifold]
Let $X$ be a smooth manifold with corners and \(Y \subset X\) be a closed \(p\)-submanifold. The blow-up of $X$ along $Y$, denoted $[X,Y]$, is a manifold with corners, given as a point set by 
$$[X, Y]= S(N_X(Y))\bigsqcup(X \setminus Y),$$
where $S(N_X(Y))$ represents the inward-pointing part of the spherical normal bundle $N_X(Y)$, i.e, the inward-pointing normal space \(N_{X,y}(Y)\) at a point \(y \in Y\) is defined as the quotient
$$N_{X,y}(Y) = T_y X /T_y Y,$$
and the spherical normal bundle is then given by
$$S(N_{X,y}(Y)) = (N_{X,y}(Y) \setminus \{0\})/{\mathbb{R}^+}.$$
The blow-up \([X, Y]\) has a natural \(C^{\infty}\) structure as a manifold with corners. There is a unique smooth map $[X,Y]\to X$ extending the identity on $X\setminus Y$ called the blow-down map. 
\end{definition}

\begin{example} 
Blowing-up $\bR^{n}$ along $\bR^{n-k}$ in Melrose sense:
\begin{align*}
[\bR^{n},\bR^{n-k}]&=[\bR^{n-k}\times \bR^{k},\bR^{n-k}\times \{0\}]\\
&=\bR^{n-k}\times[\bR^{k}, \{0\}]\\
&=\bR^{n-k}\times S(N_{\bR^{n}}(\bR^{n-k}))\bigsqcup(\bR^{k} \setminus \{0\})\\
&=\mathbb{S}^{k-1}\times [0,\infty)_r\times\bR^{n-k}.
\end{align*}
By $\bC^{n}\cong \bR^{2n}$ we get that
$$ [\bC^{n},\bC^{n-k}]=\mathbb{S}^{2k-1}\times [0,\infty)_r\times\bC^{n-k}.$$
We also have that
$$ [\bR^{n}\times [0,\infty)_\varepsilon,\bR^{n-k}\times \{0\}]=\mathbb{S}_+^{k}\times [0,\infty)_r\times\bR^{n-k},$$
$$ [\bC^{n}\times [0,\infty)_\varepsilon,\bC^{n-k}\times \{0\}]=\mathbb{S}_+^{2k}\times [0,\infty)_r\times\bC^{n-k}.$$
\end{example}

\begin{definition}[Lifting submanifolds]
If $Z \subset X$ is a closed subset of a manifold with corners, and $Y \subset X$ is a closed $p$-submanifold, we shall define the lift of $Z$ to $[X, Y]$ under the blow-down map $\beta: [X, Y]\to X$ in two distinct cases. First, if $Z \subset Y$, then
$$\beta(Z) = \beta^{-1}(Z).$$
Secondly, if $Z = \overline{Z \setminus Y}$, then
$$\beta(Z) =  \overline{\beta^{-1}(Z \setminus Y)}.$$
\end{definition}
\begin{definition}[Geometric fiber]
Let $M$ be a manifold with corners and let $V$ be a $C^\infty(M)$-module with module structure $C^\infty(M)\times V \ni (f,v)\to fv\in V$. For $p\in M$,  the ideal of smooth functions on $M$ vanishing at $p$, i.e, 
$$I_pM=\{f\in C^{\infty}(M):f(p)=0\},$$
is a complex subspace of $V$ and $V/((I_pM)V)$ is called the geometric fiber of $V$ at $p$. In general, geometric fibers are vector spaces with different dimensions. 
\end{definition}
Recall from algebra that $S\subset V$ is called a basis for a $C^\infty(M)$-module $V$, if any $v\in V$ could be written uniquely as $v=\displaystyle\sum_{s\in S}f_ss$ such that $f_s\in C^\infty(M)$ and $\{s\in S:f_s\neq 0\}$ is a finite set. A module is called free, if it has a basis. For free modules, geometric fibers have same dimension and this dimension is equal to the cardinality of the basis. A $C^\infty(M)$-module $V$ is called finitely generated projective, if there exist a $C^\infty(M)$-module $W$ such that $V\oplus W$ is free with finite basis. This is equivalent with the concept of locally free $C^\infty(M)$-modules.
\begin{theorem}[Serre-Swan]\label{Serre-Swan}
Let $V$ be a finitely generated projective $C^\infty(M)$-module. Then there exists a natural smooth vector bundle,
 $E\to M$, and a natural map $\iota:E\to V$  such that
$$V=\iota_*\Gamma(M,E),$$
and with the fiber of $E$ above $p\in M$ canonically identified with $V/((I_pM)V)$. Conversely, for any finite rank smooth vector bundle $E\to M$, $\Gamma(M,E)$ is a finitely generated projective $C^\infty(M)$-module.
\end{theorem}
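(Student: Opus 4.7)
The plan is to prove both directions of this Serre--Swan type statement by reducing to the case of a trivial bundle via an idempotent description, which works essentially verbatim for manifolds with corners since $C^\infty(M)$ still admits partitions of unity and the notion of a smooth vector bundle (with local trivializations over chart neighborhoods modeled on $\mathbb{R}^n_k$) is well behaved.

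For the converse direction (bundles give projective modules), I would first choose a locally finite cover $\{U_\alpha\}$ of $M$ by charts over which $E$ is trivial with smooth local frames $\{s_\alpha^1,\dots,s_\alpha^r\}$, together with a subordinate smooth partition of unity $\{\chi_\alpha\}$. In the compact case finitely many such $(U_\alpha,\chi_\alpha)$ suffice, and the global sections $\{\chi_\alpha s_\alpha^i\}$ generate $\Gamma(M,E)$ as a $C^\infty(M)$-module; choosing a complemmaentary bundle (for instance by picking a smooth fiber metric and embedding $E$ into a trivial bundle of sufficiently large rank via an appropriate collection of sections) realizes $\Gamma(M,E)$ as a direct summand of $C^\infty(M)^N$, hence finitely generated and projective. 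In the general non-compact case one uses an exhaustion and the standard trick of combining the trivializations into a finite-rank embedding after multiplication by partition functions.

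For the main direction (projective modules give bundles), write $V\oplus W\cong C^\infty(M)^N$ and let $e\in M_N(C^\infty(M))$ be the idempotent projecting onto $V$. The key observation is that $e$ evaluated at each point $p\in M$ gives a smooth family of linear projections $e(p):\bR^N\to \bR^N$, whose ranks are locally constant since $p\mapsto \tr(e(p))$ is a continuous integer-valued function. I would then define
\[
E = \bigsqcup_{p\in M} e(p)(\bR^N) \subset M\times \bR^N,
\]
and verify that this is a smooth subbundle: near any $p_0$, choose coordinates so that $e(p_0)$ has a specific standard form, and apply a small smooth gauge transformation (obtainable by Gram--Schmidt on local smooth sections of $e$) to produce a smooth local frame of $E$. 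The map $\iota_*:V\hookrightarrow C^\infty(M)^N \to \Gamma(M,E)$ sending $v$ to $p\mapsto e(p)v(p)$ is then a well-defined $C^\infty(M)$-module homomorphism; it is injective because $v=e v$ for $v\in V$ together with the surjectivity of $e$ on $V$, and it is surjective since any smooth section of $E$ can be written as a smooth $\bR^N$-valued function fixed by $e$ using local frames and a partition of unity.

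The fiber identification $E_p\cong V/((I_pM)V)$ follows naturally: the evaluation map $V\to E_p$, $v\mapsto v(p)$, is $C^\infty(M)$-linear, obviously vanishes on $(I_pM)V$, and by the idempotent description its image is all of $E_p = e(p)(\bR^N)$. To see injectivity on the quotient, given $v\in V$ with $v(p)=0$ one writes $v=\sum_i f_i s_i$ in a local frame $\{s_i\}$ of $E$ around $p$ with $f_i\in C^\infty$ and $f_i(p)=0$, and uses a cutoff and partition of unity argument to globally express $v$ as a finite sum of elemmaents of $(I_pM)V$. The step I expect to require the most care is verifying that the idempotent $e(p)$ has locally constant rank and produces a smooth subbundle structure near boundary and corner points of $M$; this requires only that smoothness of matrix-valued functions on $\mathbb{R}^n_k$ is well-defined (which it is, by the definition of $C^\infty$ on manifolds with corners recalled earlier in the section), so the standard Euclidean argument adapts without modification.
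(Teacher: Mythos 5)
Your proof is correct and follows the standard idempotent argument, which is precisely the proof in Karoubi's book that the paper cites in place of giving its own argument (the paper only states the theorem and refers to \cite{karoubi2009k}); your adaptation to corners is right, since smoothness of matrix-valued functions on $\mathbb{R}^n_k$, partitions of unity, and the local-constancy of $\operatorname{tr}(e(p))$ all carry over verbatim. The only step stated loosely is the converse direction for non-compact $M$, where ``exhaustion and the standard trick'' should be made precise as the finite-coloring argument: a manifold (with corners) of dimension $n$ admits a cover by finitely many open sets, each a disjoint union of trivializing charts, so $E$ embeds as a direct summand of a trivial bundle of finite rank; since the paper applies the theorem only to compact manifolds with corners such as $\mathcal{X}$ and $\widehat{\mathcal{X}}$, this looseness is harmless for everything downstream.
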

We refer to \cite{karoubi2009k} for more details about the Serre-Swan theorem.
\begin{definition}[Structural Lie algebra of vector fields]\label{Structural Lie algebra of vector fields}
A structural Lie algebra of vector fields on a manifold $M$ (possibly with corners) is a subspace, $\mathcal{V} \subset \mathfrak{X}(M)$, of the real vector space of vector fields on $M$ with the following properties: 
\begin{enumerate}
\item
$\mathcal{V}$ is closed under Lie brackets.
\item
$\mathcal{V}$ is a finitely generated projective $C^{\infty}(M)$-module.
\item
the vector fields in $\mathcal{V}$ are tangent to all faces in $M$.
\end{enumerate}

\end{definition}

\begin{example}[Lie algebra of $b$-vector fields]\label{$b$-vector fields}
Let $M$ be a manifold with corners, and
\begin{align*}
\mathcal{V}_b(M) &= \{X \in \mathfrak{X}(M) : X \text{ is tangent to all faces of } M\}\\
&= \{X \in \mathfrak{X}(M) : X \rho_H=a_H\rho_H, a_H\in C^{\infty}(X),\forall H\in \partial M\},
\end{align*}
where $\rho_H$ is a boundary defining function of the hypersurface $H$. Then $\mathcal{V}_b(M)$ is a structural Lie algebra of vector fields.  This is the fundamental object in the theory of Melrose's b-calculus. A vector field $X \in \mathcal{V}_b(M)$ is called a $b$-vector field $X$. In local coordinates near a point $p\in \partial X$ any $b$-vector field $X$ is of the form
$$X = \displaystyle\sum_{i=1}^{k} a_i(x, y)x_i\partial_{x_i} + \displaystyle\sum_{i=1}^{n-k} b_i(x, y)\partial_{y_i},$$
where $x_1,\ldots,x_k$ are boundary defining functions, $y \in \mathbb{R}^{n-k}$, $a_i$ and $b_i$ are smooth functions. This shows that the Lie algebra of $b$-vector fields is generated in a neighborhood $U$ of $p$ by $x_j\partial_{x_j}$ and $\partial_{y_j}$ as a $C^{\infty}(M)$-module.
Any structural Lie algebra of vector fields on $M$ is contained in $\mathcal{V}_b(M)$.
\end{example}

\begin{example}[Lie algebra of scattering vector fields]
Let $M$ be a compact manifold with boundary, and let $x: M \to \mathbb{R}_+$ be a boundary defining function. Then the Lie algebra $\mathcal{V}_{\sca}(M) := x\mathcal{V}_b(M)$ does not depend on the choice of $x$, and the vector fields in $\mathcal{V}_{\sca}(M)$ are called scattering vector fields. In a local coordinate $(x, y_1,\ldots,y_{n-1})$ near a point $p$ any scattering vector field $X$ is of the form 
$$X = a(x, y)x^2\partial_{x} + \displaystyle\sum_{i=1}^{n-1} xb_i(x, y)\partial_{y_i},$$
where $a$ and $b_i$ are smooth functions. In fact the Lie algebra of scattering vector fields is generated in a neighborhood $U$ of $p$ by $x^2\partial_x$ and $x\partial_{y_i}$ as a $C^{\infty}(M)$-module.
\end{example}

\begin{example}[Lie algebra of edge vector fields]\label{Lie algebra of edge vector fields}
Let $M$ be a manifold with boundary $\partial M$, which is the total space of a fibration $\pi : \partial M \to B$ of smooth manifolds. We let
$$\mathcal{V}_e(M) = \{X \in \mathfrak{X}(M) : X \text{ is tangent to all fibers of } \pi \text{ at the boundary}\}$$
be the space of edge vector fields. Clearly, $\mathcal{V}_e(M)$ is closed under the Lie bracket. This is the fundamental object in the theory of Mazzeo's edge calculus. If $(x, y, z)$ are coordinates in a local product decomposition near the boundary, where $x$ corresponds to the boundary-defining function, $y$ corresponds to a set of variables on the base $B$ lifted to $\partial M$ through $\pi$, and $z$ is a set of variables in the fibers of $\pi$, then edge vector fields are generated by $x\partial_x$, $x\partial_y$, and $\partial_z$.
In other words, any $X \in \mathcal{V}_e(M)$ can be expressed locally as
$$X = \displaystyle a(x,y,z) x\partial _x +  \displaystyle\sum_{i=1}^{b} b_i(x,y,z) x\partial _{y_i} +  \displaystyle\sum_{i=1}^{f} c_j(x,y,z) \partial _{z_i},$$
where $a, b_i, c_j \in C^{\infty}(M)$.
\end{example}
\begin{example}\label{0-calculus}
As a particular case of edge vector fields, the fibration $\id:\partial M \to\partial M$ for a manifold with boundary $M$ yields the Lie algebra of $0-$vector fields in the $0$-calculus of Mazzeo-Melrose in \cite{mazzeo1987meromorphic}, i.e,
$$\mathcal{V}_0(M) = \{X \in \mathfrak{X}(M) : X{\big|}_{\partial M}=0\}=x\mathfrak{X}(M),$$
where $x$ is a boundary-defining function.
\end{example}

\begin{proposition}
If $\mathcal{V}$ is a structural Lie algebra of vector fields, then $\mathcal{V}$ is a finitely generated projective $C^{\infty}(M)$-module. So there exists a vector bundle $E$ such that $\mathcal{V}=\Gamma(M,E)$ and a natural vector bundle map $\varrho_\mathcal{V} : E \to TM$ such that the induced map $\varrho_\Gamma : \mathcal{V} \to \mathfrak{X}(M)$ identifies with the inclusion map.
\end{proposition}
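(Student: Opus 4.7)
The first assertion — that $\mathcal{V}$ is a finitely generated projective $C^\infty(M)$-module — is already built into the definition of a structural Lie algebra of vector fields (Definition \ref{Structural Lie algebra of vector fields}(2)), so there is nothing to prove there. The substance of the statement is the existence of the vector bundle $E$ with $\mathcal{V}\cong \Gamma(M,E)$ and of the anchor map $\varrho_\mathcal{V}:E\to TM$ realizing the inclusion at the level of sections. My plan is to apply Serre-Swan to produce $E$, and then to construct $\varrho_\mathcal{V}$ fiberwise by evaluation, checking that the evaluation map descends to the quotient $\mathcal{V}/(I_pM)\mathcal{V}$ and assembles into a smooth bundle morphism.

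First, since $\mathcal{V}$ is finitely generated and projective, Theorem \ref{Serre-Swan} immediately provides a smooth vector bundle $E\to M$ together with a natural identification $\iota:\Gamma(M,E)\xrightarrow{\sim}\mathcal{V}$, with the canonical fiber description
$$E_p \;\cong\; \mathcal{V}/(I_pM)\mathcal{V},\qquad p\in M.$$
Next, for each $p\in M$ I would define a linear map $\widetilde{\varrho}_p:\mathcal{V}\to T_pM$ by evaluation, $\widetilde{\varrho}_p(X):=X_p$. If $X=\sum_j f_j Y_j$ with $Y_j\in\mathcal{V}$ and $f_j\in I_pM$, then $X_p=\sum_j f_j(p)(Y_j)_p=0$, so $\widetilde{\varrho}_p$ vanishes on $(I_pM)\mathcal{V}$ and therefore descends to a well-defined linear map
$$(\varrho_\mathcal{V})_p:E_p=\mathcal{V}/(I_pM)\mathcal{V}\longrightarrow T_pM.$$
Collecting these fiberwise maps yields a set-theoretic map $\varrho_\mathcal{V}:E\to TM$ covering the identity on $M$.

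To check that $\varrho_\mathcal{V}$ is a smooth vector bundle morphism, it suffices to verify smoothness on a set of local generators. Pick a local frame $s_1,\ldots,s_r$ of $E$ on an open set $U$; the corresponding sections $\iota(s_j)\in\mathcal{V}\subset\mathfrak{X}(M)$ are, by construction, smooth vector fields on $M$, and $\varrho_\mathcal{V}\circ s_j=\iota(s_j)$ as a map $U\to TM$, which is smooth. Hence $\varrho_\mathcal{V}$ is smooth. By construction, the induced map on sections $\varrho_\Gamma:\Gamma(M,E)\to\mathfrak{X}(M)$ sends $s\mapsto p\mapsto (\varrho_\mathcal{V})_p(s(p))$, which under the identification $\iota$ is nothing other than the inclusion $\mathcal{V}\hookrightarrow\mathfrak{X}(M)$.

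The only mildly delicate point — and the one I would write out carefully — is the verification that evaluation descends to the quotient $\mathcal{V}/(I_pM)\mathcal{V}$; the rest is formal once Serre-Swan is invoked. No obstacle from the boundary or corner structure appears here, since the tangency condition in Definition \ref{Structural Lie algebra of vector fields}(3) only restricts which vector fields lie in $\mathcal{V}$ and does not interfere with the module-theoretic argument. In particular, $\varrho_\mathcal{V}$ need not be injective: its failure to be an isomorphism over $\partial M$ is precisely what encodes the Lie structure at infinity.
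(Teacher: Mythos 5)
Your proof is correct and is essentially the same argument as the one the paper relies on: the paper gives no proof of its own, deferring to Proposition 2.12 of Ammann--Lauter--Nistor, which likewise obtains $E$ from the Serre--Swan theorem (the paper's Theorem on Serre--Swan, stated for manifolds with corners) and obtains the anchor from the fact that the $C^{\infty}(M)$-linear inclusion $\mathcal{V} \subset \mathfrak{X}(M) = \Gamma(M,TM)$ of section modules is induced by a bundle morphism. Your fiberwise-evaluation verification --- that evaluation at $p$ kills $(I_pM)\mathcal{V}$ and hence descends to $E_p \cong \mathcal{V}/(I_pM)\mathcal{V}$, with smoothness checked on a local frame --- correctly fills in the mechanism behind that correspondence, and your closing observations (the tangency condition plays no role here; the anchor is only required to be an isomorphism over $M^{\circ}$, not over $\partial M$) are consistent with the paper's definition of a Lie structure at infinity.
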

See Proposition 2.12 in \cite{ammann2004geometry} for a proof.\\
Now, let us recall the definition of a Lie algebroid. In a general sense, a Lie algebroid can be viewed as the multi-object version of a Lie algebra. 
\begin{definition}[Lie algebroid]
A Lie algebroid $\mathcal{A}$ over a manifold $M$ is a vector bundle $\mathcal{A}$ over $M$, together with a Lie algebra structure on the space $\Gamma(\mathcal{A})$ of smooth sections of $\mathcal{A}$ and a bundle map $\varrho:\mathcal{A}\to TM$, extended to a map $\varrho_\Gamma:\Gamma(\mathcal{A})\to \Gamma(TM)$ between sections of these bundles, such that the right Leibniz rule is also satisfied
$$[X, fY] =f[X,Y]+(\varrho_\Gamma(X)f )Y, \quad \forall X, Y \in \Gamma(A), \, f \in C^{\infty}(M).$$
The map $\varrho_\Gamma$ is called the anchor of $\mathcal{A}$.
\end{definition}

\begin{remark}
By the antisymmetry of the bracket, the left Leibniz rule is also satisfied:
$$[fX, Y] =f[X,Y]-(\varrho_\Gamma(X)f )Y, \quad \forall X, Y \in \Gamma(A), \, f \in C^{\infty}(M).$$
\end{remark}

\begin{remark}
By direct calculation as Proposition 1.21 in \cite{ammar2021polyhomogeneite} one can check that the anchor map is a morphism of Lie algebras. In other words,
$$\varrho_\Gamma([X, Y]) = [\varrho_\Gamma(X), \varrho_\Gamma(Y)], \quad \forall X,Y \in \Gamma(A),$$
where on the left, we have the Lie algebroid bracket, and on the right, we have the Lie algebra bracket of vector fields.
\end{remark}

\begin{example}
Examples of Lie algebroid
\begin{enumerate}
\item All Lie algebras are Lie algebroids. In fact, a Lie algebroid over a one-point set, with the zero anchor, is a Lie algebra.
\item Any bundle of Lie algebras is a Lie algebroid with zero anchor and Lie bracket defined pointwise.
\item The tangent bundle $TM$ of a manifold $M$, with as bracket the Lie bracket of vector fields and with as anchor the identity of $TM$, is a Lie algebroid over $M$ which is called tangent Lie algebroid.
\item Given the action of a Lie algebra $\mathfrak{g}$ on a manifold $M$ that is, a homomorphism of Lie algebras $\rho:\mathfrak{g} \to \mathfrak{X}(M)$, the action algebroid is the trivial vector bundle $\mathfrak{g} \times M \to M$, with the anchor given by the Lie algebra action and brackets uniquely determined by the bracket of $\mathfrak{g}$ on constant sections $M \to \mathfrak{g}$ and by the Leibniz identity.
\end{enumerate}
\end{example}

\begin{definition}[Lie structure at infinity]\label{Lie structure at infinity}
A Lie structure at infinity on a smooth manifold $M^\circ$ is a pair $(M, \mathcal{V})$, where $M$ is a compact manifold possibly with corners, $\inter (M)=M^{\circ}$ and $\mathcal{V}$ is a structural Lie algebra of vector fields on $M$ such that its anchor $\varrho_\mathcal{V}:{}^\mathcal{V} TM\to TM$ is an isomorphism on $M^{\circ}$, i.e,$ {}^\mathcal{V} TM{\big|}_{TM^{\circ}}\cong TM^{\circ}$.
\end{definition}
\begin{remark}
If $M^{\circ}$ is compact without boundary, then it follows from the Definition \ref{Lie structure at infinity} that $M=M^{\circ}$ and ${}^\mathcal{V} TM=TM$, so a Lie structure at infinity on $M^{\circ}$ gives no additional information
on $M^{\circ}$. The interesting cases are thus the ones when $M^{\circ}$ is noncompact.
\end{remark}

\begin{definition}[Riemannian manifold with a Lie structure at infinity]
Let $(M,\mathcal{V})$ be a Lie structure at infinity for a manifold $M$. Let $\varrho_{\mathcal{V}}:{}^\mathcal{V} TM\to TM$ be the associated anchor and $g$ a Riemannian metric on ${}^\mathcal{V} TM$, that is, a smooth positive definite symmetric 2-tensor $g$ on ${}^\mathcal{V} TM$.
In this case, $(M^{\circ}, (\varrho_{\mathcal{V}}^{-1})^{*}(g{\big|}_{M^{\circ}}))$ is called a Riemannian manifold with a Lie structure at infinity.
\end{definition}
Riemannian manifold with a Lie structure at infinity have some nice geometric property, for instance the following proposition shows that the volume of any noncompact Riemannian
manifold with a Lie structure at infinity is infinite.
\begin{proposition}
Let $M^{\circ}$ be a Riemannian manifold with Lie structure $(M, \mathcal{V},g)$ at
infinity. Let $f\geq 0$ be a smooth function on $M$. If $\displaystyle\int_{M^{\circ}}f\dvol_g<\infty$, then $f$ vanishes on each boundary hyperface of $M$. In particular, the volume of any noncompact Riemannian
manifold with a Lie structure at infinity is infinite.
\end{proposition}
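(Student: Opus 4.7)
The plan is to argue locally near a boundary hypersurface $H$ and to exploit property (3) of Definition \ref{Structural Lie algebra of vector fields} -- namely that every $X\in\mathcal{V}$ is tangent to $H$ -- in order to show that the volume form $\dvol_g$ blows up transversally to $H$ like $d\rho_H/\rho_H$. Combined with positivity of $f$ at a hypothetical non-vanishing boundary point, this will contradict integrability, and the ``in particular'' statement will then follow by taking $f\equiv 1$.

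Concretely, I would fix a boundary hypersurface $H$ with defining function $\rho_H$, choose local coordinates $(\rho_H, y_1, \ldots, y_{n-1})$ on a product neighborhood $U = [0,\delta)\times V$ of a point $p\in H$, and pick a smooth local frame $X_1,\ldots, X_n$ of ${}^\mathcal{V} TM$. Tangency of each $\varrho_\mathcal{V}(X_i)$ to $H$ forces
$$\varrho_\mathcal{V}(X_i) = a_i\,\rho_H\,\partial_{\rho_H} + \sum_j b_{ij}\,\partial_{y_j},\qquad a_i,b_{ij}\in C^{\infty}(U),$$
so that the matrix $A = (a_i\mid b_{ij})$ and the Gram matrix $G=(g(X_i,X_j))$ are smooth up to $H$, with $G$ positive definite on $U$ and $\det A$ nowhere vanishing on $U\cap M^\circ$. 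A short change-of-basis computation then yields
$$\dvol_g = \frac{\sqrt{\det G}}{\rho_H\,|\det A|}\,d\rho_H\wedge dy_1\wedge\cdots\wedge dy_{n-1}$$
on $U\cap M^\circ$. Assuming for contradiction that $f(p)>0$ at some $p\in H$ and shrinking $U$ yields $f\geq c>0$, $\sqrt{\det G}\geq c_1>0$, and $|\det A|\leq c_2$ throughout $U$, so that
$$\int_{M^\circ} f\,\dvol_g \;\geq\; \frac{c\,c_1}{c_2}\,\vol(V)\int_{0}^{\delta}\frac{d\rho_H}{\rho_H}\;=\;+\infty,$$
contradicting the hypothesis. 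Specializing to $f\equiv 1$ on a noncompact $M^\circ$, for which necessarily $\partial M\neq\emptyset$, gives the ``in particular'' statement.

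The main point to watch is that the anchor can drop rank at $H$, so that $\det A$ may vanish at $\rho_H=0$ -- as happens in the scattering, edge, and $0$-calculus examples. This degeneracy only strengthens the divergence rather than obstructing it, since the argument uses only an upper bound on $|\det A|$ together with the strict positivity of $\sqrt{\det G}$. An invariant reformulation would observe that $d\rho_H/\rho_H$ has bounded pointwise norm in the dual metric on ${}^\mathcal{V} T^*M$ (because $X\rho_H\in\rho_H\,C^\infty(M)$ for every $X\in\mathcal{V}$) and then deduce volume divergence via coarea, but the coordinate computation above is the most direct route.
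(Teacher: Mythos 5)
Your proof is correct, and it is essentially the argument behind the result: the paper itself gives no proof but cites Proposition 4.1 of Ammann--Lauter--Nistor, whose proof likewise rests on the fact that $\mathcal{V}\subset\mathcal{V}_b(M)$ forces the coefficient matrix of a local $\mathcal{V}$-frame (your $A$) to be smooth up to the boundary, so that $\dvol_g$ dominates a positive multiple of the $b$-volume $\rho_H^{-1}\,d\rho_H\wedge dy$ and $\int_0^\delta d\rho_H/\rho_H$ diverges. Your remark that only the upper bound on $|\det A|$ matters -- so that rank-dropping of the anchor at $H$ (scattering, edge, $0$-metrics) only strengthens the divergence -- is exactly the right point, and your deduction of the ``in particular'' clause from $f\equiv 1$ together with compactness of $M$ is as in the cited source.
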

See Proposition 4.1.in \cite{ammann2004geometry} for a proof.
\begin{proposition}
Let $M^{\circ}$ be a Riemannian manifold with a Lie structure $(M, \mathcal{V},g)$ at infinity. Then $M^{\circ}$ is complete in the induced metric $g$.
\end{proposition}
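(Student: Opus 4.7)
My plan is to apply the Hopf--Rinow theorem in the form: $(M^\circ,g)$ is complete if and only if every Cauchy sequence converges in $M^\circ$. Since $M$ is compact, this reduces to ruling out Cauchy sequences in $M^\circ$ accumulating at a point of $\partial M$, which I will deduce from the stronger length bound that any piecewise smooth curve $\gamma : [0,T) \to M^\circ$ whose closure in $M$ meets $\partial M$ has infinite $g$-length.

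For the length bound, the essential input is that every element of $\mathcal{V}$ is tangent to every face of $M$, so $\mathcal{V} \subset \mathcal{V}_b(M)$. Hence near any $p \in \partial M$, a smooth local frame $\{V_\alpha\}_{\alpha=1}^n$ of ${}^{\mathcal{V}}TM$ satisfies $V_\alpha \rho_H = b_{\alpha H}\,\rho_H$ for smooth functions $b_{\alpha H}$, where $\rho_H$ is a boundary defining function for a hypersurface $H$ through $p$. Using the anchor isomorphism on $M^\circ$, one writes $\dot\gamma(t) = \varrho_\mathcal{V}(\widetilde v(t))$ with $\widetilde v(t) = \sum_\alpha c_\alpha(t)\, V_\alpha|_{\gamma(t)}$, giving
\[
(\log \rho_H \circ \gamma)'(t) \;=\; \sum_\alpha c_\alpha(t)\, b_{\alpha H}(\gamma(t)).
\]
Since $g$ is smooth and positive definite on the closure of a compact coordinate chart around $p$, there exist constants $c_0, c_1 > 0$ with $g(\widetilde v,\widetilde v)^{1/2} \geq c_0\,|c(t)|$ and $\bigl|\sum_\alpha c_\alpha b_{\alpha H}\bigr| \leq c_1\,|c(t)|$ pointwise on that chart. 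Integrating and using the fundamental theorem of calculus then yields
\[
\mathrm{length}_g(\gamma) \;\geq\; \frac{c_0}{c_1}\int_0^T \bigl|(\log \rho_H \circ \gamma)'(t)\bigr|\, dt \;\geq\; \frac{c_0}{c_1}\,\bigl|\log \rho_H(\gamma(T)) - \log \rho_H(\gamma(0))\bigr|,
\]
which diverges as $\rho_H(\gamma(t)) \to 0$.

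To pass from this to the Cauchy statement, I would use the standard concatenation trick: given a Cauchy sequence $\{p_n\} \subset M^\circ$, extract a subsequence with $d_g(p_n,p_{n+1}) < 2^{-n}$, connect $p_n$ to $p_{n+1}$ by curves $\gamma_n$ of length at most $2^{-n+1}$, and glue them into a single continuous curve $\gamma : [0,1) \to M^\circ$ of finite total length. Compactness of $M$ provides a limit $p \in M$ of some subsequence of $\{p_n\}$; if $p \in \partial M$ then $\gamma$ accumulates at $p$, and after covering $\partial M$ by finitely many boundary charts (possible since $\partial M$ is compact) the logarithmic lower bound above applies to arbitrarily late pieces of $\gamma$, forcing infinite total length and contradicting the construction. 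Hence $p \in M^\circ$ and the sequence converges in $M^\circ$. The main subtlety lies in relating a possibly oscillating trajectory to the local logarithmic estimate, but compactness of $\partial M$ and the finiteness of the chart cover make this step routine.
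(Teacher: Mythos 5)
Your proof is correct and follows essentially the same route as the proof the paper invokes (Corollary 4.9 of Ammann--Lauter--Nistor \cite{ammann2004geometry}): tangency of $\mathcal{V}$ to all faces gives $V\rho_H=b\,\rho_H$, hence $\log\rho_H$ is uniformly Lipschitz with respect to $g$ near each hypersurface, so any curve whose closure meets $\partial M$ has infinite length, and completeness then follows from compactness of $M$. The only deferred point, controlling a curve that oscillates in and out of the finitely many boundary charts, is indeed routine via the standard level-crossing estimate, so there is no genuine gap.
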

See Corollary 4.9. in \cite{ammann2004geometry} for a proof.
\begin{proposition}
Let $M^{\circ}$ be a connected Riemannian manifold with a Lie structure $(M, \mathcal{V},g)$ at infinity. Then $(M^{\circ},g)$ is of bounded geometry.
\end{proposition}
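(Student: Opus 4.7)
The plan is to establish the two standard conditions defining bounded geometry for $(M^{\circ}, g)$: uniform pointwise bounds $\sup_{M^{\circ}} |\nabla^k R|_g < \infty$ for all $k \geq 0$, together with a uniform positive lower bound on the injectivity radius.

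The curvature bounds I would obtain by working in $\mathcal{V}$-frames. By the Serre-Swan theorem (Theorem \ref{Serre-Swan}) applied to the finitely generated projective module $\mathcal{V}$, the bundle ${}^{\mathcal{V}} TM$ is locally trivial, so around every point of $M$ there is a smooth local frame $V_1, \ldots, V_n$ of ${}^{\mathcal{V}} TM$ which, via the anchor $\varrho_{\mathcal{V}}$, descends to a smooth local frame of $TM^{\circ}$. The metric coefficients $g_{ij} = g(V_i, V_j)$, their inverses $g^{ij}$, and the bracket functions $c_{ij}^k$ defined by $[V_i, V_j] = \sum_k c_{ij}^k V_k$ are all smooth on $M$: smoothness of $c_{ij}^k$ uses that $\mathcal{V}$ is closed under brackets and finitely generated over $C^{\infty}(M)$. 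The Koszul formula expresses the Christoffel symbols of the Levi-Civita connection of $g$ in the $V_i$-frame entirely in terms of $g_{ij}$, $g^{ij}$, their $V_\ell$-derivatives, and the $c_{ij}^k$; iterating, all components of $\nabla^k R$ in this frame are smooth functions on $M$, and the $g$-norms require only bounded contractions with $g^{ij}$. Covering the compact $M$ by finitely many such frame neighborhoods then yields uniform bounds on $|\nabla^k R|_g$ over $M^{\circ}$.

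For the injectivity radius, I would combine the curvature bounds above with the completeness of $(M^{\circ}, g)$ from the preceding proposition and argue by contradiction. Suppose there exists a sequence $x_n \in M^{\circ}$ with $\mathrm{inj}_g(x_n) \to 0$; by compactness of $M$ one may assume $x_n \to x_\infty \in M$. The case $x_\infty \in M^{\circ}$ is immediately ruled out since the injectivity radius is locally bounded below in any smooth Riemannian manifold. When $x_\infty \in \partial M$, I would rescale the metric by $\mathrm{inj}_g(x_n)^{-2}$ and invoke Cheeger-Gromov compactness (which is applicable thanks to the uniform curvature bounds) to extract a pointed limit Riemannian manifold. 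The Lie structure at infinity induces an asymptotic homogeneous model for $g$ near the boundary face containing $x_\infty$, determined by the restriction of the anchor to that face; this model has positive injectivity radius, contradicting the collapse.

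The principal obstacle is precisely this last blow-up step, since it requires extracting and identifying the asymptotic homogeneous model from the abstract Lie structure rather than from explicit coordinates. In the concrete examples of $b$-vector fields, $0$-vector fields, scattering vector fields, and edge vector fields discussed earlier, the model geometry is explicit (asymptotically cylindrical, hyperbolic, Euclidean at infinity, etc.) and bounded geometry follows by direct inspection; in the general setting one needs a local normal form for the anchor of ${}^{\mathcal{V}}TM$ near each boundary face, producing a transitive Lie-group action on the model whose homogeneity forces the uniform lower bound.
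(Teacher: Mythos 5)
Your first half (the curvature bounds) is sound and is in fact the standard argument: working in local frames of ${}^\mathcal{V} TM$, the metric coefficients, their inverses and the structure functions $c_{ij}^k$ are smooth on the compact manifold with corners $M$, so the components of $\nabla^k R$ extend smoothly to $M$ and are bounded by compactness. Note, however, that the paper itself offers no proof of this proposition at all: it delegates the curvature bounds to Corollary 4.3 of \cite{ammann2004geometry} and the injectivity radius bound to Theorem 5.2 of \cite{bui2020injectivity}. That division of labor is telling: the positivity of the injectivity radius was left as a conjecture by Ammann--Lauter--Nistor in 2004 and was only resolved much later, so it is precisely the part of your proposal that cannot be waved through, and you correctly flag it yourself as the ``principal obstacle.''

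Concretely, two things break in your blow-up step. First, the rescaling argument as stated does not produce a contradiction: rescaling by $\mathrm{inj}_g(x_n)^{-2}$ makes the curvature of the rescaled metrics tend to zero while the injectivity radius at the basepoint is normalized to $1$, and the Cheeger--Gromov limit (which does exist, using the volume lower bound coming from $\mathrm{inj}=1$ at the basepoint together with Cheeger--Gromov--Taylor) is a complete \emph{flat} pointed manifold with injectivity radius $1$ at the basepoint. Such limits exist in abundance --- flat cylinders, for instance --- so you must additionally show the limit is $\mathbb{R}^n$, and that is where the Lie structure has to enter quantitatively. Second, the mechanism you propose for this, an ``asymptotic homogeneous model determined by the restriction of the anchor to the face,'' does not exist in the naive sense: for the scattering structure $\mathcal{V}_{\sca}=x\mathcal{V}_b$ the anchor vanishes identically on the boundary, and for edge structures its image at the boundary is only tangent to the fibers of $\varphi$, so the restricted anchor is far from transitive and carries no homogeneity by itself. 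The correct local models are obtained by integrating the Lie algebroid ${}^\mathcal{V} TM$ to a Lie groupoid (possible by Debord's theorem, since the anchor is injective over the dense open set $M^{\circ}$) and using the groupoid exponential map to produce uniform normal coordinates --- which is the actual content of the cited Theorem 5.2, not a lemma one may assume. In short: your proposal reproduces the easy half of the proposition and sketches a plausible-looking but incomplete strategy for the hard half, whose missing step is the theorem itself.
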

See Corollary 4.3 in \cite{ammann2004geometry} and Theorem 5.2. in \cite{bui2020injectivity} for a proof.
\begin{definition}[Melrose $b$-metric]
Let $M$ be a compact Riemannian manifold with boundary, equipped with a Lie structure at infinity $(M, \mathcal{V}_b, g_b)$. The Riemannian metric $g_b$ is referred to as the $b$-metric. 
\end{definition}
\begin{example}[Manifold with asymptotically cylindrical end]
A manifold with cylindrical ends is a Riemannian manifold $(M, g)$ for which there exists a compact subset $K$ (topologic part) such that outside $K$, $M$ resembles a cylinder with the product metric. This can be expressed through the identification:
$$M\setminus K\cong N\times (0,\infty)_r,$$
where $N$ is a closed manifold with $\dim N=\dim M-1$ and a cylindrical metric on $N\times (0,\infty)_r$, i.e,
$$g{\big|}_{M\setminus K}=g_{\text{cyl}}=g_N+dr^2,$$
where $g_N$ is a metric on $N$ and $r\in (0,\infty)$ is a coordinate for $(0,\infty)$. 
Let $(M, \mathcal{V}_b)$ be a compact Riemannian manifold with boundary $M$, equipped with a Lie structure at infinity. By using a tubular neighborhood of $N=\partial M$ in $M$ to make
$$M^{\circ}\setminus K\cong \partial M\times (0,\infty)_r.$$
So, we have the cylindrical end with the cylindrical metric
$$g{\big|}_{M\setminus K}=g_{\partial M}+dr^2.$$
By attaching $\partial M$ at infinity, we obtain a compactification $M := M^{\circ} \cup \partial M$, which is a compact manifold with boundary.
Introducing the change of variable $x=e^{-r}$ where $r$ is the coordinate for $(0,\infty)_r$, yields a defining function for $\partial M$, so we can write this cylindrical metric as
$$g{\big|}_{M\setminus K}=g_{\partial M}+\dfrac{dx^2}{x^2}\in \Gamma(S^2({}^b T^*M)),$$
which is compatible with the Lie structure at infinity $\mathcal{V}_b$, i.e, it is a $b-$metric. \\
More generally, we say $(M,g)$ has asymptotically cylindrical end if $g\to g_{\partial M}+\dfrac{dx^2}{x^2}$ when $x\to 0$ in the following sense: there exist $\gamma>0$ such that
$$g-(g_{\partial M}+\dfrac{dx^2}{x^2})\in x^\gamma C^\infty_b(M, S^2({}^b T^*M))=x^\gamma C^\infty_b(M)\otimes _{C^\infty(M)}\Gamma(S^2({}^b T^*M)),$$
where
$$C^\infty_b(M^{\circ})=\{f\in C^\infty(M^{\circ}):\forall k\in \mathbb{N}_0,\{V_1,\ldots, V_k\}\subset \mathcal{V}_b(M) ,\displaystyle\sup_{M^{\circ}}|V_1 \ldots V_kf|<\infty\},$$
and 
$$x^\gamma C^\infty_b(M^{\circ})=\{f\in C^\infty (M^{\circ}):\dfrac{f}{x^\gamma}\in C^\infty_b(M^{\circ})\}.$$
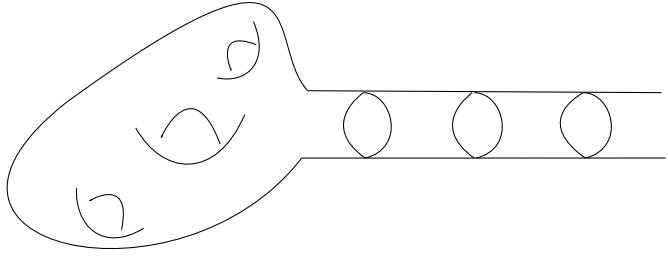
\begin{figure}
\centering
\begin{tikzpicture}[x=0.75pt,y=0.75pt,yscale=-1,xscale=1]

\draw    (128.2,64.2) .. controls (258.69,-31.03) and (227.2,33.8) .. (249.71,58.43) ;
\draw    (128.2,64.2) .. controls (34.2,138.2) and (185.2,171.8) .. (246.71,92.43) ;
\draw    (249.71,58.43) -- (428.2,59.4) ;
\draw    (246.71,92.43) -- (430.2,92.4) ;
\draw    (277.71,59.43) .. controls (292.71,59.43) and (299.71,87.43) .. (278.71,92.43) ;
\draw    (333.71,59.43) .. controls (348.71,59.43) and (355.71,87.43) .. (334.71,92.43) ;
\draw [color={rgb, 255:red, 0; green, 0; blue, 0 }  ,draw opacity=0.41 ]   (277.71,59.43) .. controls (277.71,59.43) and (254.71,75.43) .. (278.71,92.43) ;
\draw [color={rgb, 255:red, 0; green, 0; blue, 0 }  ,draw opacity=0.41 ]   (388.71,59.43) .. controls (382.2,63) and (365.71,75.43) .. (389.71,92.43) ;
\draw [color={rgb, 255:red, 0; green, 0; blue, 0 }  ,draw opacity=0.41 ]   (332.71,59.43) .. controls (332.71,59.43) and (309.71,75.43) .. (333.71,92.43) ;
\draw    (163,77.2) .. controls (176.49,101.2) and (203.47,104.19) .. (218,70.45) ;
\draw    (176.49,80.85) .. controls (172.34,89.85) and (189.98,43.35) .. (205.55,85.35) ;
\draw    (133.13,107.5) .. controls (132.2,126.84) and (146.54,140.36) .. (167,127.86) ;
\draw    (140.84,113.42) .. controls (135.23,116.63) and (162.05,98.37) .. (155.83,128.69) ;
\draw    (204,51.97) .. controls (219.28,55.17) and (230.94,43.71) .. (222.35,23.5) ;
\draw    (210.73,47.09) .. controls (212.92,52.61) and (200.17,25.88) .. (223.82,35.26) ;
\draw    (388.71,59.43) .. controls (403.71,59.43) and (410.71,87.43) .. (389.71,92.43) ;
\end{tikzpicture}
\caption{\small A manifold with a cylindrical end}
\end{figure}

\end{example}

\begin{definition}[Melrose $\sca$-metric]
A Riemannian metric compatible with a Lie structure at infinity $(M, \mathcal{V}_{\sca})$, where $M$ is a compact manifold with boundary, is called scattering metric ($\sca$-metric for short). 
\end{definition}
\begin{example}
Let $M$ be a compact manifold with boundary. A metric of the form 
\[
 g_{\sca}= \dfrac{g_{\partial M}}{x^2} + \dfrac{dx^2}{x^4},
\]
close to the boundary $\partial M$, is an example of $\sca$-metric, where $x$ is a defining function for the boundary, and $g_{\partial M}$ is the Riemannian metric $g$ restricted to the boundary. Notice that a $\sca$-metric is always of the form $g_{\sca}=\dfrac{g_b}{x^2}$ for some $b$-metric.
\end{example}

\begin{example}
A simple example in the Euclidean case is the radial compactification of $\mathbb{R}^n$ with the boundary being the sphere $\mathbb{S}^{n-1}$. This compactification is given by the stereographic projection $\SP$ defined by
\[\SP: \mathbb{R}^n \to \mathbb{S}^{n}_+ := \left\{z = (z_0, ..., z_n) \in \mathbb{S}^{n} : \ z_0 \geq 0\right\},\]
\[\SP(x) = \dfrac{1}{\sqrt{1 + |z|^2}} \left(1,z_1, ..., z_n\right),\]
where $\mathbb{S}^{n}_+$ is a compact manifold with boundary. $\SP$ identifies $\mathbb{R}^n$ with the interior of the upper half-sphere $\mathbb{S}^{n}_+$. The Euclidean metric is a scattering metric on $\mathbb{R}^n$ given by 
$$g_{\mathbb{R}^n} = dr^2 + r^2g_{\mathbb{S}^{n-1}}  = \dfrac{dx^2}{x^4} + \dfrac{g_{\mathbb{S}^{n-1}}}{x^2},$$
where $r = |z| $ and $x=\dfrac{1}{r}$ is a defining function for the boundary i.e. $\partial \mathbb{S}^{n}_+=\mathbb{S}^{n-1} = \{x = 0\}$.
\end{example}

\begin{example}[Manifold with asymptotically conical end]
A manifold with conical ends is a Riemannian manifold $(M, g)$ for which there exists a compact subset $K$ (topologic part) such that outside $K$, $(M,g)$ is a cone. This can be expressed through the identification:
$$M\setminus K\cong N\times (R,\infty)_r, \quad \text{for some } R>0,$$
where $N$ is a closed manifold with $\dim N=\dim M-1$ and a conic metric on $N\times (R,\infty)_r$, i.e,
$$g{\big|}_{M\setminus K}=g_{\text{cone}}=r^2g_N+dr^2, r\geq R,$$
where $g_N$ is a metric on $N$ and $r\in (0,\infty)$ is a coordinate for $(0,\infty)$. 
If $M$ is a compact manifold with boundary, by using a tubular neighborhood of $N=\partial M$ in $M$, we can find an identification 
$$M^{\circ}\setminus K\cong \partial M\times (R,\infty)_r,$$
so that a conical metric $g$ on $M^{\circ}$ with $g{\big|}_{M^{\circ}\setminus K}=r^2g_{\partial M}+dr^2$ is a $\sca$-metric. Indeed, attaching this cone to $\partial M$ at infinity, we obtain a compactification $M := M^{\circ} \cup \partial M$, which is a compact manifold with boundary.
Introducing the change of variable $x=\dfrac{1}{r}$ where $r$ is the coordinate for $(0,\infty)_r$, yields a defining function for $\partial M$, so we can write this conical metric as
$$g{\big|}_{M\setminus K}=\dfrac{g_{\partial M}}{x^2}+\dfrac{dx^2}{x^4}\in \Gamma(S^2({}^{\sca} T^*M)),$$
which is compatible with the Lie structure at infinity $\mathcal{V}_{\sca}$, i.e, it is a $\sca-$metric.

\begin{figure}
\centering
\begin{tikzpicture}[x=0.75pt,y=0.75pt,yscale=-1,xscale=1]
\draw    (128.2,110.6) .. controls (258.69,15.37) and (235.2,79.4) .. (249.71,104.83) ;
\draw    (128.2,110.6) .. controls (34.2,184.6) and (185.2,218.2) .. (246.71,138.83) ;
\draw    (163,123.6) .. controls (176.49,147.6) and (203.47,150.59) .. (218,116.85) ;
\draw    (176.49,127.25) .. controls (172.34,136.25) and (189.98,89.75) .. (205.55,131.75) ;
\draw    (133.13,153.9) .. controls (132.2,173.24) and (146.54,186.76) .. (167,174.26) ;
\draw    (140.84,159.82) .. controls (135.23,163.03) and (162.05,144.77) .. (155.83,175.09) ;
\draw    (204,98.37) .. controls (219.28,101.57) and (230.94,90.11) .. (222.35,69.9) ;
\draw    (210.73,93.49) .. controls (212.92,99.01) and (200.17,72.28) .. (223.82,81.66) ;
\draw    (249.71,104.83) .. controls (254.2,109.2) and (277.2,145.2) .. (378.2,29.2) ;
\draw    (246.71,138.83) .. controls (269.2,106.2) and (355.2,164) .. (378.2,195.2) ;
\draw    (294,135.37) .. controls (307.2,126.2) and (302.2,111.2) .. (294.2,105.2) ;
\draw    (350.2,167.2) .. controls (382.2,146.2) and (390.2,89.2) .. (351.2,59.2) ;
\draw [color={rgb, 255:red, 0; green, 0; blue, 0 }  ,draw opacity=0.5 ]   (350.2,167.2) .. controls (330.2,140.2) and (320.2,105.2) .. (351.2,59.2) ;
\draw [color={rgb, 255:red, 0; green, 0; blue, 0 }  ,draw opacity=0.5 ]   (294,135.37) .. controls (284.2,126.2) and (281.2,115.2) .. (294.2,105.2) ;
\end{tikzpicture}
\caption{\small A manifold with a conical end}
\end{figure}
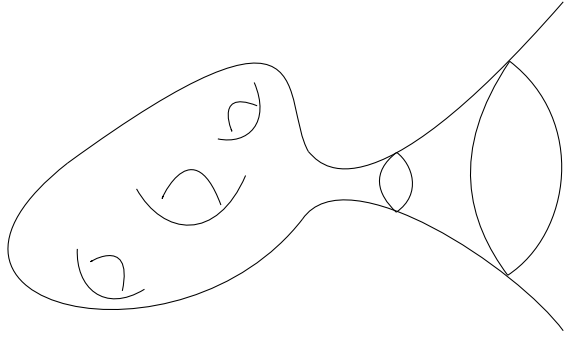

\end{example}
\begin{definition}[Asymptotically Conical Metric]
Let $(L, g)$ be a compact Riemannian manifold. On $C=(0, \infty) \times L$, consider the conic metric
$$g_{\text{cone}} = dr^2 + r^2 g,$$
where $r$ is the coordinate on $(0, \infty)$. A Riemannian manifold $(M, g)$ is called asymptotically conical (AC for short), asymptotic to $g_{\text{cone}}$, if there exists a diffeomorphism $\pi: M \setminus K \rightarrow  (R, \infty) \times L$, for some $R > 0$ and $K \subset M$ compact, there is a positive constants $c$ and $\mu$ such that for any $k\geq 0$,
$$|\nabla^k (\pi_{*}(g) - g_{\text{cone}})|_{g_{\text{cone}}}\leq \dfrac{c}{r^{\mu+k}}.$$
Here, $\nabla$ denotes the Levi-Civita connection of $g_{\text{cone}}$.
\end{definition}
Now, we define asymptotically locally Euclidean metrics, which are important examples of asymptotically conical metrics. In fact when $g_{\text{cone}}$ is a quotient of the euclidean flat space by a finite subgroup of the orthogonal matrices which acts freely on the unit sphere, the corresponding AC metrics are called  asymptotically locally euclidean or ALE for short.
\begin{definition}[Asymptotically Locally Euclidean metric]
Let $\Gamma$ be a finite subgroup of $U(n)$ acting freely on $\mathbb{C}^n \setminus \{0\}$, so $\mathbb{C}^n / \Gamma$ has an isolated quotient singularity at $0$, and the Euclidean metric is $\Gamma$-invariant. Thus, $(\mathbb{C}^n / \Gamma, g_{\Euc})$ is a Riemannian cone. Let $M$ be a non-compact complex manifold with end asymptotic to the cone $\mathbb{C}^n / \Gamma$ at infinity (e.g. a resolution of $\mathbb{C}^n / \Gamma$), i.e., there is a compact subset $K \subset M$ and a map $\pi: M \setminus K \to \mathbb{C}^n / \Gamma$ that is a diffeomorphism between $M \setminus K$ and $\{z\in \mathbb{C}^n / \Gamma: d_{\Euc}(z,0)>R\}$ for a fixed positive constant $R$. A Riemannian (Kähler) metric $g$ on $M$ is called asymptotically locally Euclidean (ALE-for short) if $\pi_*(g)$ is asymptotic to $g_{\Euc}$ at infinity, i.e., there is a positive constant $c$ such that for any $k\geq 0$,
$$|\nabla^k(\pi_*(g) - g_{\Euc})| \leq \dfrac{c}{r^{n+k}},$$
where $\nabla$ is the Levi-Civita connection of $g_{\Euc}$ on $\mathbb{C}^n / \Gamma$.
\end{definition}

\begin{example}[Burns and Simanca ALE scalar-flat Kähler metrics on $\mathcal{O}_{\mathbb{	CP}^{m-1}}(-1)$]\label{r=1}
In 1991 Burns (case $m=2$) and Simanca \cite{simanca1991kahler}  (case $m>1$) constructed a cscK metric on $\mathcal{O}_{\mathbb{CP}^{m-1}}(-1)$. They showed that the \kahler potential of this scalar flat \kahler metric on $\mathcal{O}_{\mathbb{CP}^{m-1}}(-1)$ is radially symmetric and of the form 
$$H_{\text{BS}}=\|Z\|^2+\gamma(\|Z\|)\log \|Z\|^2+\|Z\|^{4-2m}+\psi(\|Z\|^2),$$
where $\gamma:\mathbb{R}\to\mathbb{R}$ is the cut-off function such that $\gamma(t)=1$ for $t<1$, $\gamma(t)=0$ for $t>2$ and 
$$|\nabla^k \psi (t)| \leq \dfrac{c}{t^{m+k-2}},$$
for all $k\geq 0$. Here $\nabla$ is the Levi-Civita connection of $g_{\FS}$ on $\mathcal{O}_{\mathbb{CP}^{m-1}}(-1)$. 
\end{example}
More generally, for any natural number $r$, as discussed in section 2 of \cite{apostolov2017ale}, the line bundle $\mathcal{O}_{\mathbb{CP}^{m-1}}(-r)$ admits an ALE scalar-flat Kähler metric as follows.
\begin{example}[ALE scalar-flat Kähler metric on $\mathcal{O}_{\mathbb{CP}^{m-1}}(-r)$]\label{genrealr}
The Burns-Simanca metric is generalized by Eguchi-Hanson \cite{eguchi1979self} ($m = 2$, $r = 2$), LeBrun \cite{lebrun1988counter} ($m = 2$, $r > 2$), Pedersen-Poon  \cite{pedersen1991hamiltonian} and Rollin-Singer \cite{rollin2009construction} ($m > 2$, $r > 2$)  . In summary, the Burns-Eguchi-Hanson-LeBrun-Pedersen-Poon-Simanca metric on $\mathcal{O}_{\mathbb{CP}^{m-1}}(-r)$ has Kähler potential:
$$H=\dfrac{1}{2}\|Z\|^2+A\|Z\|^{4-2m}+O(\|Z\|^{3-2m}),$$
when $m\geq 3$ and 
$$H=\dfrac{1}{2}\|Z\|^2+A\log \|Z\|+O(\|Z\|^{-2}),$$
when $m=2$, where $A$ is some constant. Recall that $T=O(t^s)$ if $|T|\leq ct^s$ for some $c>0$.
\end{example}

There is a generalization of ALE-metrics to quasi-asymptotically locally Euclidean metrics (QALE-metrics for short) by D. Joyce in \cite{joyce2001asymptotically} when the action of $\Gamma$ is not necessarily free on $\mathbb{C}^n \setminus \{0\}$. There is as well a generalization of AC-metrics and QALE-metrics to quasi-asymptotically conical metrics (QAC-metrics for short) by Degeratu and Mazzeo \cite{degeratu2018fredholm}, which we will discuss and use in Chapter 4 for our main construction.

As a summary, we have these relations between these special metrics
\[\begin{array}{ccc}
\text{AE} \subset  \quad\text{ALE} & \subset & \text{AC}\quad  \subset \sca \\
\quad\quad\quad\cap & & \cap  \quad\quad\quad\\
\quad \quad\quad \text{QALE} & \subset & \text{QAC} \quad\quad\quad
\end{array}\]
We finish this chapter by defining the edge-metric in Mazzeo's sense.
\begin{definition}[Mazzeo edge metric]
A Riemannian metric compatible with a Lie structure at infinity $(M, \mathcal{V}_{e})$, where $M$ is a compact manifold with fibered boundary is called and edge metric. An edge metrics $g_{e}$ close to the boundary $\partial M$ is an element of $S^2({}^e T^*M)$ which locally generated by
$$\{\dfrac{dx^2}{x^2},\dfrac{dy_i^2}{x^2},\dfrac{dx\otimes dy_i}{x^2},dz_j^2,\dfrac{dx\otimes dz_j }{x},\dfrac{dy_i\otimes dz_j}{x}\},$$
in terms of the local coordinates of example \ref{Lie algebra of edge vector fields}.
\end{definition}
\begin{example}[$0-$metrics]
An interesting class of a edge metrics is the class of $0$-metrics, i.e., metrics corresponding to the Lie structure at infinity $(M, \mathcal{V}_{0})$ as discussed in Example \ref{0-calculus}. A $0$-metric $g_{0}$ close to the boundary $\partial M$ is of the form
\[ g_0 = \frac{dx^2}{x^2} + \frac{dy_i^2}{x^2}. \]
In fact, if $(M,g)$ is a compact Riemann manifold with boundary, and $x$ is a defining function for the boundary, then the metric in the interior of $M$,
\[ g_0 = \frac{g}{x^2}, \]
is complete and is an example of $0-$metric. In particular, the hyperbolic space is of this type. The sectional curvature of $g_0$ approaches $-|dx|_g^2$ at the boundary, so $g_0$ has negative curvature outside a compact set. For more information, see lemma 2.5 in \cite{mazzeo1987meromorphic}.
\end{example}

\section{Constant Scalar Curvature \kahler metrics}
\label{sec: Constant Scalar Curvature \kahler metrics}

In this section we focus on constant scalar curvature \kahler (cscK) metrics. A special case of a constant scalar curvature Kähler metric is a Kähler-Einstein (KE) metric, which has been the main focus of Kähler geometry since the inception of the celebrated Calabi conjecture on the existence of canonical Kähler metrics in the 1950s: In every Kähler Class of every compact Kähler manifolds, there must exists one best, canonical Kähler metrics.\\
In fact, Calabi proposed the following conjectures for an compact Kähler manifold $(X,\omega_X)$:\\
\textbf{Conjecture 1:} If $\Aut(X) = 1$, then there exists a unique cscK metric on $X$ in $[\omega_X]$.\\
\textbf{Conjecture 2:} There exists an extremal Kähler metric on $X$ in $[\omega_X]$, unique up to $\Aut(X)$.

Calabi's vision, now six decades later, has been the inspiration for fundamental work in Kähler geometry up to the present day. From Yau's celebrated theorem \cite{yau1978ricci}, based on Calabi's $C^3$ estimate for the Monge-Ampère equation in 1958 \cite{calabi1958improper}, for which he received the Fields Medal in 1976, to the conjecture of Yau-Tian-Donaldson in the Kähler-Einstein Fano case that was finally solved in 2012 by Chen, Donaldson, and Sun \cite{chen2015kahler1, chen2015kahler2, chen2015kahler3} and Tian \cite{MR3352459}.\\
We begin this section with the scalar curvature function and the definition of extremal metrics. Then, we briefly look at Kähler-Einstein metrics and Conjecture 1. Following that, we study cscK metrics and discuss classic results by Matsushima-Lichnerowicz and Arezzo-Pacard. Finally, we wrap up this section by constructing new examples of cscK orbifolds with singularities of type $\mathcal{I}$.
\begin{lemma}
Let $(M,g)$ be a Kähler manifold and $D$ denote its Levi-Civita connection. For a real 1-form $\alpha$ if we denote by $D^{-}\alpha$ the J-anti-invariant part of the covariant derivative $D\alpha$, then we have
$$D^{-}\alpha=-\dfrac{1}{2}g(J(\mathcal{L}_{\alpha^\sharp}J)\bullet,\bullet)=-\dfrac{1}{2}\omega(J(\mathcal{L}_{\alpha^\sharp}J)\bullet,\bullet).$$
\end{lemma}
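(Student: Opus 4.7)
The plan is to reduce the identity to a direct computation exploiting the Kähler condition $DJ = 0$. First I would set $V := \alpha^\sharp$ so that $\alpha(X) = g(V,X)$, hence $(D\alpha)(X,Y) = g(D_X V, Y)$. By definition, the $J$-anti-invariant part of $D\alpha$ is
$$D^-\alpha(X,Y) = \tfrac{1}{2}\bigl[(D\alpha)(X,Y) - (D\alpha)(JX,JY)\bigr] = \tfrac{1}{2}\bigl[g(D_X V,Y) - g(D_{JX}V, JY)\bigr].$$
So the task reduces to showing that $-\tfrac{1}{2}g\bigl(J(\mathcal L_V J)X,\,Y\bigr)$ equals this expression.

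Next I would rewrite $\mathcal L_V J$ using the Kähler identity $DJ=0$ and the torsion-free identity $[V,W] = D_V W - D_W V$. A short direct calculation gives
$$(\mathcal L_V J)(X) = [V,JX] - J[V,X] = J D_X V - D_{JX} V,$$
where the terms $J D_V X$ cancel thanks to $DJ=0$. Applying $J$ and using $J^2=-\mathrm{Id}$ then yields
$$J(\mathcal L_V J)(X) = -D_X V - J D_{JX} V.$$

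Pairing with $Y$ via $g$ and using that $J$ is $g$-skew ($g(JA,B) = -g(A,JB)$, a consequence of $g$-orthogonality of $J$), one obtains
$$-\tfrac{1}{2}\,g\bigl(J(\mathcal L_V J)X,\,Y\bigr) = \tfrac{1}{2}\bigl[g(D_X V,Y) - g(D_{JX}V, JY)\bigr] = D^-\alpha(X,Y),$$
establishing the first equality. The second equality is then a matter of the convention $\omega(A,B) = g(JA,B)$ linking the Kähler form and metric, so both expressions represent the same tensor.

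There is no real obstacle here beyond careful sign bookkeeping; the proof is a one-line application of the Kähler condition once $\mathcal L_V J$ is expressed in terms of $D V$. The only conceptual point worth flagging is that, on a Kähler manifold, $D\alpha$ need not be symmetric, so $D^-$ is the $J$-anti-invariant projection of the \emph{full} $(0,2)$-tensor $D\alpha$ rather than of its symmetric part; this is automatically handled in the computation above because no symmetrization is performed.
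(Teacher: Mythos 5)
Your computation of the first equality is correct, and it is the standard argument (the paper itself gives no proof, deferring to Gauduchon, Lemma 1.22.2, where essentially this computation appears): using torsion-freeness and $DJ=0$ to get $(\mathcal{L}_V J)X = JD_XV - D_{JX}V$ for $V=\alpha^\sharp$, applying $J$, and pairing with $g$ via the skewness of $J$. All signs check out, and your closing remark that no symmetrization of $D\alpha$ is needed is apt.

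The second equality is where your proposal has a genuine gap. You assert that, by the convention $\omega(A,B)=g(JA,B)$, the two displayed expressions "represent the same tensor," but substituting that convention gives $\omega\bigl(J(\mathcal{L}_V J)X,Y\bigr)=g\bigl(J^2(\mathcal{L}_V J)X,Y\bigr)=-g\bigl((\mathcal{L}_V J)X,Y\bigr)$, which is not $g\bigl(J(\mathcal{L}_V J)X,Y\bigr)$. Indeed, since $\mathcal{L}_V J$ anticommutes with $J$, one computes $-\tfrac12\,\omega\bigl(J(\mathcal{L}_V J)X,Y\bigr)=\tfrac12\,g\bigl((\mathcal{L}_V J)X,Y\bigr)=-D^-\alpha(X,JY)$, which differs from $D^-\alpha(X,Y)$ unless $\mathcal{L}_V J=0$. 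In fact the two formulas as printed cannot both hold: $g(B\,\bullet,\bullet)=\omega(B\,\bullet,\bullet)$ with $B=J(\mathcal{L}_V J)$ would force $B=JB$, hence $B=0$ because $\operatorname{Id}-J$ is invertible ($(\operatorname{Id}-J)^2=-2J$). What is true, and what the statement presumably intends (the printed formula carries a spurious extra $J$), is $D^-\alpha=-\tfrac12\,\omega\bigl((\mathcal{L}_{\alpha^\sharp}J)\bullet,\bullet\bigr)$: passing from $g$ to $\omega$ \emph{absorbs} the $J$ rather than retaining it, and this version follows immediately from your first equality together with the convention. Deriving the $\omega$-form explicitly, instead of waving at the convention, would have exposed this discrepancy; as it stands, your one-line justification certifies a formula that is false as written.
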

See lemma 1.22.2. \cite{Gauduchon} for a proof.

\begin{definition}[Lichnerowicz operator]\label{Lichner}
Let $(M,g)$ be a Kähler manifold and $D$ the Levi-Civita connection. If we set $\mathcal{D}=D^{-}d$ and denote by $\mathcal{D}^*=(D^{-}d)^*$ its formal adjoint, then the fourth-order operator $\mathcal{D}^*\mathcal{D}:C^{\infty}(M,\mathbb{C})\to C^{\infty}(M,\mathbb{C})$ is called the Lichnerowicz operator. The Lichnerowicz operator $\mathcal{D}^*\mathcal{D}$ is a formally self-adjoint, semipositive
differential operator acting on (real) functions. For instance for complex-valued functions $\phi$, $\psi$ defined on compact Kähler manifold $(M,g)$ we have 
$$\displaystyle\int_M(\mathcal{D}^*\mathcal{D}\phi)\overline{\psi}\omega^n=\displaystyle \int_M \phi\overline{\mathcal{D}^*\mathcal{D}\psi}\omega^n.$$
\end{definition}

\begin{lemma}[Lichnerowicz operator]\label{Lichnerowicz}
Suppose that $(M,g)$ is a Kähler manifold and $u$ is a smooth complex-valued function defined on $M$. Then
\begin{align*}
\mathcal{D}^*\mathcal{D}u&=\dfrac{1}{2}\Delta_g^2u+\Rc_g.\nabla^2 u+\dfrac{1}{2}\nabla S(\omega).\nabla u\\
&=\dfrac{1}{2}\Delta_g^2u+R^{\bar{j}i}\nabla_{i}\nabla_{\bar{j}} u+\dfrac{1}{2}g^{i\bar{j}}\nabla_{i} S(\omega).\nabla_{\bar{j}} u.
\end{align*}
\end{lemma}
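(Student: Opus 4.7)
The plan is to reduce the formula to a direct computation in local holomorphic normal coordinates, where the K\"ahler identities eliminate most of the connection terms.

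First, using the preceding lemma, I would interpret $\mathcal{D}u = D^- du$ explicitly in a holomorphic chart: the $J$-anti-invariant part of $Ddu$ kills the mixed $(1,1)$-components of the complex Hessian (which are the $\nabla_i\nabla_{\bar j}u$), and what remains is precisely the pure $(2,0)$ and $(0,2)$ parts, so $(\mathcal{D}u)_{ij}=\nabla_i\nabla_j u$ and $(\mathcal{D}u)_{\bar i\bar j}=\nabla_{\bar i}\nabla_{\bar j}u$. Equivalently, $\mathcal{D}u = \db (\partial u)^\sharp$ up to a normalization constant, where the $(1,0)$-gradient is raised using $g^{i\bar j}$.

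Next I would pair $\mathcal{D}u$ with a test function $\phi$ and integrate by parts twice against $\omega^n$ to obtain the formal adjoint. After fixing conventions this produces, schematically, an expression of the shape
$$\mathcal{D}^*\mathcal{D}u \;=\; 2\, g^{i\bar k}g^{j\bar l}\,\nabla_{\bar l}\nabla_{\bar k}\nabla_i\nabla_j u \;+\; \overline{(\cdot)},$$
and the whole task reduces to rearranging these four covariant derivatives into the form stated. This is done with the K\"ahler Ricci identity $[\nabla_a,\nabla_{\bar b}]T_c = -R_{a\bar b c}{}^d T_d$. Commuting the inner $\nabla_{\bar k}$ past $\nabla_i$ produces: (i) a leading piece that, after further commutation, assembles into $\tfrac12 \Delta_g^2 u$ via the K\"ahler identity $\Delta_g = 2 g^{i\bar j}\nabla_i\nabla_{\bar j}$; (ii) a curvature commutator which contracts into $R^{\bar j i}\nabla_i\nabla_{\bar j}u$, i.e. $\Rc_g \cdot \nabla^2 u$; and (iii) a term of the form $(\nabla_i R^{\bar j i})\nabla_{\bar j} u$, which by the K\"ahler-form second Bianchi identity $\nabla_i R^{\bar j i}=\nabla^{\bar j} S(\omega)$ becomes $\tfrac12 g^{i\bar j}\nabla_i S(\omega)\,\nabla_{\bar j}u$. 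Collecting these three contributions with their complex conjugates yields exactly the right-hand side.

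The principal obstacle is pure bookkeeping of numerical factors. The coefficient $\tfrac12$ in front of $\Delta_g^2 u$ depends on the convention $\Delta_g = 2 g^{i\bar j}\nabla_i\nabla_{\bar j}$; the $J$-anti-invariant projection used to define $\mathcal{D}$ differs from the full Hessian by another factor of $2$; and the integration by parts on $(2,0)$-tensors doubles up via the conjugate. Keeping these coherent, and cross-checking that the final expression is manifestly formally self-adjoint as required in Definition \ref{Lichner} (so that $\int (\mathcal{D}^*\mathcal{D}\phi)\bar\psi\,\omega^n = \int\phi\,\overline{\mathcal{D}^*\mathcal{D}\psi}\,\omega^n$), is where the proof needs the most care.
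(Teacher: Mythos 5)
Your proposal is correct and takes essentially the same route as the paper's own proof, which is simply a citation to Gauduchon (Lemma 1.22.5): identify $\mathcal{D}u=D^-du$ with the $(2,0)+(0,2)$ part of the Hessian, $(\mathcal{D}u)_{ij}=\nabla_i\nabla_j u$ (equivalently $\db\nabla^{1,0}u$ after raising an index), compute the adjoint by integration by parts, commute covariant derivatives with the \kahler Ricci identity so the full curvature contracts to $R^{\bar j i}\nabla_i\nabla_{\bar j}u$, and invoke the contracted second Bianchi identity for the $\nabla S$ term. The only point to fix in your bookkeeping is that with the Riemannian convention $S(\omega)=2g^{i\bar j}R_{i\bar j}$ the contracted Bianchi identity reads $g^{i\bar k}\nabla_{\bar k}R_{i}{}^{\bar j}=\tfrac12\nabla^{\bar j}S(\omega)$ rather than the unnormalized identity you display, and this factor is exactly the source of the $\tfrac12$ in the final gradient term, so your stated conclusion is consistent.
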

See lemma 1.22.5. \cite{Gauduchon} for a proof.

\begin{lemma}\label{linears}\label{nonlinears}
Let $M$ be a Kähler manifold with Kähler metric $g$ and corresponding Kähler form \(\omega\). Assume that \(\omega +\sqrt{-1}\dd u\) is a small perturbation of $\omega$ by a function $u\in C^4(M)$ with $\|u\|_{C^4(M)}<c$, for a sufficiently small $c>0$. Then we can linearize the scalar curvature operator in the following way:
$$S(\omega+\sqrt{-1}\dd u)=\displaystyle \sum_{k=0}^{+\infty}\dfrac{d^k}{dt^k}|_{t=0}S(\omega +t\sqrt{-1}\dd u)=S(\omega)+L_{\omega}(u)+Q_{\omega}(\nabla^2 u),$$
where $L_{\omega}(u)=\dfrac{d}{dt}|_{t=0}S(\omega +t\sqrt{-1}\dd u)$ is the linear part and 
 $$Q_{\omega}(\nabla^2 u)=\displaystyle \sum_{k=2}^{+\infty}\dfrac{d^k}{dt^k}|_{t=0}S(\omega +t\sqrt{-1}\dd u),$$
is a second-order nonlinear differential operator that collects all the nonlinear terms. Moreover, the linearization of the scalar curvature operator $L_{\omega}(u)$ can be expressed as:
$$L_{\omega}(u)=-(\dfrac{1}{2}\Delta^2_gu+\braket{\Rc_g,\sqrt{-1}\dd u}_g)=-\dfrac{1}{2}\Delta^2_gu-R^{\bar{j}i}\partial_i\partial_{\bar{j}}u=\dfrac{1}{2}\nabla S(\omega).\nabla u-\mathcal{D}^*\mathcal{D}u.$$ 
Also, the nonlinear part $Q_{\omega}$ could decomposes with finite sums as follows:
\begin{align*}
Q_{\omega}(\nabla^2u)&=\displaystyle\sum_{q}B_{q,4,2}(\nabla^4u,\nabla^2u)C_{q,4,2}(\nabla^2u)\\
&+\displaystyle\sum_{q}B_{q,3,3}(\nabla^3u,\nabla^3u)C_{q,3,3}(\nabla^2u)\\
&+|z|\displaystyle\sum_{q}B_{q,3,2}(\nabla^3u,\nabla^2u)C_{q,3,2}(\nabla^2u)\\
&+\displaystyle\sum_{q}B_{q,2,2}(\nabla^2u,\nabla^2u)C_{q,2,2}(\nabla^2u),
\end{align*}
where $B^i$s are bilinear forms and $C^i$s are smooth functions.
\end{lemma}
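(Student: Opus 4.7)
The plan is to compute the Taylor expansion of $S(\omega_t)$ where $\omega_t = \omega + t\sqrt{-1}\,\partial\bar{\partial}u$ in local complex coordinates, using the familiar formula
$$S(\omega_t) = -(g_t)^{i\bar{j}}\,\partial_i\partial_{\bar{j}}\log\det\bigl((g_t)_{k\bar{\ell}}\bigr), \qquad (g_t)_{k\bar{\ell}} = g_{k\bar{\ell}} + t\,\partial_k\partial_{\bar{\ell}}u.$$
Provided $\|u\|_{C^4(M)}<c$ with $c$ small, the matrix $I + t g^{-1}\partial\bar{\partial}u$ is invertible, so both $(g_t)^{i\bar{j}}$ and $\log\det(g_t)$ admit convergent power series expansions in $t$ whose coefficients are smooth functions of $\nabla^2 u$. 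Substituting these series into the scalar curvature formula produces a Taylor series in $t$ whose coefficients involve derivatives of $u$ of order at most $4$.

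The linear coefficient is obtained from the standard variation formula for the K\"ahler scalar curvature:
$$L_\omega(u) = \frac{d}{dt}\Big|_{t=0} S(\omega_t) = -\frac{1}{2}\Delta_g^2 u - R^{\bar{j}i}\partial_i\partial_{\bar{j}}u.$$
To recast this as $\tfrac{1}{2}\nabla S(\omega)\cdot\nabla u - \mathcal{D}^*\mathcal{D}u$, I would invoke Lemma~\ref{Lichnerowicz}, which gives
$$\mathcal{D}^*\mathcal{D}u = \tfrac{1}{2}\Delta_g^2 u + R^{\bar{j}i}\nabla_i\nabla_{\bar{j}}u + \tfrac{1}{2}g^{i\bar{j}}\nabla_i S(\omega)\,\nabla_{\bar{j}}u,$$
after using the Kähler identity $R^{\bar{j}i}\partial_i\partial_{\bar{j}}u = R^{\bar{j}i}\nabla_i\nabla_{\bar{j}}u$ (covariant and coordinate Hessians agree upon contraction with the Ricci tensor on a Kähler manifold).

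For the nonlinear remainder $Q_\omega(\nabla^2 u) := S(\omega_1) - S(\omega) - L_\omega(u)$, the strategy is to write
$$(g_1)^{i\bar{j}} = g^{i\bar{j}} + h^{i\bar{j}}(\nabla^2 u), \qquad \log\det(g_1) = \log\det(g) + \operatorname{tr}(g^{-1}\partial\bar{\partial}u) + F(\nabla^2 u),$$
where $h^{i\bar{j}}$ and $F$ are smooth in $\nabla^2 u$ with $h^{i\bar{j}}(0) = 0$, $F(0) = dF(0) = 0$. Expanding $-(g_1)^{i\bar{j}}\partial_i\partial_{\bar{j}}(\log\det g + \operatorname{tr}(g^{-1}\partial\bar{\partial}u) + F(\nabla^2 u))$ and cancelling the constant and linear parts, one collects the surviving terms according to the orders of the derivatives of $u$ that appear. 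Specifically, $h^{i\bar{j}}(\nabla^2 u)$ multiplying $\partial_i\partial_{\bar{j}}\operatorname{tr}(g^{-1}\partial\bar{\partial}u)$ yields the $B_{q,4,2}\cdot C_{q,4,2}$ terms; the second derivatives of $F(\nabla^2 u)$ split into a $\nabla^3u\otimes\nabla^3u$ contribution (the $B_{q,3,3}$ terms) and further $\nabla^4u\cdot\nabla^2u$ contributions; and the derivatives of the background data $g^{i\bar{j}}, \log\det g$ contribute the $B_{q,3,2}$ terms (carrying the coordinate weight $|z|$ from the Taylor expansion of the metric at the chosen point) and the $B_{q,2,2}$ terms.

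The main obstacle is the bookkeeping in this last step: one must verify that every term produced by the expansion fits into exactly one of the four prescribed bilinear-times-smooth-function patterns, with no other structures arising. Because $(g_1)^{i\bar{j}}$ and $F$ are analytic in $\nabla^2 u$ on the neighborhood $\{\|\nabla^2 u\| < c'\}$, the smoothness of the coefficients $C_{q,\bullet,\bullet}$ is automatic; the only real work is in tracking the derivative orders combinatorially, which is finite but tedious. Once this bookkeeping is performed the decomposition follows immediately.
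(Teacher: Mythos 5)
Your proposal is correct and takes essentially the same route as the paper, which does not prove this lemma directly but defers to Besse (Lemma 2.158), Arezzo--Pacard (Eq.~(31)) and Lena (Lemmas 1.2--1.3): your Taylor expansion of $S(\omega_t)$ in $t$, the standard first-variation formula combined with the paper's Lemma~\ref{Lichnerowicz}, and the coordinate bookkeeping yielding the four bilinear patterns --- with the $|z|$ weight correctly traced to the Taylor expansion of the background metric at the chosen point --- is precisely the argument carried out in those references. One cosmetic remark: your parenthetical justification of $R^{\bar{j}i}\partial_i\partial_{\bar{j}}u = R^{\bar{j}i}\nabla_i\nabla_{\bar{j}}u$ is weaker than what actually holds, since on a K\"ahler manifold the mixed Hessians of a function satisfy $\nabla_i\nabla_{\bar{j}}u=\partial_i\partial_{\bar{j}}u$ outright (the mixed Christoffel symbols vanish), with no contraction against the Ricci tensor needed.
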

See lemma 2.158 \cite{besse2007einstein}, Equation (31) in \cite{arezzo2006blowing}, lemma 1.2 and lemma 1.3 in \cite{lena2013desingularization} for a proof.
\begin{lemma}\label{curvature of a conformally}
Let $M$ be a Kähler manifold of real dimension $n$ with Kähler metric $g$ and corresponding Kähler form \(\omega\). The scalar curvature of a conformally changed metric $\omega'=e^{2f}\omega$ can be computed by:
$$S(\omega')=e^{-2f}(S(\omega)+2(n-1)\Delta_\omega f-(n-1)(n-2)\|\nabla f\|^2_\omega).$$
\end{lemma}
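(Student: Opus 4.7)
This is the classical formula for the transformation of scalar curvature under a conformal change of Riemannian metric (see, e.g., Theorem 1.159 in Besse's \emph{Einstein Manifolds}); the K\"ahler hypothesis is not actually used. My plan is to prove it by a direct three-step computation in local coordinates.

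First, I would express the Christoffel symbols of $g'=e^{2f}g$ in terms of those of $g$. Applying the defining formula ${\Gamma'}^{k}_{ij}=\tfrac{1}{2}{g'}^{kl}(\partial_i g'_{jl}+\partial_j g'_{il}-\partial_l g'_{ij})$ to $g'=e^{2f}g$ yields
$$\sigma^{k}_{ij}:={\Gamma'}^{k}_{ij}-\Gamma^{k}_{ij}=\delta^{k}_{i}\partial_j f+\delta^{k}_{j}\partial_i f-g_{ij}\nabla^k f.$$
Substituting $\Gamma'=\Gamma+\sigma$ into the Riemann/Ricci formula and simplifying then produces the standard conformal transformation law for the Ricci tensor,
$$\Rc_{g'}=\Rc_g-(n-2)\nabla^2 f+(\Delta_\omega f)g+(n-2)\,df\otimes df-(n-2)|\nabla f|^2_\omega\, g,$$
where $\Delta_\omega=-g^{ij}\nabla_i\nabla_j$ is the positive Hodge Laplacian, matching the convention used throughout the paper (cf.\ Lemma \ref{Lichnerowicz}).

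Next, I would take the $g'$-trace. Since ${g'}^{-1}=e^{-2f}g^{-1}$, this amounts to multiplying by $e^{-2f}$ and then taking the $g$-trace. Using $g^{ij}\nabla_i\nabla_j f=-\Delta_\omega f$ together with $g^{ij}g_{ij}=n$, the two contributions of $\Delta_\omega f$ combine with coefficient $(n-2)+n=2(n-1)$, and the two contributions of $|\nabla f|^2_\omega$ combine with coefficient $(n-2)-n(n-2)=-(n-1)(n-2)$. This yields exactly
$$S(\omega')=e^{-2f}\bigl(S(\omega)+2(n-1)\Delta_\omega f-(n-1)(n-2)|\nabla f|^2_\omega\bigr).$$

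There is no real conceptual obstacle; the one thing that must be handled carefully is the sign convention for the Laplacian, since under the opposite, geometer's convention $\Delta=+g^{ij}\nabla_i\nabla_j$ the middle term in the Ricci law flips sign and one would instead obtain $-2(n-1)\Delta f$ in the final formula. Once the convention is fixed consistently with the rest of the paper, the identity reduces to the two elementary contractions above.
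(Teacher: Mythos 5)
Your proposal is correct: the paper offers no argument of its own for this lemma, deferring entirely to Section 1J of Besse's \emph{Einstein Manifolds} (Theorem 1.159), and your outline is precisely the standard derivation found there --- conformal change of Christoffel symbols, the resulting Ricci transformation law, then a $g'$-trace. You also correctly pin down the one genuinely delicate point, namely that the stated sign $+2(n-1)\Delta_\omega f$ requires the positive (Hodge) convention $\Delta_\omega f=-g^{ij}\nabla_i\nabla_j f$, which is the convention of the cited source, and your coefficient bookkeeping $(n-2)+n=2(n-1)$ and $(n-2)-n(n-2)=-(n-1)(n-2)$ checks out.
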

See Section 1J in \cite{besse2007einstein} for a proof.
Extremal metrics were defined by Calabi \cite{calabi1982extremal} in 1982 as follows:
\begin{definition}[Extremal Metrics]
Suppose that $M$ is a compact Kähler manifold. An extremal Kähler metric on $M$ in the class $\Omega\in H^2_{\DR}(M,\mathbb{R})$ is a critical point
of the functional
\[\text{Cal}(\omega) = \displaystyle\int_M S(\omega)^2\omega^n,\]
for $\omega\in \Omega$, where $S(\omega)$ is the scalar curvature of $\omega$. This functional is called the
Calabi energy functional.
\end{definition}
\begin{theorem}
A Kähler metric $\omega$ on compact Kähler manifolds $M$ is extremal if and only if $S(\omega)$ is a Killing potential, i.e, one of the following equivalent conditions holds:
\begin{enumerate}
\item  $\nabla^{1,0}S(\omega)$ is a holomorphic vector field.
\item  $\mathcal{D}^*\mathcal{D} S(\omega)=0$. (Here, $\mathcal{D}^*\mathcal{D}$ is the Lichnerowicz operator defined in Definition \ref{Lichner} on page \pageref{Lichner}.)
\end{enumerate}
\end{theorem}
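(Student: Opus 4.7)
The plan is to establish the two equivalences separately: extremality $\iff$ condition (2), and (2) $\iff$ (1). The first is obtained by computing the Euler--Lagrange equation of the Calabi functional $\mathrm{Cal}(\omega)=\int_M S(\omega)^2\omega^n$ on $\Omega=[\omega]$, while the second uses the self-adjointness of $\mathcal{D}^*\mathcal{D}$ recorded in Definition \ref{Lichner} together with the structural formula $\mathcal{D} u = D^-du = -\tfrac{1}{2}\omega(J(\mathcal{L}_{(du)^\sharp}J)\cdot,\cdot)$ from the lemma immediately preceding it.

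For the first step, I fix $\omega\in\Omega$ and differentiate along the path $\omega_t=\omega+t\sqrt{-1}\partial\bar\partial\phi$ for real smooth $\phi$. By Lemma \ref{linears}, $\frac{d}{dt}\big|_{t=0}S(\omega_t) = L_\omega(\phi) = -\mathcal{D}^*\mathcal{D}\phi+\tfrac{1}{2}\nabla S(\omega)\cdot\nabla\phi$, while $\frac{d}{dt}\big|_{t=0}\omega_t^n = n\sqrt{-1}\partial\bar\partial\phi\wedge\omega^{n-1}=\tfrac{1}{2}(\Delta_g\phi)\omega^n$ in the convention of Lemma \ref{Lichnerowicz}. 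Substituting into $\mathrm{Cal}(\omega_t)$ yields
$$\frac{d}{dt}\Big|_{t=0}\mathrm{Cal}(\omega_t) = -2\int_M S(\omega)\,\mathcal{D}^*\mathcal{D}\phi\,\omega^n + \int_M S(\omega)\nabla S(\omega)\cdot\nabla\phi\,\omega^n + \tfrac{1}{2}\int_M S(\omega)^2\Delta_g\phi\,\omega^n.$$
I then transfer $\mathcal{D}^*\mathcal{D}$ onto $S(\omega)$ via self-adjointness, rewrite $S\nabla S\cdot\nabla\phi = \tfrac{1}{2}\nabla(S^2)\cdot\nabla\phi$, and integrate by parts once; the last two terms cancel, leaving
$$\frac{d}{dt}\Big|_{t=0}\mathrm{Cal}(\omega_t) = -2\int_M \phi\,\mathcal{D}^*\mathcal{D} S(\omega)\,\omega^n.$$
Since this must vanish for all admissible $\phi$ and constants lie in $\ker\mathcal{D}^*\mathcal{D}$, extremality of $\omega$ is equivalent to condition (2).

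For the equivalence of (1) and (2), the self-adjointness of $\mathcal{D}^*\mathcal{D}$ gives
$$\int_M S(\omega)\,\mathcal{D}^*\mathcal{D} S(\omega)\,\omega^n = \int_M |\mathcal{D} S(\omega)|^2\,\omega^n,$$
so $\mathcal{D}^*\mathcal{D} S(\omega)=0$ iff $\mathcal{D} S(\omega)=0$. Applying the formula $\mathcal{D} u = -\tfrac{1}{2}\omega(J(\mathcal{L}_{\nabla u}J)\cdot,\cdot)$ with $u=S(\omega)$, the condition $\mathcal{D} S(\omega)=0$ is equivalent to $\mathcal{L}_{\nabla S(\omega)}J=0$, i.e.\ $\nabla S(\omega)$ is a real holomorphic vector field on $(M,J)$, which is in turn equivalent to $\nabla^{1,0}S(\omega) = \tfrac{1}{2}(\nabla S(\omega)-iJ\nabla S(\omega))$ being a holomorphic section of $T^{1,0}M$. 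This is condition (1).

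The main delicate point will be the cancellation in the first-variation calculation, and in particular pinning down the convention-dependent constants and signs in $\int\nabla f\cdot\nabla g\,\omega^n = -\int f\Delta_g g\,\omega^n$ and in $n\sqrt{-1}\partial\bar\partial\phi\wedge\omega^{n-1}=\tfrac{1}{2}(\Delta_g\phi)\omega^n$, so that the two $\int S(\omega)^2\Delta_g\phi\,\omega^n$ contributions cancel exactly as claimed. Once these conventions are reconciled with those of Lemmas \ref{Lichnerowicz} and \ref{linears}, the proof is standard and closely parallels the derivation in Gauduchon's notes cited after Lemma \ref{linears}.
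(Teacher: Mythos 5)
Your proof is correct and takes essentially the same route as the paper's proof, which is given by citation to Gauduchon (Lemma 1.23.2) and Sz\'ekelyhidi (Theorem 4.2): the first variation of the Calabi functional along $\omega+t\sqrt{-1}\partial\bar{\partial}\phi$ yields the Euler--Lagrange equation $\mathcal{D}^*\mathcal{D}S(\omega)=0$ after the integration-by-parts cancellation you describe, and then self-adjointness on the compact $M$ plus the formula $\mathcal{D}u=D^-du=-\tfrac{1}{2}\omega(J(\mathcal{L}_{\nabla u}J)\cdot,\cdot)$ identifies this with $\mathcal{L}_{\nabla S(\omega)}J=0$, i.e.\ holomorphy of $\nabla^{1,0}S(\omega)$. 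The convention-dependent constants you flag do reconcile exactly as claimed (with $\Delta_g=2g^{i\bar{j}}\partial_i\partial_{\bar{j}}$ the two $\tfrac{1}{2}\int S^2\Delta_g\phi\,\omega^n$ terms cancel), so nothing is missing.
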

See lemma 1.23.2 in \cite{Gauduchon} or Theorem 4.2. in \cite{szekelyhidi2014introduction} for a proof.
\begin{example}
The most important examples of extremal metrics are cscK metrics. In particular Kähler-Einstein metrics have constant scalar curvature, so they are examples of extremal metrics, i.e,
\[\begin{array}{ccc}
\text{KE} \subset  \quad\text{cscK} & \subset & \text{Extremal.}
\end{array}\]
\end{example}
\begin{remark}
Suppose that $M$ is a compact Kähler manifold.
\begin{enumerate}
\item If $\mathfrak{h}(M)=0$, i.e, $M$ admits no non-trivial holomorphic vector fields, then every extremal Kähler metric must have constant scalar curvature.
\item If the Kähler class $\Omega$ is proportional to $c_1(M)$, then any constant scalar curvature metric in $\Omega$ is Kähler-Einstein.
\end{enumerate}
\end{remark}
See lemma 2.2.3 in \cite{futaki2006kahler} for a proof.\\
Now we discuss the \textbf{Conjecture 1}. The existence of Kähler-Einstein metrics for compact Kähler manifolds depends on the sign of the first Chern class of the Kähler manifold.
\begin{theorem}[Yau]
Let $M$ be a compact Kähler manifold with $c_1(M) = 0$. Then, every Kähler class contains a unique Ricci flat metric. These types of manifolds are called Calabi-Yau. Calabi-Yau manifolds are complex manifolds that generalize K3 surfaces to higher dimensions.
\end{theorem}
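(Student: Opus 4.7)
The plan is to apply the continuity method to a complex Monge-Amp\`ere equation equivalent to the Ricci-flatness condition. Fix a reference \kahler metric $\omega_0\in\Omega$. Since $2\pi c_1(M)=[\Rc(\omega_0)]=0$, the $\dd$-lemma furnishes a smooth real function $F$ with $\Rc(\omega_0)=\sqrt{-1}\,\dd F$, which we normalize so that $\int_M(e^F-1)\omega_0^n=0$. Every other \kahler metric in $\Omega$ is of the form $\omega_\phi:=\omega_0+\sqrt{-1}\,\dd\phi$ for a smooth function $\phi$, and using the identity $\Rc(\omega_\phi)=\Rc(\omega_0)-\sqrt{-1}\,\dd\log(\omega_\phi^n/\omega_0^n)$, Ricci-flatness of $\omega_\phi$ is equivalent to the complex Monge-Amp\`ere equation
\begin{equation*}
(\omega_0+\sqrt{-1}\,\dd\phi)^n=e^F\omega_0^n,\qquad \omega_\phi>0.
\end{equation*}

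Next, set up the continuity path $F_t:=tF-c_t$ for $t\in[0,1]$, where $c_t$ is determined by the integrability condition $\int_M(e^{F_t}-1)\omega_0^n=0$, and let
\begin{equation*}
S=\Bigl\{t\in[0,1]:\ \exists\,\phi_t\in C^{\infty}(M),\ \omega_{\phi_t}>0,\ \omega_{\phi_t}^n=e^{F_t}\omega_0^n,\ \textstyle\int_M\phi_t\,\omega_0^n=0\Bigr\}.
\end{equation*}
Clearly $0\in S$ with $\phi_0\equiv 0$. Openness of $S$ follows from the implicit function theorem in a suitable H\"older space: the linearization of the map $\phi\mapsto\log(\omega_\phi^n/\omega_0^n)$ is $\tfrac12\Delta_{\omega_\phi}$, which restricts to an isomorphism between the slices of mean-zero functions in $C^{k+2,\alpha}$ and $C^{k,\alpha}$.

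The main obstacle is closedness of $S$, which requires a priori $C^{k,\alpha}$ bounds on $\phi_t$ uniform in $t$. Following Yau's original strategy, one establishes in turn: (i) a $C^0$ bound via Moser iteration on the Monge-Amp\`ere equation, exploiting the normalization $\int_M(e^{F_t}-1)\omega_0^n=0$; (ii) a $C^2$ bound, by applying the maximum principle to a quantity of the form $e^{-\lambda\phi_t}\,\tr_{\omega_0}\omega_{\phi_t}$ with $\lambda$ chosen in terms of a lower bound of the holomorphic bisectional curvature of $\omega_0$; (iii) a $C^{2,\alpha}$ bound, either from Calabi's third-order identity or from the Evans-Krylov theorem applied to the concave fully nonlinear operator $\phi\mapsto\log\det(g_{0,i\bar j}+\phi_{i\bar j})$. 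Schauder bootstrapping then promotes these to $C^{k,\alpha}$ bounds for every $k$, so the Arzel\`a-Ascoli theorem produces a smooth solution at every limit point of $S$.

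For uniqueness within $\Omega$, suppose $\phi_1,\phi_2$ are two solutions. The algebraic identity
\begin{equation*}
\omega_{\phi_1}^n-\omega_{\phi_2}^n=\sqrt{-1}\,\dd(\phi_1-\phi_2)\wedge\sum_{k=0}^{n-1}\omega_{\phi_1}^{k}\wedge\omega_{\phi_2}^{n-1-k},
\end{equation*}
combined with integration by parts against $\phi_1-\phi_2$ on the identity $\omega_{\phi_1}^n=\omega_{\phi_2}^n$, produces a sum of nonnegative terms that must vanish, forcing $d(\phi_1-\phi_2)\equiv 0$ and hence $\omega_{\phi_1}=\omega_{\phi_2}$. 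The hardest part of this whole program is the $C^2$ estimate in step (ii), which hinges on the delicate choice of auxiliary function and the use of a lower bisectional curvature bound for the background metric $\omega_0$; once it is in place, the rest of the argument is comparatively standard elliptic machinery.
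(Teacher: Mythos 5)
Your outline is correct and is precisely the standard continuity-method proof of the Calabi conjecture (Monge--Amp\`ere reduction, openness via the linearized Laplacian on mean-zero slices, closedness via Yau's $C^0$, $C^2$, and $C^{2,\alpha}$ a priori estimates, and uniqueness by the telescoping identity plus integration by parts); the paper itself gives no proof, stating the theorem as background with a citation to \cite{yau1978ricci, yau1977calabi}, where exactly this argument appears. The only cosmetic caveat is that the factor $\tfrac12$ in the linearization $\tfrac12\Delta_{\omega_\phi}$ depends on whether one uses the Riemannian or the complex Laplacian convention, which does not affect the argument.
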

The case when the first Chern class is negative is proved independently in 1978 by Thierry Aubin \cite{aubin1976equations} and Shing-Tung Yau \cite{yau1978ricci} as follows:
\begin{theorem}[Aubin-Yau]
Let $M$ be a compact Kähler manifold with $c_1(M)<0$. Then, there is a unique Kähler metric $\omega\in -2\pi c_1(M)$ such that $\Rc(\omega)=-\omega$.
\end{theorem}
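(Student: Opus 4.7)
The plan is to reduce the Kähler-Einstein equation $\Rc(\omega)=-\omega$ to a scalar complex Monge-Amp\`ere equation and solve it by the continuity method of Aubin and Yau. First I fix any Kähler reference metric $\omega_0\in -2\pi c_1(M)$. Since $\Rc(\omega_0)$ represents $2\pi c_1(M)=-[\omega_0]$ in $H^{1,1}(M,\bR)$, the $\partial\bar\partial$-lemma produces a smooth function $f$ on $M$ with $\Rc(\omega_0)+\omega_0=\sqrt{-1}\,\partial\bar\partial f$. Writing $\omega_\phi=\omega_0+\sqrt{-1}\,\partial\bar\partial\phi$ and using the identity $\Rc(\omega_\phi)=\Rc(\omega_0)-\sqrt{-1}\,\partial\bar\partial\log(\omega_\phi^n/\omega_0^n)$, the equation $\Rc(\omega_\phi)=-\omega_\phi$ becomes, after absorbing an irrelevant constant into $f$, the Monge-Amp\`ere equation $(\omega_0+\sqrt{-1}\,\partial\bar\partial\phi)^n = e^{f+\phi}\omega_0^n$ with the positivity constraint $\omega_\phi>0$. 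Note the sign $+\phi$ on the right-hand side, which will be crucial.

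Next I would set up the continuity path $(\omega_0+\sqrt{-1}\,\partial\bar\partial\phi_t)^n = e^{tf+\phi_t}\omega_0^n$ for $t\in[0,1]$; the case $t=0$ admits the trivial solution $\phi_0=0$ and $t=1$ recovers the target equation. Let $S\subset[0,1]$ denote the set of $t$ for which a smooth Kähler solution exists. Openness of $S$ is the implicit function theorem in $C^{4,\alpha}(M)$: linearizing the logarithmic form of the equation at $\phi_t$ yields the operator $\tfrac{1}{2}\Delta_{\omega_{\phi_t}}-\Id$ (in the geometric sign convention where $\Delta$ is nonpositive with kernel the constants), whose spectrum lies in $(-\infty,-1]$, so it is an isomorphism between the appropriate Hölder spaces.

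The main obstacle is closedness of $S$, which reduces to a priori estimates uniform in $t$. The $C^0$ bound is the easy direction here, in sharp contrast with the Fano case: at a maximum of $\phi_t$ one has $\sqrt{-1}\,\partial\bar\partial\phi_t\leq 0$, hence $\omega_{\phi_t}^n\leq\omega_0^n$, forcing $e^{tf+\phi_t}\leq 1$ and $\phi_t\leq -tf$ there; the symmetric minimum argument gives $\|\phi_t\|_{C^0}\leq\|f\|_{C^0}$. The $C^2$ estimate is the deep step: one applies Yau's maximum principle to a quantity such as $e^{-C\phi_t}(n+\Delta_{\omega_0}\phi_t)$, using a lower bound on the bisectional curvature of $\omega_0$ together with the $C^0$ bound to absorb the bad third-order terms and extract a uniform upper bound for $\Delta_{\omega_0}\phi_t$; combined with the Monge-Amp\`ere equation itself this gives two-sided bounds on the eigenvalues of $\omega_{\phi_t}$ relative to $\omega_0$, so the equation is uniformly elliptic. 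The $C^{2,\alpha}$ estimate then follows from Evans-Krylov for concave fully nonlinear elliptic equations (or, alternatively, Calabi's $C^3$ computation), and higher regularity by Schauder bootstrapping, closing $S$.

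Finally, for uniqueness, suppose $\phi$ and $\psi$ both solve $\omega_\phi^n=e^{f+\phi}\omega_0^n$. At a point where $\phi-\psi$ attains its maximum we have $\sqrt{-1}\,\partial\bar\partial(\phi-\psi)\leq 0$, so $\omega_\phi^n\leq\omega_\psi^n$ there, whence $e^\phi\leq e^\psi$ at that point, forcing $\max(\phi-\psi)\leq 0$; swapping the roles of $\phi$ and $\psi$ yields $\phi=\psi$ and hence uniqueness of the Kähler-Einstein form $\omega\in -2\pi c_1(M)$. The hardest step throughout is genuinely the $C^2$ estimate, which is the crux of Yau's original argument and the place where the specific sign of the equation and the Kähler structure are exploited most delicately.
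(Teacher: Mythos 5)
Your proposal is correct: the paper states this classical theorem without proof, simply citing Aubin \cite{aubin1976equations} and Yau \cite{yau1978ricci}, and your argument is precisely the standard proof from those references — reduction to the Monge-Amp\`ere equation $(\omega_0+\sqrt{-1}\,\partial\bar\partial\phi)^n=e^{f+\phi}\omega_0^n$, the continuity method with openness from invertibility of $\Delta_{\omega_{\phi_t}}-\Id$, the easy $C^0$ bound from the favorable sign of $\phi$ in the exponential, Yau's $C^2$ estimate, Evans--Krylov (or Calabi's $C^3$ estimate) plus bootstrapping for closedness, and the maximum-principle uniqueness. All signs and key steps check out, so there is nothing to compare against in the paper beyond noting agreement with the cited original arguments.
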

When the first Chern class is positive, existence of Kähler-Einstein metrics remained a well-known open problem for many years. In this case, there are a non-trivial obstructions to existence. In 2012, Xiuxiong Chen, Simon Donaldson, and Song Sun \cite{chen2015kahler1},\cite{ chen2015kahler2}, \cite{chen2015kahler3} as well as Tian \cite{MR3352459} proved that in this case existence is equivalent to an algebro-geometric property called K-stability. 
\\
Now we state Matsushima - Lichnerowicz theorem. This classical theorem gives us obstructions to the existence of cscK metric based on the structure of the Lie algebra of holomorphic vector fields.
\begin{theorem}[Matsushima-Lichnerowicz]
Let $(M, J, g)$ be a cscK manifold. Then, the Lie algebra $\mathfrak{h}(M)$ of holomorphic vector fields decomposes as a direct sum: 
$$\mathfrak{h}(M) = \mathfrak{h}_0(M) \oplus \mathfrak{a}(M),$$
where $\mathfrak{a}(M) \subset \mathfrak{h}(M)$ is the abelian subalgebra of parallel holomorphic vector fields and $\mathfrak{h}_0(M)$ is the subalgebra of holomorphic vector fields with zeros.  Furthermore, $\mathfrak{h}_0(M)$ is the complexification of the killing fields with zeros, i.e,
$$\mathfrak{h}_0(M) = (\mathfrak{k}(M, g) / \mathfrak{a}(M))\otimes_{\mathbb{R}} \mathbb{C},$$
where $\mathfrak{k}(M, g)$ denotes the Lie algebra of real Killing vector fields on $(M, g)$. In particular, $\mathfrak{h}(M)$ is a reductive Lie algebra, i.e, it is the direct sum of an abelian and a semisimple Lie algebra. 
\end{theorem}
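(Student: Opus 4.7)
The plan is to combine a structural decomposition of $\mathfrak{h}(M)$ on a compact K\"ahler manifold with an analysis of the Lichnerowicz operator $\mathcal{D}^{*}\mathcal{D}$ from Lemma \ref{Lichnerowicz}, exploiting the cscK hypothesis to make this operator real and so to reduce the study of holomorphic vector fields to that of real-valued Killing potentials.

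First I would recall the vector-space splitting $\mathfrak{h}(M) = \mathfrak{a}(M) \oplus \mathfrak{h}_{0}(M)$ available on any compact K\"ahler manifold: a parallel vector field on a compact connected manifold is either identically zero or nowhere vanishing, so the two summands intersect trivially, and a Hodge-type decomposition writes each $(1,0)$-type holomorphic field as a parallel part plus one of the form $\grad^{1,0}_{\omega} f$, with $f$ a complex function unique up to an additive constant and with $\grad^{1,0}_{\omega} f$ landing in $\mathfrak{h}_{0}(M)$. Holomorphicity of $\grad^{1,0}_{\omega} f$ is equivalent to $\mathcal{D}f = 0$, i.e.\ $f \in \ker(\mathcal{D}^{*}\mathcal{D})$, which gives a complex-linear identification
\[ \mathfrak{h}_{0}(M) \;\cong\; \ker_{\mathbb{C}}(\mathcal{D}^{*}\mathcal{D})/\mathbb{C}. \]

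Next I would invoke the cscK hypothesis through Lemma \ref{Lichnerowicz}: the first-order coefficient $\tfrac{1}{2}g^{i\bar{j}}\nabla_{i} S(\omega)$ of $\mathcal{D}^{*}\mathcal{D}$ vanishes when $S(\omega)$ is constant, so $\mathcal{D}^{*}\mathcal{D}$ has real coefficients and $\ker_{\mathbb{C}}(\mathcal{D}^{*}\mathcal{D}) = \ker_{\mathbb{R}}(\mathcal{D}^{*}\mathcal{D}) \otimes_{\mathbb{R}} \mathbb{C}$. The step I expect to require the most care is matching real Killing potentials with Killing fields possessing a zero: for a real $u$ with $\mathcal{D}u = 0$, the identity
\[ \mathcal{D}u \;=\; -\tfrac{1}{2}\,\omega\bigl(J(\mathcal{L}_{\nabla u} J)\,\cdot\,,\,\cdot\,\bigr) \]
from the lemma preceding Definition \ref{Lichner} forces $\mathcal{L}_{\nabla u}J = 0$, so both $\nabla u$ and (since $\mathfrak{h}(M)$ is $J$-invariant) $J\nabla u$ are real holomorphic; because $\iota_{J\nabla u}\omega = -du$ is closed, $J\nabla u$ also preserves $\omega$, hence preserves $g = \omega(\cdot,J\cdot)$ and is therefore Killing. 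Conversely, every Killing field with a zero arises this way, giving a real-linear isomorphism $\ker_{\mathbb{R}}(\mathcal{D}^{*}\mathcal{D})/\mathbb{R} \cong \mathfrak{k}(M,g)/\mathfrak{a}(M)$; complexifying and combining with the previous identification yields $\mathfrak{h}_{0}(M) = \bigl(\mathfrak{k}(M,g)/\mathfrak{a}(M)\bigr) \otimes_{\mathbb{R}} \mathbb{C}$.

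Finally, reductivity follows formally: $\Iso(M,g)$ is a compact Lie group, so $\mathfrak{k}(M,g)$ and its quotient $\mathfrak{k}(M,g)/\mathfrak{a}(M)$ are compact real Lie algebras, and the complexification of a compact Lie algebra is reductive; together with the abelian summand $\mathfrak{a}(M)$ this makes $\mathfrak{h}(M)$ reductive.
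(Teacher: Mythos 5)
The paper offers no proof of this theorem; it simply cites the original papers of Lichnerowicz and Matsushima, so your outline can only be measured against the classical argument. Most of your sketch follows the standard modern route correctly: with $S(\omega)$ constant the first-order term $\frac{1}{2}\nabla S\cdot\nabla$ in Lemma \ref{Lichnerowicz} drops and $\mathcal{D}^*\mathcal{D}$ becomes a real operator, so $\ker_{\mathbb{C}}(\mathcal{D}^*\mathcal{D})=\ker_{\mathbb{R}}(\mathcal{D}^*\mathcal{D})\otimes_{\mathbb{R}}\mathbb{C}$; the passage from a real Killing potential $u$ to the Killing field $J\nabla u$ via $\iota_{J\nabla u}\omega=-du$ is right; and reductivity from compactness of $\Iso(M,g)$ is the standard conclusion.

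The genuine gap is in your very first step: the splitting $\mathfrak{h}(M)=\mathfrak{a}(M)\oplus\mathfrak{h}_0(M)$ is \emph{not} "available on any compact K\"ahler manifold" --- it is precisely the assertion where the cscK hypothesis does the real work, and by assuming it for free you have made the argument circular at its crux. What Hodge theory gives unconditionally is only the decomposition $\xi=\xi_H+\bar{\partial}f$ of the dual $(0,1)$-form of a holomorphic field $X$; it does \emph{not} give that $\grad^{1,0}_{\omega}f$ is again holomorphic (equivalently, that $\xi_H^{\sharp}$ is holomorphic, and hence parallel by the Bochner argument). A concrete counterexample: on $\mathbb{CP}^1\times T^2$ with a generic non-cscK K\"ahler metric, the nowhere-vanishing holomorphic field generating the $T^2$-factor is not parallel, while every holomorphic field with a zero has vanishing $T^2$-component, so no decomposition into (parallel) $+$ (vanishing somewhere) exists for that metric. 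The missing ingredient is Lichnerowicz's identity $\bar{\partial}^*\bar{\partial}\,\grad^{1,0}_{\omega}f=\tfrac{1}{2}\grad^{1,0}_{\omega}\bigl(\mathcal{D}^*\mathcal{D}f\bigr)$, which holds \emph{only} when $S(\omega)$ is constant (otherwise extra $\nabla S$ terms appear): pairing $0=\bar{\partial}X=\bar{\partial}\xi_H^{\sharp}+\bar{\partial}\,\grad^{1,0}_{\omega}f$ against $\bar{\partial}\,\grad^{1,0}_{\omega}f$, the cross term becomes $\langle\xi_H,\bar{\partial}(\mathcal{D}^*\mathcal{D}f)\rangle=0$ by orthogonality of harmonic and exact forms, forcing $\bar{\partial}\,\grad^{1,0}_{\omega}f=0$ and thereby establishing the splitting. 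You should restructure the proof so that the cscK hypothesis is invoked here, not merely later to make $\mathcal{D}^*\mathcal{D}$ real; as a smaller point, the claim that $\grad^{1,0}_{\omega}f$ for \emph{complex} $f$ always has a zero (so that it lands in $\mathfrak{h}_0(M)$) also deserves an argument rather than an assertion.
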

See \cite{lichnerowicz1957transformations} and \cite{matsushima1957structure} for a proof. 
\begin{corollary}
Let $(M, J, g)$ be a cscK manifold. Then the identity component of $\Iso(M,g,J)$ is the maximal compact subgroup of the identity component $\Aut(M,J)$.
\end{corollary}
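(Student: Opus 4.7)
The plan is to work at the level of Lie algebras and then transfer the conclusion to the groups via the correspondence between compact real forms of a reductive complex Lie algebra and maximal compact subgroups of the corresponding reductive complex Lie group.

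First, I would identify the relevant Lie algebras. Because $M$ is compact, Myers--Steenrod gives that $\Iso(M,g)$ is a compact Lie group, so its closed subgroup $\Iso(M,g,J) = \Iso(M,g) \cap \Aut(M,J)$ is also compact. Its Lie algebra is the space of real Killing vector fields that preserve $J$, i.e., the real holomorphic Killing fields, which we identify with the real subspace $\mathfrak{k}(M,g) \cap \mathfrak{h}(M)$ of $\mathfrak{h}(M)$ appearing in the statement of Matsushima--Lichnerowicz. On the other hand, $\Aut_0(M,J)$ is a connected complex Lie group whose Lie algebra is $\mathfrak{h}(M)$. In particular $\Iso_0(M,g,J)$ is a compact connected subgroup of $\Aut_0(M,J)$.

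Second, I would invoke the Matsushima--Lichnerowicz decomposition already stated: since $g$ is cscK, $\mathfrak{h}(M)$ is reductive and splits as $\mathfrak{a}(M) \oplus \mathfrak{h}_0(M)$ with $\mathfrak{h}_0(M) = (\mathfrak{k}(M,g)/\mathfrak{a}(M)) \otimes_{\mathbb{R}} \mathbb{C}$. Together with the fact that $\mathfrak{a}(M)$ consists of parallel, hence Killing, holomorphic vector fields, this exhibits $\mathfrak{k}(M,g) \cap \mathfrak{h}(M)$ precisely as a compact real form of the reductive complex Lie algebra $\mathfrak{h}(M)$.

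Third, I would apply the standard structure theorem for connected complex reductive Lie groups (see e.g.\ Hochschild, \emph{The Structure of Lie Groups}): if $\mathfrak{k}$ is a compact real form of the Lie algebra of a connected complex reductive Lie group $G^{\mathbb{C}}$, then the connected Lie subgroup $K \subset G^{\mathbb{C}}$ with Lie algebra $\mathfrak{k}$ is a maximal compact subgroup. Applied to $G^{\mathbb{C}} = \Aut_0(M,J)$ and $\mathfrak{k} = \mathfrak{k}(M,g) \cap \mathfrak{h}(M)$, this yields that the connected subgroup integrating this real form is maximal compact; since $\Iso_0(M,g,J)$ is connected with precisely this Lie algebra, the two coincide.

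The main substantive content is Matsushima--Lichnerowicz itself, which we have already granted; the remaining obstacle is purely a bookkeeping one, namely checking that $\Iso_0(M,g,J)$ (and not some larger connected group) is the analytic subgroup of $\Aut_0(M,J)$ integrating $\mathfrak{k}(M,g)\cap\mathfrak{h}(M)$. This is immediate from the uniqueness of the connected Lie subgroup with a prescribed Lie subalgebra, so the corollary follows.
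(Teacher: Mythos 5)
There is a genuine gap at your second step: $\mathfrak{k}(M,g)\cap\mathfrak{h}(M)$ is \emph{not} a compact real form of $\mathfrak{h}(M)$ in general. The Matsushima--Lichnerowicz theorem, as stated in the paper, only complexifies the \emph{quotient} $\mathfrak{k}(M,g)/\mathfrak{a}(M)$, and the summand $\mathfrak{a}(M)$ of parallel fields is precisely where your identification fails: on a K\"ahler manifold $\nabla J=0$, so $X$ parallel implies $JX$ parallel, and every parallel field is Killing; hence $\mathfrak{a}(M)$ is a $J$-invariant (complex) subspace consisting entirely of holomorphic Killing fields. Under the identification of $\mathfrak{h}(M)$ with real holomorphic fields (where multiplication by $i$ corresponds to $X\mapsto JX$), this gives $\mathfrak{k}\cap i\mathfrak{k}\supseteq\mathfrak{a}(M)$, whereas a real form requires $\mathfrak{k}\cap i\mathfrak{k}=0$. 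Concretely, for a flat complex torus (cscK, scalar curvature zero) every holomorphic field is parallel and Killing, so your $\mathfrak{k}$ is \emph{all} of $\mathfrak{h}(M)$ as a real Lie algebra; the real-form claim is correct only when $\mathfrak{a}(M)=0$.

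The third step has a second, related gap: the structure theorem you invoke (the analytic subgroup attached to a compact real form is maximal compact) holds for \emph{linear} complex reductive groups, and reductivity of the Lie algebra $\mathfrak{h}(M)$ does not make the group $\Aut_0(M,J)$ reductive in that sense. The same torus example is a counterexample to the theorem in the generality you need: there $\Aut_0(M,J)$ is the compact complex torus $M$ acting by translations, and the analytic subgroup integrating a half-dimensional real subspace of its abelian Lie algebra is either a dense wind (not even closed) or a subtorus that is compact but not maximal, since the whole group is compact. So even after repairing step two on the semisimple part $\mathfrak{h}_0(M)$, the central/parallel directions require a separate argument. The corollary itself is true, but the standard proof (Calabi's, cf.\ Gauduchon, Theorem 3.5.1, which the paper cites for the extremal analogue) runs differently: given a compact connected subgroup $K\subseteq\Aut_0(M,J)$ containing $\Iso_0(M,g,J)$, one shows directly that $K$ acts by isometries, the key point being that a field of the form $JY$, with $Y$ a Killing field with zeros, cannot generate a precompact flow unless $Y=0$. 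As written, your argument establishes the corollary only under the additional hypotheses that $\mathfrak{a}(M)=0$ and that $\Aut_0(M,J)$ is linear.
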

\begin{corollary}
The theorem of Lichnerowicz and Matsushima implies that a compact \kahler manifold $(M, J)$ whose identity component $\Aut_0(M,J)$ of the automorphism group is not reductive does not admit any cscK metric. 
\end{corollary}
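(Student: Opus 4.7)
The plan is to argue by contrapositive directly from the Matsushima--Lichnerowicz theorem stated just above. Suppose, for contradiction to the contrapositive, that $(M,J)$ is a compact Kähler manifold carrying a cscK metric $g$ with Kähler form $\omega$. The task is then to show that the identity component $\Aut_0(M,J)$ is reductive, which will yield the corollary.

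First I would invoke the Matsushima--Lichnerowicz decomposition to write
\[
\mathfrak{h}(M) = \mathfrak{h}_0(M) \oplus \mathfrak{a}(M),
\]
with $\mathfrak{a}(M)$ abelian and $\mathfrak{h}_0(M) = (\mathfrak{k}(M,g)/\mathfrak{a}(M))\otimes_{\mathbb{R}}\mathbb{C}$ the complexification of a real Lie algebra of Killing vector fields. The key structural observation is that $\mathfrak{k}(M,g)$ is the Lie algebra of the isometry group $\Iso(M,g,J)$, which is compact since $M$ is compact; hence the quotient $\mathfrak{k}(M,g)/\mathfrak{a}(M)$ is also the Lie algebra of a compact real Lie group. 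Its complexification $\mathfrak{h}_0(M)$ is therefore reductive by the standard fact that complexifications of compact Lie algebras are reductive (equivalently, semisimple modulo their center).

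Next I would assemble these pieces: $\mathfrak{a}(M)$ is abelian and thus trivially reductive, $\mathfrak{h}_0(M)$ is reductive, and the decomposition is a direct sum of ideals, so $\mathfrak{h}(M)$ itself is reductive, being an extension of an abelian Lie algebra by a reductive one in the form of a direct sum. Since $\mathfrak{h}(M) = \Lie(\Aut_0(M,J))$ and a connected complex Lie group is reductive exactly when its Lie algebra is reductive, we conclude that $\Aut_0(M,J)$ is reductive. Taking the contrapositive: if $\Aut_0(M,J)$ is not reductive, then no cscK metric can exist on $(M,J)$.

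There is essentially no hard step here; the entire content is packaged into the Matsushima--Lichnerowicz theorem. The only point requiring a brief justification is the passage from the compactness of $\Iso(M,g,J)$ to the reductivity of the complexified Lie algebra $\mathfrak{h}_0(M)$, which is a classical Lie-theoretic fact and does not rely on any Kähler geometry beyond what has already been recorded.
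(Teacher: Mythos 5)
Your proof is correct and is essentially the paper's (implicit) argument: the paper states this corollary without proof, as the immediate contrapositive of the Matsushima--Lichnerowicz theorem, whose statement already records that $\mathfrak{h}(M)$ is a reductive Lie algebra for a cscK manifold, so your rederivation of reductivity from the decomposition $\mathfrak{h}(M)=\mathfrak{h}_0(M)\oplus\mathfrak{a}(M)$ is sound but strictly redundant. The one point to read carefully is your bridge ``a connected complex Lie group is reductive exactly when its Lie algebra is reductive'': for general complex Lie groups this fails (the additive group $\mathbb{C}$ has reductive Lie algebra but is not a reductive group), so the corollary should be understood with ``reductive'' taken at the Lie algebra level, or else via the stronger group-level form of Matsushima's theorem identifying $\Aut_0(M,J)$ as the complexification of the compact group $\Iso_0(M,g)$ --- which is the convention the paper is using, so your contrapositive goes through as intended.
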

\begin{example}
Let $n>1$, then the projective space $\mathbb{CP}^n$ blown-up at one or two points does not admit any cscK metric. See \cite{besse2007einstein} page 331 for more details.
\end{example}
\begin{remark}
There is a general version of the Matsushima-Lichnerowicz theorem for extremal metrics proved by Calabi (Theorem 2.3.6 in \cite{futaki2006kahler}). In particular, a compact complex manifold $(M,J)$ for which the connected group of automorphisms is non-trivial but has no connected compact subgroup apart from $\{\Id\}$ cannot have any extremal Kähler metric. Examples of Kähler compact complex surfaces satisfying these hypotheses, hence admitting no extremal Kähler metric, were first given by M. Levine \cite{levine1985remark}. As a consequence, the answer of \textbf{Conjecture 2} is negative in general. A Kähler manifold is called a \textbf{Calabi dream manifold} if every Kähler class on it admits an extremal metric. All compact Riemann surfaces, complex projective spaces $\mathbb{CP}^n$, Hirzebruch surfaces $\mathbb{F}_k\cong \mathbb{P}(\mathcal{O}\oplus \mathcal{O}(k))$, and all compact Calabi-Yau manifolds \cite{yau1978ricci} are Calabi dream manifolds.
\end{remark}
\begin{proposition}[Calabi]
Let $g$ be an extremal Kähler metric on a compact complex manifold $(M, J)$. Let $\Iso_{0}(M,g)$ denote the identity component of the isometry group of $(M, g)$, and let $\Aut_{0}(M,J)$ denote the identity component of the biholomorphism group of $(M, J)$. Then $\Iso_{0}(M,g)$ is a maximal compact subgroup of $\Aut_{0}(M,J)$.
\end{proposition}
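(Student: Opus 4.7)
The plan is to follow Calabi's classical argument, which extends the Matsushima–Lichnerowicz decomposition from the cscK setting to the extremal setting. The essential input is already recorded in the text: $\mathcal{D}^*\mathcal{D}S(\omega)=0$ implies that $\xi_S := J\nabla S(\omega)$ is a Killing vector field on $(M,g)$, so it generates a torus $T \subset \Iso_0(M,g)$ whose complexification already sits inside $\Aut_0(M,J)$. This distinguished Killing field is the anchor around which the whole argument is organized.

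First I would establish the inclusion $\Iso_0(M,g) \subset \Aut_0(M,J)$, equivalently the Lie-algebra statement $\mathfrak{k}(M,g) \subset \mathfrak{h}(M)$. Recall from the lemma preceding Definition \ref{Lichner} that a real vector field $Y$ is $J$-holomorphic if and only if $D^{-}Y^{\flat}=0$. Given any Killing field $X$, one has $X(S(\omega))=0$ since $X$ preserves all Riemannian curvature invariants. Applying the Hodge decomposition to $X^{\flat}$ on the compact Kähler manifold and using Lemma \ref{Lichnerowicz} together with the semi-positivity of the Lichnerowicz operator $\mathcal{D}^*\mathcal{D}$ in Definition \ref{Lichner}, one pairs $D^{-}X^{\flat}$ against itself to obtain an integral identity that, under the extremal condition, forces $D^{-}X^{\flat}=0$. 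Hence $X$ is real holomorphic.

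Next I would apply Calabi's reductive decomposition. Setting $\mathfrak{h}_0 := \{V \in \mathfrak{h}(M) : [V,\nabla^{1,0}S(\omega)]=0\}$, Calabi's theorem asserts that $\mathfrak{h}_0$ is a reductive complex Lie subalgebra of $\mathfrak{h}(M)$, and that
$$\mathfrak{h}(M) = \mathfrak{h}_0 \oplus \bigoplus_{\lambda\neq 0}\mathfrak{h}_\lambda,$$
where $\mathfrak{h}_\lambda$ are eigenspaces of $\mathrm{ad}(\nabla^{1,0}S(\omega))$; the pieces with $\lambda\neq 0$ contribute only to the nilpotent part of $\mathfrak{h}(M)$ and admit no non-trivial compact subgroup. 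The compact real form of $\mathfrak{h}_0$ is precisely the algebra of Killing fields commuting with $\xi_S$, which by the first step and because $\xi_S$ is central in $\mathfrak{k}(M,g)$ coincides with all of $\mathfrak{k}(M,g)$.

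Finally, for maximality, any compact subgroup $K \subset \Aut_0(M,J)$ integrates to a compact real Lie subalgebra of $\mathfrak{h}(M)$ which, by the standard structure theorem for reductive complex Lie groups, lies up to conjugation inside $\Aut_0(M,J)$ within the compact real form of $\mathfrak{h}_0$, hence inside $\mathfrak{k}(M,g)$. Since $\Iso_0(M,g)$ is itself compact and contained in $\Aut_0(M,J)$, it is therefore a maximal compact subgroup. The main obstacle is the first step: proving that every Killing vector field is real holomorphic, which is false without the extremal assumption and requires a delicate Hodge-theoretic manipulation on $1$-forms specific to the extremal case. Calabi's reductivity theorem for $\mathfrak{h}_0$ is a second non-trivial ingredient, but it follows cleanly from the spectral properties of $\mathrm{ad}(\nabla^{1,0}S(\omega))$ once the first step has been secured.
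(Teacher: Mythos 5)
The paper does not actually prove this proposition: it records the statement and cites Theorem 3.5.1 of \cite{Gauduchon}, i.e.\ Calabi's original argument, so your proposal has to be measured against that argument, whose overall skeleton (the inclusion $\Iso_0(M,g)\subset \Aut_0(M,J)$, the eigenspace decomposition of $\mathfrak{h}(M)$ under $\operatorname{ad}$ of the extremal field, the identification of the centralizer with $\mathfrak{k}(M,g)\oplus J\mathfrak{k}(M,g)$, then maximality) you have reproduced correctly. However, your first step rests on a false premise. The claim that ``every Killing vector field is real holomorphic'' is \emph{false without the extremal assumption} is wrong: on any compact K\"ahler manifold this holds with no curvature hypothesis whatsoever. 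Indeed, the flow $\phi_t$ of a Killing field preserves $g$, hence sends the $g$-harmonic form $\omega$ to a $g$-harmonic form in the same cohomology class (the flow is homotopic to the identity), so $\phi_t^*\omega=\omega$ by uniqueness of harmonic representatives; preserving both $g$ and $\omega$, it preserves $J$. So no ``delicate Hodge-theoretic manipulation specific to the extremal case'' is needed or relevant there; extremality enters only through $\mathcal{D}^*\mathcal{D}S(\omega)=0$, which makes $\xi_S=J\nabla S(\omega)$ Killing, and through Calabi's structure theorem. This misplacement is conceptual rather than fatal, since the conclusion of the step is true.

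The genuine gap is in your maximality step. You invoke ``the standard structure theorem for reductive complex Lie groups'' to conjugate an arbitrary compact subgroup of $\Aut_0(M,J)$ into the compact real form of $\mathfrak{h}_0$, but for an extremal metric that is not cscK the group $\Aut_0(M,J)$ is typically \emph{not} reductive: reductivity is exactly the Matsushima--Lichnerowicz obstruction, available only in the cscK case. Calabi's own extremal metrics on $\mathbb{F}_1=\Bl_1^{\mathbb{CP}^2}$ — and this paper itself recalls that $\mathbb{CP}^n$ blown up at one or two points has non-reductive $\Aut_0$, which is why it carries no cscK metric — show that the theorem you cite does not apply; Calabi's decomposition only exhibits $\mathfrak{h}(M)$ as a semidirect sum of the reductive centralizer $\mathfrak{h}_0=\mathfrak{k}(M,g)\oplus J\mathfrak{k}(M,g)$ with the nilpotent ideal $\bigoplus_{\lambda\neq 0}\mathfrak{h}_\lambda$. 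Note also that the obvious spectral shortcut fails: since $\operatorname{ad}(JX)=J\operatorname{ad}(X)$ on real holomorphic fields, $\operatorname{ad}(\xi_S)$ has purely imaginary spectrum on \emph{all} of $\mathfrak{h}(M)$, so skew-adjointness of $\operatorname{ad}(\xi_S)$ on a compact subalgebra does not by itself exclude components in the $\mathfrak{h}_\lambda$ with $\lambda\neq 0$. The correct route is (i) the Cartan--Iwasawa--Malcev theorem — in any connected Lie group, maximal compact subgroups exist, are connected and are conjugate — which reduces the problem to showing that a compact connected $K$ with $\Iso_0(M,g)\subseteq K\subseteq \Aut_0(M,J)$ equals $\Iso_0(M,g)$; and (ii) the hard content of Calabi's theorem, namely that the positively graded directions $\mathfrak{h}_\lambda$ and the $J\mathfrak{k}$-directions (modulo parallel fields) generate no relatively compact one-parameter subgroups; in the model case $\Bl_1^{\mathbb{CP}^2}$ this is the elementary fixed-point argument conjugating a compact subgroup of $\GL(2,\mathbb{C})\ltimes\mathbb{C}^2$ into $\GL(2,\mathbb{C})$, but in general it requires the positivity of the eigenvalues in Calabi's decomposition, which your sketch asserts rather than proves. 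As written, your argument would fail precisely on the non-cscK extremal manifolds the proposition is designed to cover.
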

See Theorem 3.5.1 \cite{Gauduchon} for a proof.
\begin{proposition}
Let $(M, g)$ be a compact cscK manifold and let $ L_{\omega}(u)=-\mathcal{D}^*\mathcal{D}u$ be the linearization of the scalar curvature operator in lemma \ref{linears}, then
$$\dim_{\mathbb{R}}(\ker(L_{\omega}))=\dim_{\mathbb{C}}(\mathfrak{h}_0(M))+1.$$
In particular, when the identity component of the biholomorphism group $\Aut(M,J)$ is discrete, $\ker(L_{\omega})$ consists only of constant functions.
\end{proposition}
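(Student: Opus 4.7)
The plan is to produce a natural complex-linear isomorphism $\ker_{\mathbb{C}}(\mathcal{D})/\mathbb{C}\cong \mathfrak{h}_0(M)$ via the gradient map $u\mapsto \nabla^{1,0}u$, from which the real-dimension formula will follow by complexification. First, since $(M,g)$ is cscK one has $\nabla S(\omega)=0$, so lemma \ref{linears} gives $L_\omega(u)=-\mathcal{D}^*\mathcal{D}u$. By the self-adjointness and semi-positivity of $\mathcal{D}^*\mathcal{D}$ recalled in Definition \ref{Lichner}, integration by parts on the compact manifold $M$ yields $\ker_{\mathbb{R}}(L_\omega)=\ker_{\mathbb{R}}(\mathcal{D}^*\mathcal{D})=\ker_{\mathbb{R}}(\mathcal{D})$. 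Since $\mathcal{D}$ is a real operator, $\ker_{\mathbb{C}}(\mathcal{D})=\ker_{\mathbb{R}}(\mathcal{D})\otimes_{\mathbb{R}}\mathbb{C}$, and hence $\dim_{\mathbb{R}}\ker_{\mathbb{R}}(L_\omega)=\dim_{\mathbb{C}}\ker_{\mathbb{C}}(\mathcal{D})$; the problem reduces to computing this complex dimension.

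Next I would introduce
$$\Phi\colon \ker_{\mathbb{C}}(\mathcal{D})\longrightarrow \mathfrak{h}(M),\qquad u\longmapsto \nabla^{1,0}u.$$
The defining condition $\mathcal{D}u=D^{-}du=0$ is precisely the condition for $\nabla^{1,0}u$ to be holomorphic, so $\Phi$ lands in $\mathfrak{h}(M)$. Its kernel is $\mathbb{C}$, since $\nabla u=0$ on connected $M$ forces $u$ to be constant. The core of the proof is the identification of the image of $\Phi$ with the ideal $\mathfrak{h}_0(M)$ appearing in the Matsushima-Lichnerowicz decomposition $\mathfrak{h}(M)=\mathfrak{h}_0(M)\oplus \mathfrak{a}(M)$, which is valid here because $M$ is cscK.

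For the inclusion $\mathrm{Im}(\Phi)\subseteq \mathfrak{h}_0(M)$ I would use the identity $\iota_{\nabla^{1,0}u}\omega=i\bar{\partial}u$: the associated $(0,1)$-form is $\bar{\partial}$-exact, hence has trivial harmonic component, and this forces the parallel component of $\nabla^{1,0}u$ in the decomposition above to vanish, so that $\nabla^{1,0}u\in \mathfrak{h}_0(M)$. The reverse inclusion is the classical Hodge-theoretic fact that any holomorphic vector field with a zero on a compact Kähler manifold is a holomorphic gradient: given $X\in \mathfrak{h}_0(M)$, the $\bar{\partial}$-closed $(0,1)$-form $\iota_X\omega$ has no harmonic component (since $X$ has no parallel part), so Hodge decomposition on $(0,1)$-forms produces $u$ with $i\bar{\partial}u=\iota_X\omega$, and then $X=\nabla^{1,0}u$. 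I would invoke this as a standard result, for example from \cite{Gauduchon}. Taken together, $\Phi$ induces the complex isomorphism $\ker_{\mathbb{C}}(\mathcal{D})/\mathbb{C}\cong \mathfrak{h}_0(M)$, and combining with the first paragraph yields $\dim_{\mathbb{R}}\ker(L_\omega)=\dim_{\mathbb{C}}\ker_{\mathbb{C}}(\mathcal{D})=\dim_{\mathbb{C}}\mathfrak{h}_0(M)+1$.

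The final assertion is then immediate: if the identity component of $\Aut(M,J)$ is discrete, then $\mathfrak{h}(M)=0$, so $\mathfrak{h}_0(M)=0$, and the formula gives $\dim_{\mathbb{R}}\ker(L_\omega)=1$, meaning the kernel consists only of real constants. The main obstacle in this plan is the surjectivity half of the image identification, which rests on the Hodge-theoretic characterization of holomorphic vector fields with zeros as holomorphic gradients on compact Kähler manifolds; the inclusion-in direction is somewhat easier, but both arguments are cleanest when phrased through the Matsushima-Lichnerowicz decomposition just recalled in the excerpt.
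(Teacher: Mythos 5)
Your proof is correct and follows the standard route: the paper itself offers no argument for this proposition (it simply refers to Proposition 1 of \cite{lena2013desingularization}), and what you give — $\ker L_{\omega}=\ker\mathcal{D}$ by integration by parts on the compact manifold, then $u\mapsto\nabla^{1,0}u$ inducing $\ker_{\mathbb{C}}(\mathcal{D})/\mathbb{C}\cong\mathfrak{h}_0(M)$ via the Matsushima--Lichnerowicz decomposition and the Hodge-theoretic characterization of holomorphic fields with zeros as holomorphic gradients — is exactly the classical proof of the cited result. One small touch-up: in computing $\ker\Phi$, for complex-valued $u$ the condition $\nabla^{1,0}u=0$ says $\bar{\partial}u=0$, so $u$ is holomorphic and hence constant by compactness of $M$; it does not directly give $\nabla u=0$ as you wrote, though the conclusion that $\ker\Phi=\mathbb{C}$ stands.
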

See Proposition 1 in \cite{lena2013desingularization} for a proof.

Now, we recall some notable results of Arezzo-Pacard \cite{arezzo2006blowing} and \cite{arezzo2009blowing} that provide a vast collection of cscK manifolds.
\begin{theorem}[Arezzo-Pacard]
Let $(M, \omega)$ be a constant scalar curvature compact Kähler manifold or Kähler orbifold of complex dimension $m$ with isolated singularities. Assume that there is no nonzero holomorphic vector field vanishing somewhere on $M$. Then, given finitely many smooth points $p_1, \ldots, p_n$ in $M$ and positive numbers $a_1, \ldots, a_n > 0$, there exists $\varepsilon_0 > 0$ such that the blow-up of $M$ at $p_1, \ldots, p_n$ carries constant scalar curvature Kähler forms
$$\omega_\varepsilon \in \pi^*[\omega] - \epsilon^2 \left( a_1^{\frac{1}{m-1}} [E_1] + \ldots + a_n^{\frac{1}{m-1}}  [E_n] \right),$$
where $[E_i]$ are the Poincaré duals of the $(2m-2)$-homology classes of the exceptional divisors of the blow-up at $p_i$, and $\varepsilon \in (0, \varepsilon_0)$. Moreover, as $\varepsilon$ tends to $0$, the sequence of metrics $(g_\varepsilon)_\varepsilon$  converges to $g$ (in smooth topology) on compact subsets away from the exceptional divisors.
\end{theorem}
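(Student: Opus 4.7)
The plan is to follow the now-classical gluing strategy in this subject: construct an approximate cscK metric $\omega_\varepsilon$ on the blow-up $\widehat{M}$ at $p_1,\ldots,p_n$ by welding in a rescaled Burns-Simanca ALE scalar-flat model near each point, then perturb it to an exact cscK metric through the Lichnerowicz operator. The crucial analytic input is that, under the hypothesis that $(M,\omega)$ carries no nonzero holomorphic vector field vanishing somewhere, the proposition on $\dim_\bR \ker(L_\omega)$ stated above ensures that $\ker L_\omega$ consists only of constants, so $\cD^*\cD$ is invertible on mean-zero functions on the base.

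For the gluing, I would choose holomorphic coordinates $z$ near each $p_i$ in which the Kähler potential of $\omega$ takes the form $\tfrac12\|z\|^2+O(\|z\|^4)$. By Example \ref{r=1}, the Burns-Simanca potential $H_{\mathrm{BS}}$ on $\mathcal{O}_{\mathbb{CP}^{m-1}}(-1)$ has leading term $\|Z\|^2$, so the rescaled potential
$$H_i^\varepsilon(z) \;=\; \varepsilon^2 a_i^{1/(m-1)}\, H_{\mathrm{BS}}\!\left(z/(\varepsilon\, a_i^{1/(2m-2)})\right) \;=\; \tfrac12\|z\|^2 + O\!\bigl(\varepsilon^{2m-2}\|z\|^{4-2m}\bigr)$$
also matches the Euclidean model to leading order. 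Introducing a cutoff on an intermediate annulus $r_\varepsilon/2 \leq \|z\| \leq 2r_\varepsilon$ with $\varepsilon \ll r_\varepsilon \ll 1$, I would interpolate between these two potentials to produce a global Kähler form $\omega_\varepsilon$ on $\widehat{M}$ in the class $[\omega]-\varepsilon^2\sum_i a_i^{1/(m-1)}[E_i]$. The resulting scalar-curvature error $S(\omega_\varepsilon)-\hat S_\varepsilon$, where $\hat S_\varepsilon$ is the topological average determined by the class, is then supported essentially in the gluing annuli and decays as a positive power of $\varepsilon$ in a weighted Hölder norm adapted to the degenerating geometry.

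The heart of the proof is the linear analysis. On the base $(M,\omega)$ the operator $\cD^*\cD$ is an isomorphism between mean-zero $C^{4,\alpha}$ and $C^{0,\alpha}$ functions by the kernel statement. On the Burns-Simanca model $\cD^*\cD$ is an isomorphism between appropriately chosen weighted Hölder spaces $C^{4,\alpha}_\delta$ for weights $\delta$ lying in a gap of the indicial roots. A Mazzeo-style rescaling-and-contradiction argument patches these two inverses into a right inverse $G_\varepsilon$ for $\cD_\varepsilon^*\cD_\varepsilon$ on mean-zero functions, whose operator norm blows up only polynomially in $\varepsilon^{-1}$. Using the expansion of Lemma \ref{linears}, the cscK equation $S(\omega_\varepsilon+\sqrt{-1}\dd u)=\hat S_\varepsilon$ then becomes the fixed-point problem
$$u \;=\; G_\varepsilon\!\left(\hat S_\varepsilon - S(\omega_\varepsilon) - Q_{\omega_\varepsilon}(\nabla^2 u)\right),$$
and a Banach contraction argument on a small ball in $C^{4,\alpha}_\delta$ solves it for $\varepsilon$ sufficiently small.

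The main obstacle I anticipate is balancing three competing scales: the size of the initial error $\hat S_\varepsilon - S(\omega_\varepsilon)$, the polynomial blow-up of $\|G_\varepsilon\|$, and the nonlinear estimates on $Q_{\omega_\varepsilon}$ in the weighted norm. This forces a coordinated choice of the gluing radius $r_\varepsilon$ and the weight $\delta$, kept strictly inside a gap between indicial roots of $\cD^*\cD$ on the ALE model; this is precisely where the assumptions $m>1$ and $a_i>0$ enter, through the availability of the scalar-flat Burns-Simanca model and the legitimacy of the rescaling $\varepsilon a_i^{1/(2m-2)}$. The smooth convergence $\omega_\varepsilon\to\omega$ away from $\{p_i\}$ follows immediately from the construction, since outside a fixed neighborhood of the exceptional divisors the perturbation is $O(\varepsilon^{2m-2})$ in every $C^k$ norm.
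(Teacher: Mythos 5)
Your proposal is essentially correct and follows the same route as the source this paper relies on: the paper gives no proof of this statement (it cites Theorem 1.1 of Arezzo--Pacard), and your scheme --- gluing a rescaled Burns--Simanca ALE model at each $p_i$ with cutoff at an intermediate radius $r_\varepsilon$, inverting the Lichnerowicz operator on weighted H\"older spaces with weight in the indicial gap $(4-2m,0)$ using the triviality of its kernel under the no-vector-fields hypothesis, then closing with a Banach fixed point --- is exactly the strategy the paper itself adapts in its Sections 5--7 to the orbifold-resolution setting. The only deviation worth noting is cosmetic: your patched-inverse-by-rescaling-and-contradiction argument is the streamlined Sz\'ekelyhidi-style formulation (which in fact yields an $\varepsilon$-uniformly bounded inverse for such weights, stronger than the polynomial blow-up you allow for) rather than Arezzo--Pacard's original matching of solutions on the two pieces, and this changes nothing of substance.
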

See Theorem 1.1 in \cite{arezzo2006blowing} for a proof.

\begin{theorem}[Arezzo-Pacard]
Assume that $(M, J, g, \omega)$ is a constant scalar curvature compact Kähler manifold. There exists $n_g \geq 1$ such that for all $n \geq n_g$, there exists a nonempty open subset 
$$V_n \subset M^n_\Delta:=\{(p_1, \ldots, p_n) \in M^n \mid p_a \neq p_b \text{ for all } a \neq b\},$$
such that for all $(p_1, \ldots, p_n) \in V_n$, the blow-up of $M$ at $p_1, \ldots, p_n$ carries a family of constant scalar curvature Kähler metrics $(g_\varepsilon)_\varepsilon$ converging to $g$ (in smooth topology) on compact subsets away from the exceptional divisors, as the parameter $\varepsilon$ tends to $0$.

\end{theorem}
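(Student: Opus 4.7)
The plan is to adapt the gluing strategy of the first Arezzo-Pacard theorem stated just above to the presence of nontrivial holomorphic vector fields vanishing somewhere. The new flexibility comes from choosing the blow-up points $p_1,\ldots,p_n$ together with the weights $a_1,\ldots,a_n$ in a "balanced" position so as to kill the finite-dimensional obstruction coming from the non-trivial kernel of the Lichnerowicz operator on $(M,\omega)$.

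First I would construct an approximate cscK metric $\omega_\varepsilon$ on $\widehat M=\Bl_{p_1,\ldots,p_n}M$ in the class $\pi^*[\omega]-\varepsilon^2\sum_a a_a^{1/(m-1)}[E_a]$ by gluing. Near each $p_a$, in Kähler normal coordinates for $\omega$, one replaces a ball of radius $r_\varepsilon$ (with $\varepsilon\ll r_\varepsilon\ll 1$) by a suitably scaled Burns-Simanca scalar-flat ALE Kähler metric on $\mathcal{O}_{\mathbb{CP}^{m-1}}(-1)$ from Example \ref{r=1}, interpolating with cut-off functions on an annular region. A computation using Lemma \ref{nonlinears} and the explicit asymptotics of $H_{\mathrm{BS}}$ gives $S(\omega_\varepsilon)=\lambda_\varepsilon+E_\varepsilon$, where $\lambda_\varepsilon$ is the would-be constant and the error $E_\varepsilon$ is controlled by a high power of $\varepsilon$ in a suitable weighted Hölder norm on $\widehat M$.

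Next I would linearize. The linearization of $S$ at $\omega_\varepsilon$ is, up to a first-order term, the Lichnerowicz operator $L_\varepsilon=-\mathcal D^*\mathcal D$ of $\omega_\varepsilon$. Following Seyyedali-Székelyhidi's scheme (and the same strategy used for Proposition \ref{inv}), $L_\varepsilon$ is uniformly bounded invertible on the $L^2(\omega)$-orthogonal complement of an approximate kernel $\cK_\varepsilon$ modelling $\ker L_\omega\subset C^\infty(M)$. By the proposition quoted just above, this kernel has real dimension $\dim_{\mathbb C}\mathfrak h_0(M)+1$, and, modulo the constants, it is spanned by the real parts of Hamiltonian potentials $f$ of elements of $\mathfrak h_0(M)$. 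Decomposing the cscK equation $S(\omega_\varepsilon+\sqrt{-1}\partial\bar\partial u)=\lambda$ into its projection onto $\cK_\varepsilon^\perp$ and its $\cK_\varepsilon$-component, Banach's fixed point theorem solves the $\cK_\varepsilon^\perp$-part for $\varepsilon$ small, leaving a finite-dimensional obstruction
\[
\Phi_\varepsilon(p_1,\ldots,p_n,a_1,\ldots,a_n)\in\mathfrak h_0(M)^*
\]
whose leading asymptotic, up to a positive scalar depending only on $m$ and $\varepsilon$, is the moment-map balancing expression
\[
\sum_{a=1}^n a_a^{m/(m-1)}\,\mu(p_a),
\]
where $\mu\colon M\to\mathfrak h_0(M)^*$ is the moment map of the compact real form of the action of $\mathfrak h_0(M)$.

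The main obstacle — and the content of the theorem — is to produce $n$ distinct points and positive weights at which this obstruction vanishes, and to show that such configurations fill out a nonempty open subset of $M_\Delta^n$ once $n\geq n_g$. By a convexity theorem for moment maps, the convex cone generated by $\mu(M)\subset \mathfrak h_0(M)^*$ has $0$ in its interior whenever the action is non-trivial, which is the case since $\mathfrak h_0(M)\neq 0$. Hence one can find finitely many points $p_1,\ldots,p_{n_g}$ and positive $a_1,\ldots,a_{n_g}$ satisfying the balancing equation, and the defining equation is transverse in the combined $(p,a)$ parameter at any such balanced configuration, so the balanced locus is a smooth submanifold whose projection to $M_\Delta^n$ is open. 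For $n\geq n_g$ one adds further points freely. Finally, the implicit function theorem produces a true cscK potential $u_\varepsilon$ for each $\varepsilon\in(0,\varepsilon_0)$; the weighted estimates force $u_\varepsilon\to 0$ smoothly on compact subsets of $\widehat M\setminus\bigcup_a E_a$, which yields the advertised smooth convergence of $\omega_\varepsilon+\sqrt{-1}\partial\bar\partial u_\varepsilon$ to $\omega$ away from the exceptional divisors.
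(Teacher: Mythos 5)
This statement is quoted background in the paper: the author gives no proof, referring to Theorem 1.2 of \cite{arezzo2009blowing}, so your attempt can only be measured against the Arezzo--Pacard argument itself (and against the refined version, Theorem 1.3/1.4 of that paper, which this paper also quotes and actually uses in its Section 4). Your overall architecture is sound and is essentially the modern re-proof in the style of Sz\'ekelyhidi and Seyyedali--Sz\'ekelyhidi: glue scaled Burns--Simanca metrics, invert the linearization transversally to an approximate kernel modelled on $\ker \mathcal{D}^*\mathcal{D}$, and reduce to a finite-dimensional obstruction in $\mathfrak{h}_0(M)^*$ whose leading term is a moment-map balancing sum. Arezzo--Pacard's original proof runs instead through Cauchy-data matching on annular regions, but the reduction to the same two geometric conditions --- balancing and spanning --- is common to both, so your route is a legitimate alternative.

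Two places where your sketch papers over real content. First, killing the obstruction for each fixed small $\varepsilon$ needs more than the existence of a balanced configuration: the balancing only annihilates the leading term of $\Phi_\varepsilon$, and one must perturb the weights $\varepsilon$-dependently (this is the estimate $|a_{i,\varepsilon}-a_i|\le c\,\varepsilon^{2/(2m+1)}$ in the paper's quoted Theorem) to solve $\Phi_\varepsilon=0$ exactly; this is possible precisely when $\{\mu(p_1),\dots,\mu(p_n)\}$ spans $\mathfrak{h}_0(M)^*$, which is condition (1) of the quoted refined theorem and is an additional open condition that must be built into the definition of $V_n$. Your claim that the balancing equation ``is transverse in the combined $(p,a)$ parameter at any such balanced configuration'' is not automatic --- transversality in the $a$-variables is exactly the spanning condition, which can fail at special balanced configurations and must be imposed (it holds generically). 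Second, your convexity step ("$0$ lies in the interior of the cone generated by $\mu(M)$ whenever the action is nontrivial") requires the Hamiltonian potentials to be normalized to have zero mean against $\omega^m$; with that normalization each $\langle\mu,\xi\rangle$, $\xi\neq 0$, is non-constant with zero average and hence changes sign, which is what puts $0$ in the interior of the convex hull. With an arbitrary normalization the claim is false, and this normalization is where Lemma 6.3 of \cite{arezzo2009blowing} (invoked verbatim in Section 4 of the present paper) does its work. Finally, a small bookkeeping slip: with classes $\pi^*[\omega]-\varepsilon^2\sum_i a_i^{1/(m-1)}[E_i]$, the leading obstruction is proportional to $\sum_i a_i\,\mu(p_i)$, not $\sum_i a_i^{m/(m-1)}\mu(p_i)$, since the relevant coefficient coming from the exceptional mass of $E_i$ is $\bigl(a_i^{1/(m-1)}\bigr)^{m-1}=a_i$.
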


See Theorem 1.2 in \cite{arezzo2009blowing} for a proof.
\begin{theorem}[Arezzo-Pacard]
Assume that $(M, J, g, \omega)$ is a compact Kähler manifold of complex dimension $m$ with constant scalar curvature and that $(p_1, \ldots, p_n) \in M^n_\Delta$ are chosen so that:
 \begin{enumerate} 
 \item 
 $\xi(p_1), \ldots, \xi(p_n)$ span $\mathfrak{h}^*$, where $\mathfrak{h}$, the space of Killing vector fields with zeros.
 \item there exist $a_1, \ldots, a_n > 0$ such that $\displaystyle\sum_{i=1}^n a_i \xi(p_i) = 0 \in \mathfrak{h}^*$. 
 \end{enumerate} 
 Then, there exist $c > 0$, $\varepsilon_0 > 0$, and for all $\varepsilon \in (0, \varepsilon_0)$, there exists on the blow-up of $M$ at $p_1, \ldots, p_n$ a constant scalar curvature Kähler metric $g_\varepsilon$ which is associated to the Kähler form
$$\omega_\varepsilon \in [\omega] - \varepsilon^2 \left(a_{1,\varepsilon}^{\frac{1}{m-1}}[E_1] + \ldots + a_{n,\varepsilon}^{\frac{1}{m-1}}[E_n]\right),$$ 
where the $[E_i]$ are the Poincaré duals of the $(2m-2)$-homology classes of the exceptional divisors of the blow-up at $p_i$ and where \[ |a_{i,\varepsilon} - a_i| \leq c \varepsilon^{\frac{2}{2m+1}}. \] Finally, the sequence of metrics $(g_\varepsilon)_\varepsilon$ converges to $g$ (in smooth topology) on compact subsets, away from the exceptional divisors. 
\end{theorem}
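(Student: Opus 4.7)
The plan is to adapt the Arezzo--Pacard gluing strategy to the presence of the non-trivial Lie algebra $\mathfrak{h}(M)$ of holomorphic vector fields, absorbing the resulting obstruction by treating the weights $a_{i,\varepsilon}$ as free parameters. First I would construct a family of approximately cscK K\"ahler forms $\omega_\varepsilon$ on $\widetilde{M} := \Bl_{p_1,\ldots,p_n} M$ by gluing: around each $p_i$, insert the Burns--Simanca ALE scalar-flat K\"ahler metric on $\mathcal{O}_{\mathbb{CP}^{m-1}}(-1)$ from Example \ref{r=1}, rescaled by a factor of order $\varepsilon\,a_i^{1/(2m-2)}$, so that the exceptional divisor $E_i$ has symplectic area of order $\varepsilon^2 a_i^{1/(m-1)}$. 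Away from these model regions, one keeps the original cscK form $\omega$, and interpolates the two K\"ahler potentials with a cut-off in an annular neck, preserving the cohomology class $[\omega] - \varepsilon^2\sum_i a_i^{1/(m-1)}[E_i]$. The scalar curvature error $S(\omega_\varepsilon) - \overline{S}_\varepsilon$ is then supported in the necks and controlled by the ALE decay rate of the Burns--Simanca metric given in Example \ref{r=1}.

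Next I would perform the linear analysis. By Lemma \ref{linears}, the linearization of the scalar curvature at $\omega_\varepsilon$ is $-\mathcal{D}^*_\varepsilon \mathcal{D}_\varepsilon$ plus first-order terms vanishing with $\varepsilon$, and on $(M,\omega)$ the kernel of this operator coincides with $\mathbb{R}\oplus \mathfrak{k}(M,g)/\mathfrak{a}(M)$ by the Matsushima--Lichnerowicz decomposition, namely the constants together with the holomorphy potentials of Killing fields with zeros. Working in an $\varepsilon$-dependent weighted H\"older space $C^{4,\alpha}_\delta$ built in the spirit of the edge/b-calculus of Section 3, and exploiting that the Lichnerowicz operator on the Burns--Simanca model is uniformly invertible on suitably decaying functions, one shows that $\mathcal{D}^*_\varepsilon \mathcal{D}_\varepsilon$ admits a right inverse $G_\varepsilon$ modulo a finite-dimensional obstruction space $\mathcal{H}_\varepsilon \subset L^2$ of dimension $1 + \dim_{\mathbb{R}}\mathfrak{k}(M,g)$, with norm bounded by $C\varepsilon^{-\sigma}$ for an explicit $\sigma = \sigma(m)$.

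To absorb the remaining obstruction I would allow the weights to vary as $a_{i,\varepsilon} = a_i + t_i$. The leading-order effect on the projection of $S(\omega_\varepsilon) - \overline{S}_\varepsilon$ onto $\mathcal{H}_\varepsilon \cong \mathfrak{h}_0^{*}$ is the linear map $t \mapsto \sum_i t_i\,\xi(p_i)$, which is surjective by hypothesis $(1)$, while hypothesis $(2)$ makes the leading residue vanish. The implicit function theorem in this finite-dimensional slice then produces $a_{i,\varepsilon}$ with $|a_{i,\varepsilon} - a_i| \le c\,\varepsilon^{2/(2m+1)}$, the exponent determined by balancing the intrinsic neck error (of order $\varepsilon^{2m}$ coming from the $\|Z\|^{4-2m}$ tail of $H_{\mathrm{BS}}$ in Example \ref{r=1}) against the norm of $G_\varepsilon$. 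Writing $\widetilde{\omega}_\varepsilon = \omega_\varepsilon + \sqrt{-1}\,\dd\phi$, Lemma \ref{nonlinears} turns the cscK equation into the fixed-point problem
\[
\phi \;=\; G_\varepsilon\bigl(-\pi_\varepsilon^{\perp}(S(\omega_\varepsilon) - \overline{S}_\varepsilon) \;+\; Q_{\omega_\varepsilon}(\nabla^2 \phi)\bigr),
\]
with the $\mathcal{H}_\varepsilon$-component killed by the choice of $a_{i,\varepsilon}$. Bounding $Q_{\omega_\varepsilon}$ in the weighted norm via the decomposition of Lemma \ref{nonlinears} and applying Banach's fixed-point theorem on a small ball in $C^{4,\alpha}_\delta$ yields a smooth $\phi$, hence a cscK metric $g_\varepsilon$, and the convergence on compacta away from the $E_i$ follows from the decay of $\phi$ in the weighted norm. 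The main obstacle is precisely the third step: one must match scales so that the finite-dimensional correction via $(a_{i,\varepsilon})$ genuinely cancels the projection of the error against the growing bound $\|G_\varepsilon\| \le C\varepsilon^{-\sigma}$, and tracking these exponents is what forces the specific rate $\varepsilon^{2/(2m+1)}$ appearing in the statement.
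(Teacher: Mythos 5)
You should first note that the paper does not prove this statement at all: it is quoted as background, with the proof delegated by citation to Theorem 1.3 of \cite{arezzo2009blowing}, so there is no internal argument to compare against. Measured against the actual Arezzo--Pacard proof (and against the template this paper itself follows in Sections 5--7 in the unobstructed setting), your reconstruction is correct in outline: Burns--Simanca models scaled by $\varepsilon\,a_i^{1/(2m-2)}$ so that $[E_i]$ acquires area $\varepsilon^2 a_i^{1/(m-1)}$, linearization whose kernel on the base consists of constants plus holomorphy potentials of Killing fields with zeros (Matsushima--Lichnerowicz), a finite-dimensional obstruction absorbed by perturbing the weights, with hypothesis (2) killing the leading residue and hypothesis (1) providing surjectivity of $t\mapsto\sum_i t_i\,\xi(p_i)$, followed by a Banach fixed-point argument with convergence away from the exceptional divisors read off from the weighted norm of the potential.

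Two pieces of bookkeeping differ from the actual treatments, though neither breaks your scheme. First, in correctly weighted H\"older spaces (weight $\delta\in(4-2m,0)$) the inverse of the linearized operator modulo the obstruction space is bounded \emph{uniformly} in $\varepsilon$ --- this is exactly the analogue of Proposition \ref{inv} in this paper --- so the rate $\varepsilon^{2/(2m+1)}$ is not obtained by balancing the error against a growing bound $\|G_\varepsilon\|\le C\varepsilon^{-\sigma}$; it comes from the size of the glued error at the matching radius $r_\varepsilon=\varepsilon^{2m/(2m+1)}$ (the analogue of this paper's $r_\varepsilon=\varepsilon^{2k/(2k+1)}$ and the estimate $\|F_\varepsilon(0,0)\|\le c\,r_\varepsilon^{3-\delta}$ of Proposition \ref{forf}), with $\varepsilon^{2/(2m+1)}=(\varepsilon/r_\varepsilon)^2$. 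Second, Arezzo--Pacard implement the correction not through a single global right inverse modulo $\mathcal{H}_\varepsilon$ but by solving on the blown-up pieces and the punctured base separately and matching Cauchy data across the neck, with the weights $a_{i,\varepsilon}$ entering through the boundary data; your global formulation is closer to the later streamlining of Sz\'ekelyhidi \cite{szekelyhidi2012blowing} and of the present paper, and it works equally well. A minor slip: the obstruction space should have dimension $1+\dim$ of the Killing fields with zeros (i.e.\ $\mathfrak{k}(M,g)$ modulo parallel fields), not $1+\dim_{\mathbb{R}}\mathfrak{k}(M,g)$.
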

See Theorem 1.3 in \cite{arezzo2009blowing} for a proof.
\begin{theorem}[Kronheimer-Joyce]
Let $(M, \omega)$ be a nondegenerate compact $m$-dimensional constant scalar curvature Kähler orbifold with $m = 2$ or $3$ and isolated singularities. Let $p_1, \ldots, p_n \in M$ be any set of points with a neighborhood biholomorphic to a neighborhood of the origin in $\mathbb{C}^m / \Gamma_i$, where $\Gamma_i$ is a finite subgroup of $SU(m)$. Let further $N_i$ be a Kähler crepant resolution of $\mathbb{C}^m / \Gamma_i$ (which always exists). Then there exists $\varepsilon_0 > 0$ such that, for all $\varepsilon \in (0, \varepsilon_0)$, there exists a constant scalar curvature Kähler form $\omega_\varepsilon$ on $\displaystyle M \displaystyle\bigsqcup_{p_1,\varepsilon} N_1 \bigsqcup_{p_2,\varepsilon} \cdots \bigsqcup_{p_n,\varepsilon} N_n$.
\end{theorem}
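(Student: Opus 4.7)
The plan is to adapt the Arezzo--Pacard gluing strategy to the orbifold setting, exploiting the fact that each $\Gamma_i\subset \SU(m)$ guarantees a Kähler crepant resolution $N_i$ equipped with a Ricci-flat (hence scalar-flat) ALE Kähler metric $\eta_i$: existence of such $\eta_i$ follows from Kronheimer's hyperkähler construction when $m=2$ and from Joyce's QALE construction combined with Yau's theorem when $m=3$. The nondegeneracy hypothesis is to be read as in \cite{arezzo2006blowing}, i.e.\ there is no nonzero holomorphic vector field on $M$ vanishing at some point, so that $\ker \cD^*\cD$ on $M$ consists only of constants.

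\textbf{Step 1 (Approximate metrics).} Near each $p_i$, identify a neighborhood with an open set in $\bC^m/\Gamma_i$ with Euclidean radial coordinate $r$. Choose a neck scale $r_\varepsilon=\varepsilon^\alpha$ with $0<\alpha<1$, and in the annulus $\{r_\varepsilon/2\le r\le 2r_\varepsilon\}$ glue a local Kähler potential of $\omega$ to the Kähler potential of the rescaled ALE metric $\varepsilon^2\eta_i$ via a radial cutoff. This yields a smooth family of Kähler forms $\omega_\varepsilon$ on $\widehat M:=M\sqcup_{p_1,\varepsilon}N_1\sqcup\cdots\sqcup_{p_n,\varepsilon}N_n$, coinciding with $\omega$ outside fixed neighborhoods of the $p_i$ and with $\varepsilon^2\eta_i$ deep inside each $N_i$.

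\textbf{Step 2 (Error estimate).} Both $\omega$ and $\eta_i$ have constant scalar curvature, and since $\Gamma_i\subset \SU(m)$ the Ricci-flat metric $\eta_i$ satisfies the sharp ALE decay $|\eta_i-g_{\eucl}|=O(r^{-2m})$, with matching derivative bounds. Consequently $S(\omega_\varepsilon)-S(\omega)$ is supported in the neck annuli and small in a weighted Hölder norm adapted to the degenerating QAC-type geometry of $(\widehat M,\omega_\varepsilon)$. This is exactly the step where the $\SU(m)$ hypothesis is used.

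\textbf{Step 3 (Linear analysis).} By Lemma \ref{linears}, the linearization of $u\mapsto S(\omega_\varepsilon+\sqrt{-1}\dd u)$ is $L_\varepsilon=-\cD_\varepsilon^*\cD_\varepsilon$, since the first-order transport term essentially vanishes on an almost-cscK background. Introduce weighted Hölder spaces on $\widehat M$ with weight exponents avoiding the indicial roots of $\cD^*\cD$ at an orbifold point in $\bC^m/\Gamma_i$. On the model $(M,\omega)$, nondegeneracy combined with Matsushima--Lichnerowicz forces $\ker \cD^*\cD=\bR$; on each ALE model $(N_i,\eta_i)$, standard indicial analysis at infinity shows that the bounded kernel of the Lichnerowicz operator is also reduced to constants. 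Patching these model parametrices via a Mazzeo-style edge / QAC pseudodifferential construction produces a right inverse for $L_\varepsilon$ whose operator norm is uniform in $\varepsilon$ (on the subspace of functions orthogonal to constants).

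\textbf{Step 4 (Nonlinear iteration).} Write the cscK equation as $L_\varepsilon u=-(S(\omega_\varepsilon)-S_0)+Q_\varepsilon(\nabla^2 u)$, with $Q_\varepsilon$ the quadratic remainder described in Lemma \ref{nonlinears} and $S_0$ the topological constant dictated by $[\omega_\varepsilon]$ and $c_1(\widehat M)$ (computed using $c_1(N_i)=0$ since the resolutions are crepant). Combining the smallness estimate from Step 2 with the uniform bound from Step 3 and the Lipschitz estimates for $Q_\varepsilon$ in the chosen weighted Hölder ball, the map $u\mapsto L_\varepsilon^{-1}\bigl(-(S(\omega_\varepsilon)-S_0)+Q_\varepsilon(\nabla^2 u)\bigr)$ becomes a contraction for all sufficiently small $\varepsilon$, and Banach's fixed point theorem produces the desired cscK potential.

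The main obstacle is Step 3. Securing a uniform bound for $L_\varepsilon^{-1}$ requires a careful choice of weight exponents that are simultaneously admissible for the conic model at each $p_i$ and for the ALE model on each $N_i$, together with a delicate patching of the two parametrices across the neck of scale $\varepsilon^\alpha$; this is where the analysis on manifolds with fibered corners and the Degeratu--Mazzeo QAC calculus, as reviewed in \S\ref{sec: Manifolds with corners and Lie structures at infinity}, enter in an essential way.
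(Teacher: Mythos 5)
The paper itself offers no proof of this statement: it is quoted as background, with the proof deferred to Corollary 8.2 of \cite{arezzo2006blowing}, so your attempt has to be measured against the Arezzo--Pacard argument. On that scale your outline is correct and follows the same global scheme --- glue $\varepsilon^2$-rescaled scalar-flat ALE resolutions into the orbifold at a neck scale $r_\varepsilon=\varepsilon^\alpha$, estimate the scalar curvature error in weighted H\"older norms, invert the Lichnerowicz operator uniformly in $\varepsilon$ on functions orthogonal to constants, and close with a contraction --- but your Step 3 is implemented by a genuinely different route. Arezzo--Pacard build no global parametrix: they solve the linearized equation separately on the punctured orbifold and on the truncated ALE pieces, using weighted spaces adapted to each model, and then match Cauchy data across the neck annuli by a further fixed point, which keeps the analysis elementary and self-contained. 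The route you propose --- a uniform right inverse obtained by patching Mazzeo-edge/QAC parametrices on a fibered-corners compactification --- is closer in spirit to the present paper; note, however, that even here (Proposition \ref{inv}, following \cite{seyyedali2020extremal} and \cite{szekelyhidi2014introduction}) the uniform bound on $\widetilde{L}_\varepsilon^{-1}$ is \emph{not} obtained by parametrix patching but by a blow-up/contradiction argument that only needs triviality of the kernels on the three model geometries (Lemmas \ref{red}, \ref{new11}, \ref{yellow}); adopting that argument would let you bypass the ``delicate patching'' you yourself flag as the main obstacle, since making a patched parametrix uniform in $\varepsilon$ is precisely the hard point. Two smaller corrections: for $m=3$ with isolated singularities the Ricci-flat ALE metric on the crepant resolution comes from Joyce's ALE version of the Calabi conjecture, not from his QALE theory \cite{joyce2001asymptotically}, which concerns non-isolated singularities; and the hypothesis $\Gamma_i\subset \SU(m)$ is used not only for the improved decay $O(r^{-2m})$ (equivalently the vanishing of the mass coefficient $A$, cf.\ Theorem \ref{Apostolov-Rollin thm}) but first of all for the existence of a crepant resolution, which together with the restriction $m\le 3$ is why the theorem is stated only in dimensions $2$ and $3$.
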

See Corollary 8.2 in \cite{arezzo2006blowing} for a proof.
\begin{theorem}[Apostolov-Rollin]\label{Apostolov-Rollin thm}
For any $k\geq 2$, the orbifold $\mathbb{CP}_{(-w_0,w)}^{k}$ admits a scalar-flat Kähler ALE metric $g_{\ALE}$ with quotient singularity at infinity $\mathbb{C}^{n}/\Gamma_{(-w_0,w)}$ and a Kähler potential $H$ for the Kähler form $\omega_{\ALE}$ written as 
\begin{equation}\label{pot1}
H=\dfrac{1}{2}\|Z\|^2+A\|Z\|^{4-2k}+O(\|Z\|^{3-2k}),
\end{equation}
when $k\geq 3$ and 
$$H=\dfrac{1}{2}\|Z\|^2+A\log \|Z\|+O(\|Z\|^{-2}),$$
when $k=2$, where $A$ is a real constant and $\|Z\|^2$ is the square norm function on $\mathbb{C}^k$. Furthermore, the constant $A=0$ iff the metric $g_{\ALE}$ is Ricci-flat.
\end{theorem}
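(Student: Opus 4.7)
The plan is to build the ALE metric by a Calabi ansatz on the total space of the orbifold line bundle $\mathcal{O}_{\mathbb{CP}^{k-1}_{w}}(-w_0) \cong \mathbb{CP}^{k}_{(-w_0,w)}$, which exhibits the space as a holomorphic line bundle over the compact weighted projective orbifold $\mathbb{CP}^{k-1}_{w}$. First, I would equip $\mathbb{CP}^{k-1}_{w}$ with a suitably chosen extremal (or more specifically Bochner-flat / weighted Kähler--Einstein) orbifold Kähler metric $\omega_B$; since $\mathbb{CP}^{k-1}_{w}$ is a toric orbifold of complex dimension $k-1$, such canonical metrics exist and can be obtained from the Delzant polytope construction. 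Choose a hermitian metric $h$ on $\mathcal{O}_{\mathbb{CP}^{k-1}_{w}}(-w_0)$ whose Chern curvature is a positive multiple of $\omega_B$.

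Second, using the $S^1$-action rotating fibers, make the Calabi ansatz: let $t=\log|\sigma|_h^2$ be the logarithmic fiber coordinate, let $\theta$ be the connection one-form of $h$, and seek an $S^1$-invariant Kähler form of the momentum-profile type
\begin{equation*}
\omega \;=\; \phi'(\tau)\, p^*\omega_B \;+\; \frac{d\tau \wedge \theta}{\Theta(\tau)},
\end{equation*}
where $\tau$ is the momentum variable and $\Theta(\tau)=\phi''(t(\tau))$ is the momentum profile; this is the standard Hwang--Singer / Calabi construction. The scalar-flat condition $S(\omega)=0$ translates to an explicit ODE for $\Theta$ with polynomial coefficients determined by the scalar curvature and top Chern forms of $(\omega_B,\mathbb{CP}^{k-1}_{w})$. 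The boundary conditions are: at the zero section (the divisor $\mathbb{CP}^{k-1}_{w}\subset \mathbb{CP}^{k}_{(-w_0,w)}$) the profile must vanish with the correct normalization so that $\omega$ extends as an orbifold Kähler metric; at the infinite end the profile must produce the Euclidean orbifold cone $\mathbb{C}^{k}/\Gamma_{(-w_0,w)}$ on a suitable compactification of $\mathbb{CP}^{k-1}_{w}$.

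Third, I would solve this ODE explicitly. Because the base is weighted projective and $\omega_B$ is chosen in the canonical class, $S(\omega_B)$ and the volume form have explicit polynomial expressions in $\tau$, and the scalar-flat ODE integrates in closed form up to two constants of integration; the boundary conditions at the zero section and at the cone end pin both constants down uniquely. From this explicit solution I would read off the asymptotic behavior of $\Theta(\tau)$ as $\tau \to \infty$, translate it back to the distance function $\|Z\|$ on the Euclidean model $\mathbb{C}^k/\Gamma_{(-w_0,w)}$ via $\tau \sim \tfrac{1}{2}\|Z\|^2$, and integrate once more to obtain a Kähler potential with expansion $H=\tfrac12\|Z\|^2+A\|Z\|^{4-2k}+O(\|Z\|^{3-2k})$ for $k\geq 3$ and logarithmic leading correction when $k=2$. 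The coefficient $A$ emerges as a specific algebraic combination of $w_0$ and $w$ and the average scalar curvature of $\omega_B$.

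The hardest part will be \emph{twofold}: ensuring that the ODE admits a solution on the full momentum interval with $\Theta>0$ in the interior and the correct vanishing at the endpoints (this is a global matching problem, analogous to the existence analysis in Hwang--Singer and Rollin--Singer), and verifying that the constant $A$ admits the interpretation $A=0 \Leftrightarrow \Rc(\omega_{\ALE})=0$. For the latter I would use that Ricci-flatness of $\omega_{\ALE}$ is equivalent to $\omega_{\ALE}^k = C\cdot \Omega\wedge\bar\Omega$ for a holomorphic volume form $\Omega$ on the smooth locus of $\mathbb{CP}^k_{(-w_0,w)}$, then expand the defect $\omega_{\ALE}^k - C\Omega\wedge\bar\Omega$ at infinity; the leading coefficient is $A$ up to a nonzero universal factor, giving the equivalence.
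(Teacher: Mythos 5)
There is a genuine gap at the heart of your plan, namely the step where you assert that the scalar-flat condition ``translates to an explicit ODE'' for the momentum profile. In the Calabi/Hwang--Singer momentum construction, the scalar curvature of $\omega=\phi'(\tau)\,p^*\omega_B+\Theta(\tau)^{-1}d\tau\wedge\theta$ is a function of $\tau$ alone only if the base is cscK (more generally a local product of cscK metrics) and $F(h)$ is a constant multiple of $\omega_B$. For genuinely weighted $\mathbb{CP}^{k-1}_{w}$ no such base metric is available: the canonical metrics there --- Bryant's Bochner-flat metrics, which are what your ``Bochner-flat / weighted K\"ahler--Einstein'' option produces --- are extremal with \emph{nonconstant} scalar curvature, and the Futaki obstruction excludes cscK representatives, so with merely extremal $\omega_B$ the equation $S(\omega)=0$ remains a PDE on the base and the reduction collapses at the first step. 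Worse, even granting a solution within the ansatz, the end could not be ALE to the \emph{Euclidean} cone: in a Calabi-ansatz metric all symplectic reductions by the fiberwise circle are homothetic (all equal to $\phi'(\tau)\,\omega_B$ up to scale), whereas the reductions of the flat metric on $\mathbb{C}^k$ by the \emph{weighted} circle action are pairwise non-homothetic K\"ahler metrics on $\mathbb{CP}^{k-1}_{w}$ unless $w=(1,\ldots,1)$; equivalently, the weighted $S^1$ is not the Hopf/Reeb circle of the round link $S^{2k-1}/\Gamma_{(-w_0,w)}$. Hence the flat metric on $\mathbb{C}^k/\Gamma_{(-w_0,w)}$ is not of Calabi-ansatz form over $\mathbb{CP}^{k-1}_{w}$, and no choice of boundary condition for $\Theta$ can force the asymptotics $H=\tfrac12\|Z\|^2+A\|Z\|^{4-2k}+O(\|Z\|^{3-2k})$ toward the Euclidean model. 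This is precisely why the momentum-construction results you invoke (Simanca, LeBrun, Pedersen--Poon, Rollin--Singer, i.e.\ Example~\ref{genrealr}) cover only the unweighted case $\mathcal{O}_{\mathbb{CP}^{m-1}}(-r)$.

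You should also be aware that the paper does not prove Theorem~\ref{Apostolov-Rollin thm} at all: it imports it, citing Proposition 17 of \cite{apostolov2017ale}. The construction there is engineered around exactly the obstruction above, exploiting the explicit geometry of weighted projective spaces (Bochner-flat metrics and hamiltonian $2$-form/separation-of-variables techniques with more than one momentum coordinate), so that the transverse geometry is allowed to vary along the end rather than being a fixed $\omega_B$ rescaled; the expansion of the potential, the constant $A$, and the equivalence ``$A=0$ iff $g_{\ALE}$ is Ricci-flat'' are then read off from the explicit solution. Your closing argument for that equivalence via the Monge--Amp\`ere defect $\omega_{\ALE}^k-C\,\Omega\wedge\bar\Omega$ is reasonable in spirit (and one must also check that a global holomorphic volume form exists on the smooth locus, which constrains $w_0$ and $w$), but it presupposes the metric and the expansion that, for nontrivial weights, your one-variable ansatz cannot deliver.
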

See Proposition 17 in \cite{apostolov2017ale} for a proof.
\begin{theorem}[Arezzo-Pacard]
Any compact complex surface of general type admits cscK metrics.
\end{theorem}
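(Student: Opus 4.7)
The plan is to reduce the problem to the two gluing/resolution theorems of Arezzo-Pacard just cited (the blow-up theorem of \cite{arezzo2006blowing} and the Kronheimer-Joyce corollary, Corollary 8.2 in \cite{arezzo2006blowing}), by factoring any compact complex surface of general type through its canonical model. Recall that a surface of general type $X$ admits a (unique) minimal model $X_{\min}$ obtained by contracting finitely many $(-1)$-curves, and $X_{\min}$ in turn has a canonical model $X_{\can}$, obtained by contracting all $(-2)$-curves on $X_{\min}$. The variety $X_{\can}$ is a compact complex orbifold of dimension two with isolated ADE singularities, each modeled on $\mathbb{C}^2/\Gamma_i$ with $\Gamma_i \subset \gSU(2)$ a finite subgroup, and its orbifold canonical bundle $K_{X_{\can}}$ is ample.

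The first step is to apply the orbifold version of the Aubin-Yau theorem to $X_{\can}$. Since $c_1(X_{\can}) < 0$, there exists a Kähler-Einstein orbifold metric $\omega_{\can}$ with $\Rc(\omega_{\can}) = -\omega_{\can}$ in the class $-2\pi c_1(X_{\can})$; in particular it is cscK with negative constant scalar curvature. Because $c_1 < 0$, the identity component of $\Aut(X_{\can})$ is trivial, so there are no non-trivial holomorphic vector fields on $X_{\can}$. This is exactly the non-degeneracy hypothesis needed to invoke the Kronheimer-Joyce gluing result (Corollary 8.2 in \cite{arezzo2006blowing}): each singularity $\mathbb{C}^2/\Gamma_i$ admits a crepant Kähler resolution, namely the Kronheimer ALE hyperkähler space, which is Ricci-flat and in particular scalar-flat Kähler. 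Gluing these ALE pieces in place of the singular points yields, for all sufficiently small gluing parameter $\varepsilon > 0$, a cscK metric on the minimal model $X_{\min}$, the crepant resolution of $X_{\can}$.

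Finally, if $X \neq X_{\min}$, write $X$ as an iterated blow-up of $X_{\min}$ at finitely many (possibly infinitely near) points $p_1, \ldots, p_N$. Surfaces of general type have finite automorphism group, hence no non-trivial holomorphic vector fields; the same remains true for $X_{\min}$ endowed with the cscK metric from the previous step, provided we verify that the gluing construction does not create new holomorphic vector fields (which follows from the construction, since the holomorphic vector fields on $X_{\min}$ inject into those of $X_{\can}$ by Proposition \ref{bihologrroupblow} applied to the crepant resolution, and the latter vanish). We may therefore apply the original Arezzo-Pacard blow-up theorem (Theorem 1.1 of \cite{arezzo2006blowing}) iteratively, once for each blow-up in the sequence $X \to X_{\min}$, producing a cscK metric on $X$ in a Kähler class of the form $[\omega_{\min}] - \sum_{i=1}^{N} \varepsilon_i^2 a_i [E_i]$ for sufficiently small parameters $\varepsilon_i > 0$.

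The main technical obstacle is the first application of the Kronheimer-Joyce gluing to pass from $X_{\can}$ to $X_{\min}$: one must verify that the Kähler-Einstein orbifold metric from Aubin-Yau is indeed non-degenerate in the sense required by Corollary 8.2, and handle the matching of Kähler classes between the orbifold metric on $X_{\can}$ and the ALE Kronheimer metrics on the exceptional fibers. Once this step is carried out, the subsequent blow-ups are routine applications of the smooth Arezzo-Pacard theorem, since the absence of holomorphic vector fields persists throughout the construction.
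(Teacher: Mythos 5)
The paper does not actually prove this statement: it is quoted as a known result of Arezzo--Pacard (Corollary 8.3 of \cite{arezzo2006blowing}), listed right after the Kronheimer--Joyce gluing corollary with no argument supplied. Your reconstruction is essentially the original Arezzo--Pacard proof, and it is correct: pass to the canonical model $X_{\mathrm{can}}$, whose singularities are rational double points $\mathbb{C}^2/\Gamma_i$ with $\Gamma_i \subset \gSU(2)$ and whose canonical bundle is ample; apply the orbifold Aubin--Yau theorem to get a negative K\"ahler--Einstein (hence cscK) orbifold metric with no nontrivial holomorphic vector fields; glue in Kronheimer's Ricci-flat (in particular scalar-flat) ALE spaces via Corollary 8.2 of \cite{arezzo2006blowing} to obtain a cscK metric on the minimal model $X_{\min}$, which is indeed the crepant resolution of $X_{\mathrm{can}}$; then reach $X$ by iterated point blow-ups using Theorem 1.1 of \cite{arezzo2006blowing}. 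One correction: your detour through Proposition \ref{bihologrroupblow} to rule out holomorphic vector fields on $X_{\min}$ is not licensed by that proposition's hypotheses --- it requires the singular locus to have complex codimension strictly greater than $2$, whereas here the exceptional sets lie over points of a surface, so the codimension is exactly $2$ --- and it is also unnecessary: as you yourself note, every surface of general type, minimal or not, has finite automorphism group and hence no nontrivial holomorphic vector fields, so the nondegeneracy hypothesis holds for free at every stage of the construction.
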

We finish this section with a concrete example that we construct. We will demonstrate how to find a constant scalar curvature Kähler metric on it using our theorem at the end of the paper.
\subsection{An example of orbifold with singularities of type $\mathcal{I}$}
Let $r$ be a natural number and consider the cyclic subgroup of $U(1)\leq U(k)$ given by
 $$\Gamma_{(-r,1,\ldots,1)}=\braket{\xi \Id_k}\cong\mathbb{Z}_r,$$
where $\xi=e^{\frac{2\pi i }{r}}$ is the primitive $r$-th rooth of unity. Then for $l\geq 2$,
$$\Gamma:=\braket{\diag(\Id _l,\gamma): \gamma\in \Gamma_{(-r,1,\ldots,1)}}\subset U(l)\times U(k)\subset U(l+k),$$
is a finite subgroup of the isometry group $\Iso(\mathbb{CP}^{l+k-1},g_{\FS})\cong U(l+k)/U(1)$ acting on $\mathbb{CP}^{l+k-1}$ via the standard action on $\mathbb{C}^{l+k}$, where $g_{\FS}$ is the Fubini-Study metric. 
\begin{lemma}
The orbifold $M=\mathbb{CP}^{l+k-1}/\Gamma$ has two disjoint singular strata of type $\mathcal{I}$ at 
\begin{align*}
S_1&=\{[z_0:\ldots:z_{l-1}:0:\ldots:0]\in \mathbb{CP}^{l+k-1}/\Gamma:[z_0:\ldots:z_{l-1}]\in \mathbb{CP}^{l-1}\}\cong \mathbb{CP}^{l-1}/\Gamma,\\
S_2&=\{[0:\ldots:0:z_{l}:\ldots:z_{l+k-1}]\in \mathbb{CP}^{l+k-1}/\Gamma:[z_{l}:\ldots:z_{l+k-1}]\in \mathbb{CP}^{k-1}\}\cong   \mathbb{CP}^{k-1}/\Gamma.
\end{align*}
Also, 
$$N_M(S_1)=(\underbrace{\mathcal{O}_{\mathbb{CP}^{l-1}}(1)\oplus\ldots\oplus\mathcal{O}_{\mathbb{CP}^{l-1}}(1)}_{k-\text{ times}})/\Gamma
$$
and
$$N_M(S_2)=(\underbrace{\mathcal{O}_{\mathbb{CP}^{k-1}}(1)\oplus\ldots\oplus\mathcal{O}_{\mathbb{CP}^{k-1}}(1)}_{l-\text{ times}})/\Gamma.
$$
\end{lemma}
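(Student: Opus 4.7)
The plan is to extract everything from an explicit analysis of the fixed locus of $\Gamma$ acting on $\mathbb{CP}^{l+k-1}$ together with a local computation in affine coordinates. First, since the $\Gamma$-action is induced from the linear action on $\mathbb{C}^{l+k}$ preserving the splitting $\mathbb{C}^l \oplus \mathbb{C}^k$, a nontrivial element $\gamma^j = \diag(\Id_l, \xi^j \Id_k)$ with $\xi^j \neq 1$ fixes $[v] = [(v', v'')] \in \mathbb{CP}^{l+k-1}$ iff $\gamma^j v = \lambda v$ for some scalar $\lambda$, which forces either $v'' = 0$ (with $\lambda = 1$) or $v' = 0$ (with $\lambda = \xi^j$). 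Hence the fixed locus is precisely the disjoint union of the linear subspaces $\{z_l = \cdots = z_{l+k-1} = 0\}$ and $\{z_0 = \cdots = z_{l-1} = 0\}$, and $\Gamma$ acts freely on the complement. Passing to the quotient gives the two disjoint singular strata $S_1$ and $S_2$ as claimed.

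To confirm the type $\mathcal{I}$ condition and pin down the orbifold structure along each stratum, I would analyze an orbifold chart at a typical point. Near $p = [1:0:\cdots:0] \in S_1$, the affine coordinates $w_i = z_i/z_0$ turn the $\Gamma$-action into the identity on $(w_1, \ldots, w_{l-1})$ and multiplication by $\xi$ on $(w_l, \ldots, w_{l+k-1})$, so a neighborhood of $p$ in $M$ is biholomorphic to $\mathbb{C}^{l-1} \times (\mathbb{C}^k/\Gamma_{(-r,1,\ldots,1)})$. Since the weight vector is $(-r, 1, \ldots, 1)$, condition \eqref{isocond2} is automatic and Definition \ref{typeI}(1) applies directly. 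An analogous computation near $q = [0:\cdots:0:1:0:\cdots:0] \in S_2$, using affine coordinates $u_i = z_i/z_l$ and the projective rescaling $z_l \mapsto \xi z_l$, identifies the $\Gamma$-action as multiplication by $\xi^{-1}$ on $(u_0, \ldots, u_{l-1})$ and trivial on $(u_{l+1}, \ldots, u_{l+k-1})$; because $\langle \xi^{-1}\rangle = \langle \xi\rangle$ as subgroups of $U(1)$, the local model is $\mathbb{C}^{k-1}\times(\mathbb{C}^l/\Gamma_{(-r,1,\ldots,1)})$, again of type $\mathcal{I}$.

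For the normal bundles, the embeddings $\mathbb{CP}^{l-1}, \mathbb{CP}^{k-1} \hookrightarrow \mathbb{CP}^{l+k-1}$ are standard linear embeddings, so by the Euler sequence restricted to these subspaces (or by direct inspection) the normal bundles split as $\mathcal{O}_{\mathbb{CP}^{l-1}}(1)^{\oplus k}$ and $\mathcal{O}_{\mathbb{CP}^{k-1}}(1)^{\oplus l}$ respectively. This splitting is $\Gamma$-equivariant because the decomposition $\mathbb{C}^{l+k} = \mathbb{C}^l \oplus \mathbb{C}^k$ is $\Gamma$-invariant, and the affine charts above identify the induced $\Gamma$-action on each fiber as scalar multiplication (by $\xi$ on the $S_1$ side, by $\xi^{-1}$ on the $S_2$ side). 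Taking quotients yields the stated description of $N_M(S_1)$ and $N_M(S_2)$. The most delicate point is simply the bookkeeping for the $S_2$ chart, where the projective rescaling flips the character; this is routine once tracked carefully, and no part of the argument presents a genuine obstacle.
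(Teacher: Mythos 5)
Your proposal is correct, and its core — computing the fixed locus of each nontrivial $\gamma^j=\diag(\Id_l,\xi^j\Id_k)$ via the eigenvector condition $\gamma^j v=\lambda v$, forcing $v''=0$ or $v'=0$ — is exactly the paper's argument for identifying $S_1$ and $S_2$. Where you diverge is in the two supporting computations. For the normal bundles, the paper writes $S_1$ as the intersection of $k$ coordinate hyperplanes $D_1\cap\cdots\cap D_k$ and uses $N_M(S_1)=\bigl(N_M(D_1)|_{S_1}\oplus\cdots\oplus N_M(D_k)|_{S_1}\bigr)/\Gamma$ with each $N_M(D_i)|_{S_1}\cong\mathcal{O}_{\mathbb{CP}^{l-1}}(1)$, whereas you invoke the Euler sequence for the linear embedding; these are equivalent mechanisms for the same standard fact, and both handle $\Gamma$-equivariance the same way (the splitting respects the invariant decomposition $\mathbb{C}^{l+k}=\mathbb{C}^l\oplus\mathbb{C}^k$). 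More substantively, you explicitly verify the type $\mathcal{I}$ condition via affine orbifold charts — including the projective rescaling at $S_2$ that flips the character to $\xi^{-1}$ on the transverse $\mathbb{C}^l$, with $\langle\xi^{-1}\rangle=\langle\xi\rangle$ so the local model is still $\mathbb{C}^{l}/\Gamma_{(-r,1,\ldots,1)}$ and Definition \ref{typeI}(1) applies — a verification the paper's proof leaves entirely implicit (it only records the fixed loci and the normal bundles). So your write-up is, if anything, more complete than the paper's on the type $\mathcal{I}$ claim, at the cost of a slightly longer chart-by-chart bookkeeping; the paper's hyperplane-intersection route to the normal bundle is marginally quicker but proves the same thing.
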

\begin{proof}
The set of points of $\mathbb{CP}^{l+k-1}$ fixed by the action of $\Gamma$ are $[z_0:\ldots:z_{l+k-1}]\in \mathbb{CP}^{l+k-1}$ such that
$$\diag(\Id _l,\xi^s \Id_k)[z_0:\ldots:z_{l+k-1}]=[z_0:\ldots:z_{l+k-1}],0\leq s<r,$$
so this action fixes the two disjoint submanifolds
\begin{align*}
S_1&=\{[z_0:\ldots:z_{l-1}:0:\ldots:0]\in \mathbb{CP}^{l+k-1}/\Gamma:[z_0:\ldots:z_{l-1}]\in \mathbb{CP}^{l-1}\}\cong \mathbb{CP}^{l-1},\\
S_2&=\{[0:\ldots:0:z_{l}:\ldots:z_{l+k-1}]\in \mathbb{CP}^{l+k-1}/\Gamma:[z_{l}:\ldots:z_{l+k-1}]\in \mathbb{CP}^{k-1}\}\cong   \mathbb{CP}^{k-1}/\Gamma_{(-r,1,\ldots,1)}.
\end{align*}
To identify the normal bundles note that $S_1$ could be considered as the intesection of $k$ hyperplanes $D_1 \cap \ldots \cap D_{k}$, so that
\begin{equation}
N_M(S_1)=(N_M(D_1)|_{S_1}\oplus\ldots\oplus N_M(D_{k})|_{S_1})/\Gamma\\
=(\mathcal{O}_{\mathbb{CP}^{l-1}}(1)\oplus\ldots\oplus\mathcal{O}_{\mathbb{CP}^{l-1}}(1))/\Gamma.
\end{equation}
Similarly, for $S_2$, we can write it as the intersection of $l$ hyperplanes $D_1'\cap D_2'\cap \ldots \cap D_{l}'$, so we have that
\begin{equation}
N_M(S_2)=(N_M(D_1')|_{S_2}\oplus\ldots\oplus N_M(D_{l}')|_{S_2})/\Gamma=(\mathcal{O}_{\mathbb{CP}^{k-1}}(1)\oplus\ldots\oplus\mathcal{O}_{\mathbb{CP}^{k-1}}(1))/\Gamma.
\end{equation}
\end{proof}
Note that $\Aut(\mathbb{CP}^{l+k-1}) = \PGL(l+k, \mathbb{C})$. For \[ U = \left(\begin{array}{@{}c|c@{}} A & B \\ \hline C & D \end{array}\right) \in M_{l+k}(\mathbb{C}) \] and $\gamma = (\Id_l, \xi^q \Id_k) \in \Gamma$, we have \[ U \gamma = \left(\begin{array}{@{}c|c@{}} A & \xi^q B \\ \hline C & \xi^q D \end{array}\right) \] and \[ \gamma U = \left(\begin{array}{@{}c|c@{}} A & B \\ \hline \xi^q C & \xi^q D \end{array}\right), \] so $U\gamma=\gamma U$ and $\xi^q\neq 1$ imply that $B=0$ and $C=0$, so the orbifold $M=\mathbb{CP}^{l+k-1}/\Gamma$ has group of automorphism $\mathbb{P}(\GL(l,\mathbb{C})\times \GL(k,\mathbb{C}))$, which is still quite large. To obtain an example with discrete automorphism group, we will use Theorem 1.4 in \cite{arezzo2009blowing} by blowing-up $\mathbb{CP}^{l+k-1}$ at a sufficient number of points $\{p_1,p_2,\ldots,p_n\}$. Let $\mathfrak{h}=\Lie(\PGL(l+k))$ be the Lie algebra of killing vector fields with zeros on $\mathbb{CP}^{l+k-1}$ and denote by $\mathfrak{h}^{\Gamma}$ the Lie subalgebra of $\mathfrak{h}$ consisting of $\Gamma$-invariant vector fields. Also denote the corresponding momentum maps by $\mu:\mathbb{CP}^{l+k-1}\to \mathfrak{h}^*$ and $\mu^{\Gamma}:\mathbb{CP}^{l+k-1}\to {\mathfrak{h}^\Gamma}^*$. 
If $\iota: \mathfrak{h}^{\Gamma}\to \mathfrak{h}$ is the inclusion map and $\iota^*: \mathfrak{h}^*\to \mathfrak{h}^{\Gamma^*}$ the dual map, notice that $\mu^{\Gamma}=\iota^* \circ \mu$. 
\\Using lemma 6.3 in \cite{arezzo2009blowing}, there exists $n_g\geq \dim \mathfrak{h}=(l+k-1)^2-1$  such that for all $n\geq n_g$, there exists a nonempty open set $V_n\subset \{(p_1,p_2,\ldots,p_n)\in (\mathbb{CP}^{l+k-1})^n:p_a\neq p_b \quad\forall a\neq b\}$ such that, for all $(p_1,p_2,\ldots,p_n)\in V_n$, $\{\mu(p_1),\ldots, \mu(p_n)\}$ spans $\mathfrak{h}^*$ and there exist $a_1,a_2,\ldots,a_n>0$ such that $\displaystyle\sum_{i=1}^na_i\mu(p_i)=0\in \mathfrak{h}^*$. 
In particular, since $\mu^{\Gamma}=\iota^* \circ \mu$, this means that:
\begin{enumerate}
\item The set $\{\mu^{\Gamma}(p_1),\ldots, \mu^{\Gamma}(p_n)\}$ spans ${\mathfrak{h}^\Gamma}^*$.
\item There exist positive integers $a_1,a_2,\ldots,a_n$ with $a_i=a_j$ if $p_j=\sigma(p_{j'})$ for some $\sigma \in \Gamma$  and $\displaystyle\sum_{i=1}^na_i\mu^{\Gamma}(p_i)=0\in {\mathfrak{h}^\Gamma}^*$.
\end{enumerate}
Without loss of generality, we can choose $\{p_1,p_2,\ldots,p_n\} \subset \mathbb{CP}^{l+k-1} \setminus (S_1 \cup S_2)$, since $V_n$ is open and $\mathbb{CP}^{l+k-1} \setminus (S_1 \cup S_2)$ is open and dense. Note that if we add a point $p_{n+1}$, conditions (1) and (2) remain valid because of Lemma 6.2 in \cite{arezzo2009blowing}. Thus, by adding points if necessary, we can assume that the set $\{p_1,\ldots,p_n\}$ is $\Gamma$-invariant. Using Theorem 1.4 in \cite{arezzo2009blowing}, there exist $c>0$ and $\varepsilon_0>0$ such that for all $\varepsilon \in (0,\varepsilon_0)$, there exists a $\Gamma$-invariant constant scalar curvature \text{Kähler} metric $\widetilde{g}_\varepsilon$ on $\Bl_{\{p_1,\ldots,p_n\}}^{\mathbb{CP}^{l+k-1}}$. This metric induces a constant scalar curvature \text{Kähler} metric $g_\varepsilon$ on $\Bl_{\{p_1,\ldots,p_n\}}^{\mathbb{CP}^{l+k-1}/\Gamma}$. Consider the quotient map $q: \mathbb{CP}^{l+k-1} \to \mathbb{CP}^{l+k-1}/\Gamma$ and choose $q_1,\ldots,q_m \in \mathbb{CP}^{l+k-1}$ so that $q(\{p_1,\ldots,p_n\}) = \{q_1,\ldots,q_m\}$, where $m = \frac{n}{|\Gamma|}$.
The natural identification,
$$\left(\Bl_{\{p_1, \ldots, p_n\}}^{\mathbb{CP}^{l+k-1}}\right)/\Gamma \cong \Bl_{\{q_1, \ldots, q_m\}}^{\mathbb{CP}^{l+k-1}/\Gamma},$$
 then gives us a constant scalar curvature \kahler metric on the orbifold $X=\Bl_{\{q_1,  \ldots, q_m\}}^{\mathbb{CP}^{l+k-1}/\Gamma}$. 
 \\Using the Proposition \ref{bihologrroupblow} on page \pageref{bihologrroupblow} the Lie algebra of holomorphic vector fields of $\Bl_{\{q_1,  \ldots, q_m\}}^{\mathbb{CP}^{l+k-1}/\Gamma}$ is realized as 
$$\{v\in \Lie(\mathbb{P}(\GL(l)\times \GL(k))):v(q_1)=\ldots =v(q_m)=0\},$$
so the identity component of the automorphism group of $\Aut\left( \Bl_{\{q_1, \ldots, q_m\}}^{\mathbb{CP}^{l+k-1}/\Gamma} \right) $ is the subgroup of elements of $\mathbb{P}(\GL(l)\times \GL(k))$ which fix $\{q_1, \ldots, q_m\}$. Now, a fixed point of $f\in \mathbb{P}(\GL(l)\times \GL(k))$ is the same as an eigenvector of a choice of representative $\widetilde{f}\in \GL(l)\times \GL(k)$, so $f(q)=q$ means $\widetilde{f}(p)=\lambda p$ for a representative $p$. 

Suppose that we pick $n> l+k$ and the first $l+k$ points $p_1,\ldots, p_{l+k}$ in $\mathbb{CP}^{l+k-1}$ represented by points $\tilde{p}_1,\ldots, \tilde{p}_{l+k}\in \mathbb{C}^{l+k}$ forming a basis of $\mathbb{C}^{l+k}$ and with $q(p_i)\neq q(p_j)$ for $i\neq j$ with $i,j\leq l+k$, where $q:\mathbb{CP}^{l+k-1}\to \mathbb{CP}^{l+k-1}/\Gamma$ is the quotient map. Then with respect to this basis, the lift $\widetilde{f}\in \GL(l)\times \GL(k)$ of an element $f\in \mathbb{P}(\GL(l)\times \GL(k))$ such that $f(p_i)=p_i$ for $i\in\{1,\ldots,k+l\}$ will be diagonal. Then if we take $p_{l+k+1}$ such that its representative is $\tilde{p}_{l+k+1}=\displaystyle \sum_{i=1}^{l+k}\tilde{p}_i$, we see that the only element $f\in \mathbb{P}(\GL(l)\times \GL(k))$ fixing $p_1,\ldots, p_{l+k+1}$ is the identity (i.e, $\tilde{f}$ is a multiple of the identity). Perturbing the representatives $\tilde{p}_1,\ldots, \tilde{p}_{l+k}$ if necessary, we can assume that $q(p_{l+k+1})\notin S_1\cup S_2$. We have proven the following theorem:
\begin{theorem}\label{example}
Let $r$ be a natural number and consider the cyclic subgroup of $U(1)\leq U(k)$ given by
 $$\Gamma_{(-r,1,\ldots,1)}=\braket{\xi \Id_k}\cong\mathbb{Z}_r,$$
where $\xi=e^{\frac{2\pi i }{r}}$ is the primitive $r$-th rooth of unity. Then for $l\geq 2$,
$$\Gamma:=\braket{\diag(\Id _l,\gamma): \gamma\in \Gamma_{(-r,1,\ldots,1)}}\subset U(l+k),$$
is a finite subgroup of the isometry group $\Iso(\mathbb{CP}^{l+k-1},g_{\FS})$ acting on $\mathbb{CP}^{l+k-1}$ via the standard action on $\mathbb{C}^{l+k}$, where $g_{\FS}$ is the Fubini-Study metric. Then the orbifold $M=\mathbb{CP}^{l+k-1}/\Gamma$ has two disjoint singularities of type $\mathcal{I}$ at 
\begin{align*}
S_1 & = \{ [z_0 : \ldots : z_{l-1} : 0 : \ldots : 0] \in \mathbb{CP}^{l+k-1} : [z_0 : \ldots : z_{l-1}] \in \mathbb{CP}^{l-1} \} \cong \mathbb{CP}^{l-1}, \\
S_2 & = \{ [0 : \ldots : 0 : z_{l} : \ldots : z_{l+k-1}] \in \mathbb{CP}^{l+k-1} : [z_{l} : \ldots : z_{l+k-1}] \in \mathbb{CP}^{k-1} \} \cong \mathbb{CP}^{k-1} / \Gamma_{(-r, 1, \ldots, 1)}.
\end{align*}
Also, we can choose $\{ p_1, p_2, \ldots, p_{r_m} \} \subset \mathbb{CP}^{l+k-1} \setminus (S_1 \cup S_2)$ with $m \geq l+k+1$ such that there exists a constant scalar curvature \text{Kähler} metric on the orbifold.
$$
X = \Bl_{\{q_1, \ldots, q_m\}}^{\mathbb{CP}^{l+k-1}/\Gamma},
$$
where $q_1, \ldots, q_m \in \mathbb{CP}^{l+k-1}/\Gamma$ are such that 
$$
q(\{p_1, \ldots, p_{r_m}\}) = \{q_1, \ldots, q_m\},
$$
with $q : \mathbb{CP}^{l+k-1} \to \mathbb{CP}^{l+k-1}/\Gamma$ being the quotient map. Furthermore, the identity component of the automorphism group $\Aut\left( \Bl_{\{q_1, \ldots, q_m\}}^{\mathbb{CP}^{l+k-1}/\Gamma} \right)$ is trivial.
\end{theorem}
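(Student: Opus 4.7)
The plan is to reduce the construction to an equivariant application of Arezzo--Pacard's result (Theorem 1.4 in \cite{arezzo2009blowing}) on blow-ups of a cscK manifold at finitely many points, and then use Proposition \ref{bihologrroupblow} to control the automorphism group. First I would verify the statement about singular strata: the elements of $\Gamma$ have the form $\diag(\Id_l, \xi^s \Id_k)$, so their common fixed locus in $\mathbb{CP}^{l+k-1}$ is the disjoint union of the linear subspaces $S_1 = \{z_l=\dots=z_{l+k-1}=0\}\cong \mathbb{CP}^{l-1}$ and $S_2 = \{z_0=\dots=z_{l-1}=0\}\cong \mathbb{CP}^{k-1}$; along $S_1$ the isotropy acts trivially on the tangent directions to $S_1$ and by $\xi \Id_k$ on the normal directions, so the singularity transversal to $S_1$ is modelled on $\mathbb{C}^k/\Gamma_{(-r,1,\ldots,1)}$, which is of type $\mathcal{I}$ by Definition \ref{typeI}; analogously for $S_2$.

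Next I would carry out the equivariant Arezzo--Pacard construction. Start with the Fubini-Study metric $g_{\FS}$ on $\mathbb{CP}^{l+k-1}$, which is $\Gamma$-invariant and cscK. Let $\mathfrak{h}=\Lie(\PGL(l+k,\mathbb{C}))$ and denote by $\mathfrak{h}^\Gamma$ the $\Gamma$-invariant part; the two moment maps are related by $\mu^\Gamma=\iota^*\circ \mu$ with $\iota:\mathfrak{h}^\Gamma\hookrightarrow\mathfrak{h}$ the inclusion. By Lemma 6.3 of \cite{arezzo2009blowing}, for $n$ large enough there is a nonempty open set $V_n$ of configurations $(p_1,\dots,p_n)$ of distinct points such that $\{\mu(p_i)\}$ spans $\mathfrak{h}^*$ and such that positive weights $a_i$ exist with $\sum a_i\mu(p_i)=0$. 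Pushing through $\iota^*$, the same configuration satisfies the analogous conditions in $\mathfrak{h}^\Gamma$. Using the openness of $V_n$ together with Lemma 6.2 of \cite{arezzo2009blowing} (which allows the addition of further points without destroying the balancing), I can successively enlarge the configuration so that (i) it avoids $S_1\cup S_2$, and (ii) it is $\Gamma$-invariant with $\Gamma$-invariant weights. Applying Theorem 1.4 of \cite{arezzo2009blowing} then produces, for all sufficiently small $\varepsilon>0$, a $\Gamma$-invariant cscK metric $\widetilde g_\varepsilon$ on $\Bl^{\mathbb{CP}^{l+k-1}}_{\{p_1,\ldots,p_{rm}\}}$. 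Descending through the quotient yields a cscK metric on $X=\Bl^{\mathbb{CP}^{l+k-1}/\Gamma}_{\{q_1,\ldots,q_m\}}$, using the natural identification $(\Bl^{\mathbb{CP}^{l+k-1}}_{\{p_i\}})/\Gamma\cong \Bl^{\mathbb{CP}^{l+k-1}/\Gamma}_{\{q_j\}}$.

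Finally I would kill the connected automorphism group. By Proposition \ref{bihologrroupblow}, the Lie algebra $\mathfrak{h}(X)$ is identified with the subalgebra of $\Lie(\Aut(\mathbb{CP}^{l+k-1}/\Gamma))\cong \Lie(\mathbb{P}(\GL(l,\mathbb{C})\times \GL(k,\mathbb{C})))$ (computed in the paragraph preceding the theorem) consisting of vector fields tangent to $S_1\cup S_2$ and vanishing at each $q_i$. Equivalently, the identity component of $\Aut(X)$ lifts to the subgroup of $\GL(l,\mathbb{C})\times \GL(k,\mathbb{C})$ whose elements fix each $q_i$ projectively. Pick the first $l+k$ points so that their lifts $\tilde p_1,\dots,\tilde p_{l+k}$ form a basis of $\mathbb{C}^{l+k}$; an element fixing all of them must then be diagonal in this basis. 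Adding a further point $p_{l+k+1}$ with lift $\tilde p_{l+k+1}=\sum_{i=1}^{l+k}\tilde p_i$ forces the diagonal entries to coincide, so the lift is a multiple of the identity and descends to the identity in $\mathbb{P}(\GL(l)\times\GL(k))$. Small perturbations (which keep the configuration inside $V_n$) ensure that the $\Gamma$-orbit of these $l+k+1$ points still lies in $\mathbb{CP}^{l+k-1}\setminus(S_1\cup S_2)$ and admits a balancing. The main technical point, which I expect to be the most delicate, is arranging all three conditions simultaneously — $\Gamma$-invariance, the Arezzo--Pacard spanning-plus-balancing, and the rigidity configuration — while remaining in the open set $V_n$; this is handled by invoking Lemma 6.2 of \cite{arezzo2009blowing} to add points one at a time and by exploiting the openness of $V_n$ and the density of $\mathbb{CP}^{l+k-1}\setminus(S_1\cup S_2)$.
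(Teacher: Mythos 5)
Your proposal matches the paper's own proof essentially step for step: the same identification of the fixed loci $S_1$, $S_2$ and their transverse models, the same equivariant use of Lemmas 6.2 and 6.3 and Theorem 1.4 of Arezzo--Pacard (spanning and balancing via $\mu^\Gamma=\iota^*\circ\mu$, adding points to make the configuration $\Gamma$-invariant and away from $S_1\cup S_2$, then descending the invariant cscK metric through the quotient identification of blow-ups), and the same rigidity argument via Proposition \ref{bihologrroupblow} using $l+k$ basis points plus the point with representative $\sum_{i}\tilde{p}_i$, perturbed to avoid $S_1\cup S_2$. The argument is correct and complete at the same level of detail as the paper's.
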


\section{Gluing Technique}
\label{sec: Gluing Technique}
Let \(X\) be an orbifold of depth one  with singularities of type \(\mathcal{I}\) along a connected suborbifold \(Y\) of complex codimension \(k\) i.e., we can find a finite subgroup \(\Gamma_{(-w_0, w)}\) of \(U(k)\) acting freely on \(\mathbb{C}^k \setminus \{0\}\) such that any point $p\in Y$ has a local orbifold uniformization chart of the form \( \mathbb{C}^{n-k} \times (\mathbb{C}^k / \Gamma_{(-w_0, w)})\) and \(\Gamma_{(-w_0, w)}\) is of type \(\mathcal{I}\) in the sense of Definition \ref{typeI}. Suppose that \(X\) admits a cscK metric. Let \(\pi: \widehat{X} \to X\) be a partial resolution of \(X\) by performing a \((-w_0, w)\)-weighted blow-up along \(Y\) as described on page \pageref{wbb}. 

The goal of this section will be to construct a \kahler metric on the resolution $\widehat{X}$ which is close to the cscK metric $\omega_X$ on $X$. To do so, we will follow the approach of \cite{conlon2019quasi} and introduce an auxiliary space on which this construction will take place. We follow the following steps:\\

Step 1: Consider first the orbifold with boundary $X\times [0,\infty)_{\varepsilon}$ and blow-up the submanifold $Y\times \{0\}$ in the sense of Melrose to obtain the orbifold with corners 
$\mathcal{X}:=[X\times [0,\infty)_{\varepsilon}, Y\times \{0\}]$ where $\varepsilon$ is the parameter of deformation. Let $\beta: \mathcal{X}\to X\times [0,\infty)_{\varepsilon}$ be the corresponding blow-down map.

\begin{figure}[hbt!]
\begin{tikzpicture}[x=0.75pt,y=0.75pt,yscale=-1,xscale=1]

\draw    (310.5,260.25) -- (310.89,143.44) ;
\draw [shift={(310.9,141.44)}, rotate = 90.19] [color={rgb, 255:red, 0; blue, 0; blue, 0 }  ][line width=0.75]    (10.93,-3.29) .. controls (6.95,-1.4) and (3.31,-0.3) .. (0,0) .. controls (3.31,0.3) and (6.95,1.4) .. (10.93,3.29)   ;
\draw [color={rgb, 255:red, 0; blue, 118; blue, 255 }  ,draw opacity=1 ]   (91,260) -- (530,260.5) ;
\draw  [color={rgb, 255:red, 249; blue, 7; blue, 7 }  ,draw opacity=1 ][fill={rgb, 255:red, 247; blue, 4; blue, 4 }  ,fill opacity=1 ] (307.94,260.25) .. controls (307.94,258.83) and (309.08,257.69) .. (310.5,257.69) .. controls (311.92,257.69) and (313.06,258.83) .. (313.06,260.25) .. controls (313.06,261.67) and (311.92,262.81) .. (310.5,262.81) .. controls (309.08,262.81) and (307.94,261.67) .. (307.94,260.25) -- cycle ;

\draw (292.83,120.27) node [anchor=north west][inner sep=0.75pt]    {$[ 0,+\infty )_{\varepsilon }$};
\draw (489,274.4) node [anchor=north west][inner sep=0.75pt]  [color={rgb, 255:red, 0; blue, 119; blue, 255 }  ,opacity=1 ]  {$X$};
\draw (304,274.4) node [anchor=north west][inner sep=0.75pt]    {$\textcolor[rgb]{1,0,0}{Y}$};
\end{tikzpicture}
\caption{\small Orbifold with corner $X\times [0,\infty)_{\varepsilon}$}
\end{figure}
In local coordinates near $Y\times \{0\}$, this means that we replace $\bC^{n-k}\times \bC^k\Big/\Gamma\times [0,\infty)$ by $\bC^{n-k}\times \mathbb{S}^{2k}_+\Big/\Gamma\times [0,+\infty)$, where $\mathbb{S}^{2k}_+$ is the half sphere $\mathbb{S}^{2k}_+=\{(z,\varepsilon)\in \bC^k\times [0,\infty):\vert z\vert ^2+\varepsilon^2=1\}$ and the local blow-down map 
$$\bC^{n-k}\times \mathbb{S}^{2k}_+\Big/\Gamma\times [0,+\infty) \to \bC^{n-k}\times \bC^k\Big/\Gamma\times [0,\infty),$$
 is given by $(w,(z,\varepsilon),r)\mapsto (w,rz,r\varepsilon)$.\\
  Let $H_1$ be the boundary hypersurface of $\mathcal{X}$ obtained by the blow-up of $Y\times \{0\}$ and let $H_2$ be the boundary hypersurface corresponding to the lift of $X\times \{0\}$ in $X\times [0,\infty)_{\varepsilon}$.  In our metric model $H_1=\overline{N_X(Y)}=N_X(Y)\sqcup S(N_X(Y))$ is the radial compactification of the normal bundle $N_X(Y)$ of $Y$ and $H_2=[X,Y]$. Also note that $H_1\cap H_2=\partial H_1=\partial H_2=S(N_X(Y))$ and 
\begin{equation}\label{inner}
\widehat{H}_1^{\circ}\cong \mathcal{O}_{E\diagup Y}(-w_0),
\end{equation}
where $E=\mathbb{P}_w(W)$ is the weighted projectivization of some vector bundle $W\to Y$ of rank $k$ such that $N_X(Y)=W\Big/\Gamma_{(-w_0,w)}$.

\begin{figure}[hbt!]
\begin{tikzpicture}[x=0.75pt,y=0.75pt,yscale=-1,xscale=1]

\draw  [draw opacity=0][line width=0.75]  (247.45,226.96) .. controls (248.09,185.96) and (281.53,152.92) .. (322.69,152.92) .. controls (363.83,152.92) and (397.25,185.92) .. (397.94,226.89) -- (322.69,228.17) -- cycle ; \draw  [color={rgb, 255:red, 238; blue, 10; blue, 10 }  ,draw opacity=1 ][line width=0.75]  (247.45,226.96) .. controls (248.09,185.96) and (281.53,152.92) .. (322.69,152.92) .. controls (363.83,152.92) and (397.25,185.92) .. (397.94,226.89) ;  
\draw [color={rgb, 255:red, 0; blue, 255; blue, 250 }  ,draw opacity=1 ]   (397.94,226.89) -- (540.06,226.49) ;
\draw    (322.03,152.38) -- (322.42,35.57) ;
\draw [shift={(322.43,33.57)}, rotate = 90.19] [color={rgb, 255:red, 0; blue, 0; blue, 0 }  ][line width=0.75]    (10.93,-3.29) .. controls (6.95,-1.4) and (3.31,-0.3) .. (0,0) .. controls (3.31,0.3) and (6.95,1.4) .. (10.93,3.29)   ;
\draw [color={rgb, 255:red, 0; blue, 255; blue, 250 }  ,draw opacity=1 ]   (105.32,227.36) -- (247.45,226.96) ;

\draw (312.76,158.1) node [anchor=north west][inner sep=0.75pt]  [color={rgb, 255:red, 244; blue, 9; blue, 9 }  ,opacity=1 ] [align=left] {$\displaystyle H_{1}$};
\draw (458.27,232.85) node [anchor=north west][inner sep=0.75pt]  [color={rgb, 255:red, 0; blue, 118; blue, 255 }  ,opacity=1 ] [align=left] {$\displaystyle H_{2}$};
\draw (222.69,232.07) node [anchor=north west][inner sep=0.75pt]    {$\mathcal{X} =[ X\times [ 0,+\infty )_{\varepsilon } ,Y\times \{0\}]$};
\draw (304.83,11.87) node [anchor=north west][inner sep=0.75pt]    {$[ 0,+\infty )_{\varepsilon }$};
\draw (162.27,232.85) node [anchor=north west][inner sep=0.75pt]  [color={rgb, 255:red, 0; blue, 118; blue, 255 }  ,opacity=1 ] [align=left] {$\displaystyle H_{2}$};

\end{tikzpicture}
\caption{\small Blowing-up the orbifold $X\times [0,\infty)_{\varepsilon}$ along $Y\times \{0\}$}
\end{figure}

Step 2: The partial resolution $\pi: \widehat{X}\to X$ naturally induces a partial resolution
$$\widehat{\mathcal{X}}\xrightarrow{\pi}\mathcal{X}=[X\times  [0,\infty)_{\varepsilon};\overline{Y}\times \{0\}]\xrightarrow{\beta}X\times  [0,\infty)_{\varepsilon}.$$
As an orbifold, $X$ is automatically a stratified space with two strata $\Sigma_1=Y$ and $\Sigma_2=X\setminus Y$. The orbiofld $\mathcal{X}$ has corners with boundary hypersurfaces $H_1=\overline{N_X(Y)}$ and $H_2=[X,Y]$ corresponding to the strata $\Sigma_1$ and $\Sigma_2$. The boundary hypersurfaces are naturally equipped with a fiber bundle structure
$$\xymatrix{\overline{V_1}=\overline{\bC^k\Big/\Gamma}\ar[rr]&&H_1=\overline{N_X(Y)}\ar[d]^{\varphi_1}\\&& S_1=Y},\quad\quad\xymatrix{V_2=\{\pt\}\ar[rr]&&H_2=[X,Y]\ar[d]^{\varphi_2=\id}\\&& S_2=[X,Y]},$$
where $\overline{V_1}$ and $V_2$ are the fibers. \\
This is a particular case of Lemma 4.3 in \cite{conlon2019quasi}, namely the orbifold  $\mathcal{X}$ is in fact an orbifold with fibered corners $(\mathcal{X},\varphi)$ where $\varphi_1:H_1=\overline{N_X(Y)}\to Y$ and $\varphi_2=\id:H_2=[X, Y]\to H_2$ are the fiber bundle maps. Indeed, the partial order in this case is just the order $H_1<H_2$ on the set $\{H_1,H_2\}$ of boundary hypersurfaces and $H_1\cap H_2=\partial H_1=\partial H_2= \partial(\overline{N_X(Y)})$, $\varphi_1{\big|}_{H_1\cap H_2}: H_1\cap H_2\to S_1$ is clearly a surjective submersion, $S_{21}:=\varphi_2(H_1\cap H_2)=H_1\cap H_2=\partial H_2$ is the boundary of $H_2=S_{2}$ and $\varphi_{21}:=\varphi_1{\big|}_{H_1\cap H_2}$ is such that $\varphi_{21} \circ \varphi_2=\varphi_1 \circ \id=\varphi_1$.

Now we consider the Lie algebra of \textbf{$b$-vector fields} in the sense of Example \ref{$b$-vector fields} on the orbifold with fibered corners $\mathcal{X}$, that is, smooth vector fields on $\mathcal{X}$ which are tangent to all boundary
hypersurfaces:
\begin{align*}
\mathcal{V}_{b}(\mathcal{X}):&=\{\xi\in\mathfrak{X}(\mathcal{X}):\xi \text{ tangent to } H_1 \text{ and } H_2 \}\\
&=\{\xi\in\mathfrak{X}(\mathcal{X}):\xi x_1\in x_1\mathcal{C}^\infty(\mathcal{X}) \text{ and } \xi x_2\in x_2\mathcal{C}^\infty(\mathcal{X}) \},
\end{align*}
where $x_1$ and $x_2$ are boundary defining functions for $H_1$ and $H_2$, i.e, $x_i\geq 0$, $x_i^{-1}(0)=H_i$ and $dx_i\neq 0$ on $H_i$. Choose $x_1$ and $x_2$ so that $\varepsilon=x_1.x_2$ is the corresponding total boundary function. In the local coordinates $(x_1,x_2,u_i)$, vector fields in $\mathcal{V}_{b}(\mathcal{X})$ are of the form 
$$\xi=ax_1\dfrac{\partial}{\partial x_1}+bx_2\dfrac{\partial}{\partial x_2}+\displaystyle \sum_{i} c_iu_i\dfrac{\partial}{\partial u_i},$$
where $a, b, c_i$ are smooth functions.\\
Also we consider the Lie subalgebra $\mathcal{V}_{\QAC}(\mathcal{X})$ of \textbf{quasi asymptotically conical vector fields} or \textbf{$\QAC$-vector fields} on $\mathcal{X}$ originally introduced by Conlon, Degeratu and Rochon in \cite{conlon2019quasi}. By definition (see Definitions 1.11 and 1.14 in \cite{conlon2019quasi}), these are the $b$-vector fields $\xi\in \mathcal{V}_{b}(\mathcal{X})$ such that:
\begin{enumerate}
\item $\QAC1$: $\xi{\big|}_{H_i}$ is tangent to the fibers of $\varphi_i:H_i\to S_i$ for all $i$,
\item $\QAC2$: $\xi \varepsilon\in \varepsilon^2C^\infty(\mathcal{X})$.
\end{enumerate}
The Lie algebra $\mathcal{V}_{\QAC}(\mathcal{X})$ is a finitely generated projective $\mathcal{C}^\infty(\mathcal{X})$-module, so by the Serre-Swan theorem (Theorem \ref{Serre-Swan}), there exists a natural smooth vector bundle,
 ${}^\varphi T\mathcal{X}\to \mathcal{X}$, and a natural map $\iota_\varphi:{}^\varphi T\mathcal{X}\to T\mathcal{X}$  such that
$$\mathcal{V}_{\QAC}(\mathcal{X})=(\iota_\varphi)_*\Gamma(\mathcal{X},{}^\varphi T\mathcal{X}).$$
The above vector bundle ${}^\varphi T\mathcal{X}$, originally introduced in \cite{conlon2019quasi} is called the \textbf{$\QAC$-tangent bundle} over $\mathcal{X}$. The \textbf{$\QAC$-cotangent bundle} ${}^\varphi T^*\mathcal{X}$ over $\mathcal{X}$ is the vector bundle dual to ${}^\varphi T\mathcal{X}$. \\
Using the function $\varepsilon=\pr_2\beta\in\mathcal{C}^{\infty}(\mathcal{X})$, where $\pr_2:X\times [0,+\infty)\to [0,+\infty)$ is the projection on the second factor,  we can define the Lie subalgebra of $\mathcal{V}_{\QAC}(\mathcal{X})$,  corresponding to $\QAC$-vector fields tangent to the level sets of $\varepsilon$:
$$\mathcal{V}_{\QAC,\varepsilon}(\mathcal{X}):=\{\xi\in\mathcal{V}_{\QAC}(\mathcal{X}):\xi\varepsilon\equiv 0\}.$$
Again by the Serre-Swan theorem (Theorem \ref{Serre-Swan} above), there exist a natural vector bundle $\mathcal{E}\to \mathcal{X}$ and a natural map $\iota_\varepsilon:\mathcal{E}\to T\mathcal{X}$ such that there is a canonical identification
$$\mathcal{V}_{\QAC,\varepsilon}(\mathcal{X})=(\iota_\varepsilon)_*\Gamma(\mathcal{X},\mathcal{E}).$$
In fact, $\mathcal{E}$ is naturally a vector subbundle of ${}^\varphi T\mathcal{X}$, which induces a natural map $\iota_{\varepsilon}^*:{}^\varphi T^*\mathcal{X}\to\mathcal{E}^*$. This means that a smooth $\QAC$-metric (i.e. a bundle metric for ${}^\varphi T\mathcal{X}$) naturally restricts to define an element of $\Gamma(\mathcal{X},\mathcal{E}^*\otimes \mathcal{E}^*)$. Now we look at the pull-back of the orbifold cscK-metric $g_X$ on $X$ to $g_\mathcal{X}:=\displaystyle\beta^*\pr_1^*{g}_X$ on $\mathcal{X}$, where $\pr_1:X\times [0,+\infty)\to X$ is the projection on the first factor.

\begin{lemma}\label{41}
The pull-back $g_\varepsilon:=\iota_\varepsilon^*\beta^*\pr_1^*g_X$ is such that $\dfrac{g_\varepsilon}{\varepsilon^2}\in \Gamma(\mathcal{X},\mathcal{E}^*\otimes \mathcal{E}^*)$.
\end{lemma}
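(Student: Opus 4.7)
The plan is to reduce the lemma to showing that for every smooth section $\xi$ of $\mathcal{E}$, the pushforward $F_*(\iota_\varepsilon \xi)$ under the smooth map $F := \pr_1 \circ \beta : \mathcal{X} \to X$ factors as $F_*(\iota_\varepsilon \xi) = \varepsilon \cdot V$ for some smooth section $V$ of the pullback bundle $F^* TX$. Once this is established, for $\xi, \xi' \in \Gamma(\mathcal{X}, \mathcal{E})$ we would have
\begin{equation*}
g_\varepsilon(\xi, \xi') = (F^* g_X)(\iota_\varepsilon \xi, \iota_\varepsilon \xi') = g_X(F_* \iota_\varepsilon \xi, F_* \iota_\varepsilon \xi') = \varepsilon^2 \, g_X(V, V'),
\end{equation*}
so that $g_\varepsilon/\varepsilon^2$ extends to a smooth section of $\mathcal{E}^* \otimes \mathcal{E}^*$ on all of $\mathcal{X}$, as claimed.

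To obtain the factorization, I would first observe that $\iota_\varepsilon \xi \in \mathcal{V}_{\QAC,\varepsilon}(\mathcal{X})$ annihilates $\varepsilon = \pr_2 \circ \beta$ by definition, so $\beta_*(\iota_\varepsilon \xi)$ has no $\partial_\varepsilon$ component on $X \times [0, \infty)$ and may be pushed down further by $\pr_1$. The heart of the argument is to verify that $\beta_*(\iota_\varepsilon\xi)$ vanishes identically on $\partial \mathcal{X} = H_1 \cup H_2$. At a point $p \in H_1$, the $b$-condition places $\iota_\varepsilon \xi|_p$ in $T_p H_1$, while $\QAC1$ demands tangency to the fibers of $\varphi_1 : H_1 = \overline{N_X(Y)} \to Y$; since $\beta|_{H_1}$ is precisely the projection $\overline{N_X(Y)} \to Y \times \{0\}$ collapsing those fibers, one has $d\beta_p(\iota_\varepsilon\xi|_p) = 0$. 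At a point $p \in H_2$, the fibration $\varphi_2 = \id$ has point fibers (exactly as in the $0$-calculus example in the text), and $\QAC1$ forces $\iota_\varepsilon \xi|_p = 0$ outright, so $d\beta_p$ trivially kills it. Both cases apply at the corner $H_1 \cap H_2$.

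Having the vanishing on $\partial \mathcal{X}$, I would conclude by Hadamard's lemma applied twice in local coordinates $(w, \zeta, x_1, x_2)$ near a corner point, with $x_i$ a boundary-defining function for $H_i$ and $\varepsilon = x_1 x_2$. Vanishing on $\{x_1 = 0\}$ gives divisibility by $x_1$; the resulting quotient still vanishes on the interior of $H_2$, where $x_1 > 0$, hence on all of $H_2$ by continuity, and a second Hadamard step extracts $x_2$. Pushing down by $\pr_1$ then yields $F_*(\iota_\varepsilon \xi) = \varepsilon \cdot V$ with $V$ smooth. Away from the corner only a single Hadamard application is needed, so the corner is the most delicate portion.

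The main technical hurdle is precisely this factorization at the corner: one has to combine two independent vanishing conditions in order to pull out the full product $\varepsilon = x_1 x_2$ rather than only a single $x_i$, and one has to recognize that at $H_2$ (where $\varphi_2 = \id$) the $\QAC1$ condition is in fact the strong vanishing $\iota_\varepsilon \xi|_{H_2} \equiv 0$, rather than a weaker fiberwise tangency. Once these two observations are granted, the remainder is local bookkeeping.
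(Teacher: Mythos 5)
Your proposal is correct, but it runs genuinely dual to the paper's proof. The paper argues on the cotangent side: it reduces the lemma to showing that $\beta^*\pr_1^*s$ divided by $\varepsilon$ is a smooth section of ${}^\varphi T^*\mathcal{X}$ for $s\in\Gamma(X,T^*X)$, and verifies this by direct computation in the adapted coordinates $x_1=\sqrt{r^2+\varepsilon^2}$, $x_2=\varepsilon/x_1$, obtaining the explicit identities $\dfrac{r\,\mathrm{d}\omega}{\varepsilon}=\sqrt{1-x_2^2}\,\dfrac{\mathrm{d}\omega}{x_2}$ and $\dfrac{\mathrm{d}r}{\varepsilon}=x_1\sqrt{1-x_2^2}\,\dfrac{\mathrm{d}\varepsilon}{\varepsilon^2}-\dfrac{\mathrm{d}x_2}{x_2^2\sqrt{1-x_2^2}}$, which are manifestly smooth sections of the relevant bundles. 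You instead work on the tangent side, invariantly: your key inputs are the structural conditions (the $b$-tangency at $H_1$ combined with ($\QAC1$), the observation that ($\QAC1$) with $\varphi_2=\id$ forces $\iota_\varepsilon\xi|_{H_2}=0$ outright, and $\xi\varepsilon\equiv 0$), followed by two applications of Hadamard's lemma at the corner to extract the full factor $\varepsilon=x_1x_2$ from $F_*(\iota_\varepsilon\xi)$; all of these steps check out, including the density argument for the intermediate quotient on $H_2$. One step you use implicitly and should make explicit is the final conversion of the $C^\infty(\mathcal{X})$-bilinear pairing $(\xi,\xi')\mapsto \varepsilon^{-2}g_\varepsilon(\xi,\xi')$ into a section of $\mathcal{E}^*\otimes\mathcal{E}^*$, which requires the projectivity of $\Gamma(\mathcal{X},\mathcal{E})$ via the Serre--Swan theorem (Theorem \ref{Serre-Swan}); this is available in the paper, so it is a citation gap rather than a mathematical one. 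As for what each approach buys: yours is coordinate-free and would transfer verbatim to more general QAC or fibered-corners situations, while the paper's computation, besides proving the lemma, produces the explicit coframe relations --- in particular the decomposition of $\mathrm{d}r/\varepsilon$ into its $\mathrm{d}\varepsilon/\varepsilon^2$ and $\mathrm{d}x_2/x_2^2$ parts --- that are immediately reused after the lemma to split $\mathcal{E}|_{H_1}$ and to identify the restriction $\dfrac{g_\varepsilon}{\varepsilon^2}\big|_{H_1}=\dfrac{\varphi_1^*g_{S_1}}{\varepsilon^2}+g_{\varphi_1}$; with your argument that later splitting would still have to be computed separately.
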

\begin{proof}
This is a special case of Lemma 4.5 in \cite{conlon2019quasi} but we will provide a direct proof for the convenience of the reader. It is sufficient to show that $\beta^*\pr_1^*s\in \Gamma(\mathcal{X},{}^\varphi T^*\mathcal{X})$ for $s\in \Gamma(X,T^*X)$. Let us choose $(y,z)$ as a local coordinate of $X$, where $y$ is a coordinate of $Y\subseteq X$ and $z=(z_1,\ldots,z_k)\in \bC^k/\Gamma$ is normal to $Y$, i.e, $Y=\{z=0\}$. Write $z=(r,\omega)\in\mathbb{R}^+\times\mathbb{S}^{2k-1}$ in the spherical coordinates. The boundary defining function of $H_1$ and $H_2$ can be choosen to be $x_1=\sqrt{r^2+\varepsilon^2}$ and $x_2=\dfrac{\varepsilon}{x_1}=\dfrac{\varepsilon}{\sqrt{r^2+\varepsilon^2}}$. The bundle ${}^\varphi T^*\mathcal{X}$ over $\mathcal{X}$ is then  generated by $\{\dfrac{\mathrm{d}\varepsilon}{\varepsilon^2},\dfrac{\mathrm{d}x_2}{x_2^2},\dfrac{\mathrm{d}y}{\varepsilon},\dfrac{\mathrm{d}\omega}{x_2} \}$, while $\mathcal{E}^*$ is generated by $\{\dfrac{\mathrm{d}x_2}{x_2^2},\dfrac{\mathrm{d}y}{\varepsilon},\dfrac{d\omega}{x_2} \}$. We need to check that $\dfrac{r \mathrm{d}\omega}{\varepsilon}$ and $\dfrac{\mathrm{d}r}{\varepsilon}$ are sections of ${}^\varphi T^*\mathcal{X}$.
Now
$$\dfrac{r \mathrm{d}\omega}{\varepsilon}=\dfrac{r}{\sqrt{r^2+\varepsilon^2}}\dfrac{\sqrt{r^2+\varepsilon^2} \mathrm{d}\omega}{\varepsilon}=\dfrac{r}{\sqrt{r^2+\varepsilon^2}}\dfrac{ \mathrm{d}\omega}{x_2}.$$
Since the coefficient $\dfrac{r}{\sqrt{r^2+\varepsilon^2}}=\sqrt{1-x_2^2}$ is a smooth function on $\mathcal{X}$, this shows that $\dfrac{r \mathrm{d}\omega}{\varepsilon}$ is a smooth section of $\mathcal{E}^*$. On the other hand:
\begin{align*}
\dfrac{\mathrm{d}r}{\varepsilon}&=\dfrac{1}{\varepsilon}\mathrm{d}(\sqrt{x_1^2-\varepsilon^2})\\
&=\dfrac{1}{\varepsilon}\mathrm{d}(\varepsilon\sqrt{\dfrac{x_1^2}{\varepsilon^2}-1})\\
&=\dfrac{1}{\varepsilon}\mathrm{d}(\varepsilon\sqrt{\dfrac{1}{x_2^2}-1})\\
&=\dfrac{\mathrm{d}\varepsilon}{\varepsilon}\sqrt{\dfrac{1}{x_2^2}-1}+\dfrac{\varepsilon}{\varepsilon}\mathrm{d}(\sqrt{\dfrac{1}{x_2^2}-1})\\
&=\dfrac{\mathrm{d}\varepsilon}{\varepsilon}\sqrt{\dfrac{1}{x_2^2}-1}-\dfrac{\mathrm{d}x_2}{x_2^2\sqrt{1-x_2^2}}.
\end{align*}
Since $\dfrac{\mathrm{d}\varepsilon}{\varepsilon}\sqrt{\dfrac{1}{x_2^2}-1}= \dfrac{\sqrt{1-x_2^2}}{{x_2}}\dfrac{\mathrm{d}\varepsilon}\varepsilon= x_1\sqrt{1-x_2^2}\dfrac{\mathrm{d}\varepsilon}{\varepsilon^2}$ is a smooth section of ${}^\varphi T^*\mathcal{X}$ over $\mathcal{X}$ and $\dfrac{\mathrm{d}x_2}{x_2^2\sqrt{1-x_2^2}}$ is a section of  $\mathcal{E}^*$ over $\mathcal{X}$, this shows that $\dfrac{\mathrm{d}r}{\varepsilon}$ is a smooth section of ${}^\varphi T^*\mathcal{X}$.
\end{proof}
Now we describe the restrictions of $\dfrac{g_\varepsilon}{\varepsilon^2}$ to $H_1$ and $H_2$. First, we describe the restriction of $\mathcal{E}$ to $H_1$ and $H_2$. On $H_1$, restricting the boundary defining function of $\mathcal{X}$ to $H_1$ gives us a Lie algebra of $\QAC$-vector fields and a corresponding $\QAC$-tangent bundle ${}^\varphi T(H_1/S_1)$ in the fibers of $\varphi_1: H_1 \to S_1$. Since these fibers are manifolds with boundary, this means that in each fiber, we have the Lie algebra of scattering vector fields in the Melrose sense. Thus, there is a natural map $\mathcal{E} \big|_{H_1} \to {}^{\SC} T(H_1/S_1)$ and a short exact sequence.
$$0 \to N_1 \mathcal{E} \to \mathcal{E} \big|_{H_1} \to {}^{\SC} T(H_1 / S_1) \to 0,$$
where $N_1\mathcal{E}:=\ker(\mathcal{E}{\big|}_{H_1}\to {}^{\SC} T(H_1/S_1))$.
\\
Since there is a natural inclusion  ${}^{\SC} T(H_1/S_1) \subset \mathcal{E}{\big|}_{H_1}$, the above short exact sequence splits. There is another natural short exact sequence involving ${}^{\SC} T(H_1/S_1)$,
$$0 \to {}^{\SC} T(H_1/S_1) \to {}^{\SC} TH_1 \to x_2\varphi_1^*( TS_1) \to 0,$$
where $TS_1$ is the tangent bundle of $S_1=Y$, so
\begin{equation}\label{yek}
x_2\varphi_1^*(TS_1)={}^{\SC} TH_1/{}^{\SC} T(H_1/S_1).
\end{equation}
Multiplication by the boundary defining function $x_1$ induces the identification
\begin{equation}\label{do}
{}^{\SC} TH_1/{}^{\SC} T(H_1/S_1)=\ker{(\mathcal{E}{\big|}_{H_1}\to {}^{\SC} T(H_1/S_1))}.
\end{equation}
In particular, we see from \eqref{yek} and \eqref{do} that there is a natural identification
$$\ker{(\mathcal{E}{\big|}_{H_1}\to {}^{\SC} T(H_1/S_1))}\cong \varepsilon\varphi_1^*(TS_1).$$
Hence, we have a canonical decomposition 
$$\mathcal{E}{\big|}_{H_1}=\varepsilon\varphi_1^*( TS_1) \oplus {}^{\SC} T(H_1/S_1).$$ 
By Lemma \ref{41}, the family of metrics $\dfrac{g_\varepsilon}{\varepsilon^2}$ splits accordingly
$$\dfrac{g_\varepsilon}{\varepsilon^2}{\big|}_{H_1}=\dfrac{\varphi_1^*g_{S_1}}{\varepsilon^2}+g_{\varphi_1},$$
where $g_{\varphi_1}$ and $\dfrac{\varphi_1^*g_{S_1}}{\varepsilon^2}$ are the metrics induced by $\dfrac{g_\varepsilon}{\varepsilon^2}$ in the fiber of $\varphi_1:H_1\to S_1$ and the bundle $N_1\mathcal{E}$.

The function $\varepsilon$ on $\mathcal{X}$ naturally extends to a smooth function on $\widehat{\mathcal{X}}$, also denoted by $\varepsilon$. Similarly, the boundary defining functions $x_1$ can be chosen to lift to a smooth boundary defining function on $\widehat{\mathcal{X}}$, yielding a natural Lie algebra $\mathcal{V}_{\QAC}(\widehat{\mathcal{X}})$. We can define a Lie subalgebra by
$$\mathcal{V}_{\QAC,\varepsilon}(\widehat{\mathcal{X}}):=\{\xi\in\mathcal{V}_{\QAC}(\widehat{\mathcal{X}}):\xi \varepsilon\equiv 0\}.$$
There is a corresponding vector bundle $\widehat{\mathcal{E}}\to \widehat{\mathcal{X}}$ and a natural map $\iota_\varepsilon:\widehat{\mathcal{E}}\to T\widehat{\mathcal{X}}$ yielding a canonical identification 
$$\mathcal{V}_{\QAC,\varepsilon}( \widehat{\mathcal{X}})=(\iota_\varepsilon)_*\Gamma(\widehat{\mathcal{X}},\widehat{\mathcal{E}}).$$
The following theorem constructs a family of K\"ahler metrics on $\widehat{X}$ which are approximately cscK.
\begin{theorem}\label{main}
Let $(X,\omega_X)$ be a compact cscK complex orbifold of depth one with singularities of type $\mathcal{I}$ along a suborbifolds $Y$. Then, there exists a smooth closed $(1,1)$-form $\omega_{\widehat{\varphi}_1}$ on $\widehat{H}_1$ restricting on each fiber of $\widehat{\varphi}_1$ to the K\"ahler form of a scalar flat $\ALE$-metric asymptotic to $\omega_{\varphi_1}$. Moreover, for $\mu>0$ small, there is $\widehat{\omega}_\varepsilon\in \Gamma(\widehat{\mathcal{X}}, \widehat{\mathcal{E}^*}\wedge \widehat{\mathcal{E}^*})$ such that $\dfrac{\widehat{\omega}_\varepsilon}{\varepsilon^2}{\big|}_{\widehat{H}_1}=\omega_{\widehat{\varphi}_1}+\dfrac{\varphi_1^*\omega_{S_1}}{\varepsilon^2}$,  $\widehat{\omega}_\varepsilon{\big|}_{H_2}=\omega_X{\big|}_{H_2}$ and which yields a positive definite closed $(1,1)-$form on
$$\widehat{X}_{c}=\{p\in\widehat{\mathcal{X}}: \varepsilon(p)=c\}\cong \widehat{X},$$
for each $0<c<\mu$.
\end{theorem}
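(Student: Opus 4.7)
The plan is a gluing construction: away from the exceptional divisor use $\beta^*\pr_1^*\omega_X$, and near it use an $\varepsilon^2$-rescaled fibrewise Apostolov--Rollin ALE scalar-flat K\"ahler metric, interpolating via a cutoff on K\"ahler potentials.

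For the first assertion, the normal bundle $W\to Y$ inherits a Hermitian structure $h$ from the restriction of $\omega_X$, and Theorem \ref{Apostolov-Rollin thm} applied fibrewise on $\widehat{\varphi}_1:\widehat{H}_1\to S_1$ produces an ALE potential
$$H(y,Z)=\tfrac{1}{2}\|Z\|^2_{h(y)} + A(y)\|Z\|^{4-2k}_{h(y)} + O(\|Z\|^{3-2k}),\qquad k\geq 3,$$
depending only on the Hermitian norm and hence varying smoothly in $y\in Y$. The closed $(1,1)$-form $\omega_{\widehat{\varphi}_1}:=i\partial\bar\partial H$ computed fibrewise gives the required object on $\widehat{H}_1$, restricting on each fibre to a scalar-flat ALE K\"ahler form and asymptotic at fibre infinity to $\omega_{\varphi_1}$.

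For the second assertion, pick normal coordinates $(y,z)$ on a tubular neighborhood of $Y$, so that a local K\"ahler potential for $\omega_X$ has the form $\psi(y,z)=\psi_0(y)+\tfrac{1}{2}\|z\|^2_{h(y)}+O(\|z\|^4)$, and define the inner potential $\widehat{\psi}_\varepsilon(y,z):=\psi_0(y)+\varepsilon^2 H(y,z/\varepsilon)$. For an $\varepsilon$-dependent cutoff $\chi_\varepsilon$ equal to $1$ on $\{\|z\|\leq \varepsilon^\alpha\}$ and $0$ outside $\{\|z\|\leq 2\varepsilon^\alpha\}$ with some fixed $\alpha\in(\tfrac{1}{2},1)$, set
$$\widehat{\omega}_\varepsilon \;=\; \beta^*\pr_1^*\omega_X + i\partial\bar\partial\bigl(\chi_\varepsilon(\widehat{\psi}_\varepsilon-\psi)\bigr),$$
patched globally by a partition of unity on $Y$. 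Closedness and $(1,1)$-type are automatic. The restriction to $H_2$ gives $\omega_X|_{H_2}$ because $\chi_\varepsilon\equiv 0$ there, while on $\widehat{H}_1$ only the inner term $i\partial\bar\partial(\psi_0+\varepsilon^2 H)$ survives and, under the canonical splitting $\mathcal{E}|_{H_1}=\varepsilon\varphi_1^*TS_1\oplus {}^{\SC}T(H_1/S_1)$ discussed just before the theorem, reproduces exactly $\omega_{\widehat{\varphi}_1}+\varphi_1^*\omega_{S_1}/\varepsilon^2$ after dividing by $\varepsilon^2$. Positive definiteness on each $\widehat{X}_c$, $0<c<\mu$, follows because in the transition annulus $\{\varepsilon^\alpha\leq\|z\|\leq 2\varepsilon^\alpha\}$ the error $\widehat{\psi}_\varepsilon-\psi=O(\varepsilon^{2k-2}\|z\|^{4-2k})+O(\|z\|^4)$ is $o(1)$ in $C^2$ as $\varepsilon\to 0$ (since $\alpha>\tfrac{1}{2}$ and $k\geq 3$), so $\widehat{\omega}_\varepsilon$ is a small perturbation of the Euclidean-type model and remains strictly positive.

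The main obstacle is not computational but organizational: verifying that the locally defined glued form assembles into a global section of $\Gamma(\widehat{\mathcal{X}},\widehat{\cE}^*\wedge\widehat{\cE}^*)$ with the prescribed behavior at both boundary hypersurfaces, i.e.\ that the polar rescaling $z\mapsto \varepsilon Z$ underlying the inner potential is compatible with the $\QAC$/edge structure on the manifold with fibered corners $\widehat{\mathcal{X}}$, and that the Apostolov--Rollin normalization can be taken to vary smoothly over $Y$ (which is guaranteed by the fact that the potential $H$ is radially symmetric in the Hermitian norm $h(y)$). Lemma \ref{41} together with the canonical splitting of $\mathcal{E}|_{H_1}$ reduces the remaining checks to a bookkeeping exercise in the framework of \cite{conlon2019quasi}.
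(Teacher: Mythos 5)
Your overall strategy is the same as the paper's: glue $\beta^*\pr_1^*\omega_X$ to an $\varepsilon^2$-rescaled fibrewise ALE potential via a cutoff at an intermediate scale, check the restrictions at the two boundary hypersurfaces, and get positivity on $\widehat{X}_c$ from smallness of the error in the transition annulus (your admissible range $\alpha\in(\tfrac12,1)$ contains the paper's choice $r_\varepsilon=\varepsilon^{2k/(2k+1)}$, which matters only for the later nonlinear analysis, not for positivity). However, there is a genuine gap in the first and most delicate part: your justification for the global well-definedness of the fibrewise potential. You assert that the Apostolov--Rollin potential $H$ ``depends only on the Hermitian norm'' and is ``radially symmetric in the Hermitian norm $h(y)$.'' This is false for general weights $(-w_0,w)$: the group $U(k)$ does not even act on the weighted blow-up $\mathbb{CP}^k_{(-w_0,w)}=\mathcal{O}_{\mathbb{CP}^{k-1}_w}(-w_0)$ unless $w=(1,\ldots,1)$ (the Burns--Simanca case), and the scalar-flat ALE metric of Theorem \ref{Apostolov-Rollin thm} is invariant only under the subgroup $\bigtimes_{i=1}^l U(k_i)\subset U(k)$ commuting with the weighted $\mathbb{C}^*$-action, where the $k_i$ are the multiplicities of the equal weights. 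Only the leading terms $\tfrac12\|Z\|^2+A\|Z\|^{4-2k}$ of the expansion are radial; the $O(\|Z\|^{3-2k})$ remainder is not. Consequently, two trivializations of $W$ related by a general unitary transition function give genuinely different local potentials, and your ``patching by a partition of unity on $Y$'' does not rescue this: $\sqrt{-1}\partial\bar\partial\bigl(\sum_j\theta_j u_j\bigr)$ is still closed, but it no longer restricts on each fibre of $\widehat{\varphi}_1$ to the scalar-flat ALE form, so the first assertion of the theorem fails for your construction as written.

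The repair is exactly the point of the paper's proof: since the transition functions of $W\to Y$ commute with the generator $\gamma$ of $\Gamma_{(-w_0,w)}$, they preserve the $\omega_X$-orthogonal eigenbundle decomposition $W=\bigoplus_{i=1}^l W_i$, so choosing orthonormal frames adapted to this splitting reduces the structure group to $\bigtimes_{i=1}^l U(k_i)$; the invariance of $g_{\ALE}$ and of $H$ under precisely this group then makes the fibrewise potential $H_N:N_X(Y)\to\mathbb{R}$ well defined independently of the trivialization, with no partition of unity needed. With that substitution your argument goes through, and the remaining verifications (that the glued form defines a section of $\Gamma(\widehat{\mathcal{X}},\widehat{\mathcal{E}}^*\wedge\widehat{\mathcal{E}}^*)$ with the stated restrictions, via Lemma \ref{41} and the splitting $\mathcal{E}\big|_{H_1}=\varepsilon\varphi_1^*(TS_1)\oplus{}^{\SC}T(H_1/S_1)$, and positivity near $\widehat{H}_1$ and $H_2$ propagating to small $\varepsilon$) coincide with the paper's.
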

\begin{proof}
Let $\Gamma = \langle \gamma \rangle$. Since the unitary matrix $\gamma$ is diagonalizable, the eigenspaces of $\gamma$ in each fiber of $W \to Y$ induce an $\omega_X$-orthogonal decomposition
\[ W = \displaystyle\bigoplus_{i=1}^l W_i, \quad \text{where} \quad \dim W_i = k_i, \quad \sum_{i=1}^l k_i = k. \]
Now we can consider the action of $\displaystyle\bigtimes_{i=1}^l U(k_i)$ on $W$. Let $e = (e_1, \ldots, e_k)$ be an orthonormal basis for smooth sections of $W$ on an open set $U \subset Y$, compatible with the decomposition of $W$. This gives us a trivialization $W|_U \cong \mathbb{C}^k \times U$, and in this trivialization, $\gamma$ acts diagonally on $\mathbb{C}^k$ by
$$\gamma=\diag(e^{\frac{iw_1}{w_0}},\ldots,e^{\frac{iw_k}{w_0}}).$$
Let $W|_V \cong \mathbb{C}^k \times V$ be another such trivialization for an open subset $V \subset Y$ with orthonormal basis $e' = (e'_1, \ldots, e'_k)$. Then $e' = fe$ for a smooth function $f:U\cap V\to \displaystyle\bigtimes_{i=1}^l  U(k_i)$. Note that $\widehat{H}_1=\widehat{\overline{N_X(Y)}}$ and $\widehat{H}_1^{\circ}$ is the total space of a vector bundle with fibers isomorphic to $\mathbb{CP}_{(-w_0,w)}^{k}=\mathcal{O}_{\mathbb{CP}_{w}^{k-1}}(-w_0)$. We get a natural line bundle:
\[
\varpi: \widehat{H}_1^{\circ} \to E, \quad \text{where} \quad E=\mathbb{P}_{w}(W) \quad \text{is the weighted fiberwise projectivization of } W.
\]
As discussed in Theorem \ref{Apostolov-Rollin thm} on page \pageref{Apostolov-Rollin thm}, there exists a scalar-flat Kähler metric $g_{\ALE}$ on $\mathbb{CP}_{(-w_0,w)}^{k}$ modelled on $\mathbb{C}^k/\Gamma_{(-w_0,w)}$ at infinity with a Kähler potential $H$ for the Kähler form $\omega_{\ALE}$ written as 
\begin{equation}\label{pot1}
H=\dfrac{1}{2}\|Z\|^2+A\|Z\|^{4-2k}+O(\|Z\|^{3-2k}),
\end{equation}
when $k\geq 3$ and 
$$H=\dfrac{1}{2}\|Z\|^2+A\log \|Z\|+O(\|Z\|^{-2}),$$
when $k=2$, where $A$ is a constant. Since the metric $g_{\ALE}$ and the potential $H$ are invariant under the action of $\displaystyle\bigtimes _{i=1}^l U(k_i)$, we can use the above trivialization to obtain a well-defined fiberwise potential 
$$H_N: N_X(Y)\to \mathbb{R},$$
corresponding to $H$ in each fibers of $N_X(Y)$ for any choice of trivialization as described above. Set 
$$\omega_{\widehat{\varphi}_1}=2\sqrt{-1}\dd (H_N)\quad \text{on} \quad N_X(Y)\setminus Y\cong \widehat{H}_1^{\circ}   \setminus E.$$
On each fiber of $\widehat{H}_1^{\circ} \to Y$, this closed $(1,1)$-form extends to an $\ALE$ scalar flat metric. Globally, this extends to a smooth $(1,1)$-form on $\widehat{H}_1^{\circ}$ that we will also denoted by $\omega_{\widehat{\varphi}_1}$. Since $\omega_{\widehat{\varphi}_1}$ is closed on the complement of $E$, it is closed everywhere by continuity. Hence, $\omega_{\widehat{\varphi}_1}$ is the desired closed $(1,1)$-form. Finally we define
$$\widehat{\omega}_\varepsilon=\omega_X+\varepsilon^2\sqrt{-1}\dd [\gamma_1(\dfrac{d}{r_\varepsilon})f(\dfrac{d}{\varepsilon})],$$
where $\gamma_1:\mathbb{R}\to\mathbb{R}$ is a cut-off function such that $\gamma_1(t)=1$ for $t<1$ and $\gamma_1(t)=0$ for $t>2$, $d=d_{g_X}\circ\pr_1\circ \beta\circ \pi: \widehat{\mathcal{X}}\to [0,+\infty)$, where $d_{g_X}$ is the distance from $Y$ on $X$ with respect to the cscK metric $g_X$ (we can use $d=r$ in terms of the coordinates used in the proof of Lemma \ref{41}), $r_\varepsilon=\varepsilon^{\frac{2k}{2k+1}}$ and
$$f=A\|Z\|^{4-2k}+O(\|Z\|^{3-2k}),$$
is a function defined on the complement of the exceptional divisor such that
$$\omega_{\widehat{\varphi}_1}=\omega_{\varphi_1}+\sqrt{-1}\dd f.$$
By construction, we obtain three different regions on $\widehat{X}$ as follows:
\begin{enumerate}
\item Near the exceptional divisor, on‌ $\Omega_1=\{x\in \widehat{X}:d_{}(x)<r_\varepsilon\}$ we have $\gamma_1(\dfrac{d}{r_\varepsilon})=1$ so we get 
$$\widehat{\omega}_\varepsilon=\omega_X+\varepsilon^2\sqrt{-1}\dd f(\dfrac{d}{\varepsilon}).$$
‌‌‌‌\item On the intermediate region‌ $\Omega_2=\{x\in \widehat{X}:r_\varepsilon<d_{}(x)<2r_\varepsilon\}$ we get 
$$\widehat{\omega}_\varepsilon=\omega_X+\varepsilon^2\sqrt{-1}\dd [\gamma_1(\dfrac{d}{r_\varepsilon})f(\dfrac{d}{\varepsilon})].$$
\item Far from the exceptional divisor,‌ $\Omega_3=\{x\in \widehat{X}:2r_\varepsilon<d_{}(x)\}$ we have‌ ‌‌$\gamma_1(\dfrac{d}{r_\varepsilon})=0$, so we get
$$\widehat{\omega}_\varepsilon=\omega_{X}.$$

\begin{figure}
\centering
\begin{tikzpicture}[x=0.75pt,y=0.75pt,yscale=-1,xscale=1]

\draw  [draw opacity=0][line width=0.75]  (247.45,226.96) .. controls (248.09,185.96) and (281.53,152.92) .. (322.69,152.92) .. controls (363.83,152.92) and (397.25,185.92) .. (397.94,226.89) -- (322.69,228.17) -- cycle ; \draw  [color={rgb, 255:red, 238; blue, 10; blue, 10 }  ,draw opacity=1 ][line width=0.75]  (247.45,226.96) .. controls (248.09,185.96) and (281.53,152.92) .. (322.69,152.92) .. controls (363.83,152.92) and (397.25,185.92) .. (397.94,226.89) ;  
\draw [color={rgb, 255:red, 0; blue, 255; blue, 250 }  ,draw opacity=1 ]   (397.94,226.89) -- (540.06,226.49) ;
\draw    (322.03,152.38) -- (322.42,35.57) ;
\draw [shift={(322.43,33.57)}, rotate = 90.19] [color={rgb, 255:red, 0; blue, 0; blue, 0 }  ][line width=0.75]    (10.93,-3.29) .. controls (6.95,-1.4) and (3.31,-0.3) .. (0,0) .. controls (3.31,0.3) and (6.95,1.4) .. (10.93,3.29)   ;
\draw [color={rgb, 255:red, 0; blue, 255; blue, 250 }  ,draw opacity=1 ]   (105.32,227.36) -- (247.45,226.96) ;
\draw    (397.94,226.89) .. controls (407.65,228.6) and (538.71,184.43) .. (539.71,40.43) ;
\draw    (397.94,226.89) .. controls (407.65,228.6) and (449.71,185.43) .. (450.71,41.43) ;
\draw    (247.45,226.96) .. controls (243.43,229.14) and (193.43,186.14) .. (194.43,42.14) ;
\draw    (247.45,226.96) .. controls (246.94,227.08) and (103.94,188.08) .. (104.94,44.08) ;

\draw (312.76,158.1) node [anchor=north west][inner sep=0.75pt]  [color={rgb, 255:red, 244; blue, 9; blue, 9 }  ,opacity=1 ] [align=left] {$\displaystyle H_{1}$};
\draw (455.27,224.85) node [anchor=north west][inner sep=0.75pt]  [color={rgb, 255:red, 13; blue, 243; blue, 246 }  ,opacity=1 ] [align=left] {$\displaystyle H_{2}$};
\draw (294.83,13.87) node [anchor=north west][inner sep=0.75pt]    {$[ 0,+\infty )_{\varepsilon }$};
\draw (157.27,225.85) node [anchor=north west][inner sep=0.75pt]  [color={rgb, 255:red, 13; blue, 243; blue, 246 }  ,opacity=1 ] [align=left] {$\displaystyle H_{2}$};
\draw (251,96) node [anchor=north west][inner sep=0.75pt]    {$\Omega _{1}$};
\draw (169,138) node [anchor=north west][inner sep=0.75pt]    {$\Omega _{2}$};
\draw (113,174) node [anchor=north west][inner sep=0.75pt]    {$\Omega _{3}$};
\draw (493,175) node [anchor=north west][inner sep=0.75pt]    {$\Omega _{3}$};
\draw (368,97) node [anchor=north west][inner sep=0.75pt]    {$\Omega _{1}$};
\draw (455,138) node [anchor=north west][inner sep=0.75pt]    {$\Omega _{2}$};
\end{tikzpicture}
\caption{\small Three different regions on $\widehat{X}$}
\end{figure}
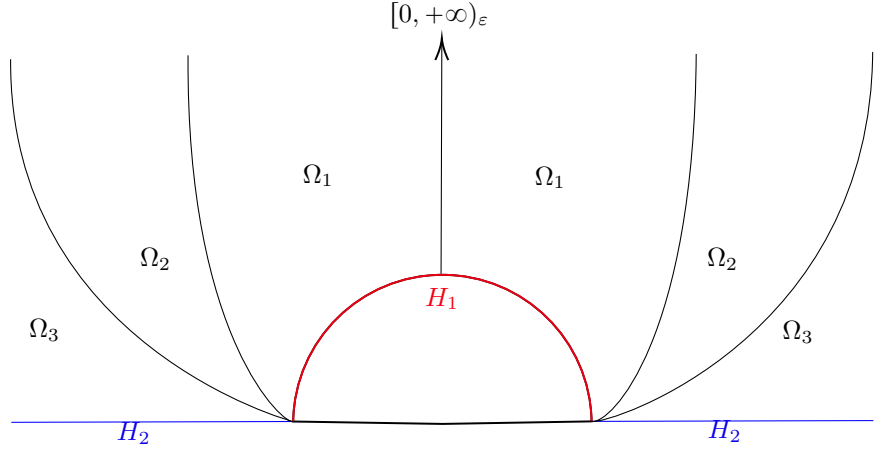
\end{enumerate}
Far from the exceptional divisor, $\widehat{\omega}_\varepsilon$ is well-defined on the complement of the exceptional divisor. Moreover, by construction, $\dfrac{\widehat{\omega}_\varepsilon}{\varepsilon^2}$ extends to a metric on the bundle $\widehat{\mathcal{E}}$ for small $\varepsilon$. In fact on $\widehat{H}_1$ we get
$$\dfrac{\widehat{\omega}_\varepsilon}{\varepsilon^2}{\big|}_{H_1}=\omega_{\widehat{\varphi}_1}+\dfrac{\varphi_1^*\omega_{S_1}}{\varepsilon^2},$$
and on $\widehat{H}_2$ we get same restriction as $\dfrac{\omega_X}{\varepsilon^2}$. Since $\dfrac{\widehat{\omega}_\varepsilon}{\varepsilon^2}$ is positive definite on both $\widehat{\mathcal{E}}{\big|}_{H_1}$ and $\widehat{\mathcal{E}}{\big|}_{H_2}$, so is  $\dfrac{\widehat{\omega}_\varepsilon}{\varepsilon^2}$ near $\widehat{H}_1$ and $\widehat{H}_2$. Consequently, it remains positive definite for small $\varepsilon > 0$, which yields a positive definite closed $(1,1)-$form on the level sets $\widehat{X}_{c}=\{p\in\widehat{\mathcal{X}}: \varepsilon(p)=c\}\cong \widehat{X}$ as claimed.
\end{proof}

\begin{remark}\label{4.3}
Since the cohomology in degree 2 of $\mathbb{CP}_{(-w_0,w)}^{k}$ is generated by the divisor $[\mathbb{CP}_{w}^{k-1}]$, notice that by reparametrizing $\varepsilon$, if necessary, we can assume without loss of generality in Theorem \ref{main} that $[\widehat{\omega}_\varepsilon] = [\omega_X] - \varepsilon^2[E]$.
\end{remark}

\section{Linear analysis}
\label{sec: Linear analysis}
The Kähler metrics $\widehat{\omega}_\varepsilon$ provided by Theorem \ref{main} are not necessarily cscK, but since their asymptotic models on $\widehat{H}_1$ and $H_2$ are, they will be almost cscK for small $\varepsilon$. We can therefore hope to solve the nonlinear equation
$$S(\widehat{\omega}_\varepsilon+\sqrt{-1}\dd u)=R,$$
perturbatively, for some well-chosen constant $R$.\\
To do this, the purpose of this section is to first study the linearization of the scalar curvature of the metric $\widehat{\omega}_\varepsilon$ perturbed by a potential $u$:
\begin{equation}\label{32}
S(\widehat{\omega}_\varepsilon+\sqrt{-1}\dd u)=S(\widehat{\omega}_\varepsilon)+L_{\widehat{\omega}_\varepsilon}(u)+Q_{\widehat{\omega}_\varepsilon}(\nabla^2 u),
\end{equation}
where $L_{\widehat{\omega}_\varepsilon}$ is linear part and $Q_{\widehat{\omega}_\varepsilon}$ is nonlinear part. 
By the Proposition \ref{linears} we get,
$$L_{\widehat{\omega}_\varepsilon}(u)=-(\dfrac{1}{2}\Delta^2_{\widehat{g}_\varepsilon}+\Rc_{\widehat{g}_\varepsilon}.\nabla^2_{\widehat{g}_\varepsilon})u.$$
In terms of Lichnerowicz operator $\mathcal{D}^*\mathcal{D}$, we can write
$$L_{\widehat{\omega}_\varepsilon}(u)=\dfrac{1}{2}\nabla S(\widehat{\omega}_\varepsilon).\nabla u-\mathcal{D}^*\mathcal{D}u.$$
Let us define $\widetilde{L}_\varepsilon:\mathcal{C}^\infty(\widehat{X})_0\times \bR\to \mathcal{C}^\infty(\widehat{X})$ by
\begin{equation}\label{52}
\widetilde{L}_\varepsilon(u,R)=L_{\widehat{\omega}_\varepsilon}(u)-R,
\end{equation}
where $\mathcal{C}^\infty(\widehat{X})_0$ is the space of smooth functions $u$ that have zero integral with respect to the metric $\widehat{g}_\varepsilon$.

\begin{definition}[H\"{o}lder space]
Let $(M, g)$ is a smooth Riemannian manifold, $k\in \mathbb{N}_0$ and $\alpha\in (0,1]$. Then the H\"{o}lder space $\mathcal{C}^{k,\alpha}_g(M)$ consists of functions $f:M\to\mathbb{R}$ such that
$$\|f\|_{\mathcal{C}^{k,\alpha}_g}:=\| f\|_{g,k}+[\nabla^k f]_{g,\alpha}<\infty,$$
where $\| f\|_{g,k}=\displaystyle\sum_{i=0}^k\sup_{p\in M}\| \nabla^i f(p)\|_g$ with $\|\cdot\|_g$ the pointwise norm induced by the metric $g$ and $[\nabla^k f]_{g,\alpha}$ is the H\"{o}lder semi norm defined by
$$[\nabla^k f]_{g,\alpha}=\sup_{\gamma(0)\neq \gamma(1)}\{\dfrac{\|P_\gamma(\nabla^k f(\gamma(0)))-\nabla^k f(\gamma(1))\|_g}{l(\gamma)^\alpha}:\gamma\text{ a smooth curve on } M \},$$
and $P_\gamma$ is parallel transport along $\gamma$.
\end{definition}

\begin{definition}[Weighted H\"{o}lder space]
Let $(M, g)$ is a smooth Riemannian manifold, $k\in \mathbb{N}_0$ and $\alpha\in (0,1]$. The weighted H\"{o}lder space $\rho\mathcal{C}^{k,\alpha}_g(M)$ for a weight function $\rho\in\mathcal{C}^\infty(M,\mathbb{R}^+)$ consists of functions $f:M\to\mathbb{R}$ such that
$$\|f\|_{\rho\mathcal{C}^{k,\alpha}_g}:=\|\dfrac{f}{\rho}\|_{\mathcal{C}^{k,\alpha}_g}<\infty.$$
\end{definition}

On a fiber $\overline{Z_1}$ of $\widehat{\varphi}_1 : \widehat{H}_1 \to \widehat{S}_1$, the metric $\dfrac{\widehat{g}_\varepsilon}{\varepsilon^2}$ induced by restriction is a scalar flat Kähler ALE metric $g_{\widehat{\varphi}_1}$. It is convenient to study the mapping properties of its operator $L_{\widehat{\omega}_\varepsilon}$ in terms of the weighted Hölder space induced by the $b$-metric $g_{\widehat{\varphi}_{1,b}}$ obtained by restriction of $\dfrac{\widehat{g}_\varepsilon}{\rho^2}$ to $\overline{Z_1}$, where $\rho=\sqrt{d^2+\varepsilon^2}$ is a boundary defining function for $\widehat{H}_1$ in $\widehat{\mathcal{X}}$.

\begin{remark}
\begin{enumerate}
\item The restriction of $\widehat{g}_\varepsilon$ to $\widehat{H}_1$ induces a family of fiberwise b-metrics in the fibers of $\widehat{\varphi}_1:\widehat{H}_1\to \widehat{S}_1$.
\item The restriction $\dfrac{\widehat{g}_\varepsilon}{\rho^2}|_{H_2}=\dfrac{g_X}{d^2}|_{H_2}$ is an edge metric in the sense of Mazzeo.
\end{enumerate}
\end{remark}

\begin{lemma}\label{red}
If $\delta<0$ and $k>2$, then the linear operator
$$L_{\omega_{\widehat{\varphi}_1}}:(\dfrac{\rho}{\varepsilon})^\delta\mathcal{C}^{4,\alpha}_{g_{\widehat{\varphi}_{1,b}}}(Z_1)\to (\dfrac{\rho}{\varepsilon})^{\delta-4}\mathcal{C}^{0,\alpha}_{g_{\widehat{\varphi}_{1,b}}}(Z_1),$$
has trivial kernel, where $Z_1$ is the interior of $\overline{Z_1}$.
\end{lemma}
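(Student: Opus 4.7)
The plan is to exploit that $g_{\widehat{\varphi}_1}$ is scalar-flat (being an Apostolov--Rollin ALE metric on $\mathbb{CP}^k_{(-w_0,w)}$ as in Theorem~\ref{Apostolov-Rollin thm}), so Lemma~\ref{Lichnerowicz} collapses to $L_{\omega_{\widehat{\varphi}_1}} = -\mathcal{D}^*\mathcal{D}$. Let $u$ lie in the kernel, so $\mathcal{D}^*\mathcal{D} u = 0$ while $u = O(\|Z\|^{\delta})$ at the ALE infinity of $Z_1$ with $\delta < 0$. I will argue in three steps: (a) integration by parts forces $\mathcal{D} u = 0$; (b) hence $V := \nabla^{1,0} u$ is a holomorphic vector field on $Z_1$ decaying at infinity; (c) such a $V$ must vanish, forcing $u$ to be constant, and the decay then forces $u \equiv 0$.

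For step (a), I take a cut-off $\chi_R$ equal to $1$ on $\{\|Z\|\leq R\}$ and supported in $\{\|Z\|\leq 2R\}$, with $|\nabla^j \chi_R|_{g_{\widehat{\varphi}_1}} \leq C R^{-j}$ in the ALE metric. Pairing $\chi_R^2 u$ against $\mathcal{D}^*\mathcal{D} u = 0$ and integrating by parts on the ALE space, the boundary terms (of the schematic form $\int (\nabla^a \chi_R)(\nabla^b u)(\nabla^c u)$ with $a+b+c = 4$, $b,c\leq 3$) are controlled by $O(R^{2\delta + 2k - 4})$, which tends to $0$ whenever $\delta < 2 - k$. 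For the intermediate range $\delta \in [2 - k,\, 0)$, I bootstrap: on the ALE end, $\mathcal{D}^*\mathcal{D}$ agrees with $\tfrac{1}{2}\Delta^2$ on the flat cone $\mathbb{C}^k/\Gamma_{(-w_0,w)}$ up to lower-order asymptotic corrections, and the negative indicial roots of $\Delta^2$ on $\mathbb{R}^{2k}$, acting on spherical harmonics of degree $m$, are $2-2k-m$ and $4-2k-m$, the largest being $4-2k$. Standard indicial-root analysis (in the spirit of Mazzeo's edge calculus from \S~3) then produces a polyhomogeneous expansion of $u$ at infinity whose leading term is of order $\|Z\|^{4-2k}$, so $u = O(\|Z\|^{4-2k})$. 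Since the hypothesis $k > 2$ gives $4 - 2k < 2 - k$, the integration-by-parts estimate now applies and yields $\int |\mathcal{D} u|^2_{g_{\widehat{\varphi}_1}} = 0$, hence $\mathcal{D} u = 0$.

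For step (b), the equation $\mathcal{D} u = 0$ says that the $J$-antiinvariant part of $D(du)$ vanishes, which by the unnamed lemma preceding Definition~\ref{Lichner} is equivalent to $\mathcal{L}_V J = 0$. Hence $V = \nabla^{1,0}u$ is a real holomorphic vector field on $Z_1$, and decays at the ALE infinity by the decay of $u$ and its first derivative.

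For step (c), I pull $V$ back along the covering $\mathbb{C}^k \setminus \{0\} \to (\mathbb{C}^k \setminus \{0\})/\Gamma_{(-w_0,w)}$ that identifies the ALE end of $Z_1$. This produces a $\Gamma_{(-w_0,w)}$-equivariant holomorphic vector field on $\mathbb{C}^k \setminus \{0\}$ whose coefficients in the standard frame are holomorphic functions vanishing at infinity. Since $k > 2 > 1$, Hartogs' theorem extends these coefficients across the origin to entire holomorphic functions on $\mathbb{C}^k$ that are bounded and tend to $0$ at infinity, so Liouville forces each to be identically $0$. Thus $V$ vanishes on the ALE end of $Z_1$, and by unique continuation on the whole connected manifold $Z_1$. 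So $u$ is constant, and the decay $u \to 0$ at infinity forces $u \equiv 0$. The main obstacle is the polyhomogeneous bootstrap in step (a): the direct integration-by-parts argument covers only $\delta < 2 - k$, so upgrading the decay of $u$ from a generic $\delta \in [2-k, 0)$ to the leading negative indicial rate $4 - 2k$ requires the careful indicial-root/parametrix analysis on the conical model.
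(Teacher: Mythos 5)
Your proposal is correct and follows essentially the same route as the paper's proof: scalar-flatness reduces $L_{\omega_{\widehat{\varphi}_1}}$ to $-\mathcal{D}^*\mathcal{D}$, the decay of $u$ is upgraded to $O(\|Z\|^{4-2k})$ by comparison with the Euclidean bilaplacian together with the absence of indicial roots in $(4-2k,0)$, the resulting decay (using $k>2$) justifies the integration by parts giving $\mathcal{D}u=0$, and Hartogs plus Liouville then kill the holomorphic field $\nabla^{1,0}u$, forcing $u\equiv 0$. The only difference is in implementation: where you invoke a polyhomogeneous/parametrix expansion for the decay upgrade, the paper cuts off with $\gamma$ and solves $\Delta_{\Euc}^2 v=\Delta_{\Euc}^2(\gamma u)$ in weighted spaces via Theorem 8.3 of \cite{szekelyhidi2014introduction}, reading off the improved rate from the biharmonic, decaying difference $v-\gamma u$.
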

\begin{proof}
We will proceed as the proof of Proposition 8.9 in \cite{szekelyhidi2014introduction}. Since the scalar curvature of $g_{\widehat{\varphi}_1}$ is zero,
$$L_{\omega_{\widehat{\varphi}_1}}(u)=-\mathcal{D}^*\mathcal{D}u.$$
Also, by construction in the proof of Theorem \ref{main}, the \kahler potential of $\omega_{\widehat{\varphi}_1}$ as $\| Z\|\to \infty$ is
$$\dfrac{1}{2}\| Z\|^2+A\| Z\|^{4-2k}+O(\| Z\|^{3-2k}),$$
where $Z$ denotes the Euclidean coordinates on $(\mathbb{C}^k/\Gamma_{(-w_0,w)})\setminus \{0\}$ identified with complement of the exceptional divisor in $Z_1$.
Suppose $u\in (\dfrac{\rho}{\varepsilon})^\delta\mathcal{C}^{4,\alpha}_{g_{\widehat{\varphi}_{1,b}}}(Z_1)$ and $\mathcal{D}^*\mathcal{D}u=0$. Consider a smooth cutoff function $\gamma$ such that it vanishes on $\pi^{-1}(B_1(0))$ and is 1 outside $\pi^{-1}(B_2(0))$, where $B_r(0)$ is the ball with radius $r$ centred origin in $\mathbb{C}^k/\Gamma_{(-w_0,w)}$, so $\gamma u$ is a smooth function on $\bC^k/\Gamma_{(-w_0,w)}$. Now we try to compare the Lichnerowicz operator $\mathcal{D}^*\mathcal{D}$ with Euclidean operator $\Delta_{\Euc}^2$, note that $\mathcal{D}_{\Euc}^*\mathcal{D}_{\Euc}=\dfrac{1}{2}\Delta^2_{\Euc}$, but
$$\mathcal{D}^*\mathcal{D}(u)=\dfrac{1}{2}\Delta^2_{g_{\widehat{\varphi}_1}}(u)+R^{\bar{k}j}\partial_j\partial_ku+\dfrac{1}{2}\nabla S(\omega_{\ALE})\cdot \nabla u,$$
so $\mathcal{D}^*\mathcal{D}-\dfrac{1}{2}\Delta_{\Euc}^2$ is in $O(\| Z\|^{2-2k})$ as a scattering operator as $\| Z\|\to \infty$. 

Since $u\in (\dfrac{\rho}{\varepsilon})^\delta\displaystyle\mathcal{C}^{4,\alpha}_{g_{\widehat{\varphi}_{1,b}}}(Z_1)$, this means that $\Delta_{\Euc}^2(\gamma u)\in(\dfrac{\rho}{\varepsilon})^{\delta-2-2k}\mathcal{C}^{0,\alpha}_{g_{\widehat{\varphi}_{1,b}}}((\bC^k\setminus\{0\})/\Gamma_{(-w_0,w)})$. Applying Theorem 8.3 in \cite{szekelyhidi2014introduction} on the orbifold $\bC^k/\Gamma_{(-w_0,w)}$, we conclude that there exist a function\linebreak $v\in (\dfrac{\rho}{\varepsilon})^{\delta+2-2k}\mathcal{C}^{4,\alpha}_{g_{\widehat{\varphi}_{1,b}}}((\bC^k\setminus \{0\})/\Gamma_{(-w_0,w)})$ such that $\Delta_{\Euc}^2(v)=\Delta_{\Euc}^2(\gamma u)$. Hence $v-\gamma u$ is a biharmonic function that decays at infinity. Since there is no indicial roots in $(4-2k,0)$ we have 
$$v-\gamma u\in (\dfrac{\rho}{\varepsilon})^{4-2k}\mathcal{C}^{4,\alpha}_{g_{\widehat{\varphi}_{1,b}}}((\bC^k\setminus \{0\})/\Gamma_{(-w_0,w)}\setminus B_2(0)),$$
 and this implies that
$$\gamma u\in (\dfrac{\rho}{\varepsilon})^{4-2k}\mathcal{C}^{4,\alpha}_{g_{\widehat{\varphi}_{1,b}}}((\bC^k\setminus \{0\})/\Gamma_{(-w_0,w)}\setminus B_2(0)),$$ 
so $u\in (\dfrac{\rho}{\varepsilon})^{4-2k}\mathcal{C}^{4,\alpha}_{g_{\widehat{\varphi}_{1,b}}}(Z_1)$. Since we assume $k>2$, this decay allows us to integrate by parts
$$\displaystyle\int_{Z_1}\|\mathcal{D} u\|^2 \ \mathrm{d} \omega_{\widehat{\varphi}_1}=\displaystyle\int _{Z_1}u \mathcal{D}^*\mathcal{D} u   \mathrm{d}\omega_{\widehat{\varphi}_1}=0,$$
and then $\mathcal{D} u=0$, so $\nabla^{1,0}u$ is a holomorphic vector field on $Z_1=\widehat{\bC^k/\Gamma_{(-w_0,w)}}$. The resolution \linebreak $\pi:Z_1\to \bC^k/\Gamma_{(-w_0,w)}$ implies a biholomorphism $Z_1\setminus \pi^{-1}(0)\cong (\bC^k\setminus\{0\})/\Gamma_{(-w_0,w)}$, so we can represent the holomorphic vector field $\nabla^{1,0}u|_{Z_1\setminus \pi^{-1}(0)}$ as $\displaystyle \sum_{i=1}^ka_i\dfrac{\partial}{\partial z_i}$ where $a_i$ are $\Gamma_{(-w_0,w)}$-invariant function on $\bC^k\setminus\{0\}$. By applying the Hartogs theorem, for each $a_i$ there exist a holomorphe extension $\widetilde{a}_i$ on $\bC^k$. Because of the boundedness of $\widetilde{a}_i$, Liouville's theorem implies that $\widetilde{a}_i$ is a constant function, hence its decay at infinity implies that $\widetilde{a}_i=0$. So $\nabla^{1,0}u\equiv 0$ on $Z_1\setminus \pi^{-1}(0)$. Now notice that $Z_1\setminus \pi^{-1}(0)$ is a dense subset of $Z_1$, so by continuity $\nabla^{1,0}u\equiv 0$ on $Z_1$. Finally, $\nabla u=\nabla^{1,0}u+\nabla^{0,1}u=\nabla^{1,0}u+\overline{\nabla^{1,0}u}=0$, so $u$ is a constant function on $Z_1$ that decays at infinity, implying that $u=0$. 
\end{proof}
\begin{lemma}\label{new11}
If $\delta<0$ and $L_0(u)= 0$ for $u\in (\dfrac{\rho}{\varepsilon})^\delta C^{4,\alpha}_{\frac{\widehat{g}_\varepsilon}{\rho^2}}(Z_1\times \mathbb{C}^{n-k})$, then $u=0$. Here $L_0$ denotes the Lichnerowicz operator on the product space.
\end{lemma}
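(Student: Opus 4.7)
The plan is to reduce to the one-factor case (Lemma \ref{red}) by exploiting the translation invariance of $L_0$ in the flat direction $\mathbb{C}^{n-k}$. Since the Euclidean factor is scalar- and Ricci-flat, the product metric is scalar-flat, $L_0=-\mathcal{D}^*\mathcal{D}$, and Lemma \ref{Lichnerowicz} gives the decomposition
\[L_0 \;=\; L_{Z_1} \,-\, \tfrac{1}{2}\Delta_{\mathbb{C}^{n-k}}^2 \,-\, \Delta_{Z_1}\Delta_{\mathbb{C}^{n-k}}.\]
Because $\delta<0$, the function $u(Z,\cdot)$ is uniformly bounded on $\mathbb{C}^{n-k}$ for each fixed $Z$, so $u$ defines a tempered distribution in $y$ with values in the weighted Hölder space on $Z_1$. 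Taking a partial Fourier transform in $y$, the equation $L_0u=0$ becomes
\[\tilde{L}_\xi\hat{u}(\cdot,\xi) \;=\; 0,\qquad \tilde{L}_\xi \;:=\; L_{Z_1} + |\xi|^2\Delta_{Z_1} - \tfrac{1}{2}|\xi|^4,\]
understood distributionally in $\xi$.

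I would then show that for every $\xi\neq 0$ the operator $\tilde{L}_\xi$ is injective on $(\rho/\varepsilon)^\delta\mathcal{C}^{4,\alpha}_{g_{\widehat{\varphi}_{1,b}}}(Z_1)$. At infinity in $Z_1$, $\tilde{L}_\xi$ is asymptotic to $-\tfrac{1}{2}(\Delta_{\mathrm{Eucl}}-|\xi|^2)^2$, whose Green's function on $\mathbb{C}^k/\Gamma_{(-w_0,w)}$ decays exponentially. Adapting the Fredholm step used in the proof of Lemma \ref{red} then upgrades any solution $v\in(\rho/\varepsilon)^\delta\mathcal{C}^{4,\alpha}_{g_{\widehat{\varphi}_{1,b}}}(Z_1)$ of $\tilde{L}_\xi v=0$ to one with exponential decay in $Z$; with this decay, integration by parts combined with $L_{Z_1}=-\mathcal{D}^*\mathcal{D}$ and $\int v\,\Delta v = -\int\|\nabla v\|^2$ yields
\[0 \;=\; -\int_{Z_1} v\,\tilde{L}_\xi v\, d\vol \;=\; \int_{Z_1}\bigl(\|\mathcal{D} v\|^2 + |\xi|^2\|\nabla v\|^2 + \tfrac{1}{2}|\xi|^4 v^2\bigr)\, d\vol,\]
forcing $v\equiv 0$.

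Consequently $\hat{u}$ is supported at $\xi=0$, so $u$ is polynomial in $y$; the uniform $y$-bound then forces this polynomial to be constant in $y$, giving $u(Z,y)=P(Z)$ with $P\in(\rho/\varepsilon)^\delta\mathcal{C}^{4,\alpha}_{g_{\widehat{\varphi}_{1,b}}}(Z_1)$. Substituting back yields $L_{Z_1}P=0$, and Lemma \ref{red} concludes $P=0$, hence $u=0$. The main technical obstacle is the rigorous passage from the distributional equation $\tilde{L}_\xi\hat{u}=0$ together with the fibrewise injectivity of $\tilde{L}_\xi$ to the conclusion $\mathrm{supp}(\hat{u})\subset\{0\}$: this requires handling $\hat{u}$ as a distribution valued in a weighted Hölder space, which I would do by convolving against $\xi$-mollifiers supported away from the origin and passing to the limit using uniform injectivity estimates; an equivalent route is to test $L_0 u=0$ directly against families of oscillatory test functions $\phi(y)e^{-i\xi\cdot y}$ and reduce to the single-factor Lemma \ref{red} in each mode.
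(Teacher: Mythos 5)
Your proposal is correct in outline, but it takes a genuinely different route from the paper: the paper does not argue directly at all — its proof of Lemma \ref{new11} consists of transplanting Lemma 11 of \cite{seyyedali2020extremal}, replacing $\Bl_0^{\mathbb{C}^k}$ by $\mathbb{CP}^k_{(-w_0,w)}$ and Proposition 8.9 of \cite{szekelyhidi2014introduction} by Lemma \ref{red}. You instead give a self-contained mode decomposition. The individual pieces check out: since the product metric is scalar-flat and the Ricci term lives only on the $Z_1$ factor, Lemma \ref{Lichnerowicz} does give $L_0=L_{Z_1}-\tfrac12\Delta^2_{\mathbb{C}^{n-k}}-\Delta_{Z_1}\Delta_{\mathbb{C}^{n-k}}$, and hence the transformed family $\tilde{L}_\xi=L_{Z_1}+|\xi|^2\Delta_{Z_1}-\tfrac12|\xi|^4$; the integration-by-parts identity is correct (pair against $\bar v$ and replace $v^2$ by $|v|^2$, since $\hat u$ is complex-valued) and the term $\tfrac12|\xi|^4\int|v|^2$ alone forces $v=0$ once decay justifies the boundary terms; the limit operator $-\tfrac12(\Delta_{\Euc}-|\xi|^2)^2$ at ALE infinity is indeed invertible for $\xi\neq 0$, so an Agmon-type estimate (which you should record, using that the ALE error terms decay) yields the exponential decay upgrade; and since the weight $(\rho/\varepsilon)^\delta$ is independent of the flat variables, $u$ is uniformly bounded in $y$, which correctly kills all nonconstant polynomial modes and reduces the $\xi=0$ mode to Lemma \ref{red}. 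What your approach buys is transparency: it isolates exactly where full ellipticity of the family at $\xi\neq 0$ enters, which the paper's citation leaves hidden inside \cite{seyyedali2020extremal}.

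The step you flag is, however, genuinely incomplete as written, and you should be explicit that fibrewise injectivity of $\tilde{L}_\xi$ for each fixed $\xi\neq 0$ is \emph{not} by itself enough to conclude $\mathrm{supp}(\hat{u})\subset\{0\}$ for a Banach-space-valued distribution: one needs a priori estimates $\|v\|\leq C_K\|\tilde{L}_\xi v\|$ that are locally uniform for $\xi$ in compact subsets of $\mathbb{R}^{2(n-k)}\setminus\{0\}$, so that your mollification in $\xi$ can be controlled. These estimates are available, but they require an argument: for $\xi\neq 0$ the operator $\tilde{L}_\xi$ is Fredholm of index zero on the relevant weighted spaces (its model at infinity has a spectral gap), so your injectivity statement upgrades to invertibility, and continuity of the family in $\xi$ then gives locally uniform bounds by the same contradiction-and-compactness scheme used in Proposition \ref{inv}. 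A related small point: when pairing the transformed equation against a test function $\psi(\xi)$ supported near $\xi_0\neq 0$, the resulting relation is not a single equation $\tilde{L}_{\xi_0}v=0$ but a perturbed one involving $v_{|\xi|^2\psi}$ and $v_{|\xi|^4\psi}$; on small supports these are controlled perturbations, which is precisely where the locally uniform invertibility is consumed. With these points written out the argument closes, and it constitutes a valid alternative to the paper's proof-by-citation.
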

\begin{proof}
It suffices to replace $\Bl_0^{\mathbb{C}^k}$ by $\mathbb{CP}^k_{(-w_0,w)}$ in Lemma 11 of \cite{seyyedali2020extremal} and use Lemma \ref{red} instead of Proposition 8.9 in \cite{szekelyhidi2014introduction} in the proof of Lemma 11 in \cite{seyyedali2020extremal}.
\end{proof}
\begin{lemma}\label{yellow}
If $4-2k<\delta<0$ and $\Delta_{\Euc}^2u= 0$ for $u\in (1+\dfrac{d^2}{\varepsilon^2})^{\frac{\delta }{2}}C^{4,\alpha}_{\frac{g_X}{\rho^2}}(((\mathbb{C}^k/\Gamma_{(-w_0,w)})\setminus\{0\})\times \mathbb{C}^{n-k})$, then $u=0$.
\end{lemma}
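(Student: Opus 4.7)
The proof plan is to reduce the statement to a classical Liouville theorem for biharmonic functions on $\mathbb{R}^{2n}$, by first lifting through the orbifold cover and then removing the singular stratum. Let $q\colon \mathbb{C}^k\to\mathbb{C}^k/\Gamma_{(-w_0,w)}$ denote the quotient map. Since $\Gamma_{(-w_0,w)}$ acts only on the first factor, the pullback $(q\times\id)^*u$ produces a $\Gamma_{(-w_0,w)}$-invariant biharmonic function $\tilde u$ on $(\mathbb{C}^k\setminus\{0\})\times\mathbb{C}^{n-k}$ satisfying the lifted pointwise bound $|\tilde u(z,w)|\leq C(1+|z|^2/\varepsilon^2)^{\delta/2}$. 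The hypothesis $\delta<0$ makes this weight uniformly bounded by $1$, so $\tilde u\in L^\infty((\mathbb{C}^k\setminus\{0\})\times\mathbb{C}^{n-k})$. Moreover, close to the stratum $\{0\}\times\mathbb{C}^{n-k}$ we have $\rho\sim\varepsilon$, so the metric $g_X/\rho^2$ is uniformly equivalent to a constant rescaling of the Euclidean metric, and the weighted $C^{4,\alpha}_{g_X/\rho^2}$-control gives uniform local $C^{4,\alpha}$-bounds on $\tilde u$ up to the stratum.

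The inequality $4-2k<\delta<0$ forces $k>2$, so the stratum $\{0\}\times\mathbb{C}^{n-k}\subset\mathbb{C}^n$ has real Hausdorff dimension $2(n-k)<2n-4$. The standard removable-singularity theorem for elliptic operators of order four (the polyharmonic version of Harvey--Polking) then asserts that a locally $L^\infty$ function on $\mathbb{C}^n$ which is biharmonic in the distributional sense on the complement of a closed set of Hausdorff dimension $<2n-4$ extends uniquely to a biharmonic distribution on all of $\mathbb{C}^n$; elliptic regularity for $\Delta_{\Euc}^2$ then upgrades $\tilde u$ to a smooth biharmonic function on $\mathbb{R}^{2n}$.

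Finally, once $\tilde u$ is a smooth, bounded biharmonic function on $\mathbb{R}^{2n}$, the Liouville theorem for biharmonic functions makes $\tilde u$ a constant. Concretely, Pizzetti's exact mean-value identity for biharmonic functions
\begin{equation*}
\frac{1}{|B_R(p)|}\int_{B_R(p)}\tilde u\,dx = \tilde u(p) + \frac{R^2}{2(2n+2)}\,\Delta\tilde u(p),
\end{equation*}
combined with $\|\tilde u\|_\infty<\infty$, forces $\Delta\tilde u\equiv 0$ upon letting $R\to\infty$, and then the classical Liouville theorem for bounded harmonic functions gives $\tilde u\equiv c$ for some constant $c$. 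The decay $|\tilde u(z,w)|\to 0$ as $|z|\to\infty$, inherited from $\delta<0$ in the weighted bound, then forces $c=0$; descending back through $q\times\id$ gives $u\equiv 0$. The one step that needs genuine care is the removable-singularity argument, where one must verify that the weighted H\"older condition at the level of $g_X/\rho^2$ really supplies the local $L^\infty$ data needed for the Harvey--Polking extension; this is precisely where the strict inequality $2k>4$, guaranteed by $\delta>4-2k$, is essential.
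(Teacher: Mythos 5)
Your proof is correct, and its opening move --- lifting $u$ through $q\times\id$ to a $\Gamma_{(-w_0,w)}$-invariant biharmonic function on $(\mathbb{C}^k\setminus\{0\})\times\mathbb{C}^{n-k}$ --- is exactly the reduction the paper makes; but where the paper then simply cites Lemma 12 of \cite{seyyedali2020extremal} for the flat model statement, you prove that statement from scratch, via Harvey--Polking removability of the stratum followed by the Pizzetti mean-value identity and the classical Liouville theorem. Your bookkeeping of the hypotheses is accurate and, in fact, clarifying: $\delta<0$ is used twice (the weight $(1+d^2/\varepsilon^2)^{\delta/2}\le 1$ gives $\tilde u\in L^\infty$, and its decay as $|z|\to\infty$, uniform in $w$, kills the constant at the end), while $\delta>4-2k$ enters only through $k>2$, i.e.\ the stratum $\{0\}\times\mathbb{C}^{n-k}$ has real codimension $2k>4=\operatorname{ord}(\Delta^2_{\Euc})$, which is what makes it removable for bounded solutions; the Pizzetti constant $R^2/\bigl(2(2n+2)\bigr)$ is also correct for $\mathbb{R}^{2n}$. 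One simplification worth noting: you do not actually need the full strength of Harvey--Polking. Near the stratum $\rho\sim\varepsilon$, so (for the fixed $\varepsilon$ at hand) the metric $g_X/\rho^2$ is uniformly equivalent to a rescaled Euclidean metric and membership in the weighted space yields uniform Euclidean $C^{4,\alpha}$ bounds on a punctured neighborhood; a function with bounded derivatives up to order four on the complement of a closed set of codimension at least $2$ extends to a $C^{4}$ function across it, and then $\Delta^2_{\Euc}\tilde u=0$ holds everywhere by continuity --- so the ``genuine care'' step you flag is in fact elementary given the H\"older control you already extracted. The trade-off between the two routes: the paper's citation keeps the exposition short and consistent with its systematic reliance on \cite{seyyedali2020extremal}, whereas your argument is self-contained, avoids importing the weighted Fredholm and indicial-root machinery (which is genuinely needed in the companion Lemma \ref{red}, where the weight blows up at the exceptional set), and makes transparent that for this particular weight --- bounded at the stratum, decaying at infinity --- only removability plus Liouville is required.
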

\begin{proof}
It suffices to pull-back to $(\mathbb{C}^k\setminus\{0\})\times \mathbb{C}^{n-k}$ and use Lemma 12 in \cite{seyyedali2020extremal}.
\end{proof}

\begin{lemma}\label{green}
Suppose that $X$ has no non trivial holomorphic vector fields. If $4-2k<\delta<0$, then the linear operator
$$\widetilde{L}_{\omega_0}:\rho^\delta\mathcal{C}^{4,\alpha}_{\frac{g_{X}}{\rho^2}}(H_2)_0\times \mathbb{R}\to \rho^{\delta-4}\mathcal{C}^{0,\alpha}_{\frac{g_{X}}{\rho^2}}(H_2),$$
defined by $\widetilde{L}_{\omega_0}(u,R)=L_{\omega_0}(u)-R$ where $\omega_0=\omega_X$ and $\rho^\delta\mathcal{C}^{4,\alpha}_{\frac{g_{X}}{\rho^2}}(H_2)_0$ is the subspace of functions in $\rho^\delta\mathcal{C}^{4,\alpha}_{\frac{g_{X}}{\rho^2}}(H_2)$ that are $L^2$-orthogonal to the constant functions, has trivial kernel.
\end{lemma}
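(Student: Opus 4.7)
The strategy is to translate the kernel condition into an elliptic identity on the underlying orbifold and then remove the singular weight across $Y$. Since $(X,\omega_X)$ is cscK, $S(\omega_X)$ is constant, so by Lemma \ref{linears} we have $L_{\omega_X}(u)=-\mathcal{D}^*\mathcal{D}u$. Hence $\widetilde{L}_{\omega_0}(u,R)=0$ is equivalent to $\mathcal{D}^*\mathcal{D}u=-R$ on $X\setminus Y$ with $u\in\rho^\delta\mathcal{C}^{4,\alpha}_{g_X/\rho^2}(H_2)_0$ and $R\in\mathbb{R}$. The plan is to show $R=0$ first, then $u\equiv 0$.

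For the first step, I would pair both sides with the constant function $1$ on the truncated region $X_\epsilon=\{\rho\geq\epsilon\}\subset X$. Since $\mathcal{D}(1)=0$, integration by parts reduces $\int_{X_\epsilon}\mathcal{D}^*\mathcal{D}u\,dV_{\omega_X}$ to a boundary integral on $\{\rho=\epsilon\}$ involving at most third derivatives of $u$. The weighted bound $|\nabla^j u|=O(\rho^{\delta-j})$ combined with the area scaling $|\{\rho=\epsilon\}|=O(\epsilon^{2k-1})$ makes this boundary term $O(\epsilon^{\delta+2k-4})\to 0$ because $\delta>4-2k$. Passing to the limit in $\int_{X_\epsilon}\mathcal{D}^*\mathcal{D}u\,dV_{\omega_X}=-R\cdot\Vol(X_\epsilon)$ yields $R=0$.

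With $R=0$, the main obstacle is to upgrade the solution $u$ of $\mathcal{D}^*\mathcal{D}u=0$ on $X\setminus Y$ to a smooth orbifold function on all of $X$. Along the fibers of $N_X(Y)\to Y$, the normal operator of $\mathcal{D}^*\mathcal{D}$ agrees with $\tfrac12\Delta^2_{\Euc}$ on $\mathbb{C}^k/\Gamma_{(-w_0,w)}$ up to lower order terms, exactly as in the proof of Lemma \ref{red}. The indicial roots of $\Delta^2_{\Euc}$ acting on $\Gamma_{(-w_0,w)}$-invariant spherical harmonic modes are of the form $\{\ell,\,\ell+2,\,2-2k-\ell,\,4-2k-\ell\}_{\ell\geq 0}$, none of which lies in the open interval $(4-2k,0)$. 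By Mazzeo's edge-elliptic regularity (or a direct removable singularity argument using $\codim_{\mathbb{R}}Y=2k\geq 6$ and the asymptotic comparison $\mathcal{D}^*\mathcal{D}\sim\tfrac12\Delta^2_{\Euc}$ modulo lower order edge terms), this gap in indicial roots propagates to $u$: its asymptotic expansion near $Y$ cannot contain any term with exponent in $(4-2k,0)$, so $u$ must extend to a smooth orbifold function across $Y$. Justifying this polyhomogeneous improvement is the technical heart of the argument and the sole place where both $\delta>4-2k$ and $\codim Y>2$ are essential.

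Once $u$ is a smooth (orbifold) function on the compact $X$, integration by parts is legitimate and gives
\[
\int_X|\mathcal{D}u|_{g_X}^2\,dV_{\omega_X}\;=\;\int_X u\,\mathcal{D}^*\mathcal{D}u\,dV_{\omega_X}\;=\;0,
\]
so $\mathcal{D}u=0$. Therefore $\nabla^{1,0}u$ is a holomorphic vector field on $X$, and the assumption $\mathfrak{h}(X)=0$ forces $\nabla^{1,0}u\equiv 0$, so $u$ is constant. The orthogonality condition built into $(H_2)_0$ then yields $u\equiv 0$, proving that the kernel of $\widetilde{L}_{\omega_0}$ is trivial.
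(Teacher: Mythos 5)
Your proposal is correct in substance and ends exactly where the paper does (smoothness of $u$ on the compact orbifold, then $\int_X |\mathcal{D}u|^2\,\omega_X^n = \int_X u\,\mathcal{D}^*\mathcal{D}u\,\omega_X^n = 0$, no holomorphic vector fields, orthogonality to constants), but the middle of your argument takes a genuinely different and heavier route than the paper's. The paper does not treat $R$ separately: it observes that since $\delta-4>-2k$ both $u$ and $\mathcal{D}^*\mathcal{D}u$ are integrable across $Y$ (codimension $2k$), so the identity $\mathcal{D}^*\mathcal{D}u + R = 0$ holds \emph{in the sense of distributions on all of $X$} when tested against $\varphi\in\mathcal{C}^\infty(X)$; standard elliptic regularity for the fourth-order operator $\mathcal{D}^*\mathcal{D}$ then makes $u$ smooth in the orbifold sense in one stroke, and the short computation $\|\mathcal{D}^*\mathcal{D}u\|_{L^2}^2 = \langle -R, \mathcal{D}^*\mathcal{D}u\rangle_{L^2} = \langle -\mathcal{D}R,\mathcal{D}u\rangle_{L^2} = 0$ kills $\mathcal{D}^*\mathcal{D}u$ and $R$ simultaneously. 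You instead prove $R=0$ first by truncating at $\{\rho=\epsilon\}$ and estimating the boundary terms by $O(\epsilon^{\delta+2k-4})$, and then invoke indicial roots and Mazzeo edge regularity to remove the singularity of $u$ along $Y$. Note that your cutoff estimate already contains everything needed for the paper's distributional extension — the vanishing of the boundary terms is precisely the statement that no current supported on $Y$ appears — so the polyhomogeneity machinery is avoidable here; it is genuinely needed in Lemma \ref{red}, where the model lives on a noncompact ALE space and there is no compact orbifold to fall back on, but on $H_2$ the distributional argument buys a much shorter proof.

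One caution about your removable-singularity step as written: the absence of indicial roots in $(4-2k,0)$ gives $u=O(1)$, but the jump from "no exponents in the interval" to "smooth orbifold function" is quicker than what the edge calculus hands you directly — at the root $0$ one must still rule out logarithmic terms and upgrade boundedness to smoothness, e.g.\ by the classical removable-singularity theorem for bounded solutions of elliptic equations across a set of real codimension $2k\geq 6$, followed by interior elliptic regularity. This is fixable, so it is not a fatal gap, but as stated it leaves the "technical heart" of your argument resting on an assertion rather than a proof, whereas the paper's integrability-plus-distributions route requires nothing beyond the boundary estimate you already carried out.
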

\begin{proof}
On $H_2$,
$$\widetilde{L}_{\omega_0}(u,R)=\dfrac{1}{2}\nabla S(\omega_0).\nabla u-\mathcal{D}^*\mathcal{D}u-R=-\mathcal{D}^*\mathcal{D}u-R.$$
Let $u\in \rho^\delta\mathcal{C}^{4,\alpha}_{\frac{g_X}{\rho^2}}(H_2)_0$ and $R\in \mathbb{R}$ such that $\widetilde{L}_{\omega_\varepsilon}(u,R)=0$, so $\mathcal{D}^*\mathcal{D}u+R=0$.
Both $\mathcal{D}^*\mathcal{D}u$ and $u$ belongs to $\rho^{\delta-4}\mathcal{C}^{0,\alpha}_{\frac{g_{X}}{\rho^2}}(H_2)$ and $\delta-4>-2k$, so they are integrable on $X$. This ensures that for a test function $\varphi\in \mathcal{C}^\infty(X)$,
$$\displaystyle\int_{X}\varphi \mathcal{D}^*\mathcal{D}u \mathrm{d}\omega_{X}=\displaystyle\int _{X} (\mathcal{D}^*\mathcal{D} \varphi)u \mathrm{d}\omega_{X},$$
that is, $\mathcal{D}^*\mathcal{D}u+R=0$ in the sense of distribution on $X$. Elliptic regularity implies that $u$ is a smooth function in the orbifold sense on $X$. Hence, on $X$,
\begin{align*}
\|\mathcal{D}^*\mathcal{D}u\|^2_{L^2}&=\braket{\mathcal{D}^*\mathcal{D}u,\mathcal{D}^*\mathcal{D}u}_{L^2}\\
&=\braket{-R,\mathcal{D}^*\mathcal{D}u}_{L^2}\\&=\braket{-\mathcal{D}R,\mathcal{D}u}_{L^2}\\&=\braket{0,\mathcal{D}u}_{L^2}=0,
\end{align*}
so $\mathcal{D}^*\mathcal{D}u=0$ and so $R=0$. Since the kernel of $\mathcal{D}^*\mathcal{D}$ on $X$ consists of constant functions (we assume $X$ has no non trivial holomorphic vector fields), this shows that $u=0$. 
\end{proof}

\begin{proposition}\label{inv}
Assume $X$ has no non trivial holomorphic vector fields. For $4-2k<\delta<0$ and $\varepsilon>0$ small enough, the operator \eqref{52}
$$\widetilde{L}_\varepsilon:\rho^\delta C^{4,\alpha}_{\frac{\widehat{g}_\varepsilon}{\rho^2}}(\widehat{X})_0\times \bR\to \rho^{\delta-4}C^{0,\alpha}_{\frac{\widehat{g}_\varepsilon}{\rho^2}}(\widehat{X}),$$
where $\rho^\delta C^{4,\alpha}_{\frac{\widehat{g}_\varepsilon}{\rho^2}}(\widehat{X})_0$ is the subspace of functions in $\rho^\delta C^{4,\alpha}_{\frac{\widehat{g}_\varepsilon}{\rho^2}}(\widehat{X})$ that are $L^2$-orthogonal to the constant functions, is invertible and its inverse $P_\varepsilon:=\widetilde{L}_\varepsilon^{-1}$ is bounded by a constant independent of $\varepsilon$. 
\end{proposition}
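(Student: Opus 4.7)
The plan is to argue by contradiction along the lines of Proposition 13 of \cite{seyyedali2020extremal}, suitably adapted to the orbifold/weighted blow-up setting developed above. Suppose the estimate fails. Then after passing to a subsequence there exist $\varepsilon_i\to 0$ and $(u_i,R_i)\in \rho^\delta\mathcal{C}^{4,\alpha}_{\widehat{g}_{\varepsilon_i}/\rho^2}(\widehat{X})_0\times\mathbb{R}$ with
\[
\|(u_i,R_i)\|_{\rho^\delta\mathcal{C}^{4,\alpha}\times\mathbb{R}}=1,\qquad \|\widetilde{L}_{\varepsilon_i}(u_i,R_i)\|_{\rho^{\delta-4}\mathcal{C}^{0,\alpha}}\longrightarrow 0.
\]
First I would observe that since $\widetilde{L}_{\varepsilon}$ is formally the Lichnerowicz operator of a smooth K\"ahler metric on the (orbifold) $\widehat{X}$ plus the projection to constants, standard orbifold elliptic regularity provides interior Schauder estimates in geodesic balls of $\widehat{g}_\varepsilon$-radius $O(1)$. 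These will be combined with scaling arguments in the four regions $\Omega_j$ introduced in the proof of Theorem~\ref{main} to produce nontrivial limits, each of which must lie in the kernel of one of the model operators treated in Lemmas~\ref{red}, \ref{new11}, \ref{yellow}, \ref{green}.

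The key steps in order are as follows. (i) Pick a sequence of points $p_i\in\widehat{X}$ where the weighted Hölder norm of $u_i$ is (up to a fixed multiplicative factor) attained, and distinguish cases according to the behaviour of $d(p_i):=d_{g_X}(\pi(p_i),Y)$ relative to the characteristic scales $\varepsilon_i$ and $r_{\varepsilon_i}=\varepsilon_i^{2k/(2k+1)}$. (ii) In each case rescale $u_i$ by the appropriate power of $\rho$ and pull back the metrics to an asymptotic model:
\begin{itemize}
\item if $d(p_i)/\varepsilon_i$ stays bounded, rescale to the fiber model $\overline{Z}_1\times\mathbb{C}^{n-k}$ with the product of the scalar-flat ALE metric on $\mathbb{CP}^{k}_{(-w_0,w)}$ and the Euclidean metric on $\mathbb{C}^{n-k}$;
\item if $d(p_i)\to 0$ but $d(p_i)/\varepsilon_i\to\infty$, rescale to the intermediate model $((\mathbb{C}^k/\Gamma_{(-w_0,w)})\setminus\{0\})\times\mathbb{C}^{n-k}$ with the flat metric, under which $L_{\widehat{\omega}_{\varepsilon_i}}$ converges to $-\tfrac12\Delta_{\Euc}^2$;
\item if $d(p_i)$ stays bounded away from $0$, take the limit on $H_2$ with its original K\"ahler metric $\omega_X$.
\end{itemize}
(iii) Arzel\`a--Ascoli on compact sets, combined with the uniform $\mathcal{C}^{4,\alpha}$-bound on $u_i$ in the rescaled metric, produces a limit $u_\infty$ in the corresponding weighted Hölder space on the model, and the residual constants $R_i$ either converge to some $R_\infty\in\mathbb{R}$ (in the $H_2$ case) or are killed by the rescaling (in the other cases because multiplication by the appropriate power of $\rho/\varepsilon$ pushes the constant into a decaying weighted space). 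In every case the limiting equation is $L_{\text{model}}(u_\infty)=0$ (with $R_\infty=0$ on $H_2$), and the weight $\delta\in(4-2k,0)$ is exactly the window in which Lemmas~\ref{red}--\ref{green} force $u_\infty\equiv 0$. (iv) To turn $u_\infty\equiv 0$ into a contradiction with $\|u_i\|=1$, I would, following \cite{seyyedali2020extremal} and chapter 8 of \cite{szekelyhidi2014introduction}, use a three-annulus/weighted Schauder estimate: the uniform interior estimate and the vanishing of $u_\infty$ on each model show that the weighted norm of $u_i$ on larger and larger neighbourhoods of $p_i$ must be strictly smaller than on smaller neighbourhoods, contradicting the normalisation. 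Finally the orthogonality $\int u_i\,d\mathrm{vol}_{\widehat{g}_{\varepsilon_i}}=0$ is used to rule out the trivial limit coming from constant functions in the $H_2$ case, by testing against the constant function and using Lemma~\ref{green} to recover $R_\infty=0$ and hence $u_\infty=0$.

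The step I expect to be most delicate is the \emph{matching across regions}, in particular gluing the intermediate Euclidean model to the ALE model near the exceptional divisor and to the $H_2$ model far from it. This is where the choice of the weight $\delta\in(4-2k,0)$ and the total boundary defining function $\rho=\sqrt{d^2+\varepsilon^2}$ is essential: $\delta<0$ guarantees growth at the ALE infinity of $Z_1$ slow enough to apply Lemma~\ref{red} and integrability of $u\mathcal{D}^*\mathcal{D}u$, while $\delta>4-2k$ guarantees decay on $H_2$ slow enough that the indicial roots of $\Delta_{\Euc}^2$ in Lemma~\ref{yellow} do not obstruct the vanishing. Verifying the uniform bound on the operator $P_\varepsilon=\widetilde{L}_\varepsilon^{-1}$ then follows from the contradiction together with the closed graph/open mapping principle applied to each $\widetilde{L}_{\varepsilon}$, whose Fredholm index on the above weighted Hölder spaces is independent of $\varepsilon$ by the model analysis and equals zero because $\widetilde{L}_\varepsilon$ is, up to a finite-dimensional perturbation, a formally self-adjoint elliptic operator on a compact orbifold.
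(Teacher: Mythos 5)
Your proposal is correct and follows essentially the same contradiction/blow-up scheme as the paper's proof: normalize a putative degenerating sequence, apply uniform Schauder estimates, extract limits on compact subsets of $X\setminus Y$ and in the rescaled model regimes matched to Lemmas \ref{green}, \ref{new11} and \ref{yellow}, and conclude invertibility with a uniform bound from the resulting a priori estimate together with the index-zero property. The only cosmetic difference is the final step: the paper obtains the contradiction directly by evaluating the normalized functions at points $q_j$ where $|u_{i_j}|/\rho^{\delta}$ attains its maximum (this ratio tends to $1$ while every rescaled limit vanishes), rather than via your three-annulus argument, but both are standard ways to finish.
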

\begin{proof}
We will closely follow the proof proposition 9 in \cite{seyyedali2020extremal}. By the Schauder estimates, there is a uniform constant $C$ independent of $\varepsilon$, such that
\begin{equation}\label{aval}
\|u\|_{\rho^\delta C^{4,\alpha}_{\frac{\widehat{g}_\varepsilon}{\rho^2}}}+|R|\leq C(\|u\|_{\rho^\delta C^{0}_{\frac{\widehat{g}_\varepsilon}{\rho^2}}}+|R|+\|\widetilde{L}_\varepsilon(u,R)\|_{\rho^{\delta-4}C^{0,\alpha}_{\frac{\widehat{g}_\varepsilon}{\rho^2}}}).
\end{equation}
We want to show that there exists a uniform constant $\widetilde{C}$ such that 
\begin{equation}\label{dovom}
\|u\|_{\rho^\delta C^{4,\alpha}_{\frac{\widehat{g}_\varepsilon}{\rho^2}}}+|R|\leq \widetilde{C}\|\widetilde{L}_\varepsilon(u,R)\|_{\rho^{\delta-4}C^{0,\alpha}_{\frac{\widehat{g}_\varepsilon}{\rho^2}}}.
\end{equation}
To prove \eqref{dovom},  for a contradiction suppose that there is a sequence $\varepsilon_i\to 0$ with $u_i$ and $R_i$ such that $\|u_i\|_{\rho^\delta C^{4,\alpha}_{\frac{\widehat{g}_{\varepsilon_i}}{\rho^2}}} + |R_i| > i \|\widetilde{L}_{\varepsilon_i}(u_i, R_i)\|_{\rho^{\delta-4} C^{0,\alpha}_{\frac{\widehat{g}_{\varepsilon_i}}{\rho^2}}} $ for each $i$. In particular, by \eqref{aval},
$$i \|\widetilde{L}_{\varepsilon_i}(u_i, R_i)\|_{\rho^{\delta-4} C^{0,\alpha}_{\frac{\widehat{g}_{\varepsilon_i}}{\rho^2}}} < \|u_i\|_{\rho^\delta C^{4,\alpha}_{\frac{\widehat{g}_{\varepsilon_i}}{\rho^2}}} + |R_i| \leq C \left( \|u_i\|_{\rho^\delta C^{0}_{\frac{\widehat{g}_{\varepsilon_i}}{\rho^2}}} + |R_i| + \|\widetilde{L}_{\varepsilon_i}(u_i, R_i)\|_{\rho^{\delta-4} C^{0,\alpha}_{\frac{\widehat{g}_{\varepsilon_i}}{\rho^2}}} \right) $$
so that
$$\left( \frac{i}{C} - 1 \right) \left\| \widetilde{L}_{\varepsilon_i}(u_i, R_i) \right\|_{\rho^{\delta-4} C^{0,\alpha}_{\frac{\widehat{g}_{\varepsilon_i}}{\rho^2}}} < \left\| u_i \right\|_{\rho^\delta C^{0,\alpha}_{\frac{\widehat{g}_{\varepsilon_i}}{\rho^2}}} + |R_i| .$$
Without loss of generality, by multiplying $u_i$ and $R_i$ by a constant $\lambda_i$, we can suppose that  $\|u_i\|_{\rho^\delta C^{0}_{\frac{g_{\varepsilon_i}}{\rho^2}}}+|R_i|=1$, so $\|\widetilde{L}_\varepsilon(u_i,R_i)\|_{\rho^{\delta-4}C^{0,\alpha}_{\frac{g_{\varepsilon_i}}{\rho^2}}}<\dfrac{1}{\frac{i}{C}-1}$. This shows that $\|\widetilde{L}_\varepsilon(u_i,R_i)\|_{\rho^{\delta-4}C^{0,\alpha}_{\frac{g_\varepsilon}{\rho^2}}}\to 0$ as $i\to \infty$. Moreover, the Schauder estimates \eqref{aval} show that $u_i:X\setminus Y\to\mathbb{R}$ is uniformly bounded and uniformly equicontinuous in $\rho^{\delta-4}C^{4,\alpha}_{\frac{g_\varepsilon}{\rho^2}}$, so by the Arzelà-Ascoli theorem there exists a convergent subsequence $\{u_{i_j}\}$ converging a function $u\in \rho^\delta C^{4,\alpha}_{\frac{g_X}{\rho^2}}(X\setminus Y)$ with convergence in $\rho^\delta C^{4}_{\frac{g_X}{\rho^2}}(K)$ on each compact subset $K\subset X\setminus Y$. Also, $\{R_i\}$ is a bounded numerical sequence, so by the Bolzano-Weierstrass theorem, we can assume that the subsequence $\{R_{i_j}\}$ converges to some $R$, so that
  $$\widetilde{L}_{\omega_X}(u,R)=\displaystyle\lim_{j\to \infty}\widetilde{L}_{\omega_{\varepsilon_{i_j}}}(u_{i_j},R_{i_j})=0.$$ 
Lemma \ref{red} implies that $u=0$ and $R=0$, so $R_{i_j}\to 0$. Since $\nabla S(\omega_\varepsilon)\to \nabla S(\omega_X)=0$ when $\varepsilon\to 0$ and
$$\|\widetilde{L}_\varepsilon(u_{i_j}, R_{i_j})\|_{\rho^{\delta-4} C^{0,\alpha}_{\frac{g_{\varepsilon_{i_j}}}{\rho^2}}} = \|L_{\omega_{\varepsilon_{i_j}}}(u_{i_j}) - \frac{1}{2} \nabla S(\omega_{\varepsilon_{i_j}}) \cdot \nabla u_{i_j} - R_{i_j}\|_{\rho^{\delta-4} C^{0,\alpha}_{\frac{g_{\varepsilon_{i_j}}}{\rho^2}}} ,$$
we see that
$$\displaystyle\lim_{j\to \infty}\|L_{\omega_{\varepsilon_{i_j}}}(u_{i_j})\|_{\rho^{\delta-4}C^{0,\alpha}_{\frac{g_{\varepsilon_{i_j}}}{\rho^2}}}=0.$$
On the other hand, $\displaystyle\sup_{x\in \widehat{X}} |\dfrac{u_{i_j}(x)}{\rho^\delta(x)}|\leq \|\dfrac{u_{i_j}}{\rho^\delta}\|_{C^{0}_{\frac{g_{\varepsilon_{i_j}}}{\rho^2}}}\leq 1$, so $\dfrac{u_{i_j}(x)}{\rho^\delta(x)}$ is bounded by 1 on the compact manifold $\widehat{X}$ for each $j$. In particular, it achieves a maximum, say at $q_j\in \widehat{X}$. In particular $\dfrac{u_{i_j}(q_j)}{\rho^\delta(q_j)}\leq 1$ and $\dfrac{u_{i_j}(q_j)}{\rho^\delta(q_j)}\to 1$ as $j\to \infty$, since $R_{i_j}\to 0$ as $j\to \infty$. On any compact subset of $X\setminus Y$, $u_{i_j}(q_j)\to 0$, so we must have $\rho(q_{j})\to 0$. Taking a subsequence if needed, we can therefore assume that $q_j\to q\in \widehat{H}_1$. There are two possibilities, either $q\in \widehat{H}_1\setminus (\widehat{H}_1\cap \widehat{H}_2)$, or else $q\in \widehat{H}_1\cap \widehat{H}_2$. If $q\in \widehat{H}_1\setminus (\widehat{H}_1\cap \widehat{H}_2)$, then $\dfrac{u_{i_j}(x)}{\rho^\delta(q_j)}$ converges to a function that satisfies Lemma \ref{new11}, so $\dfrac{u_{i_j}(x)}{\rho^\delta(q_j)}\to 0$, in contradiction $\dfrac{u_{i_j}(q_j)}{\rho^\delta(q_j)}\to 1$. Otherwise, if $q\in \widehat{H}_1\cap \widehat{H}_2$, then $\dfrac{u_{i_j}(x)}{\rho^\delta(q_j)}$ converges to a function that satisfies Lemma  \ref{yellow}, $\dfrac{u_{i_j}(q_j)}{\rho^\delta(q_j)}\to 0$, again yielding to a contradiction with $\dfrac{u_{i_j}(q_j)}{\rho^\delta(q_j)}\to 1$. Consequently the inequality \eqref{dovom} holds. The inequality \eqref{dovom} shows that the kernel of $\widetilde{L}_\varepsilon$ is trivial and since it has index zero, it is invertible. Also $\widetilde{L}_\varepsilon$ is bijective bounded linear operator from one Banach space to another, so $\widetilde{L}_\varepsilon$ has bounded inverse.
\end{proof}

\section{Nonlinear analysis and the main Theorem}
\label{sec: Nonlinear analysis and the main Theorem}

If a perturbed metric $\widetilde{\omega}_\varepsilon=\widehat{\omega}_\varepsilon+\sqrt{-1}\dd u$ has a constant scalar curvature $S(\widetilde{\omega}_\varepsilon)=R_\varepsilon$ for all $\varepsilon$, then $R_\varepsilon$ can be determined from the \kahler class $[\widehat{\omega}_\varepsilon]$ of $\widehat{\omega}_\varepsilon$. We try to find an approximate for $R$.
\begin{proposition}\label{chern}
Let $X$ be a compact complex orbifold with singularities of type $\mathcal{I}$ along a subset $Y$ with codimension $k$ greater than 2, i.e., the normal bundle of $Y$ in $X$ has fibers of the form $\bC^k/\Gamma_{(-w_0,w)}$
for some weight $w$.
Then the first Chern class of the $(-w_0,w)-$weighted blow-up $\widehat{X}$ of $X$ along $Y$ is
$$c_1(\widehat{X})=\pi^*c_1(X)-(\dfrac{1}{w_0}\displaystyle \sum_{i=1}^kw_i-1)[E]|_E,$$
where $[E]$ is the Poincaré class of the exceptional divisor.
\end{proposition}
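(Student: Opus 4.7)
The plan is to determine the discrepancy coefficient of the weighted blow-up through an adjunction computation on a fiber of the exceptional divisor. Since $\pi$ is a biholomorphism on $\widehat{X}\setminus E$, the orbifold canonical bundles $K_{\widehat{X}}$ and $\pi^{*}K_{X}$ agree on the complement of $E$; as $E$ is connected (being a $\mathbb{CP}^{k-1}_{w}$-bundle over the connected suborbifold $Y$), it follows that
$$K_{\widehat{X}} \;=\; \pi^{*}K_{X} + \alpha\,E \quad \text{in } \Pic(\widehat{X})\otimes\mathbb{Q}$$
for a unique rational number $\alpha$. Taking first Chern classes then reduces the proposition to verifying that $\alpha = \tfrac{1}{w_{0}}\sum_{i=1}^{k}w_{i}-1$.

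To identify $\alpha$, I would restrict this identity to a single fiber $F\cong\mathbb{CP}^{k-1}_{w}$ of the weighted projective bundle $\pi|_{E}:E=\mathbb{P}_{w}(W)\to Y$ and combine three ingredients. First, $\pi^{*}K_{X}|_{F}$ is trivial because $\pi(F)$ is a single point of $Y$. Second, I would invoke the standard formula $K_{\mathbb{CP}^{k-1}_{w}}=\mathcal{O}(-\sum_{i=1}^{k}w_{i})$ for the canonical bundle of a weighted projective space, where $\mathcal{O}(1)$ denotes the orbifold hyperplane bundle. Third, I would use the fact that the normal bundle $N_{E/\widehat{X}}|_{F}$ is $\mathcal{O}_{\mathbb{CP}^{k-1}_{w}}(-w_{0})$, because a tubular neighborhood of $F$ in $\widehat{X}$ is modelled on the total space of $\mathcal{O}_{\mathbb{CP}^{k-1}_{w}}(-w_{0})=\mathbb{CP}^{k}_{(-w_{0},w)}$ near its zero section, as reflected in Definition \ref{weightedtautological}. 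The adjunction formula $K_{\widehat{X}}|_{E}=K_{E}\otimes\mathcal{O}(-E)|_{E}$, together with $K_{E}|_{F}=K_{F}$ (since $E\to Y$ is a weighted projective bundle), then yields $K_{\widehat{X}}|_{F}=\mathcal{O}(w_{0}-\sum_{i=1}^{k}w_{i})$. Matching this with the restriction $\alpha\,E|_{F}=-\alpha w_{0}h$, where $h$ is the hyperplane class of $\mathbb{CP}^{k-1}_{w}$, pins down the value of $\alpha$.

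The main subtle point is to work consistently in the orbifold category. Because the weighted blow-up generally leaves $\widehat{X}$ with orbifold loci (precisely the suborbifolds described on page \pageref{wbb}), the Picard group, the canonical bundle, and the first Chern class must all be interpreted in the orbifold sense, which is exactly why $\alpha$ is naturally rational rather than integral. Once the orbifold versions of the adjunction formula and of the identity $\mathcal{O}(E)|_{E}=N_{E/\widehat{X}}$ are in place, and once one has checked that $E$ is connected so that the divisor $K_{\widehat{X}}-\pi^{*}K_{X}$ is indeed a single rational multiple of $E$, the remainder is a routine computation on the weighted projective fibers.
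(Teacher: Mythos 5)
Your proposal is correct, and it reaches the formula by a genuinely different organization of the argument than the paper's. The paper also begins from adjunction, $K_{\widehat{X}}|_E = K_E \otimes [-E]|_E$, and from the formula $K_{\mathbb{CP}^{k-1}_{w}} = \mathcal{O}_{\mathbb{CP}^{k-1}_{w}}(-\sum_{i=1}^{k} w_i)$, but it then computes $K_E$ \emph{globally}: using the splitting $T^*E \cong V^* \oplus \pi^*TY$ for the vertical bundle $V$ of $\pi|_E : E = \mathbb{P}_w(W) \to Y$, it identifies $\wedge^{k-1}V^* \cong \pi^*(\wedge^{k}W^*)\otimes \mathcal{O}_{E/ Y}(-\sum_{i=1}^{k} w_i)$ (a relative Euler-type identity for the weighted projectivization), so that the $\pi^*(\wedge^k W^*)$ twist combines with $\pi^*K_Y$ to produce $\pi^*(K_X)|_E$, and only afterwards does it globalize using triviality of $[E]$ off $E$. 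You bypass this global relative computation entirely: you first posit $K_{\widehat{X}} = \pi^*K_X + \alpha E$ in $\Pic(\widehat{X})\otimes\mathbb{Q}$ and then determine $\alpha$ on a single fiber $F \cong \mathbb{CP}^{k-1}_w$, where $\pi^*K_X|_F$ is trivial, $K_E|_F = K_F$ (the normal bundle of a fiber of a fiber bundle is trivial), and $N_{\widehat{X}}(E)|_F = \mathcal{O}_{\mathbb{CP}^{k-1}_w}(-w_0)$ — the same fiberwise identification the paper uses in Proposition \ref{bihologrroupblow} — giving $-\alpha w_0 = w_0 - \sum_{i=1}^{k}w_i$, hence the stated coefficient. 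One small point of hygiene in your first step: agreement of $K_{\widehat{X}}$ and $\pi^*K_X$ off $E$ does not by itself force the difference to be a multiple of $E$; what does is that the identification is induced by $d\pi$, so the ratio bundle has a canonical meromorphic trivializing section whose divisor is supported on $E$, together with \emph{irreducibility} of $E$ (which holds here, as $E$ is a $\mathbb{CP}^{k-1}_w$-bundle over connected $Y$) — irreducibility, rather than mere connectedness, is the hypothesis to cite, equivalently the exactness of $\mathbb{Q}[E] \to \Pic(\widehat{X})\otimes\mathbb{Q} \to \Pic(\widehat{X}\setminus E)\otimes\mathbb{Q}$. With that said, your route is more economical and makes transparent why $\alpha$ is naturally rational in the orbifold category, while the paper's route yields the stronger intermediate statement $K_E = \pi^*(K_X)|_E \otimes \mathcal{O}_{E / Y}(-\sum_{i=1}^{k} w_i)$ on all of $E$, which is of independent use; both ultimately rest on the same two inputs, the weighted projective canonical bundle and the fiberwise normal bundle $\mathcal{O}_{\mathbb{CP}^{k-1}_w}(-w_0)$.
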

\begin{proof}
Away from $Y$, we have the canonical identification of canonical bundles $K_{\widehat{X}}=K_X$. Let $N_{\widehat{X}}(E)$ be the normal bundle of $E$ in $\widehat{X}$. By the adjunction formula 
$$K_{\widehat{X}}|_E=K_E\otimes N_{\widehat{X}}^*(E)=K_E\otimes [-E]|_E.$$
On the other hand, if $V=\ker (\pi|_{E_*})$ is the vertical tangent bundle of $\pi|_{E}:E\to Y$, then
\begin{equation}\label{chern}
T^*E\cong V^*\oplus \pi^*TY.
\end{equation}
Since $\pi: E\to Y$ is a projective bundle with fiber $\mathbb{CP}_w^{k-1}$, $E=\mathbb{P}_w(W)$ is the weighted projectivization of some vector bundle $W\to Y$ of rank $k$ such that $N_X(Y)=W/\Gamma_{(-w_0,w)}$. Now the canonical bundle of the weighted projective space is given by  
$$ K_{\mathbb{CP}_w^{k-1}} = \mathcal{O}_{\mathbb{CP}_w^{k-1}}(-\displaystyle\sum_{i=1}^{k} w_i), $$  
see, for instance, \cite{dolgachev2006weighted} or 6.7.2 in \cite{joyce2000compact}. Keeping in mind the decomposition of $W$ in the proof of Theorem \ref{main}, and using equation \eqref{chern}, this means that
\begin{align*}
K_E&=\wedge ^{k-1}(V^*)\otimes \pi^*(K_Y)\\
&= \pi^*(K_Y)\otimes \pi^*(\wedge ^{k}(W^*))\otimes \mathcal{O}_{E\diagup Y}(-\displaystyle\sum_{i=1}^{k} w_i)\\
&=\pi^*(K_X)|_E\otimes  \mathcal{O}_{E\diagup Y}(-\displaystyle\sum_{i=1}^{k} w_i).
\end{align*}

Finally, we have that
\begin{align*}
K_{\widehat{X}}|_E&=K_E\otimes N_{\widehat{X}}^*(E)\\
&=\pi^*(K_X)|_E \otimes \mathcal{O}_{E\diagup Y}(-\displaystyle\sum_{i=1}^{k} w_i)\otimes \mathcal{O}_{E\diagup Y}(w_0)\\
&=\pi^*(K_X)|_E \otimes \mathcal{O}_{E\diagup Y}(-\displaystyle\sum_{i=1}^{k} w_i+w_0)=\pi^*(K_X)|_E \otimes (N_{\widehat{X}}(E))^{\displaystyle\frac{1}{w_0}\sum_{i=1}^{k} w_i-1}.
\end{align*}
Globally on $\widehat{X}$, $N_{\widehat{X}}(E)=[E]$ is trivial on $\widehat{X}\setminus E$, hence
$$K_{\widehat{X}}=\pi^*(K_X)\otimes (N_{\widehat{X}}(E))^{\displaystyle\frac{1}{w_0}\sum_{i=1}^{k} w_i-1}.$$
Since $c_1(X)=-c_1(K_X)$ and $c_1(\widehat{X})=-c_1(K_{\widehat{X}})$, we finally obtain
$$c_1(\widehat{X})=\pi^*c_1(X)-(\dfrac{1}{w_0}\displaystyle\sum_{i=1}^{k} w_i-1)[E]|_E.$$
\end{proof}
\begin{proposition}\label{lambda}
Let $\widehat{\omega}_\varepsilon$ be the family of \kahler metrics of Theorem \ref{main}. Assume the singularity of type $\mathcal{I}$ is $\bC^k/\Gamma_{(-w_0,w)}$. If there is a constant scalar curvature metric $\widetilde{\omega}_\varepsilon$ in the \kahler class  $[\widehat{\omega}_\varepsilon]$, the scalar curvature of $\widetilde{\omega}_\varepsilon$ can be represented by
$$S(\widetilde{\omega}_\varepsilon)=S(\omega_X)+\lambda\varepsilon^{2k-2}+R_\varepsilon,$$
where $|R_\varepsilon|\leq c\varepsilon^{2k}$ for some constant $c>0$ independent of $\varepsilon$ and $\lambda$ is a topological constant depending on the \kahler class $[\widehat{\omega}_\varepsilon]$ and first Chern class of $\widehat{X}$.
\end{proposition}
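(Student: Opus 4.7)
The plan is to compute $S(\widetilde{\omega}_\varepsilon)$ purely cohomologically. Since $\widetilde{\omega}_\varepsilon$ has constant scalar curvature, averaging gives
\begin{equation*}
S(\widetilde{\omega}_\varepsilon) \;=\; \frac{\int_{\widehat{X}} S(\widetilde{\omega}_\varepsilon)\,\widetilde{\omega}_\varepsilon^{\,n}}{\int_{\widehat{X}} \widetilde{\omega}_\varepsilon^{\,n}} \;=\; 4\pi n\,\frac{c_1(\widehat{X}) \cdot [\widehat{\omega}_\varepsilon]^{n-1}}{[\widehat{\omega}_\varepsilon]^{n}},
\end{equation*}
which depends only on the class $[\widehat{\omega}_\varepsilon]$ (and likewise $S(\omega_X)$ depends only on $[\omega_X]$). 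Using Remark \ref{4.3} we have $[\widehat{\omega}_\varepsilon] = \pi^*[\omega_X] - \varepsilon^2[E]$, and by Proposition \ref{chern} we have $c_1(\widehat{X}) = \pi^*c_1(X) - a[E]$ with $a = \frac{1}{w_0}\sum_{i=1}^k w_i - 1$. I would therefore binomially expand both the numerator and denominator in powers of $\varepsilon^2$, Taylor-expand the quotient, and identify the order of the first nontrivial correction.

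The key combinatorial input is the following vanishing: for every $\alpha \in H^*(X)$, the intersection $\pi^*\alpha \cdot [E]^j$ vanishes in top degree on $\widehat{X}$ whenever $j<k$. Indeed, by the projection formula this number equals $\alpha \cdot \pi_*[E]^j$, and writing $[E]^j = i_{E*}(([E]|_E)^{j-1})$ one computes $\pi_*[E]^j = (\pi|_E)_*(([E]|_E)^{j-1})$. Since $\pi|_E : E \to Y$ is the weighted projective bundle $\mathbb{P}_w(W) \to Y$ with fiber $\mathbb{CP}^{k-1}_w$, its pushforward lowers complex codimension by $k-1$; hence $\pi_*[E]^j$ has codimension $j - k$ on $Y$, which is negative for $j < k$. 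The same vanishing applies to $\pi^*c_1(X) \cdot [E]^j$. In the weighted setting these pushforwards are rational, reflecting the orbifold intersection $\int_{\mathbb{CP}^{k-1}_w} h^{k-1}$ with $h = c_1(\mathcal{O}_{\mathbb{CP}^{k-1}_w}(-w_0))$.

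From this vanishing, $[\widehat{\omega}_\varepsilon]^n = [\omega_X]^n + O(\varepsilon^{2k})$, and $\pi^*c_1(X) \cdot [\widehat{\omega}_\varepsilon]^{n-1} = c_1(X)\cdot[\omega_X]^{n-1} + O(\varepsilon^{2k})$. The decisive term comes from the $-a[E]$ piece of $c_1(\widehat{X})$:
\begin{equation*}
-a[E] \cdot \bigl(\pi^*[\omega_X] - \varepsilon^2[E]\bigr)^{n-1} \;=\; \sum_{j=0}^{n-1} -a\binom{n-1}{j}(-\varepsilon^2)^j \, [E]^{j+1} \cdot \pi^*[\omega_X]^{n-1-j},
\end{equation*}
where the terms with $j+1 < k$ vanish by the lemma above, so the first nontrivial term occurs at $j = k-1$ and is of order $\varepsilon^{2(k-1)}$. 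It contributes
\begin{equation*}
-a\,(-1)^{k-1}\binom{n-1}{k-1}\,\varepsilon^{2k-2}\,[E]^{k} \cdot \pi^*[\omega_X]^{n-k}.
\end{equation*}

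Combining, and dividing through by $[\omega_X]^n$ (which is nonzero), a single Taylor expansion yields
\begin{equation*}
S(\widetilde{\omega}_\varepsilon) \;=\; S(\omega_X) + \lambda\,\varepsilon^{2k-2} + R_\varepsilon, \qquad |R_\varepsilon| \le c\,\varepsilon^{2k},
\end{equation*}
with
\begin{equation*}
\lambda \;=\; 4\pi n\,\frac{(-1)^{k}\,a\,\binom{n-1}{k-1}\;[E]^{k} \cdot \pi^*[\omega_X]^{n-k}}{[\omega_X]^n},
\end{equation*}
a topological constant depending only on $[\widehat{\omega}_\varepsilon]$ and $c_1(\widehat{X})$. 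The only real subtlety is the pushforward computation of $[E]^k \cdot \pi^*[\omega_X]^{n-k}$ in the orbifold/weighted setting; this reduces to the fiber integral over $\mathbb{CP}^{k-1}_w$ of a power of $c_1(\mathcal{O}(-w_0))$, which is a rational number determined by the weights $(w_0, w_1, \ldots, w_k)$ and is harmless for controlling the error term.
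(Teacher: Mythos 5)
Your proposal is correct and takes essentially the same route as the paper's proof: average the cscK equation to write $S(\widetilde{\omega}_\varepsilon)=4\pi n\, \big(c_1(\widehat{X})\cup[\widehat{\omega}_\varepsilon]^{n-1}\big)\big/\int_{\widehat{X}}[\widehat{\omega}_\varepsilon]^{n}$, substitute Proposition \ref{chern} and Remark \ref{4.3}, expand binomially, and discard all terms $\pi^*\alpha\cdot[E]^j$ with $j<k$ --- the paper justifies this vanishing by counting vertical degrees along the fibers $\mathbb{CP}^{k-1}_w$ of $\pi|_E:E\to Y$, which is exactly the fiber-integral form of your projection-formula/pushforward argument, and both proofs then read off the leading correction at $j=k-1$, of order $\varepsilon^{2k-2}$, with remainder $O(\varepsilon^{2k})$. (Minor remark: your sign $(-1)^k a$ in $\lambda$ is what the expansion actually produces; the paper's displayed $(-1)^{k-1}$ arises from an unexplained sign flip in its own bookkeeping, and the discrepancy is immaterial since the statement only requires $\lambda$ to be a topological constant.)
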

\begin{proof}
In this sense,
\begin{align*}
S(\widetilde{\omega}_\varepsilon) \Vol_{\widetilde{\omega}_\varepsilon}(\widehat{X})&=\displaystyle \int_{\widehat{X}} S(\widetilde{\omega}_\varepsilon) \widetilde{\omega}_\varepsilon^n\\
&=\displaystyle \int_{\widehat{X}} (S(\widetilde{\omega}_\varepsilon) \widetilde{\omega}_\varepsilon) \wedge \widetilde{\omega}_\varepsilon^{n-1}\\
&=\displaystyle \int_{\widehat{X}} 2n\rho \wedge\widetilde{\omega}_\varepsilon^{n-1}\\
&=\displaystyle \int_{\widehat{X}} 2n(2\pi c_1(\widehat{X}) )\wedge\widetilde{\omega}_\varepsilon^{n-1}\\
&=4n\pi\displaystyle \int_{\widehat{X}}  c_1(\widehat{X}) \cup[\widetilde{\omega}_\varepsilon]^{n-1}\\
&=4n\pi\displaystyle \int_{\widehat{X}}  c_1(\widehat{X}) \cup [\widehat{\omega}_\varepsilon]^{n-1},
\end{align*}
so that
$$S(\tilde{\omega}_\varepsilon) = \dfrac{4n\pi \displaystyle\int_{\widehat{X}} c_1(\widehat{X}) \cup [\widehat{\omega}_\varepsilon]^{n-1}}{\Vol_{\widehat{\omega}_\varepsilon}(\widehat{X})} = \dfrac{4\pi n}{\displaystyle\int_{\widehat{X}} [\widehat{\omega}_\varepsilon]^n} \int_{\widehat{X}} c_1(\widehat{X}) \cup [\widehat{\omega}_\varepsilon]^{n-1} .$$
We set $C_\varepsilon=\dfrac{4\pi n}{\displaystyle \int_{\widehat{X}}[\widehat{\omega}_\varepsilon]^n}$ and $C=\dfrac{4\pi n}{\displaystyle \int_{\widehat{X}}[\omega_X]^n}$ as well, then by Remark \ref{4.3},
\begin{align*}
\displaystyle \int_{\widehat{X}}[\widehat{\omega}_\varepsilon]^{n}&=\displaystyle \int_{X}([\omega_X]-\varepsilon^2[E])^n\\
&=\displaystyle \int_{X}[\omega_X]^n+\displaystyle \int_{X}\displaystyle \sum_{i=1}^{n}{n\choose i}(-\varepsilon^2[E])^i[\omega_X]^{n-i}\\
&=\displaystyle \int_{X}[\omega_X]^n+\displaystyle \sum_{i=1}^{n}{n\choose i}(-\varepsilon^2)^i\displaystyle \int_{E}[E]^{i-1}[\omega_X]^{n-i}\\
&=\displaystyle \int_{X}[\omega_X]^n+\displaystyle \sum_{i=k}^{n}{n\choose i}(-\varepsilon^2)^i\displaystyle \int_{E}[E]^{i-1}[\omega_X]^{n-i}\\
&=\displaystyle \int_{X}[\omega_X]^n+O(\varepsilon^{2k}),
\end{align*}
since we must have that $i-1\geq k-1$ for the second integral to be non-zero. Indeed, the only vertical contribution of $[E]^{i}[\omega_X]^{n-i}$ with respect to the fiber bundle $\pi:E\to Y$ is coming from $[E]^i$.\\
Hence
$$C_\varepsilon=\dfrac{4\pi n}{\displaystyle \int_{\widehat{X}}[\widehat{\omega}_\varepsilon]^n}=\dfrac{4\pi n}{\displaystyle \int_{\widehat{X}}[\omega_X]^n+O(\varepsilon^{2k})}=\dfrac{4\pi n}{\displaystyle \int_{\widehat{X}}[\omega_X]^n}+O(\varepsilon^{2k})=C+O(\varepsilon^{2k}).$$
From the Lemma \ref{chern}, $c_1(\widehat{X})=\pi^* c_1(X)-(\displaystyle\dfrac{1}{w_0}\sum_{i=1}^{k} w_i-1)[E]$ and by the Remark \ref{4.3} again, $[\widehat{\omega}_\varepsilon]=[\omega_X]-\varepsilon^2[E]$, so

\begin{align*}
S(\widetilde{\omega}_\varepsilon)&=C_\varepsilon\displaystyle \int _{\widehat{X}}(\pi^* c_1(X)-(\displaystyle\frac{1}{w_0}\sum_{i=1}^{k} w_i-1)[E])\cup ([\omega_X]-\varepsilon^2[E])^{n-1}\\
&=C_\varepsilon\displaystyle \int _X  c_1(X) \cup [\omega_X]^{n-1}\\+&C_\varepsilon\displaystyle \sum_{i=1}^{n-1}{n-1 \choose i}(-1)^i\varepsilon^{2i} \displaystyle \int _{\widehat{X}} \pi^*c_1(X)\cup [\omega_X]^{n-1-i}\cup [E]^i\\
-&C_\varepsilon\displaystyle \sum_{i=0}^{n-1}(\displaystyle\dfrac{1}{w_0}\sum_{i=1}^{k} w_i-1){n-1 \choose i}(-1)^i\varepsilon^{2i}\displaystyle \int _X [\omega_X]^{n-1-i}\cup [E]^{i+1}\\
&=S(\omega_X)+C_\varepsilon\displaystyle \sum_{i=1}^{n-1}{n-1 \choose i}(-1)^i\varepsilon^{2i} \displaystyle \int _E \pi^*c_1(X)\cup [\omega_X]^{n-1-i}\cup [E]^{i-1}\\
+&C_\varepsilon\displaystyle \sum_{i=0}^{n-1}(\displaystyle\dfrac{1}{w_0}\sum_{i=1}^{k} w_i-1){n-1 \choose i}(-1)^i\varepsilon^{2i}\displaystyle \int _E [\omega_X]^{n-1-i}\cup [E]^{i}+O(\varepsilon^{2k}).
\end{align*}
Since $\pi^*c_1(X)$ and $\pi^*[\omega_X]^{n-i-1}$ are basic with respect to the bundle map $\pi:E\to Y$, in the first sum, we must have that $i-1\geq k-1$ for the integral to be non-zero, while in the second sum, $i\geq k-1$ for the integral to be non-zero. Hence we see that 
$S(\widetilde{\omega}_\varepsilon)=S(\omega_X)+\lambda\varepsilon^{2k-2}+R_\varepsilon$ with constant coefficient $\lambda=C(\displaystyle\dfrac{1}{w_0}\sum_{i=1}^{k} w_i-1)\displaystyle{n-1 \choose k-1}(-1)^{k-1}\displaystyle \int _E [\omega_X]^{n-k}\cup [E]^{k-1}$ and $R_\varepsilon$ as claimed.

%

\end{proof}

Now we can find a better approximation of $u$ by looking at solution of $\mathcal{D}^*\mathcal{D}\Gamma=\lambda$ on $X\setminus Y$ for $\lambda$ defined in Proposition \ref{lambda}. 
To do so, we will consider the function
$$\Lambda(x) =\displaystyle\int_Y G(x,y)\mathrm{d} y,$$
where $G(x,y)$ is the Green function of the Lichnerowicz operator $\mathcal{D}^*\mathcal{D}$. The operator $G$ is associated to the Green function of $\mathcal{D}^*\mathcal{D}$ and by definition
$$\mathcal{D}^*\mathcal{D}G=\mathbf{Id}-\mathbf{P},$$
where $\mathbf{P}$ is the projection on constant functions, i.e, $\mathbf{P}(f)=\displaystyle \int_X \dfrac{f(y)}{\Vol(X)}$ . In terms of Schwartz kernels,
$$\mathcal{D}_x^*\mathcal{D}_xG(x,y)=\delta(x-y)-\dfrac{1}{\Vol(X)}, \quad \text{on X}.$$
In the distributional sense, we thus have that
$$\mathcal{D}_x^*\mathcal{D}_x\Lambda=\delta_Y-\dfrac{\Vol(Y)}{\Vol(X)}, \quad \text{on X},$$
where $\delta_Y$ is the current of integration along $Y$. Let us consider the function
$$\Gamma(x)=-\dfrac{\Vol(X)}{\Vol(Y)}\lambda\Lambda(x).$$
To determine the asymptotic behaviour of $\Lambda(x)$ and $\Gamma(x)$ near $Y$, notice that in local coordinates near $Y$,
\begin{align*}
G(x,y)&=\mathcal{F}^{-1}(\sigma_4((\mathcal{D}^*\mathcal{D})^{-1}))(x-y)+O(|x-y|^{5-2n})\\
&=\mathcal{F}^{-1}(|\xi|^{-4})(x-y)+O(|x-y|^{5-2n})\\
&=\dfrac{1}{(2\pi)^n}\displaystyle\int_{\mathbb{R}^n}e^{i(x-y). \xi}|\xi|^{-4}\mathrm{d}\xi+O(|x-y|^{5-2n})\\
&=\dfrac{c}{|x-y|^{2n-4}}+O(|x-y|^{5-2n}),
\end{align*}
where $c$ is some positive constant and $\mathcal{F}^{-1}$ denotes the inverse Fourier transform on $\mathbb{R}^n$.
Let $x_s$ be the projection of $x$ on $Y$ and $d=|x-x_s|$ be the distance to $Y$ in local coordinates. Then $|x-y|^2=d^2+|y-x_s|^2$, so setting $y_s=y-x_s$, we get that
$$\displaystyle\int_Y\dfrac{\mathrm{d}y_{1}\ldots \mathrm{d}y_{2n-2k}}{|x-y|^{2n-4}}=\displaystyle\int_Y\dfrac{\mathrm{d}y_{s_{1}}\ldots \mathrm{d}y_{s_{2n-2k}}}{(d^2+|y_s|^2)^{n-2}}.$$
In polar coordinates in a ball of radius 1, this yields 
\begin{align*}
\displaystyle\int_{B_1(0)}\dfrac{\mathrm{d}y_{s_{1}}\ldots \mathrm{d}y_{s_{2n-2k}}}{(d^2+|y_s|^2)^{n-2}} &=\displaystyle\int_{\mathbb{S}^{2n-2k-1}}\int_0^1\dfrac{r^{2n-2k-1}}{(d^2+r^2)^{n-2}}\mathrm{d}r\mathrm{d}\omega\\&=\Vol(\mathbb{S}^{2n-2k-1})\int_0^1\dfrac{r^{2n-2k-1}}{(d^2+r^2)^{n-2}}\mathrm{d}r\\&=\dfrac{2\pi^{n-k}}{\Gamma(n-k)}\int_0^1\dfrac{r^{2n-2k-1}}{(d^2+r^2)^{n-2}}\mathrm{d}r.
\end{align*}
By substituting $r=Rd$, we get
\begin{align*}
\displaystyle\int_0^1\dfrac{r^{2n-2k-1}}{(d^2+r^2)^{n-2}}\mathrm{d}r&=\displaystyle\int_0^{\frac{1}{d}}\dfrac{(Rd)^{2n-2k-1}d}{(d^2+(Rd)^2)^{n-2}}\mathrm{d}R=\dfrac{1}{d^{2k-4}}\displaystyle\int_0^{\frac{1}{d}}\dfrac{R^{2n-2k-1}}{(1+R^2)^{n-2}}\mathrm{d}R.
\end{align*}
Note that the integral $\displaystyle\int_0^{\frac{1}{d}}\dfrac{R^{2n-2k-1}}{(1+R^2)^{n-2}}\mathrm{d}R$ converges when $d\to 0$ by the Riemann criterion because\linebreak $2(n-2)-(2n-2k-1)=2k-3>1$ since $k>2$. Moreover the integral can be computed explicitly
\begin{align*}
\displaystyle\int_0^{+\infty}\dfrac{R^{2n-2k-1}}{(1+R^2)^{n-2}}\mathrm{d}R&=
\displaystyle\int_0^{\frac{\pi}{2}}\sin^{2n-2k-1}\theta\cos^{2k-5}\theta \mathrm{d}\theta, \text{ posing } R=\tan\theta\\
&=\displaystyle\int_0^{\frac{\pi}{2}}(\sin^2\theta)^{n-k-1}(\cos^{2}\theta)^{k-2}\sin \theta\cos\theta \mathrm{d}\theta\\
&=\dfrac{1}{2}\displaystyle\int_0^1t^{n-k-1}(1-t)^{k-3}\mathrm{d}t=\dfrac{\Gamma(n-k)\Gamma(k-2)}{2\Gamma(n-2)}.
\end{align*}
Hence,
\begin{equation}\label{lambda}
\Lambda(x) \approx c(\dfrac{2\pi^{n-k}}{\Gamma(n-k)})(\dfrac{\Gamma(n-k)\Gamma(k-2)}{2\Gamma(n-2)})(\dfrac{1}{d})^{2k-4}+O((\dfrac{1}{d})^{2k-5})=c'(\dfrac{1}{d})^{2k-4}+O((\dfrac{1}{d})^{2k-5}),
\end{equation}
for $c'$ another constant.

Recalling that $r_\varepsilon=\varepsilon^{\frac{2k}{2k+1}}$, we can use the function $\Gamma$ to define a new metric 
$$\widetilde{\omega}_\varepsilon:=\widehat{\omega}_\varepsilon+\sqrt{-1}\partial\bar{\partial}(\varepsilon^{2k-2}\gamma_2(\dfrac{d}{r_\varepsilon})\Gamma),$$
where $\gamma_2:\mathbb{R}\to\mathbb{R}$ is a cutoff function such that $\gamma_2(t)=0$ for $t<1$ and $\gamma_2(t)=1$ for $t>2$. On the support of $\gamma_2(\dfrac{d}{r_\varepsilon})$, $\rho\leq Cd$ for some constant and $d\geq r_\varepsilon$, so $\rho^2(\dfrac{1}{d})^{2k-4}\leq C^2(\dfrac{1}{d})^{2k-6}\leq C^2r_\varepsilon^{6-2k}$. Hence if we denote $\Omega=\{x\in \widehat{X}:r_\varepsilon\leq d_{}(x)\}$, then using \eqref{lambda},
\begin{align*}
\|\varepsilon^{2k-2}\gamma_2(\dfrac{d}{r_\varepsilon})\Gamma\|_{\rho^{2}C^{l,\alpha}_{\frac{\widehat{g}_\varepsilon}{\rho^2}}(\widehat{X})}&\leq c\varepsilon^{2k-2}\|\Gamma\|_{\rho^{2}C^{l,\alpha}_{\frac{\widehat{g}_\varepsilon}{\rho^2}}(\Omega)}\\
&\leq c'\varepsilon^{2k-2}r_\varepsilon^{6-2k}\\
&\leq c'\varepsilon^{\frac{5(2k)-2}{2k+1}}\\
&\leq c'\varepsilon^{\frac{4(2k)+2k-2}{2k+1}}\leq c'\varepsilon^{4}, \text{since } k>2,
\end{align*}
which tends to zero as $\varepsilon\to 0$. So $\widetilde{\omega}_\varepsilon=\widehat{\omega}_\varepsilon+\sqrt{-1}\partial\bar{\partial}(\varepsilon^{2k-2}\gamma_2(\dfrac{d}{r_\varepsilon})\Gamma)$ is a small perturbation of $\widehat{\omega}_\varepsilon$ and so $L_{\widetilde{\omega}_\varepsilon}$ is a small perturbation of $L_{\widehat{\omega}_\varepsilon}$ for sufficiently small $\varepsilon$.
Now we would like to solve the nonlinear equation
\begin{equation}\label{nonlinear-}
S(\widehat{\omega}_\varepsilon+\sqrt{-1}\dd u)=R, 
\end{equation}
with $u$ and $R$ of the form
\begin{align*}
u&=\varepsilon^{2k-2}\gamma_2(\dfrac{d}{r_\varepsilon})\Gamma+v,\\
R&=S(\omega_X)+\lambda\varepsilon^{2k-2}+R_\varepsilon.
\end{align*}
So the goal is to find $v$. By replacing $u$ in the left side of \eqref{nonlinear-}, we get from \eqref{32} page \pageref{32} that
\begin{align*}
S(\widehat{\omega}_\varepsilon+\sqrt{-1}\dd u)
&=S(\widehat{\omega}_\varepsilon)+L_{\widehat{\omega}_\varepsilon}(u)+Q_{\widehat{\omega}_\varepsilon}(\nabla^2 u)\\
&=S(\widehat{\omega}_\varepsilon)+\varepsilon^{2k-2}L_{\widehat{\omega}_\varepsilon}(\gamma_2(\dfrac{d}{r_\varepsilon})\Gamma)+L_{\widehat{\omega}_\varepsilon}(v)+Q_{\widehat{\omega}_\varepsilon}(\nabla^2 u),
\end{align*}
so solving \eqref{nonlinear-} means to solve
$$L_{\widehat{\omega}_\varepsilon}(v)-R_\varepsilon=S(\omega_X)-S(\widehat{\omega}_\varepsilon)+\lambda\varepsilon^{2k-2}-\varepsilon^{2k-2}L_{\widehat{\omega}_\varepsilon}(\gamma_2(\dfrac{d}{r_\varepsilon})\Gamma)-Q_{\widehat{\omega}_\varepsilon}(\nabla^2 u).$$
Now we define $F_\varepsilon:\rho^\delta C^{4,\alpha}_{\frac{\widehat{g}_\varepsilon}{\rho^2}}(\widehat{X})_0\times \bR\to \rho^{\delta-4}C^{0,\alpha}_{\frac{\widehat{g}_\varepsilon}{\rho^2}}(\widehat{X})\times \bR$ by 
$$F_\varepsilon(v,R):=S(\omega_X)-S(\widehat{\omega}_\varepsilon)+\lambda\varepsilon^{2k-2}-\varepsilon^{2k-2}L_{\widehat{\omega}_\varepsilon}(\gamma_2(\dfrac{d}{r_\varepsilon})\Gamma)-Q_{\widehat{\omega}_\varepsilon}(\nabla^2 u).$$


\begin{remark}
Note that the function $F_\varepsilon$ does not depends on $R$, but to use Banach fixed point theorem, we consider $F_\varepsilon$ as a function of $v$ and $R$.
\end{remark}

\begin{lemma}\label{bil}
Suppose $\delta>0$. Then there exists constants $c_0,c>0$ such that if $\|\varphi\|_{\rho^{\delta}C^{4,\alpha}_{\frac{g_\varepsilon}{\rho^2}}(\widehat{X})}<c_0$, then
$$\|Q_{\widehat{\omega}_\varepsilon}(\nabla^2\varphi) \|_{\rho^{\delta-4}C^{0,\alpha}_{\frac{\widehat{g}_\varepsilon}{\rho^2}}(\widehat{X})} \leq c\|\varphi\|_{\rho^{\delta}C^{4,\alpha}_{\frac{\widehat{g}_\varepsilon}{\rho^2}}(\widehat{X})}\|\varphi\|_{\rho^{2}C^{4,\alpha}_{\frac{\widehat{g}_\varepsilon}{\rho^2}}(\widehat{X})}.$$
\end{lemma}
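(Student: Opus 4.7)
The plan is to reduce the estimate to a bookkeeping exercise using the explicit structural decomposition of $Q_{\widehat{\omega}_\varepsilon}$ recorded in Lemma \ref{nonlinears}:
\[
Q_{\widehat{\omega}_\varepsilon}(\nabla^2\varphi)=\sum_{(i,j)\in\{(4,2),(3,3),(3,2),(2,2)\}}\sum_{q}\,\tau_{ij}\,B_{q,i,j}(\nabla^i\varphi,\nabla^j\varphi)\,C_{q,i,j}(\nabla^2\varphi),
\]
where $\tau_{3,2}=|z|$ and $\tau_{ij}=1$ otherwise. I will estimate each summand in the weighted H\"older space $\rho^{\delta-4}\mathcal C^{0,\alpha}_{\widehat g_\varepsilon/\rho^2}(\widehat X)$. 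The key point is a weighted product inequality of the form
\[
\|fg\|_{\rho^{a+b}\mathcal C^{0,\alpha}_{\widehat g_\varepsilon/\rho^2}}\le c\,\|f\|_{\rho^{a}\mathcal C^{0,\alpha}_{\widehat g_\varepsilon/\rho^2}}\|g\|_{\rho^{b}\mathcal C^{0,\alpha}_{\widehat g_\varepsilon/\rho^2}},
\]
valid for any real exponents $a,b$, together with the compatibility $\|\nabla^k\varphi\|_{\rho^{a-k}\mathcal C^{0,\alpha}_{\widehat g_\varepsilon/\rho^2}}\le c\|\varphi\|_{\rho^a\mathcal C^{k,\alpha}_{\widehat g_\varepsilon/\rho^2}}$ (derivatives in $\widehat g_\varepsilon$ cost one $\rho^{-1}$ relative to derivatives in $\widehat g_\varepsilon/\rho^2$, and the $B_{q,i,j}$ are $\widehat g_\varepsilon$-contractions, each of which costs an additional $\rho^{-2}$).

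Using these rules, I will split each factor $\nabla^i\varphi$ between the two copies of $\varphi$ in the right-hand side: one factor is measured by the $\rho^\delta$-norm and the other by the $\rho^2$-norm. A direct count of weights then gives, schematically, $\|B_{q,i,j}(\nabla^i\varphi,\nabla^j\varphi)\|_{\rho^{\delta+2-i-j}\mathcal C^{0,\alpha}}\le c\|\varphi\|_{\rho^\delta\mathcal C^{4,\alpha}}\|\varphi\|_{\rho^{2}\mathcal C^{4,\alpha}}$. For $(i,j)=(4,2)$ and $(3,3)$ one obtains exactly the weight $\rho^{\delta-4}$; for $(2,2)$ the weight is $\rho^{\delta-2}$, which is absorbed into $\rho^{\delta-4}$ thanks to the uniform boundedness of $\rho$ on the compact orbifold $\widehat X$; and for $(3,2)$ the extra factor $|z|\le c\rho$ is combined with the weight $\rho^{\delta-3}$, which likewise yields $\rho^{\delta-4}$ up to absorbing one bounded power of $\rho$. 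The smooth scalar factors $C_{q,i,j}(\nabla^2\varphi)$ are controlled uniformly by a constant provided $\|\nabla^2\varphi\|_{\mathcal C^{0,\alpha}_{\widehat g_\varepsilon}}$ is small, which is ensured by choosing $c_0$ small (since $\delta>0$ and $\rho$ is bounded, $\|\varphi\|_{\rho^\delta\mathcal C^{4,\alpha}_{\widehat g_\varepsilon/\rho^2}}<c_0$ gives a pointwise bound on the relevant $\widehat g_\varepsilon$-norms of $\nabla^2\varphi$).

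The main obstacle will be to make all these weighted product estimates \emph{uniform in} $\varepsilon$. To handle this I will work with the $\widehat g_\varepsilon/\rho^2$-geometry on $\widehat{\mathcal X}$, where by the construction of $\widehat\omega_\varepsilon$ in Theorem \ref{main} the metric $\widehat g_\varepsilon/\rho^2$ is of bounded geometry with bounds independent of $\varepsilon$. Standard scaling arguments on balls of $\widehat g_\varepsilon/\rho^2$-radius $1$ then give the product and H\"older estimates above with constants depending only on the structural geometry of $\widehat{\mathcal X}$ and not on $\varepsilon$. Summing the finitely many contributions from the decomposition of $Q_{\widehat{\omega}_\varepsilon}$ yields the claimed inequality. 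This argument closely parallels Lemma 10 in \cite{seyyedali2020extremal}, with the sole additional subtlety being the presence of the orbifold singular strata inside the fibres of $\widehat\varphi_1$; since the estimates are local and the bounded-geometry framework is insensitive to orbifold points, the adaptation is straightforward.
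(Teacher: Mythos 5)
Your proposal is correct and follows essentially the same route as the paper's proof: both start from the structural decomposition of $Q_{\widehat{\omega}_\varepsilon}$ in Lemma \ref{linears}, estimate each bilinear term by splitting the weight between a $\rho^{\delta}$-factor and a $\rho^{2}$-factor (with each derivative costing one power of $\rho$, so that the $(4,2)$ and $(3,3)$ terms land exactly in $\rho^{\delta-4}\mathcal{C}^{0,\alpha}_{\frac{\widehat{g}_\varepsilon}{\rho^2}}$), and absorb the milder $(2,2)$ and $|z|$-weighted $(3,2)$ contributions using the boundedness of $\rho$ on $\widehat{X}$, with $c_0$ controlling the smooth factors $C_{q,i,j}(\nabla^2\varphi)$. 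Your additional remark on uniformity in $\varepsilon$ via the bounded geometry of $\widehat{g}_\varepsilon/\rho^2$ merely makes explicit what the paper leaves implicit in treating the operator norms $\|B_{q,i,j}\|_{\op}$ as constants independent of $\varepsilon$.
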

\begin{proof}
From Lemma \ref{linears} we have the following decomposition with finite sums:
\begin{align*}
Q_{\widehat{\omega}_\varepsilon}(\nabla^2\varphi)&=\displaystyle\sum_{q}B_{q,4,2}(\nabla^4\varphi,\nabla^2\varphi)C_{q,4,2}(\nabla^2\varphi)\\
&+\displaystyle\sum_{q}B_{q,3,3}(\nabla^3\varphi,\nabla^3\varphi)C_{q,3,3}(\nabla^2\varphi)\\
&+|z|\displaystyle\sum_{q}B_{q,3,2}(\nabla^3\varphi,\nabla^2\varphi)C_{q,3,2}(\nabla^2\varphi)\\
&+\displaystyle\sum_{q}B_{q,2,2}(\nabla^2\varphi,\nabla^2\varphi)C_{q,2,2}(\nabla^2\varphi),
\end{align*}
where $B^i$s are bilinear forms and $C^i$s are smooth functions. Each term of the above decomposition is controlled by the $\|\varphi\|_{\rho^{\delta}C^{4,\alpha}_{\frac{\widehat{g}_\varepsilon}{\rho^2}}(\widehat{X})}\|\varphi\|_{\rho^{2}C^{4,\alpha}_{\frac{\widehat{g}_\varepsilon}{\rho^2}}(\widehat{X})}$, for example
\begin{align*}
\|B_{q,3,3}(\nabla^3\varphi,\nabla^3\varphi) \|_{\rho^{\delta-4}C^{0,\alpha}_{\frac{g_\varepsilon}{\rho^2}}(\widehat{X})} &\leq \|\rho^{4-\delta}B_{q,3,3}(\nabla^3\varphi,\nabla^3\varphi) \|_{C^{0,\alpha}_{\frac{\widehat{g}_\varepsilon}{\rho^2}}(\widehat{X})}\\
&\leq \|B_{q,3,3}\|_{\op}\|\rho^{3-\delta}\nabla^3\varphi\|_{C^{0,\alpha}_{\frac{\widehat{g}_\varepsilon}{\rho^2}}(\widehat{X})}\|\rho\nabla^3\varphi\|_{C^{0,\alpha}_{\frac{\widehat{g}_\varepsilon}{\rho^2}}(\widehat{X})}\\
&= \|B_{q,3,3}\|_{\op}\|\nabla^3\varphi\|_{\rho^{\delta-3}C^{0,\alpha}_{\frac{\widehat{g}_\varepsilon}{\rho^2}}(\widehat{X})}\|\nabla^3\varphi\|_{\rho^{-1}C^{0,\alpha}_{\frac{\widehat{g}_\varepsilon}{\rho^2}}(\widehat{X})}\\
&\leq c\|\varphi\|_{\rho^{\delta}C^{3,\alpha}_{\frac{\widehat{g}_\varepsilon}{\rho^2}}(\widehat{X})}\|\varphi\|_{\rho^{2}C^{3,\alpha}_{\frac{\widehat{g}_\varepsilon}{\rho^2}}(\widehat{X})}\\
&\leq c\|\varphi\|_{\rho^{\delta}C^{4,\alpha}_{\frac{\widehat{g}_\varepsilon}{\rho^2}}(\widehat{X})}\|\varphi\|_{\rho^{2}C^{4,\alpha}_{\frac{\widehat{g}_\varepsilon}{\rho^2}}(\widehat{X})},
\end{align*}
using the fact that the embedding $\rho^\delta C^{k,\alpha}_g\to \rho^{\delta'} C^{k',\alpha'}_g$ is compact for $k'+\alpha'<k+\alpha$ and $\delta'<\delta$ and also that, $\|\nabla^if\|_{\rho^\delta C^{k,\alpha}_g}\leq c\|f\|_{\rho^{\delta+i} C^{k+i,\alpha}_g}$.
\end{proof}

\begin{proposition}\label{forf}
For $4-2k<\delta<0$ very close to $4-2k$, there is a constant $c$ independent of $\varepsilon$
such that
$$\|F_\varepsilon(0,0)\|_{\rho^{\delta-4}C^{0,\alpha}_{\frac{\widehat{g}_\varepsilon}{\rho^2}}(\widehat{X})}\leq cr_{\varepsilon}^{3-\delta}.$$
\end{proposition}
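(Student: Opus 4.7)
The goal is to estimate each summand in
\[
F_\varepsilon(0,0) = \underbrace{\bigl(S(\omega_X)-S(\widehat{\omega}_\varepsilon)\bigr)}_{A_1} + \underbrace{\lambda\varepsilon^{2k-2}}_{A_2} - \underbrace{\varepsilon^{2k-2}L_{\widehat{\omega}_\varepsilon}\!\bigl(\gamma_2(d/r_\varepsilon)\Gamma\bigr)}_{A_3} - \underbrace{Q_{\widehat{\omega}_\varepsilon}(\nabla^2 u)}_{A_4}
\]
in the weighted norm $\rho^{\delta-4}C^{0,\alpha}_{\widehat{g}_\varepsilon/\rho^2}$, where $u=\varepsilon^{2k-2}\gamma_2(d/r_\varepsilon)\Gamma$. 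Following Székelyhidi's strategy, I would decompose $\widehat X$ using the two natural scales $\varepsilon$ (size of the ALE fibres) and $r_\varepsilon=\varepsilon^{2k/(2k+1)}$ (cutoff scale) into four regions: $W_1=\{d\leq R\varepsilon\}$, where $\widehat{\omega}_\varepsilon/\varepsilon^2$ restricts fibrewise to the scalar-flat ALE model from Theorem \ref{Apostolov-Rollin thm}; $W_2=\{R\varepsilon\leq d\leq r_\varepsilon\}$, where $\gamma_2=0$ and the ALE metric is already in its Euclidean asymptotic regime; $W_3=\{r_\varepsilon\leq d\leq 2r_\varepsilon\}$, where both cutoffs $\gamma_1,\gamma_2$ simultaneously transition; and $W_4=\{d\geq 2r_\varepsilon\}$, where $\widehat{\omega}_\varepsilon=\omega_X$ and $\gamma_2\equiv 1$.

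On $W_4$, $A_1=0$, and by the defining property of $\Gamma$ the linear terms $A_2$ and $A_3$ cancel identically, leaving only the quadratic remainder $A_4$. Combining the asymptotic $\Gamma\sim c'\,d^{\,4-2k}$ from \eqref{lambda} with Lemma \ref{bil} applied to $\varphi=\varepsilon^{2k-2}\Gamma$ yields an estimate quadratic in the weighted norms of $\Gamma$, which for $\delta$ just above $4-2k$ is easily dominated by $r_\varepsilon^{3-\delta}$. On $W_1\cup W_2$, the vanishing of $\gamma_2$ kills $A_3$ and $A_4$, so only $S(\omega_X)-S(\widehat{\omega}_\varepsilon)+\lambda\varepsilon^{2k-2}$ has to be controlled: in $W_1$ we exploit that $\widehat{\omega}_\varepsilon/\varepsilon^2$ is close to the scalar-flat ALE model (whose scalar curvature vanishes), with the difference tracked via the horizontal variation of $g_X$ along $Y$; in $W_2$, Taylor expanding $S$ in the perturbation $\varepsilon^2\sqrt{-1}\dd f(d/\varepsilon)$, with $f=A\|Z\|^{4-2k}+O(\|Z\|^{3-2k})$, gives a pointwise bound which, once multiplied by the weight $\rho^{4-\delta}\sim d^{4-\delta}$, stays comfortably below $r_\varepsilon^{3-\delta}$.

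The main obstacle will be the transition region $W_3$, where every derivative of a cutoff produces a factor $r_\varepsilon^{-1}$ and all four $A_i$ contribute simultaneously. The choice $r_\varepsilon=\varepsilon^{2k/(2k+1)}$ is engineered precisely so that on $W_3$ the tail of the ALE expansion and the leading correction coming from $\Gamma$ meet at a common size, and both contribute to the weighted norm at exactly the order $r_\varepsilon^{3-\delta}$. The remaining work is then to upgrade these pointwise estimates to the full $C^{0,\alpha}$ norm by rescaling distances on $W_3$ to unit scale: the rescaled cutoffs and their derivatives become uniformly bounded on a region of bounded geometry, and standard interior Schauder estimates then handle the Hölder seminorm without any loss in the exponent.
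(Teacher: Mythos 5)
Your overall architecture coincides with the paper's: the same four annular regions (the paper uses $\Omega_1=\{d<\varepsilon\}$ rather than $\{d\le R\varepsilon\}$, which is immaterial), the same observation that on $W_1\cup W_2$ the vanishing of $\gamma_2$ reduces $F_\varepsilon(0,0)$ to $S(\omega_X)-S(\widehat{\omega}_\varepsilon)+\lambda\varepsilon^{2k-2}$, controlled by $S(\widehat{\omega}_\varepsilon)=\varepsilon^{-2}S(\varepsilon^{-2}\widehat{\omega}_\varepsilon)=O(\varepsilon^{-1})$ near the divisor and $O(1/d)$ in the Euclidean regime, and the same treatment of $W_4$, where $L_{\widehat{\omega}_\varepsilon}(\Gamma)=\lambda$ cancels the linear terms and Lemma \ref{bil}, combined with $\|\Gamma\|_{\rho^{\delta}C^{4,\alpha}}\lesssim r_\varepsilon^{4-2k-\delta}$ and $\|\Gamma\|_{\rho^{2}C^{4,\alpha}}\lesssim r_\varepsilon^{2-2k-\frac1k}$, handles the quadratic remainder. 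Those three regions are fine.

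The gap is in $W_3$, precisely where you locate ``the main obstacle,'' and your accounting there is not merely vague but quantitatively wrong: the claim that the ALE tail and the $\Gamma$-correction each ``contribute to the weighted norm at exactly the order $r_\varepsilon^{3-\delta}$'' fails. On $W_3$ both potentials have size $\varepsilon^{2k-2}d^{4-2k}$ with $d\sim r_\varepsilon$, and each derivative of a cutoff costs $r_\varepsilon^{-1}$, so four derivatives of, say, $\varepsilon^{2k-2}\gamma_2(d/r_\varepsilon)\Gamma$ give pointwise size $\varepsilon^{2k-2}r_\varepsilon^{-2k}$; since $r_\varepsilon=\varepsilon^{\frac{2k}{2k+1}}$ means $\varepsilon^{2k-2}=r_\varepsilon^{2k-1-\frac1k}$, multiplying by the weight $\rho^{4-\delta}\sim r_\varepsilon^{4-\delta}$ yields $r_\varepsilon^{3-\delta-\frac1k}$, which exceeds the target by the divergent factor $r_\varepsilon^{-1/k}$. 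Rescaling to unit scale and interior Schauder estimates only organize the H\"older seminorm; they cannot recover this loss of $r_\varepsilon^{-1/k}$. The missing idea—the actual content of the paper's region (3)—is a cancellation: one writes $\widetilde{\omega}_\varepsilon=\omega_{\Euc}+\sqrt{-1}\dd H$ and splits $H=A\varepsilon^{2k-2}|z|^{4-2k}+\widetilde{H}$, where the leading $|z|^{4-2k}$ profiles of the ALE potential being switched off by $\gamma_1$ and of $\Gamma$ being switched on by $\gamma_2$ have matching coefficients and are collected into a single \emph{uncut} term. Since $\Delta^2_{\Euc}(|z|^{4-2k})=0$, this dangerous leading term is annihilated by the flat linearized operator; the cutoff derivatives then act only on subleading tails, giving $\|\Delta^2_{\Euc}\widetilde{H}\|$ of weighted size $r_\varepsilon^{4-\delta}(1+\varepsilon^{2k-2}r_\varepsilon^{1-2k})=r_\varepsilon^{4-\delta}(1+r_\varepsilon^{-1/k})\le c\,r_\varepsilon^{3-\delta}$, while the nonlinearity is quadratically small, $\|Q_{\omega_{\Euc}}(\nabla^2H)\|\lesssim\varepsilon^{4k-4}r_\varepsilon^{6-4k-\delta}\le c\,r_\varepsilon^{3-\delta}$ for $k>2$. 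Without this biharmonicity and matched-coefficient gluing step, the estimate you sketch on $W_3$ falls short of the proposition by $r_\varepsilon^{-1/k}$.
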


\begin{proof}
We consider four possible regions:
\begin{enumerate}
\item On‌ $\Omega_1=\{x\in \widehat{X}:d(x)<\varepsilon\}$ we have $\gamma_2(\dfrac{d}{r_\varepsilon})=0$ so we get 
\begin{equation} \label{1of3} \begin{aligned} F_\varepsilon(0,0) &= S(\omega_X) - S(\widehat{\omega}_\varepsilon) + \lambda \varepsilon^{2k-2} \\ &\quad - \varepsilon^{2k-2} L_{\widehat{\omega}_\varepsilon} ( \gamma_2( \dfrac{d}{r_\varepsilon} ) \Gamma ) \\ &\quad - Q_{\widehat{\omega}_\varepsilon} ( \nabla^2 ( \varepsilon^{2k-2} \gamma_2( \dfrac{d}{r_\varepsilon} ) \Gamma ) ) \\ &= S(\omega_X) - S(\widehat{\omega}_\varepsilon) + \lambda \varepsilon^{2k-2}. \end{aligned} \end{equation}
 Furthermore, in this region, 
$$\rho=\sqrt{\varepsilon^2+d^2}\leq \sqrt{\varepsilon^2+\varepsilon^2}=\sqrt{2}\varepsilon.$$
By Lemma \ref{curvature of a conformally}, the scalar curvature of the conformally changed metric $\omega'=e^{2f}\omega$ is equal to
$$S(\omega')=e^{-2f}(S(\omega)+2(2n-1)\Delta_\omega f-(2n-1)(2n-2)\|\nabla f\|^2_\omega).$$
Using this formula for $\omega'=\varepsilon^{-2}\widehat{\omega}_\varepsilon$ and $f=-\ln \varepsilon$ constant, we get $S(\varepsilon^{-2}\widehat{\omega}_\varepsilon)=\varepsilon^{2}S(\widehat{\omega}_\varepsilon)$ or $S(\widehat{\omega}_\varepsilon)=\varepsilon^{-2}S(\varepsilon^{-2}\widehat{\omega}_\varepsilon)$. Since $\varepsilon^{-2}\widehat{\omega}_\varepsilon$ tends to a scalar flat $\ALE$ metric in the fibers of $\widehat{\varphi_1}:\widehat{H}_1\to Y$, by Theorem \ref{main} on page \pageref{main}, $S(\widehat{\omega}_\varepsilon)=\varepsilon^{-2}O(\varepsilon)=O(\varepsilon^{-1})$. Hence
‌‌\begin{align*}‌
\|‌F_\varepsilon(0,0)\|_{\rho^{\delta-4}C^{0,\alpha}_{\frac{\widehat{g}_\varepsilon}{\rho^2}}(\widehat{X})} &=\|S(\omega_X)-S(\widehat{\omega}_\varepsilon)+\lambda\varepsilon^{2k-2}\|_{\rho^{\delta-4}C^{0,\alpha}_{\frac{\widehat{g}_\varepsilon}{\rho^2}}(\widehat{X})}\\‌
&=\|\rho^{4-\delta}(S(\omega_X)-S(\widehat{\omega}_\varepsilon)+\lambda\varepsilon^{2k-2})\|_{C^{0,\alpha}_{\frac{\widehat{g}_\varepsilon}{\rho^2}}(\widehat{X})}\\‌
&\leq (\sqrt{2}\varepsilon)^{4-\delta}(\|S(\omega_X)\|_{C^{0,\alpha}_{\frac{\widehat{g}_\varepsilon}{\rho^2}}(\widehat{X})}+\|S(\widehat{\omega}_\varepsilon)\|_{C^{0,\alpha}_{\frac{\widehat{g}_\varepsilon}{\rho^2}}(\widehat{X})}+\|\lambda\varepsilon^{2k-2}\|_{C^{0,\alpha}_{\frac{\widehat{g}_\varepsilon}{\rho^2}}(\widehat{X})})\\‌
&\leq (\sqrt{2}\varepsilon)^{4-\delta}(\|S(\omega_X)\|_{C^{0,\alpha}_{\frac{\widehat{g}_\varepsilon}{\rho^2}}(\widehat{X})}+\|\varepsilon^{-2}S(\varepsilon^{-2}\widehat{\omega}_\varepsilon)\|_{C^{0,\alpha}_{\frac{\widehat{g}_\varepsilon}{\rho^2}}(\widehat{X})}+\|\lambda\varepsilon^{2k-2}\|_{C^{0,\alpha}_{\frac{\widehat{g}_\varepsilon}{\rho^2}}(\widehat{X})})\\
&\leq  (\sqrt{2}\varepsilon)^{4-\delta}(c_1+\varepsilon^{-2}c_2\varepsilon‌+\varepsilon^{2k-2}c_3)\\
&\leq c(\sqrt{2}\varepsilon)^{3-\delta},
\end{align*}‌
where $c_1$, $c_2$, $c_3$ and $c$ are constants independent of $\varepsilon$.

‌‌‌‌\item On‌ $\Omega_2=\{x\in \widehat{X}:\varepsilon<d(x)<r_\varepsilon\}$ we have $\gamma_2(\dfrac{d}{r_\varepsilon})=0$,
$$\varepsilon\leq \rho=\sqrt{\varepsilon^2+d^2}\leq \sqrt{\varepsilon^2+r_\varepsilon^2}\leq \sqrt{r_\varepsilon^2+r_\varepsilon^2}=\sqrt{2}r_\varepsilon,$$
and \eqref{1of3} still holds. The terms $S(\omega_X)$ and $\lambda\varepsilon^{2k-2}$ can be estimated as in (a). By \eqref{pot1} on page \pageref{pot1} and the fact that $\omega_{\widehat{\varphi_1}}$ is scalar flat, we have that
$$S(\varepsilon^{-2}\widehat{\omega}_\varepsilon)=O(\varepsilon(\dfrac{d}{\varepsilon})^{-2k}),$$
$$S(\widehat{\omega}_\varepsilon)=\varepsilon^{-2}S(\varepsilon^{-2}\widehat{\omega}_\varepsilon)=O(\varepsilon^{-1}(\dfrac{d}{\varepsilon})^{-2k})=O(\dfrac{1}{d}).$$
This means that
$$\| S(\widehat{\omega}_\varepsilon)\|_{\rho^{\delta-4}C^{0,\alpha}_{\frac{\widehat{g}_\varepsilon}{\rho^2}}(\widehat{X})}\leq C\| \dfrac{\rho^{\delta-4}}{d}\|_{C^{0,\alpha}_{\frac{\widehat{g}_\varepsilon}{\rho^2}}(\widehat{X})}\leq C(\sqrt{2}r_\varepsilon)^{3-\delta},$$
for some constant $C$.

‌‌‌‌\item On‌ $\Omega_3=\{x\in \widehat{X}:r_\varepsilon<d(x)<2r_\varepsilon\}$ we have
\begin{align*}
F_\varepsilon(0,0)&=S(\omega_X)-S(\widehat{\omega}_\varepsilon)+\lambda\varepsilon^{2k-2}-\varepsilon^{2k-2}L_{\widehat{\omega}_\varepsilon}(\gamma_2(\dfrac{d}{r_\varepsilon})\Gamma)-Q_{\widehat{\omega}_\varepsilon}(\nabla^2(\varepsilon^{2k-2}\gamma_2(\dfrac{d}{r_\varepsilon})\Gamma))\\
&=S(\omega_X)+\lambda\varepsilon^{2k-2}-(S(\widehat{\omega}_\varepsilon)+\varepsilon^{2k-2}L_{\widehat{\omega}_\varepsilon}(\gamma_2(\dfrac{d}{r_\varepsilon})\Gamma)+Q_{\widehat{\omega}_\varepsilon}(\nabla^2(\varepsilon^{2k-2}\gamma_2(\dfrac{d}{r_\varepsilon})\Gamma)))\\
&=S(\omega_X)+\lambda\varepsilon^{2k-2}-S(\widetilde{\omega}_\varepsilon).
\end{align*}
As in the previous case, we have $$\| S(\omega_X) \|_{\rho^{\delta-4} C^{0,\alpha}_{\frac{\widehat{g}_\varepsilon}{\rho^2}}(\widehat{X})} \leq c r_{\varepsilon}^{3-\delta}$$ and $$\| \lambda \varepsilon^{2k-2} \|_{\rho^{\delta-4} C^{0,\alpha}_{\frac{\widehat{g}_\varepsilon}{\rho^2}}(\widehat{X})} \leq c r_{\varepsilon}^{3-\delta}.$$ Thus, we just need to control $$\| S(\widetilde{\omega}_\varepsilon) \|_{\rho^{\delta-4} C^{0,\alpha}_{\frac{\widehat{g}_\varepsilon}{\rho^2}}(\Omega_3)}.$$ To check this, let us write $$\widetilde{\omega}_{\varepsilon} = \omega_{\Euc} + \sqrt{-1} \dd H,$$ where $\omega_{\Euc} = \sqrt{-1} \dd (|z|^2 + |w|^2)$ and 
\begin{align*}
H&=\phi_1(z,w)+A\varepsilon^{2k-2}|z|^{4-2k}(1+\phi_2(z,w))^{4-2k}+\varepsilon^{2k-2}\gamma_2(\dfrac{d}{r_\varepsilon})\widetilde{\Gamma}+O(\varepsilon^{2k-1}|z|^{3-2k})\\
&=A\varepsilon^{2k-2}|z|^{4-2k}+\widetilde{H},
\end{align*}
where $\phi_1$ and $\phi_2$ are smooth functions and $A$ is a constant. Note that 
\begin{equation}\label{p47}
\nabla^2H=O(r_\varepsilon+\varepsilon^{2k-2}r_\varepsilon^{2-2k}+\varepsilon^{2k}r_\varepsilon^{-2k})=O(\varepsilon^{2k-2}r_\varepsilon^{2-2k})\to 0 \text{ as } \varepsilon\to 0.
\end{equation}
Now
\begin{align*}
\| S(\widetilde{\omega}_\varepsilon)\|_{\rho^{\delta-4}C^{0,\alpha}_{\frac{\widehat{g}_\varepsilon}{\rho^2}}(\Omega_3)}&\leq
\| S(\widetilde{\omega}_\varepsilon)-L_{\omega_{\Euc}}(H)\|_{\rho^{\delta-4}C^{0,\alpha}_{\frac{\widehat{g}_\varepsilon}{\rho^2}}(\Omega_3)}+\| L_{\omega_{\Euc}}(H)\|_{\rho^{\delta-4}C^{0,\alpha}_{\frac{\widehat{g}_\varepsilon}{\rho^2}}(\Omega_3)}\\
&=\| Q_{\omega_{\Euc}}(\nabla^2 H)\|_{\rho^{\delta-4}C^{0,\alpha}_{\frac{\widehat{g}_\varepsilon}{\rho^2}}(\Omega_3)}+\| \Delta^2_{\Euc}H\|_{\rho^{\delta-4}C^{0,\alpha}_{\frac{\widehat{g}_\varepsilon}{\rho^2}}(\Omega_3)}, 
\end{align*}
and with the same procedure in the Proposition 13 in \cite{seyyedali2020extremal}, we will show that each term is $O(r_\varepsilon^{3-\delta})$.
 From \eqref{p47} on page \pageref{p47} we get
\begin{align*}
\| Q_{\omega_{\Euc}}(\nabla^2 H)\|_{\rho^{\delta-4}C^{0,\alpha}_{\frac{\widehat{g}_\varepsilon}{\rho^2}}(\Omega_3)}&=\| \displaystyle\sum_{q}B_{q,4,2}(\nabla^4H,\nabla^2H)C_{q,4,2}(\nabla^2H)\\+\displaystyle\sum_{q}B_{q,3,3}(\nabla^3H,\nabla^3H)&C_{q,3,3}(\nabla^2H)\|_{\rho^{\delta-4}C^{0,\alpha}_{\frac{\widehat{g}_\varepsilon}{\rho^2}}(\Omega_3)}\\
&\leq c_1 r_\varepsilon^{4-\delta} \|\nabla^4H\|_{C^{0,\alpha}_{\frac{\widehat{g}_\varepsilon}{\rho^2}}(\Omega_3)}\|\nabla^2H\|_{C^{0,\alpha}_{\frac{\widehat{g}_\varepsilon}{\rho^2}}(\Omega_3)}\\+ c_2 r_\varepsilon^{4-\delta} \|\nabla^3H\|_{C^{0,\alpha}_{\frac{\widehat{g}_\varepsilon}{\rho^2}}(\Omega_3)}&\|\nabla^3H\|_{C^{0,\alpha}_{\frac{\widehat{g}_\varepsilon}{\rho^2}}(\Omega_3)}\\
&\leq c_1r_\varepsilon^{4-\delta}(\varepsilon^{2k-2}r_\varepsilon^{-2k})(\varepsilon^{2k-2}r_\varepsilon^{2-2k})\\
+c_2r_\varepsilon^{4-\delta}(\varepsilon^{2k-2}r_\varepsilon^{1-2k})(&\varepsilon^{2k-2}r_\varepsilon^{1-2k})\\
&\leq c\varepsilon^{4k-4}r_\varepsilon^{6-4k-\delta}\leq cr_\varepsilon^{3-\delta}.
\end{align*}
For the $L_{\omega_{\Euc}}(H)$ note that $L_{\Euc}=-\Delta^2_{\Euc}$ and $\Delta^2_{\Euc}(|Z|^{4-2k})=0$, so
\begin{align*}
\|\Delta^2_{\Euc}H\|_{\rho^{\delta-4}C^{0,\alpha}_{\frac{\widehat{g}_\varepsilon}{\rho^2}}(\Omega_3)}&=
\|\Delta^2_{\Euc}(A\varepsilon^{2k-2}|Z|^{4-2k}+\widetilde{H})\|_{\rho^{\delta-4}C^{0,\alpha}_{\frac{\widehat{g}_\varepsilon}{\rho^2}}(\Omega_3)}\\
&=\|\Delta^2_{\Euc}\widetilde{H}\|_{\rho^{\delta-4}C^{0,\alpha}_{\frac{\widehat{g}_\varepsilon}{\rho^2}}(\Omega_3)}\\
&\leq Cr_\varepsilon^{4-\delta}(1+\varepsilon^{2k-2}r_\varepsilon^{1-2k})\\
&\leq cr_\varepsilon^{3-\delta}.
\end{align*}
\item On‌ $\Omega_4=\{x\in \widehat{X}:2r_\varepsilon<d(x)\}$ we have‌ ‌‌$\gamma_2(\dfrac{d}{r_\varepsilon})=1$, $\widehat{\omega}_\varepsilon=\omega_X$, $L_{\widehat{\omega}_\varepsilon}(\Gamma)=\lambda$  and
\begin{align*}
‌‌F_\varepsilon(0,0)&=S(\omega_X)-S(\omega_X)+\lambda\varepsilon^{2k-2}-\varepsilon^{2k-2}L_{\widehat{\omega}_\varepsilon}(\Gamma)-Q_{\widehat{\omega}_\varepsilon}(\nabla^2(\varepsilon^{2k-2}\Gamma))\\
&=-Q_{\widehat{\omega}_\varepsilon}(\varepsilon^{2k-2}\nabla^2\Gamma).
\end{align*}
By the approximation of $\Gamma$ and the assumption $4-2k<\delta<0$ we get $\|\Gamma\|_{\rho^{\delta}C^{4,\alpha}_{\frac{\widehat{g}_\varepsilon}{\rho^2}}(\Omega_4)}\leq  \rho^{-\delta}d^{4-2k}\leq (\sqrt{2})^{-\delta}r_\varepsilon^{4-2k-\delta}$, because $\rho\leq \sqrt{2}\varepsilon$ and $d\geq 2r_\varepsilon$. Similarly $\|\Gamma\|_{\rho^{2}C^{4,\alpha}_{\frac{\widehat{g}_\varepsilon}{\rho^2}}(\Omega_4)}\leq \rho^{-2}r_\varepsilon^{4-2k}\leq \varepsilon^{-2}r_\varepsilon^{4-2k}\leq r_\varepsilon^{2-2k-\frac{1}{k}}$ since $\rho\geq \varepsilon$.\\
Therefore Lemma \ref{bil} implies that
\begin{align*}‌
\|F_\varepsilon(0,0)\|_{\rho^{\delta-4}C^{0,\alpha}_{\frac{\widehat{g}_\varepsilon}{\rho^2}}(\Omega_4)}&=\|-Q_{\widehat{\omega}_\varepsilon}(\varepsilon^{2k-2}\nabla^2\Gamma) \|_{\rho^{\delta-4}C^{0,\alpha}_{\frac{\widehat{g}_\varepsilon}{\rho^2}}(\Omega_4)}\\
&\leq c\varepsilon^{4k-4}\|\Gamma\|_{\rho^{\delta}C^{4,\alpha}_{\frac{g_\varepsilon}{\rho^2}}(\Omega_4)}\|\Gamma\|_{\rho^{2}C^{4,\alpha}_{\frac{\widehat{g}_\varepsilon}{\rho^2}}(\Omega_4)}\\
& \leq c'\varepsilon^{4k-4}r_\varepsilon^{4-2k-\delta}r_\varepsilon^{4-2k-2-\frac{1}{k}}\\
&\leq c'\varepsilon^{4k-4}r_\varepsilon^{6-4k-\delta-\frac{1}{k}}\\
&\leq c'(r_\varepsilon^{1+\frac{1}{2k}})^{4k-4}r_\varepsilon^{6-4k-\delta-\frac{1}{k}}\\
&\leq c' r_\varepsilon^{2-\delta+2-\frac{3}{k}}\\
&\leq c' r_\varepsilon^{3-\delta}.
‌‌‌\end{align*}‌

\end{enumerate}

\begin{figure}
\centering
\begin{tikzpicture}[x=0.75pt,y=0.75pt,yscale=-1,xscale=1]

\draw [color={rgb, 255:red, 0; green, 0; blue, 8}, draw opacity=1] (74,251) -- (273.81,252.97);

\draw [draw opacity=0] (273.81,252.97) .. controls (273.81,252.75) and (273.81,252.54) .. (273.81,252.32) .. controls (273.51,215.95) and (302.76,186.23) .. (339.12,185.94) .. controls (375.42,185.65) and (405.09,214.78) .. (405.5,251.04) -- (339.65,251.79) -- cycle;
\draw [color={rgb, 255:red, 249; green, 10; blue, 10}, draw opacity=1] (273.81,252.97) .. controls (273.81,252.75) and (273.81,252.54) .. (273.81,252.32) .. controls (273.51,215.95) and (302.76,186.23) .. (339.12,185.94) .. controls (375.42,185.65) and (405.09,214.78) .. (405.5,251.04);

\draw [color={rgb, 255:red, 0; green, 0; blue, 0}, draw opacity=1] (405.5,251.04) -- (591,249);

\draw (340,186) -- (340,55.5);
\draw [shift={(340,53.5)}, rotate=90] [color={rgb, 255:red, 0; green, 0; blue, 0}, line width=0.75] (10.93,-3.29) .. controls (6.95,-1.4) and (3.31,-0.3) .. (0,0) .. controls (3.31,0.3) and (6.95,1.4) .. (10.93,3.29);

\draw (405.5,251.04) .. controls (448,237) and (570,223) .. (570,91);
\draw (273.81,252.97) .. controls (236,241) and (92,225) .. (93,98);
\draw (405.5,251.04) .. controls (433,236) and (488,218) .. (510,91);
\draw (273.81,252.97) .. controls (259,245) and (155,227) .. (155,97);

\draw (191,95) -- (295,203);
\draw (475,91) -- (383.65,203.79);

\draw (273,111) node [anchor=north west][inner sep=0.75pt] {$\si{\Omega}_{1}$};
\draw (369,110) node [anchor=north west][inner sep=0.75pt] {$\si{\Omega}_{1}$};
\draw (193,142) node [anchor=north west][inner sep=0.75pt] {$\si{\Omega}_{2}$};
\draw (444,144) node [anchor=north west][inner sep=0.75pt] {$\si{\Omega}_{2}$};
\draw (140,167) node [anchor=north west][inner sep=0.75pt] {$\si{\Omega}_{3}$};
\draw (501,166) node [anchor=north west][inner sep=0.75pt] {$\si{\Omega}_{3}$};
\draw (95,208) node [anchor=north west][inner sep=0.75pt] {$\si{\Omega}_{4}$};
\draw (546,207) node [anchor=north west][inner sep=0.75pt] {$\si{\Omega}_{4}$};
\draw (328,188.5) node [anchor=north west][inner sep=0.75pt] {$H_{1}$};
\draw (152,257) node [anchor=north west][inner sep=0.75pt] {$\textcolor[rgb]{0.05,0,1}{H_{2}}$};
\draw (486,255) node [anchor=north west][inner sep=0.75pt] {$\textcolor[rgb]{0.05,0,1}{H_{2}}$};
\draw (309,28) node [anchor=north west][inner sep=0.75pt] {$[0,+\infty)_{\varepsilon}$};
\end{tikzpicture}

\caption{\small Four different regions on $\widehat{X}$}
\end{figure}
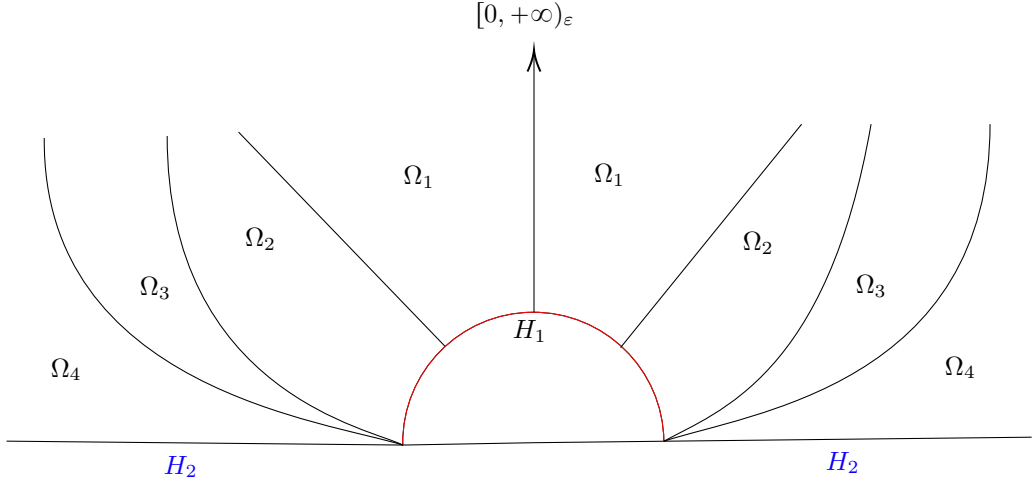
\end{proof}
Finally, to prove existence of a solution for the nonlinear equation
$$S(\widehat{\omega}_\varepsilon+\sqrt{-1}\dd u)=R,$$
for $\varepsilon>0$ small enough, we show that there exist $v_\varepsilon\in \rho^\delta C^{4,\alpha}_{\frac{\widehat{g}_\varepsilon}{\rho^2}}(\widehat{X})_0$ such that $$F_\varepsilon(v_\varepsilon,R_\varepsilon)=L_{\widehat{\omega}_\varepsilon}(v_\varepsilon)-R_\varepsilon.$$
 Define $\mathcal{N}_\varepsilon: \rho^\delta C^{4,\alpha}_{\frac{\widehat{g}_\varepsilon}{\rho^2}}(\widehat{X})_0\times \bR\to \rho^\delta C^{4,\alpha}_{\frac{\widehat{g}_\varepsilon}{\rho^2}}(\widehat{X})_0\times \bR$ by $\mathcal{N}_\varepsilon(v,R)=P_\varepsilon F_\varepsilon(v,R)$, where \linebreak $P_\varepsilon:=\widetilde{L}_\varepsilon^{-1}:\rho^{\delta-4}C^{0,\alpha}_{\frac{\widehat{g}_\varepsilon}{\rho^2}}(\widehat{X})\to \rho^\delta C^{4,\alpha}_{\frac{\widehat{g}_\varepsilon}{\rho^2}}(\widehat{X})_0\times \bR$ is as Proposition \ref{inv} on page \pageref{inv}. If we show that $\mathcal{N}_\varepsilon$ is a contraction, then by the Banach fixed point theorem, there exist unique $(v_\varepsilon,R)$, such that $\mathcal{N}_\varepsilon(v_\varepsilon,R)=(v_\varepsilon,R)$ or equivalently $F_\varepsilon(v_\varepsilon,R)=\widetilde{L}_\varepsilon(v_\varepsilon,R)
$. Since $\widetilde{L}_{\varepsilon}(v_\varepsilon,R)=L_{\widehat{\omega}_\varepsilon}(v_\varepsilon)-R$, then $F(v_\varepsilon)+R=L_{\omega_\varepsilon}(v_\varepsilon)$. Now we are going to show that $\mathcal{N}_\varepsilon$ is a contraction on a suitable domain. By Proposition \ref{lambda}, we must have that $R=R_\varepsilon$.

\begin{lemma}\label{forn}
There exist constants $c_0,\varepsilon_0>0$ such that for $0<\varepsilon<\varepsilon_0$,
$$\|\mathcal{N}_\varepsilon(v_1,R_1)-\mathcal{N}_\varepsilon(v_2,R_2)\|_{\rho^{\delta-4} C^{0,\alpha}_{\frac{\widehat{g}_\varepsilon}{\rho^2}}(\widehat{X})}\leq \dfrac{1}{2}
\|v_1-v_2\|_{\rho^{\delta} C^{0,\alpha}_{\frac{\widehat{g}_\varepsilon}{\rho^2}}(\widehat{X})},$$
for $(v_i,R)$ such that $\|v_i\|_{\rho^{2} C^{0,\alpha}_{\frac{\widehat{g}_\varepsilon}{\rho^2}}(\widehat{X})}<c_0$.
\end{lemma}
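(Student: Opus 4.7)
The plan is to exploit the fact that $\mathcal{N}_\varepsilon = P_\varepsilon \circ F_\varepsilon$ factors through the uniformly bounded inverse $P_\varepsilon$ of Proposition \ref{inv}, and then reduce the contraction estimate to a bilinear control on the difference of the nonlinear operator $Q_{\widehat{\omega}_\varepsilon}$. Observe first that among the five terms defining $F_\varepsilon(v,R)$, the first four do not depend on either $v$ or $R$, while the last term $-Q_{\widehat{\omega}_\varepsilon}(\nabla^2 u)$ depends only on $v$ through $u = \varepsilon^{2k-2}\gamma_2(d/r_\varepsilon)\Gamma + v$. Consequently
\[
F_\varepsilon(v_1,R_1) - F_\varepsilon(v_2,R_2) = Q_{\widehat{\omega}_\varepsilon}(\nabla^2 u_2) - Q_{\widehat{\omega}_\varepsilon}(\nabla^2 u_1),
\]
so the $R$-dependence drops out and only the difference $u_1 - u_2 = v_1 - v_2$ matters.

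The main technical step will be a bilinear estimate for this difference, generalizing Lemma \ref{bil}. Using the explicit decomposition of $Q_{\widehat{\omega}_\varepsilon}$ from Lemma \ref{nonlinears} and the identity
\[
Q(\nabla^2 u_1) - Q(\nabla^2 u_2) = \int_0^1 \frac{d}{dt} Q\bigl(\nabla^2(t u_1 + (1-t) u_2)\bigr)\, dt,
\]
each term produces a bilinear pairing of derivatives of $u_1-u_2$ with derivatives of some interpolated function $u_t = tu_1+(1-t)u_2$, multiplied by smooth coefficients depending on $\nabla^2 u_t$. Applying the same interpolation and product estimates in weighted Hölder spaces as in the proof of Lemma \ref{bil}, I expect to obtain
\[
\|Q_{\widehat{\omega}_\varepsilon}(\nabla^2 u_1) - Q_{\widehat{\omega}_\varepsilon}(\nabla^2 u_2)\|_{\rho^{\delta-4}C^{0,\alpha}_{\widehat{g}_\varepsilon/\rho^2}} \leq C \bigl(\|u_1\|_{\rho^2 C^{4,\alpha}} + \|u_2\|_{\rho^2 C^{4,\alpha}}\bigr)\|v_1-v_2\|_{\rho^\delta C^{4,\alpha}_{\widehat{g}_\varepsilon/\rho^2}},
\]
with $C$ independent of $\varepsilon$ provided $\|u_i\|_{\rho^2 C^{4,\alpha}}$ remains below a fixed threshold.

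To finish, I would control the size of $u_i$ in $\rho^2 C^{4,\alpha}$. From the discussion preceding Proposition \ref{forf} we already have $\|\varepsilon^{2k-2}\gamma_2(d/r_\varepsilon)\Gamma\|_{\rho^2 C^{4,\alpha}} \leq c'\varepsilon^4$, and by hypothesis $\|v_i\|_{\rho^2 C^{4,\alpha}} < c_0$, so $\|u_i\|_{\rho^2 C^{4,\alpha}} \leq c_0 + c'\varepsilon^4$. Combining with Proposition \ref{inv} (which gives a uniform bound $\|P_\varepsilon\| \leq K$), I obtain
\[
\|\mathcal{N}_\varepsilon(v_1,R_1)-\mathcal{N}_\varepsilon(v_2,R_2)\| \leq K \cdot C \cdot 2(c_0 + c'\varepsilon^4)\,\|v_1-v_2\|_{\rho^\delta C^{4,\alpha}},
\]
and choosing $c_0$ and $\varepsilon_0$ small enough so that $2KC(c_0 + c'\varepsilon_0^4) \leq \tfrac{1}{2}$ yields the contraction estimate.

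The principal obstacle is making the bilinear estimate for $Q(\nabla^2 u_1)-Q(\nabla^2 u_2)$ fully rigorous in the weighted Hölder setting associated to the degenerating family of metrics $\widehat{g}_\varepsilon/\rho^2$. The many different term types in Lemma \ref{nonlinears} (involving up to fourth-order derivatives of $u$ coupled with smooth functions of $\nabla^2 u$) must each be estimated with the correct weight bookkeeping, but this is essentially the same kind of argument used in Lemma \ref{bil} applied along the convex path $u_t$; no new analytic idea is required, only careful tracking of weights.
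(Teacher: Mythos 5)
Your proposal is correct and follows essentially the same route as the paper: reduce via the uniform bound on $P_\varepsilon$ to estimating $Q_{\widehat{\omega}_\varepsilon}(\nabla^2 u_1)-Q_{\widehat{\omega}_\varepsilon}(\nabla^2 u_2)$, obtain a Lipschitz bound with constant proportional to $\|u_1\|_{\rho^2 C^{4,\alpha}}+\|u_2\|_{\rho^2 C^{4,\alpha}}$, and make that constant small using $\|v_i\|<c_0$ together with the $o(1)$ smallness of $\varepsilon^{2k-2}\gamma_2(d/r_\varepsilon)\Gamma$. The only (cosmetic) difference is that the paper packages the interpolation along the segment $u_t$ via the mean value theorem, writing the difference as $(L_{\widehat{\omega}_\varepsilon+\sqrt{-1}\partial\bar{\partial}X}-L_{\widehat{\omega}_\varepsilon})(u_1-u_2)$ for some intermediate potential $X$, whereas you differentiate the explicit decomposition of $Q$ term by term in the style of Lemma \ref{bil} --- the same computation in integral rather than mean-value form.
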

\begin{proof}
The proof is essentially the same as Lemma 23 in \cite{szekelyhidi2012blowing}. Since $P_\varepsilon$ is bounded independently of $\varepsilon$, we just need to control 
$$\|F_\varepsilon(v_1,R_1)-F_\varepsilon(v_2,R_2)\|_{\rho^{\delta-4} C^{0,\alpha}_{\frac{\widehat{g}_\varepsilon}{\rho^2}}(\widehat{X})}=
\|-Q_{\widehat{\omega}_\varepsilon}(\nabla^2 u_1)+Q_{\widehat{\omega}_\varepsilon}(\nabla^2 u_2)\|_{\rho^{\delta-4} C^{0,\alpha}_{\frac{\widehat{g}_\varepsilon}{\rho^2}}(\widehat{X})}.$$
By the mean value theorem there exist $t\in[0,1]$ such that for $X=(1-t)u_1+tu_2$,
$$S(\widehat{\omega}_\varepsilon+\sqrt{-1}\dd u_1)-S(\widehat{\omega}_\varepsilon+\sqrt{-1}\dd u_2)=L_{\widehat{\omega}_\varepsilon+\sqrt{-1}\dd X}(u_1-u_2).$$
Hence, this means that 
$$Q_{\widehat{\omega}_\varepsilon}(\nabla^2 u_1)-Q_{\widehat{\omega}_\varepsilon}(\nabla^2 u_2)=(L_{\widehat{\omega}_\varepsilon+\sqrt{-1}\dd X}-L_{\widehat{\omega}_\varepsilon})(u_1-u_2).$$

The linear operator $L_{\widehat{\omega}_\varepsilon}$ is bounded independently of $\varepsilon$, so
\begin{align*}
\|(L_{\widehat{\omega}_\varepsilon+\sqrt{-1}\dd X}-L_{\widehat{\omega}_\varepsilon})(u_1-u_2)\|_{\rho^{\delta-4} C^{0,\alpha}_{\frac{\widehat{g}_\varepsilon}{\rho^2}}(\widehat{X})}&\leq C (\|u_1\|_{\rho^2 C^{4,\alpha}_{\frac{\widehat{g}_\varepsilon}{\rho^2}}(\widehat{X})}+\|u_2\|_{\rho^2 C^{4,\alpha}_{\frac{\widehat{g}_\varepsilon}{\rho^2}}(\widehat{X})})\|u_1-u_2\|_{\rho^\delta C^{4,\alpha}_{\frac{\widehat{g}_\varepsilon}{\rho^2}}(\widehat{X})}\\
&\leq  2c'C\|u_1-u_2\|_{\rho^\delta C^{4,\alpha}_{\frac{\widehat{g}_\varepsilon}{\rho^2}}(\widehat{X})}\\
&\leq  2c'C\|v_1-v_2\|_{\rho^\delta C^{4,\alpha}_{\frac{\widehat{g}_\varepsilon}{\rho^2}}(\widehat{X})},
\end{align*}
where the constant \( c' \) can be chosen as small as we want, provided \( c_0 \) and \( \varepsilon \) are sufficiently small, since $u_i=\varepsilon^{2k-2}\gamma_2(\dfrac{d}{r_\varepsilon})\Gamma+v_i$ and when $\varepsilon\to 0$
$$\|\varepsilon^{2k-2}\gamma_2(\dfrac{d}{r_\varepsilon})\Gamma\|_{\rho^2 C^{k,\alpha}_{\frac{\widehat{g}_\varepsilon}{\rho^2}}(\widehat{X})}\leq c(\dfrac{\varepsilon}{r_\varepsilon})^{2k-2}=o(1).$$
By Properties \ref{inv}, the result follows.
\end{proof}
Now we define open set
$$\mathcal{U}_\varepsilon=\{v\in \rho^\delta C^{4,\alpha}_{\frac{\widehat{g}_\varepsilon}{\rho^2}}(\widehat{X})_0 : \|v\|_{\rho^\delta C^{4,\alpha}_{\frac{\widehat{g}_\varepsilon}{\rho^2}}(\widehat{X})}\leq {(1+2c)Cr_{\varepsilon}^{3-\delta}} \},$$
where $C$ is the independent bound of $P_\varepsilon$ and $c$ is the constant in Properties \ref{forf}.
\begin{proposition}
Suppose $\delta<0$ is sufficiently close to $4-2k$. Then for $\varepsilon>0$ sufficiently small, the map $\mathcal{N}_\varepsilon:\mathcal{U}_\varepsilon\to \mathcal{U}_\varepsilon$ is a contraction and therefore has a fixed point $v_\varepsilon$. 
\end{proposition}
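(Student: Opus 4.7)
The strategy is a direct application of Banach's fixed point theorem to the map $\mathcal{N}_\varepsilon = P_\varepsilon \circ F_\varepsilon$ on the closed ball $\mathcal{U}_\varepsilon$ inside the Banach space $\rho^\delta C^{4,\alpha}_{\widehat{g}_\varepsilon/\rho^2}(\widehat{X})_0$. Since the uniform contraction factor $\tfrac{1}{2}$ is already prepared in Lemma~\ref{forn} and the bound $\|F_\varepsilon(0,0)\|_{\rho^{\delta-4}C^{0,\alpha}}\leq cr_\varepsilon^{3-\delta}$ in Proposition~\ref{forf}, the argument reduces to two verifications: that $\mathcal{N}_\varepsilon$ stabilizes $\mathcal{U}_\varepsilon$, and that the smallness hypothesis of Lemma~\ref{forn} is in force on $\mathcal{U}_\varepsilon$ for $\varepsilon$ small.

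For the self-mapping step I would decompose $\mathcal{N}_\varepsilon(v,R) = [\mathcal{N}_\varepsilon(v,R)-\mathcal{N}_\varepsilon(0,R)] + \mathcal{N}_\varepsilon(0,R)$. Lemma~\ref{forn} bounds the first bracket by $\tfrac{1}{2}\|v\|_{\rho^\delta C^{4,\alpha}}$, while the second term, using the uniform operator bound $C$ of $P_\varepsilon$ from Proposition~\ref{inv} together with Proposition~\ref{forf} (and the fact that $F_\varepsilon$ does not actually depend on $R$), is bounded by $Cc\,r_\varepsilon^{3-\delta}$. For $v\in\mathcal{U}_\varepsilon$ this yields
\[
\|\mathcal{N}_\varepsilon(v,R)\|_{\rho^\delta C^{4,\alpha}} \;\leq\; \tfrac{1}{2}(1+2c)Cr_\varepsilon^{3-\delta} + Cc\,r_\varepsilon^{3-\delta} \;=\; \tfrac{1+4c}{2}Cr_\varepsilon^{3-\delta} \;\leq\; (1+2c)Cr_\varepsilon^{3-\delta},
\]
which is exactly $\mathcal{N}_\varepsilon(\mathcal{U}_\varepsilon) \subset \mathcal{U}_\varepsilon$.

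The more delicate step is checking the hypothesis $\|u_i\|_{\rho^2 C^{4,\alpha}}<c_0$ of Lemma~\ref{forn}, where $u_i = \varepsilon^{2k-2}\gamma_2(d/r_\varepsilon)\Gamma + v_i$ with $v_i\in\mathcal{U}_\varepsilon$. The $\Gamma$-piece is already shown to vanish with $\varepsilon$ in the discussion preceding Lemma~\ref{forn}. For the $v_i$-piece, I would use that $\rho = \sqrt{\varepsilon^2+d^2} \geq \varepsilon$ globally on $\widehat{X}$ and that $\delta-2<0$, so $\rho^{\delta-2}$ has $b$-type $C^{4,\alpha}$-norm bounded by a multiple of $\sup\rho^{\delta-2} = \varepsilon^{\delta-2}$. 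Writing $v_i/\rho^2 = (v_i/\rho^\delta)\rho^{\delta-2}$ and applying the Leibniz estimate in the $b$-type Hölder calculus gives
\[
\|v_i\|_{\rho^2 C^{4,\alpha}} \;\lesssim\; \varepsilon^{\delta-2}\,\|v_i\|_{\rho^\delta C^{4,\alpha}} \;\leq\; (1+2c)C r_\varepsilon^{3-\delta}\varepsilon^{\delta-2} \;\lesssim\; \varepsilon^{(2k+\delta-2)/(2k+1)},
\]
which tends to $0$ as $\varepsilon\to 0$ since $\delta>4-2k$ forces $2k+\delta-2>2>0$.

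With both verifications, Banach's fixed point theorem on the closed subset $\mathcal{U}_\varepsilon$ produces a unique fixed point $(v_\varepsilon,R_\varepsilon)$. Unpacking $\mathcal{N}_\varepsilon(v_\varepsilon,R_\varepsilon) = (v_\varepsilon,R_\varepsilon)$ by applying $\widetilde{L}_\varepsilon$ gives $F_\varepsilon(v_\varepsilon,R_\varepsilon) = L_{\widehat{\omega}_\varepsilon}(v_\varepsilon)-R_\varepsilon$, which is precisely $S(\widehat{\omega}_\varepsilon + \sqrt{-1}\partial\bar\partial u_\varepsilon) = S(\omega_X)+\lambda\varepsilon^{2k-2}+R_\varepsilon$ with $u_\varepsilon = \varepsilon^{2k-2}\gamma_2(d/r_\varepsilon)\Gamma + v_\varepsilon$, the desired cscK perturbation of $\widehat{\omega}_\varepsilon$. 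The principal obstacle to carry out is the weighted-norm conversion in the second step, where the precise calibration of $\delta$ close to $4-2k$ is essential to make $\|v_i\|_{\rho^2 C^{4,\alpha}}$ small; all other estimates are direct consequences of the machinery assembled in the preceding sections.
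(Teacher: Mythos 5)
Your proposal is correct and follows essentially the same route as the paper's proof: verify the hypothesis of Lemma~\ref{forn} via $\|v\|_{\rho^2 C^{4,\alpha}}\leq \varepsilon^{\delta-2}\|v\|_{\rho^\delta C^{4,\alpha}}\leq (1+2c)C\varepsilon^{\delta-2}r_\varepsilon^{3-\delta}$, then establish $\mathcal{N}_\varepsilon(\mathcal{U}_\varepsilon)\subseteq\mathcal{U}_\varepsilon$ by the triangle-inequality split through $\mathcal{N}_\varepsilon(0,0)$ using Proposition~\ref{forf} and the uniform bound on $P_\varepsilon$, and conclude by Banach's fixed point theorem. The only difference is cosmetic: you make explicit the exponent computation $\varepsilon^{\delta-2}r_\varepsilon^{3-\delta}=\varepsilon^{(2k+\delta-2)/(2k+1)}\to 0$, which the paper simply asserts as smallness for $\varepsilon$ sufficiently small.
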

\begin{proof}
Note that if $(v,R)\in \mathcal{U}_\varepsilon$, then we have
$$\|v\|_{\rho^2 C^{4,\alpha}_{\frac{\widehat{g}_\varepsilon}{\rho^2}}(\widehat{X})}\leq \varepsilon^{\delta-2}\|v\|_{\rho^\delta C^{4,\alpha}_{\frac{\widehat{g}_\varepsilon}{\rho^2}}(\widehat{X})}\leq (1+2c)C\varepsilon^{\delta-2}r_\varepsilon^{3-\delta}\leq c_0,$$
for sufficiently small $\varepsilon$, so Lemma \ref{forn} applies to $\mathcal{U}_\varepsilon$. It remains to check that $\mathcal{N}_\varepsilon(\mathcal{U}_\varepsilon)\subseteq \mathcal{U}_\varepsilon$. To do this, for any $v\in \mathcal{U}_\varepsilon$, Proposition \ref{forf} and Lemma \ref{forn} implies that:
\begin{align*}
\|\mathcal{N}_\varepsilon(v,R)\|_{\rho^\delta C^{4,\alpha}_{\frac{\widehat{g}_\varepsilon}{\rho^2}}(\widehat{X})_0}&\leq
\|\mathcal{N}_\varepsilon(v,R)-\mathcal{N}_\varepsilon(0,0)\|_{\rho^\delta C^{4,\alpha}_{\frac{\widehat{g}_\varepsilon}{\rho^2}}(\widehat{X})_0}+\|\mathcal{N}_\varepsilon(0,0)\|_{\rho^\delta C^{4,\alpha}_{\frac{\widehat{g}_\varepsilon}{\rho^2}}(\widehat{X})_0}\\
&\leq \dfrac{1}{2}\|(v,R)\|_{\rho^\delta C^{4,\alpha}_{\frac{\widehat{g}_\varepsilon}{\rho^2}}(\widehat{X})_0}+C\|F_\varepsilon(0,0)\|_{\rho^{\delta-4} C^{0,\alpha}_{\frac{\widehat{g}_\varepsilon}{\rho^2}}(\widehat{X})_0}\\
&\leq \dfrac{1}{2}((1+2c)Cr_{\varepsilon}^{3-\delta})+Cc(r_{\varepsilon}^{3-\delta})\leq (1+2c)Cr_{\varepsilon}^{3-\delta}.
\end{align*}
\end{proof}
The above proposition completes the proof of our main theorem.
\begin{theorem}\label{themain}
Suppose that $X$ is a compact cscK orbifold with no non-trivial holomorphic vector fields, and such that the set of singular points $Y$ of $X$ is of complex co-dimension $> 2$. Suppose, furthermore, that any point $p \in Y$ has a local orbifold uniformization chart of the form $\mathbb{C}^{n-k} \times \left(\mathbb{C}^k / \Gamma_{(-w_0,w)}\right)$, where $\Gamma_{(-w_0,w)}$ is a finite linear group of type $\mathcal{I}$. Then on the $(-w_0,w)$-weighted blow-up $\widehat{X}$ of $X$ along $Y$, the Kähler class $[\omega_X] - \varepsilon^2[E]$ admits a cscK metric for $\varepsilon > 0$ sufficiently small, where $E = \pi^{-1}(Y)$ is the exceptional divisor of the partial resolution $\pi: \widehat{X} \to X$.

\end{theorem}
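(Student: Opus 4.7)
The plan is to realize the cscK metric as a small perturbation $\widetilde\omega_\varepsilon = \widehat\omega_\varepsilon + \sqrt{-1}\,\partial\bar\partial u$ of the approximate Kähler metric $\widehat\omega_\varepsilon$ produced by the gluing construction of Theorem \ref{main}. Since $[\widehat\omega_\varepsilon] = [\omega_X] - \varepsilon^2[E]$ by Remark \ref{4.3}, any cscK representative we find in $[\widehat\omega_\varepsilon]$ will automatically lie in the claimed Kähler class. The strategy is to first reduce to a fixed-point problem for the perturbation equation $S(\widehat\omega_\varepsilon + \sqrt{-1}\,\partial\bar\partial u) = R$, then invert the linearization uniformly in $\varepsilon$ on an appropriate family of weighted Hölder spaces, and finally close the argument via Banach's fixed-point theorem applied to a carefully chosen ansatz for $u$.

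Concretely, I would linearize as in Lemma \ref{linears}, writing $S(\widehat\omega_\varepsilon + \sqrt{-1}\,\partial\bar\partial u) = S(\widehat\omega_\varepsilon) + L_{\widehat\omega_\varepsilon}(u) + Q_{\widehat\omega_\varepsilon}(\nabla^2 u)$, and introduce the twisted operator $\widetilde L_\varepsilon(u,R) = L_{\widehat\omega_\varepsilon}(u) - R$ on the weighted space $\rho^\delta C^{4,\alpha}_{\widehat g_\varepsilon/\rho^2}(\widehat X)_0 \times \mathbb{R}$ with $4-2k < \delta < 0$ and $\rho = \sqrt{d^2 + \varepsilon^2}$ a defining function of $\widehat H_1$. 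The uniform invertibility of $\widetilde L_\varepsilon$ provided by Proposition \ref{inv} (whose proof relies on the triviality of the Lichnerowicz kernel on the ALE fibers, on the product, and on $X$ itself, i.e.\ Lemmas \ref{red}, \ref{new11}, \ref{yellow}, \ref{green}) gives a bounded right inverse $P_\varepsilon$ with bound independent of $\varepsilon$. The scalar curvature of any solution is topologically determined by Proposition \ref{lambda}, which forces $R = S(\omega_X) + \lambda\varepsilon^{2k-2} + R_\varepsilon$ with $|R_\varepsilon| \le c\varepsilon^{2k}$ and $\lambda$ a computable constant depending only on $c_1(\widehat X)$ (via Proposition \ref{chern}) and the Kähler class.

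Because the naive ansatz $u = v$ with $v$ small leaves an error of order $\varepsilon^{2k-2}$ concentrated near $Y$ that cannot be absorbed into a $O(\varepsilon)$ correction, I would refine the ansatz to
\[
u = \varepsilon^{2k-2}\,\gamma_2\!\bigl(\tfrac{d}{r_\varepsilon}\bigr)\,\Gamma + v,
\]
where $r_\varepsilon = \varepsilon^{2k/(2k+1)}$, $\gamma_2$ is a cutoff, and $\Gamma = -\frac{\Vol(X)}{\Vol(Y)}\,\lambda\,\Lambda$ with $\Lambda(x) = \int_Y G(x,y)\,dy$ built from the Green function of $\mathcal{D}^*\mathcal{D}$ on $X$; by construction $\mathcal{D}^*\mathcal{D}\Gamma = \lambda$ away from $Y$ and $\Gamma$ has the right rate $d^{4-2k}$ near $Y$ to cancel the leading topological term. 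The problem then becomes a fixed-point equation $(v,R) = \mathcal N_\varepsilon(v,R) := P_\varepsilon F_\varepsilon(v,R)$ where $F_\varepsilon$ collects the residual scalar curvature defect plus the quadratic error $Q_{\widehat\omega_\varepsilon}$.

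The hard part is showing that $\mathcal N_\varepsilon$ is a contraction of the closed ball $\mathcal U_\varepsilon = \{\,v : \|v\|_{\rho^\delta C^{4,\alpha}} \le (1+2c)C\,r_\varepsilon^{3-\delta}\,\}$ uniformly in $\varepsilon$. This hinges on two estimates: a uniform bound $\|F_\varepsilon(0,0)\|_{\rho^{\delta-4}C^{0,\alpha}} \le c\,r_\varepsilon^{3-\delta}$ (Proposition \ref{forf}), which must be established by splitting $\widehat X$ into the four nested regions $\{d<\varepsilon\}$, $\{\varepsilon<d<r_\varepsilon\}$, $\{r_\varepsilon<d<2r_\varepsilon\}$, $\{2r_\varepsilon<d\}$ and estimating $S(\widehat\omega_\varepsilon)$, $S(\widetilde\omega_\varepsilon)$, the cutoff-commutator contribution $L_{\widehat\omega_\varepsilon}(\gamma_2(d/r_\varepsilon)\Gamma)$, and the nonlinear term in each via the ALE expansion \eqref{pot1} and the asymptotic \eqref{lambda} of the Green function; and a Lipschitz bound for $Q_{\widehat\omega_\varepsilon}$ from Lemma \ref{bil}, giving Lemma \ref{forn}. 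The choice $r_\varepsilon = \varepsilon^{2k/(2k+1)}$ is what balances the four regional error contributions, with the delicate intermediate region being the real obstacle, as that is where both the Burns--Simanca type corrections and the Green-function tail compete. Once these estimates are in place, Banach's theorem produces a unique fixed point $v_\varepsilon \in \mathcal U_\varepsilon$, and $\widetilde\omega_\varepsilon = \widehat\omega_\varepsilon + \sqrt{-1}\,\partial\bar\partial(\varepsilon^{2k-2}\gamma_2(d/r_\varepsilon)\Gamma + v_\varepsilon)$ is the desired cscK metric in $[\omega_X] - \varepsilon^2[E]$.
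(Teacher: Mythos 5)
Your proposal is correct and follows essentially the same route as the paper: the gluing ansatz $\widehat\omega_\varepsilon$ with $[\widehat\omega_\varepsilon]=[\omega_X]-\varepsilon^2[E]$, the uniformly invertible twisted operator $\widetilde L_\varepsilon$ on $\rho^\delta C^{4,\alpha}_{\widehat g_\varepsilon/\rho^2}(\widehat X)_0\times\mathbb{R}$, the Green-function correction $\varepsilon^{2k-2}\gamma_2(d/r_\varepsilon)\Gamma$ with $r_\varepsilon=\varepsilon^{2k/(2k+1)}$, the four-region estimate of $F_\varepsilon(0,0)$, and the contraction on $\mathcal U_\varepsilon$ are precisely the steps carried out in Sections 6 and 7. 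No gaps to report; your outline matches the paper's argument, including the identification $R=S(\omega_X)+\lambda\varepsilon^{2k-2}+R_\varepsilon$ forced by Proposition \ref{lambda}.
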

Unless the singularity of type $\mathcal{I}$ is of the form $(-w_0,1,\ldots,1)$ for some $r \in \mathbb{N}$, $\widehat{X}$ also has a singularity of type $\mathcal{I}$ along a suborbifold of complex codimension $k$. However, as described on page \pageref{bihologrroupblow}, since the singularity is of type $\mathcal{I}$, we can find a sequence of weighted blow-ups 
$$
\widehat{X}_l \to \widehat{X}_{l-1} \to \ldots \to \widehat{X}_1 \to X,
$$
with $\widehat{X}_1 = \widehat{X}$ and $\widehat{X}_l$ smooth. Thanks to Proposition \ref{bihologrroupblow}, we can apply Theorem \ref{themain} iteratively to each $\widehat{X}_i$ to obtain on $\widehat{X}_l$ a cscK metric, which establishes Corollary \ref{coroc} in the introduction.
\bibliography{references}
\bibliographystyle{amsplain}

\end{document}